\renewcommand*{\backref}[1]{}
\renewcommand*{\backrefalt}[4]{\quad \tiny 
  \ifcase #1 (\textbf{NOT CITED.})%
  \or    (Cited on page~#2.)%
  \else   (Cited on pages~#2.)%
  \fi}
\def\MRbibitem{\@ifnextchar[\my@lbibitem\my@bibitem}
\def\mybiblabel#1#2{\@biblabel{{\hyperref{http://www.ams.org/mathscinet-getitem?mr=#1}{}{}{#2}}}}
\def\myhyperanchor#1{\Hy@raisedlink{\hyper@anchorstart{cite.#1}\hyper@anchorend}}
\def\my@lbibitem[#1]#2#3#4\par{%
  \item[\mybiblabel{#2}{#1}\myhyperanchor{#3}\hfill]#4%
  \@ifundefined{ifbackrefparscan}{}{\BR@backref{#3}}%
  \if@filesw{\let\protect\noexpand\immediate
    \write\@auxout{\string\bibcite{#3}{#1}}}\fi\ignorespaces%
}
\def\my@bibitem#1#2#3\par{%
  \refstepcounter\@listctr
  \item[\mybiblabel{#1}{\the\value\@listctr}\myhyperanchor{#2}\hfill]#3%
  \@ifundefined{ifbackrefparscan}{}{\BR@backref{#2}}%
  \if@filesw\immediate\write\@auxout
    {\string\bibcite{#2}{\the\value\@listctr}}\fi\ignorespaces%
}
\declaretheoremstyle[
headfont=\footnotesize\bf,
bodyfont=\footnotesize\sf
]{myremark}
\declaretheorem[Refname={Theorem,Theorems}]{theorem}
\declaretheorem[Refname={Theorem,Theorems}, name=Theorem, numberwithin=section]{otherthm}
\declaretheorem[Refname={Lemma,Lemmas}, sibling=otherthm]{lemma}
\declaretheorem[Refname={Corollary,Corollaries}, sibling=otherthm]{corollary}
\declaretheorem[Refname={Proposition,Propositions}, sibling=otherthm]{proposition}
\declaretheorem[Refname={Definition,Definitions}, sibling=otherthm, style=definition]{definition}
\declaretheorem[Refname={Example,Examples}, sibling=otherthm, style=remark]{example}
\declaretheorem[Refname={Question,Questions}, sibling=otherthm, style=remark]{question}
\declaretheorem[Refname={Remark,Remarks}, sibling=otherthm, style=remark]{remark}
\declaretheorem[Refname={Claim,Claims},sibling=otherthm, style=remark]{claim}
\numberwithin{equation}{section}     
\setlist[enumerate,1]{label={\upshape(\alph*)},ref=\alph*}
\setlist[enumerate,2]{label={\upshape(\arabic*)},ref=\arabic*}
\newcommand{\tribar}[1]{\mathopen{| {\kern -1.5pt} | {\kern -1.5pt} |} {#1}
\mathclose{| {\kern -1.5pt} | {\kern -1.5pt} |}}
\newcommand{\qand}{\quad \text{and} \quad}
\newcommand{\A}{\mathbb{A}}
\newcommand{\R}{\mathbb{R}}
\newcommand{\Z}{\mathbb{Z}}
\newcommand{\cC}{\mathcal{C}}
\newcommand{\cD}{\mathcal{D}}\newcommand{\cF}{\mathcal{F}}
\newcommand{\cG}{\mathcal{G}}
\newcommand{\cL}{\mathcal{L}}
\newcommand{\cM}{\mathcal{M}}\newcommand{\cO}{\mathcal{O}}
\newcommand{\cR}{\mathcal{R}}
\newcommand{\cU}{\mathcal{U}}
\newcommand{\cV}{\mathcal{V}}\newcommand{\cW}{\mathcal{W}}
\newcommand{\st}{\;\mathord{;}\;}
\newcommand*\circled[1]{\tikz[baseline=(char.base)]{
    \node[shape=circle,draw,inner sep=1pt] (char) {\footnotesize{#1}};}}
\renewcommand{\phi}{\varphi}
\renewcommand{\setminus}{\smallsetminus}
\newcommand{\Leb}{\mathrm{Leb}}
\newcommand{\Diff}{\mathit{Diff}}
\newcommand{\id}{\mathrm{id}}
\DeclareMathOperator{\mo}{mo}
\DeclareMathOperator{\qo}{qo}
\DeclareMathOperator{\supp}{supp}
\DeclareMathOperator{\card}{\#}
\DeclareMathOperator{\diam}{diam}
\newcommand{\cercle}{\mathbb{T}} 
\newcommand{\Eme}{\mathscr{E}} 
\begin{document}
\title{On Emergence and Complexity of Ergodic Decompositions}
\date{January, 2019 (first version); June, 2021 (final version)}

\author[Pierre Berger]{Pierre Berger{}$^*$}
\address{Pierre Berger (\href{mailto:berger@math.univ-paris13.fr}{{\tt berger@math.univ-paris13.fr}}), Universit\'e Paris 13, Sorbonne Paris Cit\'e, \textsc{laga}, \textsc{cnrs} (\textsc{umr} 7539), F-93430, Villetaneuse, France.}
	
\author[Jairo Bochi]{Jairo Bochi{}$^{**}$}
\address{Jairo Bochi (\href{mailto:jairo.bochi@mat.uc.cl}{{\tt jairo.bochi@mat.uc.cl}}), Facultad de Matem\'aticas, 
	Pontificia Universidad Cat\'olica de Chile, 
	Avenida Vicu\~na Mackenna 4860
	Santiago, Chile}

\begin{abstract}
A concept of \emph{emergence} was recently introduced in \cite{Berger} in order to quantify the richness of possible statistical behaviors of orbits of a given dynamical system. In this paper, we develop this concept and provide several new definitions, results, and examples. We introduce the notion of \emph{topological emergence} of a dynamical system, which essentially evaluates how big the set of all its ergodic probability measures is. On the other hand, the \emph{metric emergence} of a particular reference measure (usually Lebesgue) quantifies how non-ergodic this measure is. We prove fundamental properties of these two emergences, relating them with classical concepts such as Kolmogorov's $\epsilon$-entropy of metric spaces and quantization of measures. We also relate the two types of emergences by means of a variational principle. Furthermore, we provide several examples of dynamics with high emergence. First, we show that the topological emergence of some standard classes of hyperbolic dynamical systems is essentially the maximal one allowed by the ambient. Secondly, we construct examples of  smooth area-preserving diffeomorphisms that are extremely non-ergodic in the sense that the metric emergence of the Lebesgue measure is essentially maximal. These examples confirm that super-polynomial emergence indeed exists, as conjectured in \cite{Berger}. Finally, we prove that such examples are locally generic among smooth diffeomorphisms. 
\end{abstract}

\begin{thanks}
{
{}$^*$ Partially supported by the ERC project 818737 \textit{Emergence of wild differentiable dynamical systems}. 818737  Emergence  of  wild  differentiable  dynamicalsystems.
{}$^{**}$ Partially supported by project Fondecyt 1180371 and project Conicyt PIA ACT172001 \textit{New trends in ergodic theory}.
}
\end{thanks}

\subjclass[2020]{37A35; 37C05, 37C45, 37C40, 37J40}

\keywords{Emergence, ergodic decomposition, Wassertein space, covering number, quantization number, dimension, entropy, KAM theorem.}

\maketitle

\belowpdfbookmark{\contentsname}{toc} 
\tableofcontents


\section*{Introduction}

An unsophisticated but fruitful way of quantifying the size of a compact metric space $X$ goes as follows:
one counts how many points can be distinguished up to error $\epsilon > 0$, and then studies the behavior of this number $N(\epsilon)$ as the resolution $\epsilon$ tends to zero. 
For example, if $N(\epsilon)$ is of the order of $\epsilon^{-d}$, for some $d>0$, then we say that $X$ has (box-counting) dimension $d$.
This dimension, when it exists, is a geometric invariant of $X$: it is preserved under bi-Lipschitz maps.

A similar idea can be used to define invariants of dynamical systems.
One considers how many orbits can be distinguished up to time $t>0$ and up to a fine resolution; if this number is roughly $\exp(h\cdot t)$ then the dynamics has topological entropy $h$. 
The metric entropy (also called Kolmogorov--Sinai entropy) of an invariant probability measure can be characterized similarly: in this case we are allowed to disregard a set of orbits of small probability.

This discretization paradigm can be used to quantify the complexity of a dynamical system in another way, called \emph{emergence}, which was recently introduced in \cite{Berger}.  
Emergence is significant when a finite number of statistics is not enough to describe the behavior of the orbit of almost every point. 
In this paper, we carry out a more detailed study of the concept of emergence, sometimes guided by analogies with the concept of entropy.
Furthermore, we provide examples of topologically generic dynamics with high emergence, substantiating a conjecture from \cite{Berger}. 

Let us note that the word \emph{emergence} is used with several different meanings in the scientific literature. Our use is compatible with MacKay's viewpoint, according to whom ``emergence means non-unique statistical
behaviour'' \cite{MacKay}. He elaborates on this as follows:
\begin{quote}
``Note that emergence is very different from
chaos, in which sensitive dependence
produces highly non-unique trajectories
according to their initial conditions. Indeed, the
nicest forms of chaos produce unique statistical
behaviors in the basin of the attractor. The
distinction is like that between the weather and
the climate. For weather we care about
individual realizations; for climate we care about
statistical averages.'' \cite{MacKay}
\end{quote}

\medskip

Given a continuous self-map $f$ of a compact metric space $X$, emergence distinguishes only the statistical behavior of orbits of $f$.
So it does not  matter \emph{when} a segment of orbit $(f^i(x))_{i=0}^{n-1}$ visits a certain region of the phase space $X$, but only \emph{how often}. This can be quantified by a probability, the \emph{$n^{\mathrm{th}}$ empirical measure associated to $x$}:
\[
\mathbf{e}_n^{f}(x) \coloneqq  \frac1n \sum_{i=0}^{n-1} \delta_{f^i(x)}  \; .
\] 

In the paradigm of ergodic theory, one focuses on the probability measures~$\mu$ which are \emph{invariant}: $f_*\mu=\mu$. We denote by $\cM_f(X)$ the convex, closed subset of such measures. Then, by Birkhoff ergodic theorem, for $\mu$-a.e.\ $x\in X$, the sequence $(\mathbf{e}_n^{f}(x))_n$ converges to a unique measure: 
\[
\mathbf{e}^{f}(x)\coloneqq \lim_{n\to \infty} \mathbf{e}_n^{f}(x)\; ,
\]
called the \emph{empirical measure associated to $x$}. 
Furthermore, this measure is  almost surely ergodic,  by the ergodic decomposition theorem. 
We recall that a measure $\mu$ is \emph{ergodic} if and only  the \emph{empirical function} $x\mapsto \mathbf{e}^f(x)$ is $\mu$-a.e.\ constant.
We denote by $\cM_f^\mathrm{erg}(X)\subset \cM_f(X)$ the subset of ergodic probability measures.

It is natural to study how many ergodic statistical behaviors a dynamical system admits up to resolution $\epsilon$ (in a sense to be made precise). We are interested in the behavior of this number as $\epsilon$ tends to zero. This leads us to introduce the following notion:

\begin{definition}[Topological Emergence] \label{def.te}
Let $X$ be a compact metric space, let $f$ be a continuous self-map of $X$, and let $\mathsf{d}$ be a distance on the space of probabilities $\cM(X)$ of $X$ so that $(\cM(X),\mathsf{d})$ is compact.

The \emph{topological emergence} $\Eme_\mathrm{top}(f)(\epsilon)$ of $f$ is the function which associates to $\epsilon>0$ the minimal number of $\epsilon$-balls of $\cM(X)$ whose union covers $\cM_f^\mathrm{erg}(X)$.
\end{definition}

Of course this definition depends on how the space of measures is metrized. 
There are basically two classical types of distances on the space of probabilities $\cM(X)$ which define the same weak topology (which is the most relevant one in ergodic theory): the L\'evy--Prokhorov distance $\mathsf{LP}$, and Wasserstein distances $\mathsf{W}_p$, which depend on a parameter $p \in [1,\infty)$.
We will recall their definitions in \cref{ss:distproba}. 
For the rest of this introduction, we fix any distance $\mathsf{d} \in \{\mathsf{W}_p: 1\le p<\infty\} \cup \{\mathsf{LP}\}$. 

In \cref{s:top_em_examples}, we will give examples of open sets of mappings with essentially maximal topological emergence:
 
\begin{theorem}\label{emergence:topo:example}
Let $f$ be $C^{1+\alpha}$-mapping of a manifold which admits a basic hyperbolic set $K$ with box-counting dimension $d$. Assume that  $f$ is conformal expanding or that $f$ is a conservative surface diffeomorphism. Then the topological emergence of $f|K$ is stretched exponential with exponent $d$:
\[\lim_{\epsilon\to 0} \frac{\log \log \Eme_\mathrm{top}(f|K) (\epsilon)}{-\log \epsilon}=d\; .\]
\end{theorem}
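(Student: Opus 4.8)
The plan is to establish the two inequalities separately: a lower bound
$\liminf_{\epsilon\to0}\frac{\log\log\Eme_\mathrm{top}(f|K)(\epsilon)}{-\log\epsilon}\ge d$ and an upper bound with $\limsup\le d$. The upper bound should be softer and essentially geometric. Since $K$ has box-counting dimension $d$, for every $\eta>0$ the set $K$ can be covered by at most $\epsilon^{-(d+\eta)}$ balls of radius $\epsilon$ once $\epsilon$ is small. Every ergodic measure of $f|K$ is supported on $K$, so its position in $\cM(X)$ is controlled by how it distributes mass among such a cover; quantizing each of the $\approx\epsilon^{-(d+\eta)}$ cells to accuracy $\epsilon$ in total mass produces a net in $\cM_f^\mathrm{erg}(K)$ whose cardinality is at most $\exp\!\big(C\epsilon^{-(d+\eta)}\log(1/\epsilon)\big)$ with respect to $\mathsf{W}_p$ (and hence $\mathsf{LP}$, since these distances are Hölder-equivalent on $\cM(X)$ for $X$ compact — this is recalled in \cref{ss:distproba}). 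Taking $\log\log$ and letting $\epsilon\to0$ then $\eta\to0$ gives $\limsup\le d$. This step uses only that $K$ is compact with box dimension $d$ and that quantization of measures has the expected combinatorial cost; the hyperbolic/conformal hypotheses are not needed here.

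The lower bound is where the dynamical hypotheses enter, and it is the heart of the matter. The goal is to exhibit, for small $\epsilon$, a family of at least $\exp\!\big(\epsilon^{-(d-\eta)}\big)$ ergodic measures that are pairwise $\epsilon$-separated in $\cM(X)$. The natural mechanism: a basic hyperbolic set carries a rich symbolic structure (it is, up to finite-to-one semiconjugacy, a subshift of finite type), and by the specification property one can realize a huge variety of orbit-frequency patterns by periodic orbits. Concretely, choose a maximal $\epsilon$-separated subset $\{x_1,\dots,x_N\}$ of $K$ with $N\gtrsim\epsilon^{-(d-\eta)}$ (possible by the box-dimension hypothesis). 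For each subset $S\subseteq\{1,\dots,N\}$, build — via specification — a periodic orbit that spends a definite, roughly equidistributed fraction of its time near each $x_i$ with $i\in S$ and avoids a neighborhood of the others; its empirical (hence ergodic) measure $\mu_S$ then has $\mu_S(B(x_i,\epsilon/3))$ bounded away from $0$ for $i\in S$ and equal to $0$ for $i\notin S$. Distinct $S$'s give measures differing by a definite amount on some ball, hence $c\epsilon$-separated in $\mathsf{W}_p$ or $\mathsf{LP}$ for a uniform constant $c$. This yields $2^N\ge\exp(\epsilon^{-(d-\eta)})$ separated ergodic measures, and since $\Eme_\mathrm{top}$ at scale $c\epsilon$ is at least this separation number, rescaling $\epsilon$ and taking $\log\log$ gives $\liminf\ge d$.

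The two regularity hypotheses — conformal expanding, or $C^{1+\alpha}$ conservative surface diffeomorphism — are exactly what is needed to guarantee that the $\epsilon$-separated points in $K$ can be realized at the *right* density on the symbolic side, i.e.\ that the box dimension of $K$ is faithfully reflected by a subset of $K$ that is well-distributed with respect to the Markov partition at scale $\epsilon$. For conformal expanding maps, bounded distortion makes the Markov partition pieces at depth $n$ comparable to round balls of a single scale $e^{-\lambda n}$, so counting separated points is directly counting cylinders; one gets roughly $\epsilon^{-d}$ cylinders at scale $\epsilon$. For conservative surface diffeomorphisms, the unstable and stable directions scale differently, but area preservation forces the product of the two rates to be controlled, and a similar — if more delicate — count via the two-sided Markov partition recovers the box-dimension exponent $d$. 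I expect this distortion/geometry bookkeeping to be the main obstacle: one must be careful that the periodic orbits produced by specification, while visiting the chosen points with the prescribed frequencies, do not accumulate extra mass at an uncontrolled scale that would spoil the separation estimate, and that the number of "independent" cells one can prescribe is genuinely of order $\epsilon^{-d}$ and not merely $\epsilon^{-d'}$ for some smaller $d'$. Once the geometric realization is pinned down, the specification argument and the separation-to-emergence comparison are routine.
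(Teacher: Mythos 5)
Your upper bound is fine and matches what the paper does: since $\cM_f^{\mathrm{erg}}(K)\subset\cM(K)$, the bound $\overline{\mo}(\cM(K))\le\overline{\dim}(K)$ (this is the rightmost inequality of \cref{t:mo_box}, proved via \cite{BGV}) gives $\limsup\le d$ directly, with no dynamics needed.

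The lower bound is where the problem lies, and it is not in the ``distortion bookkeeping'' you flagged but in the separation claim itself. You take a maximal $\epsilon$-separated set $\{x_1,\dots,x_N\}$ with $N\approx\epsilon^{-d}$, and for each $S\subseteq\{1,\dots,N\}$ build an ergodic measure $\mu_S$ concentrating mass $\approx 1/N$ near each $x_i$, $i\in S$. You then assert that distinct $S$ give measures that are $c\epsilon$-separated ``for a uniform constant $c$.'' This is false. If $S_1$ and $S_2$ differ in only a few indices, then $\mu_{S_1}$ and $\mu_{S_2}$ differ by mass $O(1/N)$ located $O(\epsilon)$ apart, so $\mathsf{W}_1(\mu_{S_1},\mu_{S_2})=O(\epsilon/N)=O(\epsilon^{d+1})$, which is far smaller than $\epsilon$. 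The fix is to restrict to a family of subsets that is $N/4$-separated in Hamming distance; a Gilbert--Varshamov / Bernstein-inequality count shows this family still has cardinality $\ge e^{cN}$, and for Hamming-far $S_1,S_2$ the symmetric difference carries a definite fraction of the mass, giving genuine $\Omega(\epsilon)$-separation. This Hamming-separation step is exactly what the paper isolates as \cref{t:lower_general}, proved by the Bernstein estimate \cref{l:Bernstein}; without it your count of separated measures collapses.

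There is also a softer issue with how the frequencies are to be realized: since $f$ is expanding (or hyperbolic), an orbit that starts near $x_i$ escapes the $\epsilon$-ball after time $\sim\log(1/\epsilon)$, so the ``definite fraction of time near $x_i$'' must come from many short returns glued by specification, not from a single long stay. This is repairable but means the period of the orbit, the number of specification gaps, and hence the error in the realized frequencies all scale with $N\approx\epsilon^{-d}$, and the bookkeeping is genuinely delicate.

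On the comparison with the paper's route: rather than manufacturing one ergodic measure per subset, the paper constructs only $\sim e^{(h_\mu-2\delta)n}$ periodic orbits of period $\sim n$ (via Katok's entropy theorem applied to a measure of maximal dimension, which gives the identity $\chi_\mu\cdot d=h_\mu$, or $\chi_\mu\cdot d/2=h_\mu$ in the conservative surface case), shows their supports are pairwise $\epsilon_n$-apart with $\epsilon_n\approx e^{-\chi_\mu n}$, and then applies \cref{t:lower_general} to the \emph{convex hull} $\cM_f(K)$ — where the exponential blow-up from apart measures to separated convex combinations is proved once and for all — before invoking density of $\cM_f^{\mathrm{erg}}(K)$ in $\cM_f(K)$ (a consequence of specification) to transfer the estimate back. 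Your approach, if the Hamming restriction is added, would produce the many separated \emph{ergodic} measures directly and so would bypass both \cref{t:lower_general} (or rather, fold its content into the construction) and the density step; this is conceivable but harder, because it demands fine control of empirical frequencies of long specification orbits, which the paper's route deliberately avoids by only ever needing to separate the \emph{supports} of its periodic measures.
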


The emergence exponent is indeed maximal, since for any such a compact set $K$, the covering number 
of the space of probability measures $\cM(K)$ is stretched exponential with exponent $d$, both for the L\'evy--Prokhorov metric $\mathsf{LP}$ and the Wasserstein metrics $\mathsf{W}_p$; see \cref{section:mo:was} for details. 

The concept of topological emergence is linked to classical ideas of size of functional spaces developed by the Kolmogorov school (and emanating from Hilbert's 13th problem): see \cref{s:mo} for more details.
Let us also note that the set $\cM_f(X)$ has been investigated from several (topological, convex-analytic, \dots) points of view for various classes of maps $f$, from older works \cite{Sigmund,Dow} to recent ones \cite{GoP,GR,BBG,BZ,DGMR,DGR}.
The study of topological emergence expands this theme of research by imparting a more quantitative aspect to it.

\medskip

We may be only interested in physically relevant statistics, and so we are allowed to disregard statistics that correspond to a set of orbits of zero Lebesgue measure. This led to the following concept \cite{Berger}, initially introduced for $X$ a manifold and $\mu$ the Lebesgue measure:

\begin{definition}[Metric Emergence] \label{def.metric_em}
Let $(X,\mu)$ be a compact metric space $X$ endowed with a probability measure $\mu$, and let $f$ be a continuous self-map of $X$ (not necessarily $\mu$-preserving).

The \emph{metric emergence} $\Eme_{\mu}(f)$ is the function that associates to $\epsilon>0$ the minimal number $\Eme_{\mu}(f)(\epsilon)=N$ of probability measures $\mu_1$, \dots, $\mu_N$  so that: 
\begin{equation}\label{emergence def}
\limsup_{n\to \infty} \int \min_{1\le i\le N}  \mathsf{d}(\mathbf{e}^f_n(x), \mu_i) \, d\mu(x) \le  \epsilon \; .
\end{equation}
Let us note that when $\mu$ is $f$-invariant, then $(\mathbf{e}^f_n(x))_n$ converges to $\mathbf{e}^f(x)$ $\mu$-a.e.\ and so \eqref{emergence def} can be replaced by:
\begin{equation}
\int \min_{1\le i\le N}  \mathsf{d}(\mathbf{e}^f(x), \mu_i) \, d\mu(x) \le \epsilon\; .
\end{equation}
\end{definition}

As we will explain in \cref{s:quantization}, when the measure $\mu$ is $f$-invariant, metric emergence becomes a particular case of the classic problem of quantization (or discretization) of a measure \cite{GrafL}.

\medskip

Let us recall some examples of metric emergence.  By definition, if $(f,\mu)$ is ergodic then $\mathbf{e}^f(x)=\mu$ for  $\mu$-a.e.~$x$ and so its metric emergence $\Eme_\mu$  is identically $1$ (i.e.\ minimal).

When $X$ is a compact manifold $M$, the metric emergence will be canonically considered for $\mu = \Leb$, the Lebesgue measure of $M$ (that is, the probability measure corresponding to a fixed normalized smooth positive volume form). The map $f$ is called \emph{conservative} if it leaves the Lebesgue measure invariant. The group of conservative $C^r$-diffeomorphisms  is denoted by $\Diff_\Leb^r(M)$. 

There are well-studied subsets of $\Diff^r_\Leb(M)$ consisting of ergodic diffeomorphisms: uniformly hyperbolic dynamics, quasi-periodic mappings (e.g.\ minimal translations of tori), and many classes of partially hyperbolic dynamics \cite{BuW, ACW, Obata}.

For a while, Boltzmann's ergodic hypothesis 
prevailed and typical Hamiltonian dynamical systems were believed to be ergodic \cite{BK, Dumas}. 
However, KAM (Kolmogorov--Arnold--Moser) theory revealed that every perturbation of certain integrable systems displays infinitely many invariant tori filling a set of positive Lebesgue measure, in each of which the dynamics is an ergodic rotation. Thus the ergodic hypothesis was refuted. 
This phenomenon also showed that a typical symplectic diffeomorphism is in general not ergodic, since nearby its totally elliptic periodic points it is Hamiltonian and smoothly approximable by an integrable system.
As we will explain later (see  \cref{c:robust_poly}), the metric emergence of systems displaying KAM phenomenon is at least polynomial:
\begin{equation}\tag{$\ge P$}\label{e.liminf_poly}
\liminf_{\epsilon\to 0} \frac{\log \Eme_{\Leb}(f)(\epsilon)}{-\log \epsilon} \ge 1 \, .
\end{equation}

Another phenomenon, discovered by Newhouse \cite{Ne1}, is the co-existence of infinitely many invariant open sets, each of which having an asymptotically constant empirical function, 
so that the corresponding probability measures can approximate any invariant ergodic measure supported on a certain non-trivial hyperbolic compact set. 
This phenomenon occurs generically in many categories of dynamical systems: see \cite{Ne2, Duarte, Bu, BD, DNP, Biebler}.
The Newhouse phenomenon has been recently shown to be typical in the sense of Kolmogorov: see \cite{Be1, Berger}.

In view of \cref{emergence:topo:example}, one might believe that the metric emergence of systems displaying Newhouse phenomenon can have super-polynomial growth. 
In the paper \cite{Berger}, it is actually conjectured that super-polynomial growth is typical in open sets of many categories of dynamical systems. We prove one step toward this conjecture by showing (in \cref{section:metric:emer}) the existence of a smooth (that is, $C^\infty$) conservative flow 
with stretched exponential metric emergence:

\begin{theorem}\label{t:main}
There exists a smooth conservative flow $(\Phi^t)_t$ on the  annulus $\A \coloneqq \R/\Z \times [0,1]$
such that for every $t\neq 0$ the emergence of $f = \Phi^t$ is stretched exponential with (maximal) exponent $d=2$: 
\begin{equation}\tag{$S\exp^d$}\label{e.liminf_stretchted}
\lim_{\epsilon\to 0} \frac{\log \log \Eme_{\Leb}(f) (\epsilon)}{-\log \epsilon} = d
\; .\end{equation}
\end{theorem}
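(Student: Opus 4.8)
The proof splits into a soft upper bound and a hard lower bound. For the upper bound $\limsup_{\epsilon\to 0}\frac{\log\log\Eme_\Leb(f)(\epsilon)}{-\log\epsilon}\le 2$: if $\{\mu_1,\dots,\mu_N\}$ is the centre set of a minimal $\epsilon$-net of $(\cM(\A),\mathsf{d})$, then the integrand in \eqref{emergence def} is $\le\epsilon$ for every $x$ and every $n$, so $\Eme_\Leb(f)(\epsilon)$ is bounded above by the $\epsilon$-covering number of $(\cM(\A),\mathsf{d})$, which is stretched exponential with exponent $\dim\A=2$ by \cref{section:mo:was}. Thus the content of the theorem is the matching lower bound $\liminf_{\epsilon\to 0}\frac{\log\log\Eme_\Leb(f)(\epsilon)}{-\log\epsilon}\ge 2$, and, by monotonicity of $\Eme_\Leb(f)$ in $\epsilon$, this follows once we exhibit a smooth conservative flow $\Phi$ and a geometric sequence $\epsilon_n\downarrow 0$ with $\Eme_\Leb(\Phi^t)(\epsilon_n)\ge\exp(c\,\epsilon_n^{-2})$ for some $c>0$ and all $t\ne 0$.

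The flow is produced by a fast, \emph{Anosov--Katok-type} approximation $\Phi=\lim_n\Phi_n$, where $\Phi_n=(H_n)_*R_n$, $R_n$ is an explicit ``straight'' model flow with piecewise-constant (then smoothed) speed profile---so that every orbit of $R_n$ is a periodic circle and a full-measure union of thin invariant sub-annuli $A_n^{(1)},\dots,A_n^{(N_n)}$ is singled out---and $H_n=h_1\circ\cdots\circ h_n\in\Diff^\infty_\Leb(\A)$ has each factor $h_{k+1}$ supported inside, and only refining, the structure built at stage $k$. The diffeomorphism $H_n$ is designed so that every periodic circle lying in $A_n^{(j)}$ is stretched by $H_n$ into a thin, essentially space-filling curve whose itinerary through the cells of a $\approx\epsilon_n^{-1}\times\epsilon_n^{-1}$ grid on $\A$ realizes a prescribed probability vector $w^{(j)}=(w^{(j)}_C)_C$: it spends a proportion $w^{(j)}_C$ of its length in cell $C$. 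Here we use that the annulus has two winding directions, which lets the vectors $w^{(j)}$ range over an $\epsilon_n$-net of the simplex of probability vectors indexed by the $\approx\epsilon_n^{-2}$ cells; such a net has cardinality $N_n=\exp(\Theta(\epsilon_n^{-2}))$. Consequently, for every $t\ne 0$ and Lebesgue-a.e.\ $x\in H_n(A_n^{(j)})$ the orbit of $\Phi_n^t$ equidistributes along the stretched circle through $x$, so $\mathbf{e}^{\Phi_n^t}(x)$ is $\epsilon_n$-close to the model measure $\nu_n^{(j)}\coloneqq\sum_C w^{(j)}_C\,\Leb|_C$; and since the $w^{(j)}$ form an $\epsilon_n$-net, the $N_n$ measures $\nu_n^{(j)}$ can be taken to be $8\epsilon_n$-separated in $\mathsf{d}$.

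Granting that the limit flow inherits this picture at every finite scale---i.e.\ that for each $n$ there is a measurable $j_n\colon\A\to\{1,\dots,N_n\}$ with $\Leb\{j_n=j\}\asymp 1/N_n$ and $\mathsf{d}\bigl(\mathbf{e}^{\Phi^t}(x),\nu_n^{(j_n(x))}\bigr)\le 2\epsilon_n$ for a.e.\ $x$ and all $t\ne 0$---the lower bound is the standard quantization estimate. Indeed, for any candidate family $\mu_1,\dots,\mu_N$ realizing \eqref{emergence def} at resolution $\epsilon_n$: if no $\mu_i$ lies within $2\epsilon_n$ of $\nu_n^{(j)}$ for more than half of the indices $j$, then on the union of the corresponding sets $\{j_n=j\}$, which has measure $\gtrsim 1/2$, the integrand is $\gtrsim 2\epsilon_n$, contradicting \eqref{emergence def}; and by $8\epsilon_n$-separation each $\mu_i$ is within $2\epsilon_n$ of at most one $\nu_n^{(j)}$. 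Hence $N>N_n/2=\exp(\Theta(\epsilon_n^{-2}))$, as required. The uniformity in $t\ne 0$ is built in, since each surviving stretched circle carries rotation-type dynamics---uniquely ergodic for all but countably many periods---whose asymptotic empirical measure is independent of the nonzero time.

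The main obstacle is exactly the clause ``the limit flow inherits this picture at every finite scale''. As in every Anosov--Katok scheme, to make $\Phi=\lim_n\Phi_n$ a genuine $C^\infty$ conservative flow one must fix the stage-$(n+1)$ data only after $h_1,\dots,h_{n+1}$ are chosen, and take it so close to the stage-$n$ data that $\Phi_{n+1}$ and $\Phi_n$ agree to within $2^{-n}$ in the $C^n$ topology. But whereas the usual scheme then lets the accumulated later perturbations equidistribute everything and produce ergodicity, here they must do the opposite: each $h_{n+1}$ has to be supported inside the tubes $H_n(A_n^{(j)})$ and to preserve their coarse itineraries through the grid, so that these tubes persist---as invariant sets whose natural measures are still $\approx\nu_n^{(j)}$---for the limit flow, while being subdivided so as to realize the finer $\epsilon_{n+1}$-net inside them. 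Setting up this nested, simultaneously structure-preserving and structure-refining construction, keeping it conservative and $C^\infty$-convergent, and arranging that at every scale the stretching is essentially area-filling with free choice of itinerary among all $\approx\epsilon_n^{-2}$ cells (which is what pins the exponent to the maximal value $d=2$ rather than something smaller), is the technical heart of the argument; the remaining estimates are routine.
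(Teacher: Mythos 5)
Your upper bound and the overall strategy for the lower bound --- encode a large, Hamming-style combinatorial family into a conservative conjugating diffeomorphism so that the push-forwards $h_*(\lambda_\rho)$ of the invariant circles of a twist flow form $\exp(\Theta(\epsilon^{-2}))$ pairwise $\epsilon$-separated empirical measures --- are both consistent with the paper. But your mechanism for stitching scales together is genuinely different from the paper's, and the difference matters. The paper proves a single-scale statement (\cref{p:main}): for each $\epsilon_*$ there is \emph{one} conservative diffeomorphism $h$, equal to the identity near $\partial\A$, such that $\Psi^t=h\circ R^t_\omega\circ h^{-1}$ already has $\Eme_\Leb(\Psi^t)(\epsilon_*)\ge\exp(C\epsilon_*^{-2})$; this $h$ is built by translating ``$\cU$-boxes'' to colour-matched ``$\cL$-boxes'' via Hamiltonian isotopies, no iteration involved. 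Then \cref{t:main} is obtained not by an Anosov--Katok limit but by a spatial \emph{onion} decomposition: the sub-annuli $\A_i=\cercle\times[2^{-i},2^{-i+1}]$ are pairwise disjoint and each is left invariant, $h$ is defined piecewise on each $\A_i$ as a rescaled copy of the scale-$\epsilon_i$ diffeomorphism from \cref{p:main}, and a speed profile $\omega$ vanishing to infinite order at $\rho=0$ makes $h\circ R^t_\omega\circ h^{-1}$ genuinely $C^\infty$ despite $h$ being only a homeomorphism at the core circle. Each scale's picture is carried by its own invariant sub-annulus, so there is nothing to ``inherit in the limit''; the lower bound at scale $\epsilon_{i+1}$ follows from $\hat\mu\ge 2^{-i}\hat\mu_i$ and \cref{l:submeasure,l:quant_Lip}.

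This exposes the gap in your argument: you correctly isolate the clause ``the limit flow inherits this picture at every finite scale'' as the technical heart, and then leave it unproved. In your scheme that clause is genuinely hard --- unlike the standard Anosov--Katok machine, which lets later perturbations destroy earlier quasi-invariant structure to manufacture ergodicity, yours needs every $h_{k+1}$ to \emph{preserve} the stage-$k$ tubes (their invariance, their coarse itineraries, and the near-$\nu_n^{(j)}$ statistics) while refining them, and to do so with $C^n$-smallness compatible with convergence. None of the estimates guaranteeing this are given, and they are not routine: controlling the propagation of itinerary statistics through an infinite composition with shrinking support is exactly the kind of bookkeeping the paper's nested-annulus construction was designed to avoid. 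As written, then, your proposal rests on an acknowledged but unfilled gap, and the route it takes is materially harder than the one the paper actually uses.
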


\begin{remark}\label{entropy vs emergence}
We recall that surface flows have zero topological entropy. Thus the previous theorem provides an example of smooth conservative dynamics with high emergence but zero topological entropy.
\end{remark}

\begin{question}
Is there a smooth conservative surface map $f$ such that:
$$\liminf_{\epsilon\to 0} \epsilon^2 \log \Eme_{\Leb}(f) (\epsilon)>0\, ?$$
\end{question}

Actually the proof of this theorem can be adapted to show the existence of smooth conservative flow $(\Phi^t)_t$ of a compact manifold $M$ of any dimension $d\ge 2$ such that the metric emergence of $f = \Phi_t$, $t\neq 0$ satisfies \eqref{e.liminf_stretchted}.

\medskip

We recall that a conservative map is \emph{$C^r_\Leb$-weakly stable} if every conservative mapping in a $C^r$ neighborhood has only hyperbolic periodic points (i.e.\ points $x = f^p(x)$ for which the eigenvalues of $Df^p(x)$ have moduli different than~$1$). Such mappings are conjecturally uniformly hyperbolic (and so structurally stable): see \cite{berger-turaev}, \cite[Conj.~2]{Mane82}. 
In that paper, it was shown that any conservative surface diffeomorphism which is not weakly stable can be $C^\infty$-approximated by one with positive metric entropy.
Here we obtain a stronger emergence counterpart of this result:

\begin{theorem}\label{coroC}
A $C^\infty$-generic, conservative, surface diffeomorphism $f$ 
either is weakly stable or has a metric emergence with $\limsup$ stretched exponential with exponent $d=2$: 
\begin{equation}\tag{$\overline S\exp^d$} \label{e.limsup_stretchted}
\limsup_{\epsilon\to 0} \frac{\log \log \Eme_{\Leb}(f) (\epsilon)}{-\log \epsilon}  = d \; .
\end{equation}
\end{theorem}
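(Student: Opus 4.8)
The plan is to derive the statement from the existence result of \cref{t:main}, a perturbative mechanism that installs the relevant dynamics near an elliptic periodic orbit, and a Baire category argument; the overall scheme follows \cite{berger-turaev}, with the construction behind \cref{t:main} playing the role of their entropy‑producing perturbation.

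First I would set up the category framework. Fix a compact surface $M$. The set $\mathcal{WS}\subseteq\Diff^\infty_\Leb(M)$ of $C^\infty_\Leb$-weakly stable maps is open by definition, so $\mathcal{N}:=\Diff^\infty_\Leb(M)\setminus\mathcal{WS}$ is closed and, being a closed subset of a completely metrizable space, is itself a Baire space; moreover a subset of $\mathcal{N}$ that is meager in $\mathcal{N}$ is meager in $\Diff^\infty_\Leb(M)$. Hence it suffices to prove that the set of $f\in\mathcal{N}$ satisfying \eqref{e.limsup_stretchted} is residual in $\mathcal{N}$. Since $\Eme_{\Leb}(f)(\epsilon)$ is always bounded by the minimal number of $\epsilon$-balls needed to cover $\cM(M)$, which by \cref{section:mo:was} is stretched exponential with exponent $\dim M=2$, one automatically has $\limsup_{\epsilon\to0}\frac{\log\log\Eme_{\Leb}(f)(\epsilon)}{-\log\epsilon}\le 2$, so it is enough to produce a residual set of $f\in\mathcal{N}$ with $\limsup\ge 2$. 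Put $\cO_{k,m}:=\bigcup_{\epsilon\in\Q\cap(0,1/m)}\{f\in\mathcal{N}:\Eme_{\Leb}(f)(\epsilon)>\exp(\epsilon^{-2+1/k})\}$ for $k,m\in\N$; then
\[
\Big\{ f\in\mathcal{N}:\limsup_{\epsilon\to0}\frac{\log\log\Eme_{\Leb}(f)(\epsilon)}{-\log\epsilon}\ge 2\Big\}=\bigcap_{k\ge1}\bigcap_{m\ge1}\cO_{k,m},
\]
so it suffices to show that each $\cO_{k,m}$ contains a dense open subset of $\mathcal{N}$. (The map $f\mapsto\Eme_{\Leb}(f)(\epsilon)$ is of Baire class at most one, as it factors through the Baire‑class‑one map $f\mapsto(\mathbf{e}^f)_*\Leb$ and the lower semicontinuous $N$-th quantization functional, so each $\{f:\Eme_{\Leb}(f)(\epsilon)>N\}$ is $F_\sigma$; this alone does not suffice, which is why we aim at interior points.)

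For the density step, fix $f\in\mathcal{N}$, a $C^\infty$-neighborhood $V$ of $f$, and $k,m\in\N$. Since $f$ is not weakly stable, $V$ contains a conservative diffeomorphism with a non‑hyperbolic periodic orbit; applying the conservative perturbation lemmas of \cite{berger-turaev} (bifurcation and normalization of non‑hyperbolic periodic orbits) we may assume that $V$ contains a map $g$ with an elliptic periodic orbit together with an invariant annulus $A$ around it on which $g$ is $C^\infty$-close to a rigid rotation — precisely the seed of the construction of \cref{t:main}. I would then run that construction \emph{locally}: a sequence of $C^\infty$-small conservative perturbations supported in $A$, all inside $V$, yielding diffeomorphisms $g=g_0,g_1,g_2,\dots$ The construction proceeds by finite stages, and the point to extract is that already at stage $n$ the map $g_n$ carries on $A$ a statistical configuration which (i) forces $\Eme_{\mu_A}(g_n|_A)(\epsilon_n)>\exp(\epsilon_n^{-2+o(1)})$, where $\mu_A$ is the normalized restriction of $\Leb$ to $A$ and $\epsilon_n\to0$, and (ii) is topologically robust — it is organized into finitely many pieces delimited by robustly invariant sets (such as local invariant manifolds of hyperbolic periodic orbits created along the way), hence survives on a $C^\infty$-neighborhood of $g_n$. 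As $A$ is $g_n$-invariant with $\Leb(A)>0$, projecting approximating measures onto $\overline{A}$ gives $\Eme_{\Leb}(g_n)(\epsilon)\ge\Eme_{\mu_A}(g_n|_A)(C\epsilon)$ for a constant $C=C(A)$, so (i) yields $\Eme_{\Leb}(g_n)(\epsilon_n')>\exp((\epsilon_n')^{-2+1/k})$ at a comparable scale $\epsilon_n'\to0$, and by (ii) this holds on a whole neighborhood of $g_n$. Taking $n$ large with $\epsilon_n'<1/m$ and $g_n\in V$ exhibits an open subset of $\cO_{k,m}$ meeting $V$; thus $\cO_{k,m}$ has dense interior in $\mathcal{N}$, which closes the scheme. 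Note the exponent obtained, $2=\dim M$, is maximal, so membership in $\bigcap_{k,m}\cO_{k,m}$ forces equality in \eqref{e.limsup_stretchted}.

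The main obstacle is the local construction in the previous paragraph. One must verify that the mechanism behind \cref{t:main} can be installed in an arbitrary KAM annulus near an elliptic periodic orbit coming from the failure of weak stability, with every perturbation globally $C^\infty$-small (as in a successive‑conjugation scheme), that it attains the maximal exponent $d=2$, and — most delicate for the category argument — that the emergence lower bound at each finite scale $\epsilon_n$ is certified by a robust topological feature rather than by the fragile limiting object, so that it propagates to an open set. Extracting from \cite{berger-turaev} the precise intermediate input (failure of weak stability produces, after an arbitrarily $C^\infty$-small conservative perturbation, an elliptic periodic orbit with the non‑degeneracy needed as the seed of the construction) is classical but must be handled with care in the $C^\infty$ area‑preserving surface category.
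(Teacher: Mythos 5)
Your overall strategy is the right one and matches the paper's architecture: work in the open set where weak stability fails, obtain locally a place to install the conservative construction underlying Proposition~\ref{p:main}, prove a robust lower bound on emergence at one scale, and close by a Baire category argument over the scales. But there are two concrete gaps in the middle, which is precisely where the paper supplies specific tools you do not invoke.

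First, the local installation: you pass from ``a non-hyperbolic periodic orbit'' to ``an elliptic periodic orbit with a KAM annulus on which $g$ is close to a rigid rotation'' and propose to run the multi-stage construction of Theorem~\ref{t:main} inside it. This is not how the paper proceeds, and it is not clear it can be made to work as stated: a KAM annulus of a generic elliptic point is foliated by invariant circles that cannot be removed by $C^\infty$-small conservative perturbations, so there is no freedom to create the tetris-like redistribution of boxes that the construction requires. The paper instead uses the theorem of Gonchenko--Turaev--Shilnikov and Gelfreich--Turaev (Theorem~\ref{dense_periodic_spotC}) to get, densely in the open set of maps with an elliptic periodic point, a \emph{periodic spot}: an open set $O$ with $f^p|O=\id$. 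Inside a periodic spot one is free to replace the identity by the single-scale twist-map $\Psi^t$ of Proposition~\ref{p:main} (Lemma~\ref{forC}), glued to the identity near $\partial O$; no iterative sequence $g_0,g_1,\ldots$ is needed, and in fact the full multi-scale construction of Theorem~\ref{t:main} (which produces a non-smooth conjugacy and infinitely many annuli) is the wrong object to transplant here.

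Second, the robustness: you defer the openness step to a ``robust topological feature,'' mentioning ``invariant manifolds of hyperbolic periodic orbits.'' That mechanism does not give what is needed: hyperbolic invariant manifolds do not trap a positive-Lebesgue region with a controlled ergodic decomposition in the conservative surface setting. The paper's robustness comes from KAM applied to the inserted twist map (Theorem~\ref{t:KAM} and its Corollary~\ref{c:robust_erg_dec}): for every $g$ in a $C^\infty$-neighborhood of the modified map there is a $g$-invariant sub-annulus $B$ of large relative measure whose ergodic decomposition is $\mathsf{W}_1$-close to that of the model twist map, which is exactly what lets one push the single-scale lower bound from Proposition~\ref{p:main} (via Lemmas~\ref{l:continuity}, \ref{l:quant_Lip}, \ref{l:submeasure}) through to an open set. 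Without pinning down this KAM step, the proof of openness of each $\cO_{k,m}$ is not complete. Also, a small remark: the Baire-class observations about $f\mapsto\Eme_{\Leb}(f)(\epsilon)$ are not used and can be dropped, and the upper bound $\limsup\le 2$ does follow from $\overline{\mo}(\cM(M))\le\dim M=2$ as you say.
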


\begin{remark}
\Cref{coroC} certainly requires some appropriate degree of differentiability, and is completely false for the $C^0$ category -- indeed, generic volume-preserving homeomorphisms of a compact manifold are ergodic \cite{OxtobyUlam} and therefore have minimal metric emergence.
\end{remark}

With a relatively simple modification of the proof of the previous \lcnamecref{coroC}, we also obtain its dissipative (i.e.\ non conservative) counterpart:

\begin{theorem}\label{coroD}
For every $r\in [1, \infty]$ and for every surface $M$, there exists a non-empty open set $\cU\subset \Diff^r(M)$, such that a generic map $f\in \cU$ has metric emergence $\Eme_\Leb(f)$ that satisfies \eqref{e.limsup_stretchted}
with $d=2$.
\end{theorem}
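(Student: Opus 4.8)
The plan is to realize a small, slow copy of the conservative dynamics of \cref{t:main} inside a normally hyperbolic \emph{attractor} of a dissipative surface diffeomorphism, to transport the metric emergence from that attractor to the whole surface through the absolutely continuous stable holonomy, and then to obtain genericity by a Baire argument that reuses the machinery developed for \cref{coroC}. The role of the ``weakly stable'' alternative is now played by a fixed open set $\cU$, and the only real change in the local perturbation step of \cref{coroC} is that one is no longer required to stay area-preserving on the ambient surface.

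\textbf{The open set and its persistent attractor.} Fix a surface $M$ and $r\in[1,\infty]$. Embedding the closed annulus $\A$ in a disc chart and interpolating with bump functions, one produces $g_0\in\Diff^r(M)$ and a neighborhood $\hat\A$ of the embedded annulus with $g_0(\hat\A)\subset\mathrm{int}\,\hat\A$, such that $\Lambda_0\coloneqq\bigcap_{n\ge0}g_0^{\,n}(\hat\A)$ is a $C^r$ normally hyperbolic attracting annulus with arbitrarily strong normal contraction, and such that $g_0|\Lambda_0$ admits a normally hyperbolic attracting invariant circle at whose points the derivative of $g_0|\Lambda_0$ is as close to the identity as we wish. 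Let $\cU$ be a small $C^r$-neighborhood of $g_0$ in $\Diff^r(M)$; it is non-empty and open. For every $f\in\cU$, Hirsch--Pugh--Shub provides an $f$-invariant annulus $\Lambda_f$, $C^r$-close to $\Lambda_0$, depending continuously on $f$ and normally attracting, together with a continuation of the invariant circle and with an $f$-invariant stable foliation of $\hat\A$ whose holonomy projection $\pi_f\colon\hat\A\to\Lambda_f$ is absolutely continuous with Jacobian bounded between two positive constants, uniformly over $\cU$.

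\textbf{Transfer of emergence.} Let $K\subset\Lambda_f$ be any $f$-invariant compact set attracting inside $\Lambda_f$, and let $B\subset\hat\A$ be the saturation by stable fibres of $\pi_f^{-1}$ of the basin of $K$ in $\Lambda_f$; then $\Leb(B)>0$. Since the normal contraction is uniformly strong, for $\Leb$-a.e.\ $x\in B$ the orbits of $x$ and $\pi_f(x)$ synchronize exponentially, and the orbit of $\pi_f(x)$ converges to $K$; hence $\mathsf d(\mathbf e_n^f(x),\mathbf e_n^f(\pi_f x))\to0$ and $\mathbf e^f(\pi_f x)$ is carried by $K$. With the absolute continuity of $\pi_f$ this shows that $(\mathbf e^f)_*(\Leb|_B)$ is equivalent, with densities bounded above and below, to the push-forward by $\mathbf e^{f|K}$ of a measure equivalent to $\mathrm{area}_{\Lambda_f}$ restricted to the basin of $K$. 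By the monotonicity of the quantization error (\cref{s:quantization}) there are a constant $c>0$ and a function $\epsilon\mapsto\epsilon'$ with $\epsilon'/\epsilon\to1$ so that $\Eme_\Leb(f)(\epsilon)\ge c\,\Eme_{\mathrm{area}}(f|K)(\epsilon')$ for small $\epsilon$; the same comparison holds for the finite-time integrals $\int\min_i\mathsf d(\mathbf e_n^f(x),\mu_i)\,d\Leb(x)$ for all large $n$, since the synchronization rate does not depend on $n$. Conversely $\limsup_{\epsilon\to0}\frac{\log\log\Eme_\Leb(f)(\epsilon)}{-\log\epsilon}\le2$ always holds, because the covering number of $\cM(M)$ is stretched exponential with exponent $2$ (\cref{section:mo:was}). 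Thus \eqref{e.limsup_stretchted} with $d=2$ reduces to producing a residual subset of $\cU$ whose elements carry, inside their attractor, a dynamics with emergence exponent $2$.

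\textbf{Density, Baire, and the obstacle.} For $k\in\N$ let $\cH_k\subset\cU$ be the set of $f$ such that for every $m$ there is a rational $\epsilon\in(0,1/m)$ with $\min_{\mu_1,\dots,\mu_N}\int\min_{1\le i\le N}\mathsf d(\mathbf e_n^f(x),\mu_i)\,d\Leb(x)>\epsilon$ for infinitely many $n$, where $N\coloneqq\lfloor\exp\bigl((1-\tfrac1k)\epsilon^{-2}\bigr)\rfloor$. For fixed $\epsilon$ and $n$ the minimised integral is continuous in $f$, so each $\cH_k$ is $G_\delta$. If $f$ carries on an attracting sub-annulus of $\Lambda_f$ a rescaled copy of the time-$\tau$ map of the flow of \cref{t:main} with $\tau\ne0$, then $f\in\bigcap_k\cH_k$: by \cref{t:main} that copy has emergence exponent $2$ at every small scale and its Birkhoff averages converge along full-density time sequences, and the transfer estimate carries these bounds to $(M,\Leb)$. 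It remains to prove density of each $\cH_k$. Given $f\in\cU$ and $\delta>0$, on a thin invariant sub-annulus $A$ around the continuation circle the map $f$ is $C^r$-close to the identity, so a $C^r$-perturbation of size $<\delta$, supported in a small neighborhood of $A$ in $M$ and leaving $\Lambda_f$ normally hyperbolic, can reset the dynamics on $A$ to a rescaled slow copy of the flow of \cref{t:main}, keeping $A$ attracting inside $\Lambda_f$; this is the local surgery of the proof of \cref{coroC}, now performed inside the invariant annulus and without the conservativity constraint on the ambient map. The perturbed map lies in $\cH_k$, so $\cH_k$ is dense; by Baire, $\bigcap_k\cH_k$ is residual in $\cU$, and each of its elements satisfies \eqref{e.limsup_stretchted} with $d=2$. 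The hard part is the transfer estimate --- in particular its finite-time version feeding the $G_\delta$ description of $\cH_k$, which needs the synchronization along stable leaves controlled against $n$ and the regularity of $\pi_f$ controlled uniformly over $\cU$; the perturbative and Baire-theoretic parts are routine adaptations of the proof of \cref{coroC}.
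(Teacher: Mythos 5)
The paper's proof of \cref{coroD} is a two-line reduction: it applies Turaev's theorem (\cref{dense_periodic_spotD}) to obtain a non-empty open set $\cU\subset\Diff^r(M)$ containing a dense set $\cD$ of maps with a \emph{periodic spot}, then applies \cref{forD} (the dissipative analogue of \cref{forC}) to each $f\in\cD$, and closes by Baire exactly as in the proof of \cref{coroC}. Your proposal does not follow this route and, as written, contains gaps that I do not believe can be repaired without essentially reverting to the paper's approach.

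The most serious problem is the dimension count in your ``transfer'' step. You introduce a ``$C^r$ normally hyperbolic attracting annulus'' $\Lambda_0\subset M$ together with an ``$f$-invariant stable foliation of $\hat\A$ whose holonomy projection $\pi_f\colon\hat\A\to\Lambda_f$ is absolutely continuous.'' Since $M$ is a surface, an attracting annulus has full dimension, so there is no normal bundle, no stable foliation transverse to $\Lambda_f$, and no holonomy $\pi_f$ to speak of; conversely, if you intend $\Lambda_f$ to be a 1-dimensional attractor so that $\pi_f$ makes sense, then the space of ergodic measures carried by $\Lambda_f$ sits inside $\cM(\Lambda_f)$, whose metric order is at most $1$, and the emergence exponent $2$ cannot be reached through such an object. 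Either reading defeats the strategy. Moreover, even setting dimensions aside, obtaining an $f|\Lambda_f$ with stretched-exponential emergence of exponent $2$ requires $f|\Lambda_f$ to behave like a conservative annulus map; that property is not preserved under perturbations of $f$ in $\Diff^r(M)$, so nothing forces the continuation $\Lambda_f$ to carry high emergence for $f$ near $g_0$. Finally, the uniform absolute continuity (with bounded Jacobian over all of $\cU$) of the stable holonomy is not a consequence of Hirsch--Pugh--Shub persistence; it requires smoothness and bunching hypotheses that you do not have and cannot enforce over a whole $C^r$-open set.

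The second gap is the density step. You claim that for any $f\in\cU$ one can, by a $C^r$-perturbation of size $<\delta$, ``reset the dynamics on $A$ to a rescaled slow copy of the flow of \cref{t:main}'' because $f$ is close to the identity near the invariant circle. But ``close to the identity'' is not ``equal to the identity'': the map $f|A$ has its own dynamics (generically an irrational rotation on the circle with nonzero twist or contraction nearby), and you cannot overwrite it by something prescribed with an \emph{arbitrarily} small perturbation. This is precisely what periodic spots are for: in the paper, the perturbation lemmas \cref{forC} and \cref{forD} start from a map that is \emph{literally} the identity on an open set after iteration, which makes the reset a small perturbation once the time parameter $t$ is taken small. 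The density of such maps in a suitable open set $\cU$ is nontrivial and is supplied by Turaev's \cref{dense_periodic_spotD} in the dissipative case (and by \cref{dense_periodic_spotC} in the conservative one). Your proposal does not produce that density and cannot invoke it, because your $\cU$ is chosen differently; it is an attempt to replace the Newhouse-domain input by a direct construction, but the key dynamical obstruction --- being allowed to freely modify the dynamics on a small open set --- is not addressed. The $G_\delta$/Baire packaging and the upper bound $\limsup\le 2$ are fine, but the substantive content (both the transfer and the density of your $\cH_k$'s) is missing.

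Concretely: what the paper's \cref{forD} does inside a periodic spot, and what your proposal needs to replicate, is to use \cref{p:main} to build a conservative model $\Psi^t=h\circ R^t_\omega\circ h^{-1}$ with high emergence at a fixed scale, then \emph{dissipate} it slightly to create finitely many normally attracting invariant circles $\cC_{(i+.5)/n}$ whose basins have Lebesgue measure $1/n$ each, so that the ``ergodic decomposition'' of Lebesgue is $\hat\mu_n = \tfrac1n\sum_i\delta_{\tilde\lambda_{(i+.5)/n}}$, a quantity that persists under $C^1$-perturbation by normal contraction. That makes the high-emergence property open, hence $\cO_n = \bigcup_f\cV_{f,n}$ open and dense, and Baire finishes. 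Nothing in that argument requires a global invariant foliation or a conservative attractor.
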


These results prove a weak version of Conjecture~A of \cite{Berger} for the classes of smooth conservative and non-conservative surface diffeomorphisms. This conjecture posits the existence of many open classes of dynamics for which super-polynomial emergence is typical in many senses
 (including Kolmogorov's).
 In this regard, let us note that it is an open question whether Newhouse phenomenon implies typically high emergence.

\medskip

Our results also make it clear that emergence and entropy are completely unrelated. 
On one hand, a uniformly hyperbolic, conservative map has positive metric entropy but minimal metric emergence (identically equal to~$1$), since the volume measure is ergodic. 
Furthermore, a construction of Rees and B\'eguin--Crovisier--Le~Roux \cite{BCLR} yields a homeomorphism which is uniquely ergodic (and so has minimal topological and metric emergences) but has positive topological entropy.
On the other hand, \cref{t:main} gives an example of conservative dynamics with stretched exponential emergence but (as noted in \cref{entropy vs emergence}) with zero topological entropy 
and in particular (by the entropy variational principle) with zero metric entropy. 

\medskip

As we will show in \cref{s:quantization}, the metric emergence of any invariant measure is at most the topological emergence (see \cref{compa metric_topo emergence}).
Furthermore, we will prove that the latter upper bound is asymptotically attained, therefore obtaining the following statement that mirrors the entropy variational principle:

\begin{theorem}[Variational Principle for Emergence] \label{t:var_princ}
For every continuous self-map $f$ of a compact metric space $X$, 
there exists an invariant probability measure $\mu$ such that:
\[
\left\{
\begin{aligned}
\limsup_{\epsilon\to 0} \frac{\log \log  \Eme_\mu(f)(\epsilon)}{-\log\epsilon} &= \limsup_{\epsilon\to 0} \frac{\log \log  \Eme_\mathrm{top}(f)(\epsilon)}{-\log\epsilon}\, , \\ 
\liminf_{\epsilon\to 0} \frac{\log \log  \Eme_\mu(f)(\epsilon)}{-\log\epsilon}  &= \liminf_{\epsilon\to 0} \frac{\log \log  \Eme_\mathrm{top}(f)(\epsilon)}{-\log\epsilon} \, . \end{aligned}\right.\]
\end{theorem}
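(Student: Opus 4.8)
The plan is to route the statement through the quantization formulation of metric emergence. By \cref{compa metric_topo emergence} we already have $\Eme_\mu(f)(\epsilon)\le\Eme_\mathrm{top}(f)(\epsilon)$ for \emph{every} $f$-invariant $\mu$, so both asserted equalities hold with ``$\le$'' in place of ``$=$'' for any invariant $\mu$; the job is to produce one $\mu$ realizing the reverse inequalities. When $\mu$ is invariant, the empirical measures converge $\mu$-a.e.\ to $\mathbf{e}^f$, so (distances being bounded on the compact space $(\cM(X),\mathsf{d})$) the left side of \eqref{emergence def} equals $\int\min_i\mathsf{d}(\nu,\mu_i)\,d\hat\mu(\nu)$, where $\hat\mu\coloneqq(\mathbf{e}^f)_*\mu$ is the ergodic decomposition of $\mu$, a probability measure on $Y_0\coloneqq\cM_f^\mathrm{erg}(X)$. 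Thus $\Eme_\mu(f)(\epsilon)$ is exactly the $\epsilon$-quantization number of $\hat\mu$ (see \cref{s:quantization}). Conversely, by uniqueness of the ergodic decomposition, \emph{every} probability measure $\hat\mu$ on the Borel set $Y_0$ arises this way, from its barycenter $\mu=\int\nu\,d\hat\mu(\nu)\in\cM_f(X)$. Recalling that $\Eme_\mathrm{top}(f)(\epsilon)$ is, by definition, the covering number of $Y_0$, it therefore suffices to build a probability measure $\hat\mu$ on $Y_0$ whose quantization numbers share with $\Eme_\mathrm{top}(f)(\epsilon)$ the same $\limsup$ and $\liminf$ of $\log\log(\,\cdot\,)/(-\log\epsilon)$. (If $Y_0$ is finite we simply take $\mu$ to be the barycenter of the uniform measure on $Y_0$; then $\Eme_\mu(f)$ and $\Eme_\mathrm{top}(f)$ coincide for all small $\epsilon$. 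So assume $Y_0$ infinite, whence $\Eme_\mathrm{top}(f)(\epsilon)\to\infty$.)

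For each $k\ge 2$ fix a maximal $2^{-k}$-separated subset $F_k\subset Y_0$; it is finite (as $Y_0$ is totally bounded) and, being a $2^{-k}$-net of $Y_0$, satisfies $\#F_k\ge\Eme_\mathrm{top}(f)(2^{-k})$. Let $u_k$ be the uniform probability on $F_k$ and set $\hat\mu\coloneqq\sum_{k\ge2}w_k\,u_k$, where $w_k\coloneqq c\,2^{-k^{2/3}}$ and $c>0$ normalizes the total mass to $1$. The two relevant features of $(w_k)$ are that it is summable and that $-\log w_k=o(k)$, so that $-\log(w_k2^{-k})=(1+o(1))\,k\log 2$; it is precisely this second property that rules out a geometrically decaying weight, which would yield the two equalities only up to a multiplicative constant.

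The lower bound for the quantization numbers is a pigeonhole argument. For a finite set $P\subset\cM(X)$ of $N$ points, each element of $P$ is within $2^{-k}/2$ of at most one element of $F_k$ (triangle inequality and $2^{-k}$-separation), so at least $\#F_k-N$ elements of $F_k$ lie at distance $\ge 2^{-k}/2$ from $P$; hence $\int\min_{\nu'\in P}\mathsf{d}(\cdot,\nu')\,d\hat\mu\ge w_k\,\tfrac{\#F_k-N}{\#F_k}\cdot\tfrac{2^{-k}}{2}$, which exceeds $w_k2^{-k}/4$ once $N<\tfrac12\#F_k$. Therefore, writing $k^*(\epsilon)\coloneqq\max\{k\ge2: w_k2^{-k}>4\epsilon\}$ (well defined since $k\mapsto w_k2^{-k}$ strictly decreases to $0$), every $P$ witnessing distortion $\le\epsilon$ has $\#P\ge\tfrac12\#F_{k^*(\epsilon)}$, so $\Eme_\mu(f)(\epsilon)\ge\tfrac12\,\Eme_\mathrm{top}(f)(2^{-k^*(\epsilon)})$. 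As $\epsilon\to0$ the integer $k^*(\epsilon)$ increases through every large value, and from $w_{k^*+1}2^{-(k^*+1)}\le4\epsilon<w_{k^*}2^{-k^*}$ together with $-\log w_k=o(k)$ one reads off $-\log\epsilon=(1+o(1))\,k^*(\epsilon)\log 2$. Since $\Eme_\mathrm{top}(f)(2^{-k^*})\to\infty$ absorbs the factor $\tfrac12$, we obtain
\[
\frac{\log\log\Eme_\mu(f)(\epsilon)}{-\log\epsilon}\ \ge\ (1+o(1))\,\frac{\log\log\Eme_\mathrm{top}(f)(2^{-k^*(\epsilon)})}{k^*(\epsilon)\log 2}\ -\ o(1)\qquad(\epsilon\to0).
\]

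Taking $\limsup$ and $\liminf$ as $\epsilon\to0$: because $k^*(\epsilon)$ runs over all large integers and the error terms are $o(1)$ while the fractions involved are eventually non-negative, the right-hand side has $\limsup$, resp.\ $\liminf$, at least $\limsup_{k\to\infty}$, resp.\ $\liminf_{k\to\infty}$, of $\tfrac{\log\log\Eme_\mathrm{top}(f)(2^{-k})}{k\log 2}$. Finally, a standard doubling argument using the monotonicity of $\epsilon\mapsto\Eme_\mathrm{top}(f)(\epsilon)$ shows that on each block $[2^{-k-1},2^{-k}]$ the ratio $\tfrac{\log\log\Eme_\mathrm{top}(f)(\epsilon)}{-\log\epsilon}$ differs from its value at $2^{-k}$ by a factor tending to $1$, so the dyadic subsequence already realizes the full $\limsup$ and $\liminf$ over $\epsilon\to0$. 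Combined with the free inequality from \cref{compa metric_topo emergence}, both equalities follow for this $\mu$. The one genuinely delicate point is that a \emph{single} measure $\hat\mu$ has to shadow $\Eme_\mathrm{top}(f)$ at \emph{all} small scales, not merely along a subsequence, in order to pin down the $\liminf$ — and this is exactly what forces the subexponential-yet-summable choice of weights $w_k$.
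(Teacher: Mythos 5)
Your proof is correct and follows essentially the same route as the paper's: reduce to constructing a measure on $\cM_f^\mathrm{erg}(X)$ whose quantization numbers match the covering numbers (the paper packages this abstract step as \cref{t:fat_measure}), build it as a weighted sum of uniform measures on separated nets, and lower-bound the quantization numbers by a cost estimate (your pigeonhole argument is a lighter version of \cref{l:cost}). The only substantive difference is a dual parameterization: the paper takes a \emph{sparse} sequence of scales $\epsilon_i=2^{-i^2}$ with geometric weights $t_i=2^{-i}$, whereas you take \emph{dense} dyadic scales $2^{-k}$ with sub-geometrically decaying weights $w_k=c\,2^{-k^{2/3}}$ — both choices are engineered to satisfy the same crucial property $-\log(\text{weight}\cdot\text{scale})\sim-\log(\text{scale})$, which is what makes the $\log\log$ ratio tight at every scale rather than just along a subsequence.
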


\begin{question}
Can we find an invariant measure $\mu$ such that $\Eme_{\mu}(f)\sim \Eme_\mathrm{top}(f)$ (that is, these two functions of $\epsilon$ are asymptotic as $\epsilon \to 0$)?
\end{question}

\subsection*{Organization of the paper}

In \cref{s:mo} we discuss covering numbers and the related concepts of box-counting dimension (the exponent when covering numbers obey a power law) and metric order (the exponent when covering numbers obey a stretched exponential law); furthermore, we define precisely the Wasserstein and L\'evy--Prokhorov metrics on space $\cM(X)$ of probability measures on a compact metric space $X$, and show that the metric order of $\cM(X)$ coincides with the box-counting dimension of $X$ (\cref{t:mo_box}).
Since topological emergence of a dynamical system is defined as the covering number function of its set of invariant probability measures, we obtain a simple upper bound for the growth rate of topological emergence of a dynamical system in terms of the dimension of the phase space. In \cref{s:top_em_examples} we exhibit classes of examples where this upper bound is attained: this is the content of \cref{emergence:topo:example}. The proof uses elementary properties of Gibbs measures. 

In \cref{s:quantization} we recall the notion of quantization of probability measures. We define quantization numbers, which express how efficiently a measure can be discretized, and are bounded from above by the covering numbers of the ambient space. We show that metric emergence of an invariant measure amounts to the quantization number function of its ergodic decomposition. We prove \cref{t:var_princ}, which says that one can always find an invariant probability measure with essentially maximal metric emergence; actually this is deduced from a more abstract result (\cref{t:fat_measure}) on the existence of measures with essentially maximal quantization numbers. 

In \cref{section:metric:emer} we construct an example of smooth conservative surface diffeomorphism such that Lebesgue measure has essentially maximal metric emergence; more precisely, we prove \cref{t:main}. In \cref{s:generic} we prove our results on genericity of high emergence for conservative and dissipative surface diffeomorphisms, \cref{coroC,coroD}; these proofs are relatively short because we make use of an intermediate result used to obtain \cref{t:main}, namely \cref{p:main}.
The proofs also use elementary versions of the KAM theorem and the persistence of normally contracted submanifolds.

In \cref{s:entropy} we recall a characterization of metric entropy due to Katok~\cite{Katok}, and show how it provides a characterization of metric entropy in terms of quantization numbers.
Katok's theorem is used in \cref{s:top_em_examples}; that is because the proof of \cref{emergence:topo:example} uses the measure of maximal dimension as an auxiliary device in the construction of large sufficiently separated sets of periodic orbits.

\subsection*{Notation}
We employ the usual notations: 
\begin{itemize}
\item $f \sim g$ means $f/g \to 1$;
\item $f \asymp g$ means $f = O(g)$ and $g = O(f)$.
\end{itemize}

\subsection*{Acknowledgements}
We are grateful to R\'emi Peyre for meticulous discussions on the covering numbers of spaces of measures, and for allowing us to include the resulting proof of \cref{t:mo_box} in this paper. 
We also thank Viviane Baladi, Abed Bounemoura, Sylvain Crovisier,  Bassam Fayad, Godofredo Iommi, Fran\c{c}ois Ledrappier, Enrique Pujals, and Michael Shub for insightful comments. 
Finally, we thank the referee for corrections and suggestions.

\section{Metric orders and spaces of measures}\label{s:mo}

\subsection{Dimension and metric order of a compact metric space}

Let $X$ be a totally bounded space,
and let $\epsilon>0$.
A subset $F \subset X$ is called:
\begin{itemize}
	\item \emph{$\epsilon$-dense} if $X$ is covered by the closed balls of radius $\epsilon$ and centers in~$F$;
	\item \emph{$\epsilon$-separated} if the distance between any two distinct points of $F$ is greater than~$\epsilon$.
\end{itemize}

Then we define the following numbers (which are finite by total boundedness):
\begin{itemize}
	\item the \emph{covering number} $D_X(\epsilon) = D(X,\epsilon)$ is the minimum cardinality of an $\epsilon$-dense set;
	\item the \emph{packing number} $S_X(\epsilon) = S(X,\epsilon)$ is the maximum cardinality of an $\epsilon$-separated set;
\end{itemize}
Precise computation of these numbers is seldom possible (see the classic \cite{Rogers} for problems of this nature). However we are only interested in the asymptotics of these numbers as $\epsilon$ tends to $0$, and so moderately fine estimates will suffice.

Covering and packing numbers can be compared as follows: 
\begin{equation}\label{e.comparison}
S_X( 2\epsilon) \le D_X(\epsilon) \le S_X(\epsilon) \, ;
\end{equation}
indeed the first inequality follows from the observation that a $2\epsilon$-separated set of cardinality $n$ cannot be covered by less than $n$ closed balls of radius~$\epsilon$, while the second inequality follows from the fact that every maximal $\epsilon$-separated set is $\epsilon$-dense.

The \emph{upper box-counting dimension} of $X$ is defined as:
$$
\overline{\dim}(X) \coloneqq \limsup_{\epsilon \to 0} \frac{\log D_X(\epsilon)}{- \log \epsilon}  \in [0,\infty] \, .
$$
Note that by inequalities \eqref{e.comparison}, it makes no difference if $D_X(\epsilon)$ is replaced by $S_X(\epsilon)$ in the definition above.
We define the \emph{lower box-counting dimension}  $\underline{\dim}(X)$ by taking $\liminf$ instead of $\limsup$.
If these two quantities coincide, they are called the \emph{box-counting dimension} of $X$ and denoted by $\dim X$. The term \emph{Minkowski dimension} is also used.
For an elementary introduction and more information, see \cite{Falconer}.

The dimensions defined above are infinite when the numbers $D_X(\epsilon)$ and $S_X(\epsilon)$ are super-polynomial with respect to $\epsilon^{-1}$. 
However, these functions are often comparable to stretched exponentials; indeed many examples of functional spaces with this property are studied in the classic work by Kolmogorov and Tihomirov \cite{KT}\footnote{See also \cite[\S~8.2.6]{heritage} for historical context.}. 
The corresponding exponent
$$
\mo(X)\coloneqq \lim_{\epsilon \to 0} \frac{\log \log D_X(\epsilon)}{- \log \epsilon} = \lim_{\epsilon \to 0} \frac{\log \log S_X(\epsilon)}{- \log \epsilon} \, ,
$$
if it exists, 
is called the \emph{metric order} of $X$, following \cite[p.~298]{KT}.
In general we define \emph{lower and upper metric orders} $\underline{\mo}(X) \le \overline{\mo}(X)$ by taking $\liminf$ and $\limsup$.

\begin{remark}\label{r.Kloeckner}
A concept similar to metric order, called \emph{critical parameter for the power-exponential scale}, was introduced and studied by Kloeckner \cite{Klo12, Klo15}. Its definition is more akin to the Hausdorff dimension. 
\end{remark}

\begin{remark}\label{rel_cov}
If $Y$ is a subset of $X$ then define the \emph{relative covering number} $D_X(Y,\epsilon)$ as the minimal number of closed $\epsilon$-balls in $X$ whose union covers~$Y$. Note that:
\[D_X(Y,\epsilon)\le D(Y,\epsilon)\le D_X(Y,\epsilon/2) \, .\]  
Therefore dimension and metric order of subsets of $X$ can be also computed using relative covering numbers.
\end{remark}

\subsection{Spaces of measures}\label{ss:distproba}

Let $(X,\mathsf{d})$ be a compact metric space.
Let $\cM(X)$ be the space of Borel probability measures on $X$, endowed with the weak topology and therefore compact.
There are many different ways of metrizing the weak topology.
We will consider two types of metrics in $\cM(X)$: the Wasserstein distances and the L\'evy--Prokhorov distance (defined below).
These metrics respect the original metric on $X$, in the sense that the map $x \mapsto \delta_x$ (where $\delta_x$ is the Dirac probability measure concentrated at the point $x$) becomes an \emph{isometric} embedding of $X$ into $\cM(X)$.

Given two measures $\mu$, $\nu \in \cM(X)$, a \emph{transport plan} (or \emph{coupling}) from $\mu$ to $\nu$ is a probability measure $\pi$ on the product $X \times X$ such that $(p_1)_* \pi = \mu$ and $(p_2)_* \pi = \nu$, where $p_1$, $p_2 \colon X \times X \to X$ are the canonical projections. (We say that $\mu$ and $\nu$ are the \emph{marginals} of $\pi$.)
Such transport plans form a closed and therefore compact subset $\Pi(\mu,\nu)$ of $\cM(X \times X)$.
For any real number $p \ge 1$, the \emph{$p$-Wasserstein distance} between $\mu$ and $\nu$ is defined as:
$$
\mathsf{W}_p(\mu,\nu) \coloneqq \inf_{\pi \in \Pi(\mu,\nu)} \left( \int_{X \times X} [\mathsf{d}(x,y)]^p d \pi(x,y) \right)^{1/p} \, ,
$$
(The integral in this formula is called the \emph{cost} of the transport plan $\pi$ with respect to the \emph{cost function} $\mathsf{d}^p$. The infimum is always attained, i.e., an \emph{optimal} transport plan always exists.) It can be shown that $\mathsf{W}_p$ is a metric on $\cM(X)$ which induces the weak topology: see e.g.\ \cite[Theorems 7.3 and 7.12]{Villani}.

The \emph{L\'evy--Prokhorov distance} between two measures $\mu$, $\nu \in \cM(X)$ is denoted $\mathsf{LP}(\mu,\nu)$ and is defined as the infimum of $\epsilon>0$ such that for every Borel set $E \subset X$, if $V_\epsilon(E)$  denotes the $\epsilon$-neighborhood of $E$, then:
$$
\nu(E) \le \mu(V_\epsilon(E)) + \epsilon \qand
\mu(E) \le \nu(V_\epsilon(E)) + \epsilon \, .
$$
For a proof that $\mathsf{LP}$ is a metric on $\cM(X)$ and that induces the weak topology, see \cite[p.~72]{Bill}.

The L\'evy--Prokhorov distance can also be characterized in terms of transport plans:
it equals the infimum of $\epsilon>0$ such that for some $\pi \in \Pi(\mu,\nu)$, the set $\{(x,y) \in X \times X \st \mathsf{d}(x,y) > \epsilon \}$ has $\pi$-measure less than $\epsilon$; this is Strassen's theorem: see \cite[p.~74]{Bill} or \cite[p.~44]{Villani}.

The family of Wasserstein metrics are not Lipschitz-equivalent to one another nor to the L\'evy--Prokhorov metric. On the other hand, the following H\"older comparisons hold:
\begin{alignat}{3}
\mathsf{W}_q &\le \mathsf{W}_p &&\le (\diam X)^{1-\frac{q}{p}} \mathsf{W}_q^{\frac{q}{p}}  &\quad &\text{if $1\le q \le p$;} 
\label{Wp_monotone}
\\ 
\mathsf{LP}^{1+\frac{1}{p}} &\le \mathsf{W}_p &&\le (1+(\diam X)^p)^{\frac{1}{p}} \mathsf{LP}^{\frac{1}{p}} \, ; \label{compaWpLP}
\end{alignat}
see \cite[p.~210]{Villani}, \cite[Theorem~2]{GiS}.

\subsection{Metric order of spaces of measures} \label{section:mo:was}

The following result relates the lower and upper metric orders of Wasserstein space with the lower and upper box-counting dimensions of the underlying metric space:
\begin{otherthm} \label{t:mo_box}  
For any compact metric space $X$ and any $p \ge 1$, we have:
$$
\underline{\dim}(X) \le
\underline{\mo} (\cM(X), \mathsf{W}_p) \le \overline{\mo} (\cM(X), \mathsf{W}_p) \le \overline{\dim}(X) \, . 
$$
In particular the metric order $\mo (\cM(X), \mathsf{W}_p)$ exists and equals the box-counting dimension $\dim X$ whenever the latter exists.
\end{otherthm}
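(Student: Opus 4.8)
The plan is to prove the two outer inequalities; the middle one is just $\liminf\le\limsup$. Both are driven by one elementary two-sided estimate comparing $\mathsf{W}_p$ on finitely supported measures with the $\ell^1$-distance of their weight vectors. Fix points $x_1,\dots,x_N\in X$, let $\Delta_{N-1}$ be the simplex of probability vectors $a=(a_1,\dots,a_N)$, and put $\mu_a\coloneqq\sum_j a_j\delta_{x_j}$. I claim that for all $a,b\in\Delta_{N-1}$,
\[
\Bigl(\min_{i\ne j}\mathsf{d}(x_i,x_j)\Bigr)\bigl(\tfrac12\|a-b\|_1\bigr)^{1/p}
\ \le\ \mathsf{W}_p(\mu_a,\mu_b)\ \le\
(\diam X)\,\bigl(\tfrac12\|a-b\|_1\bigr)^{1/p}\, .
\]
The reason: any coupling of $\mu_a$ and $\mu_b$ is a nonnegative matrix $(\pi_{ij})$ with marginals $a,b$, keeping at most $\sum_i\min(a_i,b_i)$ of mass on the diagonal; hence it moves at least $\sum_i(a_i-b_i)^+=\tfrac12\|a-b\|_1$ of mass off the diagonal (and the coupling that keeps exactly the common part $\min(a_i,b_i)$ at each $x_i$ moves exactly that much), while each unit of off-diagonal mass costs between $(\min_{i\ne j}\mathsf{d}(x_i,x_j))^p$ and $(\diam X)^p$.

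For the lower inequality $\underline{\dim}(X)\le\underline{\mo}(\cM(X),\mathsf{W}_p)$: since $x\mapsto\delta_x$ is an isometric embedding, a maximal $\epsilon$-separated set in $X$ provides $x_1,\dots,x_N$ with $N=S(X,\epsilon)$ and $\min_{i\ne j}\mathsf{d}(x_i,x_j)>\epsilon$. By the left half of the estimate, $\mathsf{W}_p(\mu_a,\mu_b)>\delta$ whenever $\|a-b\|_1>\eta\coloneqq 2(\delta/\epsilon)^p$, so $S(\cM(X),\delta)$ is at least the $\ell^1$-packing number of $\Delta_{N-1}$ at scale $\eta$. A volume computation in $\R^{N-1}$ bounds the latter below by $(2e\eta)^{-(N-1)}$ whenever $\eta<\tfrac1{2e}$: an $\ell^1$-ball of radius $\eta$ there has volume $(2\eta)^{N-1}/(N-1)!$, and the factor $(N-1)!\ge((N-1)/e)^{N-1}$ exactly compensates the $\asymp 1/N$ side of the largest cube inscribed in $\Delta_{N-1}$, so the base of the power stays bounded away from $0$ independently of $N$. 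Next I would fix $d'<\underline{\dim}(X)$ and, given small $\delta$, set $\epsilon\coloneqq C\delta$ with $C$ a fixed constant making $\eta=2C^{-p}<\tfrac1{2e}$; then $N=S(X,\epsilon)\ge(C\delta)^{-d'}$ for $\delta$ small, so $\log S(\cM(X),\delta)\ge(N-1)\log\tfrac1{2e\eta}$ with $\log\tfrac1{2e\eta}>0$ a constant, giving $\log\log S(\cM(X),\delta)\ge d'(-\log\delta)+O(1)$. Passing between packing and covering numbers via \eqref{e.comparison} yields $\underline{\mo}(\cM(X),\mathsf{W}_p)\ge d'$, and then one lets $d'\uparrow\underline{\dim}(X)$.

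For the upper inequality $\overline{\mo}(\cM(X),\mathsf{W}_p)\le\overline{\dim}(X)$: fix $d'>\overline{\dim}(X)$ and, given small $\epsilon$, take an $(\epsilon/2)$-dense set $y_1,\dots,y_M$ in $X$ with $M=D(X,\epsilon/2)\le\epsilon^{-d'}$. Picking a Borel partition $X=\bigsqcup_j E_j$ with $E_j$ contained in the closed $(\epsilon/2)$-ball about $y_j$, the transport plan sending the $\mu$-mass of each $E_j$ to $y_j$ shows $\mathsf{W}_p(\mu,\hat\mu)\le\epsilon/2$ for every $\mu$, where $\hat\mu\coloneqq\sum_j\mu(E_j)\delta_{y_j}$; thus $\cM(X)$ lies in the $(\epsilon/2)$-neighbourhood of $\{\mu_a:a\in\Delta_{M-1}\}$. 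By the right half of the estimate, if $a$ ranges over an $\ell^1$-$\gamma$-net of $\Delta_{M-1}$ with $\gamma$ chosen so that $(\diam X)(\gamma/2)^{1/p}\le\epsilon/2$ (hence $\gamma\asymp(\epsilon/\diam X)^p$), then the corresponding measures $\mu_a$ form an $(\epsilon/2)$-net of $\{\mu_a:a\in\Delta_{M-1}\}$, hence an $\epsilon$-net of $\cM(X)$; their number is $D(\Delta_{M-1},\gamma,\ell^1)\le(CM/\gamma)^M$ by a routine mesh-grid argument. Consequently $\log D(\cM(X),\epsilon)\le M\log(CM/\gamma)\lesssim M\log\tfrac1\epsilon\le\epsilon^{-d'}\log\tfrac1\epsilon$, so $\log\log D(\cM(X),\epsilon)\le d'(-\log\epsilon)+O(\log\log\tfrac1\epsilon)$ and $\overline{\mo}(\cM(X),\mathsf{W}_p)\le d'$; then $d'\downarrow\overline{\dim}(X)$.

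The main obstacle is the lower bound, and precisely the calibration of $\epsilon$ against $\delta$. One wants $N=S(X,\epsilon)$ as large as possible, hence $\epsilon$ small; but the simplex-packing scale $\eta=2(\delta/\epsilon)^p$ must remain below the absolute threshold $\tfrac1{2e}$, which forces $\epsilon\gtrsim\delta$. The key realization is that it suffices to take $\epsilon$ a fixed constant multiple of $\delta$: this changes $\log\log S(\cM(X),\delta)$ only by a bounded amount, so the exponent that survives the two logarithms is exactly $\underline{\dim}(X)$. It is equally essential that the simplex $\ell^1$-packing number carries the full power $N-1$ with an absolute base — the role of the $(N-1)!$ above — since a cruder count (e.g.\ of weight vectors with denominator $\lesssim 1/\eta$) would only produce a fractional power of the simplex and lead to the suboptimal exponent $pd/(p+d)$ rather than $d$.
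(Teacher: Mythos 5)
Your proof is correct, and for the crucial lower bound it takes a genuinely different route than the paper. For the upper inequality the paper simply cites a bound of Bolley--Guillin--Villani of the form $D((\cM(X),\mathsf{W}_p),\epsilon)\le (C/\epsilon)^{p\,D_X(\epsilon/2)}$; your discretization plus mesh-grid covering of the simplex reproves essentially the same estimate from scratch, so the two approaches match there. The lower inequality is where the methods diverge. The paper proves a more general statement (its Theorem~\ref{t:lower_general}) about any convex $\cC\subset\cM(X)$ containing $N$ pairwise $\epsilon$-apart measures: it constructs $e^{cN}$ separated convex combinations indexed by a binary code, with the code's size controlled by the Bernstein/coin-flip inequality, and the separation between $\mu_f$ and $\mu_g$ read off the Hamming distance of $f$ and $g$. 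You instead take $N=S(X,\epsilon)$ Dirac masses, embed the full simplex $\Delta_{N-1}$ into $(\cM(X),\mathsf{W}_p)$ via your two-sided $\mathsf{W}_p$-vs-$\ell^1$ comparison, and pack $\Delta_{N-1}$ by the Lebesgue-volume argument, where the $(N-1)!$ in the volume of the $\ell^1$-ball cancels the $(N-1)!$ in the volume of the simplex to leave an absolute base; as you correctly flag, this cancellation is the whole point, and an inscribed-cube or bounded-denominator count would lose it and produce a degraded exponent. Both routes yield $e^{cN}$ separated measures and hence the same conclusion. What the paper's extra generality buys is reuse: Theorem~\ref{t:lower_general} is applied again in Section~\ref{s:top_em_examples} with $\cC$ the set of invariant measures of a hyperbolic set and the $\nu_i$ supported on periodic orbits, a setting where one cannot reduce to Dirac masses at points of $X$. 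Your argument, restricted to $\cC=\cM(X)$, is shorter and more elementary. Two small imprecisions worth noting, neither fatal: the absolute threshold $\eta<\tfrac1{2e}$ and base $(2e\eta)^{-(N-1)}$ are slightly off (the clean volume bound gives $(2\eta)^{-(N-1)}$, valid for $\eta<\tfrac12$, since the factorials cancel exactly and no Stirling estimate is needed); and there is a harmless factor-of-two slippage between the $\ell^1$ metric on $\Delta_{N-1}\subset\R^N$ and on its projection to $\R^{N-1}$, which you should track if you want the constants right, though it does not affect the double-logarithmic asymptotics.
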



Actually, the rightmost inequality in the \lcnamecref{t:mo_box} is a consequence of a more precise result of Bolley--Guillin--Villani \cite{BGV} (details will be provided below), while a variation of the leftmost inequality was obtained by Kloeckner~\cite[Theorem~1.3]{Klo15}. Here we will present a proof of the leftmost inequality which was obtained jointly with R\'emi Peyre.

\begin{remark}\label{boundM(X)LP} 
The exact same statement also holds for the  L\'evy--Prokhorov metric, as a consequence of \cite[Lemmas 1 and A1]{KZ}.
\end{remark}

\begin{remark}\label{r.KT}
Other examples where the metric order of a functional space equals the dimension of the underlying space can be found in \cite{KT}, namely uniformly bounded uniformly Lipschitz functions on an interval \cite[p.~288]{KT}, or on more general sets \cite[p.~307]{KT}.
\end{remark}

\begin{proof}[Proof of the rightmost inequality in \cref{t:mo_box}]
By \cite[Theorem~A.1]{BGV}\footnote{See \cite[Lemma~4]{Nguyen} for a related result.}, there exists $C>0$ such that:
$$
D\big((\cM(X),\mathsf{W}_p\big), \epsilon) \le (C/\epsilon)^{p D_X(\epsilon/2)} \, .
$$
So:
\begin{multline*}
\frac{\log \log D\big((\cM(X),\mathsf{W}_p),\epsilon\big)}{\log \epsilon^{-1}} \le
\frac{\log(\log C + \log \epsilon^{-1})}{\log \epsilon^{-1}}  \\ +
\frac{\log 2 + \log \epsilon^{-1}}{\log \epsilon^{-1}} \, \frac{\log D_X(\epsilon/2) + \log p}{\log(2/\epsilon)}
\, .
\end{multline*}
Taking $\limsup$ as $\epsilon \to 0$ we obtain $\overline{\mo} (\cM(X), \mathsf{W}_p) \le \overline{\dim}(X)$.
\end{proof}

The remaining part of \cref{t:mo_box} will be obtained as a consequence of a more general result that allows us to estimate the lower metric order of other spaces of measures. 

Let us say that two probability measures $\mu$, $\nu$ on $X$ are \emph{$\epsilon$-apart} if  their supports are $\epsilon$-apart in the following sense:
\[
\min\{\mathsf{d}(x,y) \mid {x\in \supp \mu , \ y\in \supp \nu}\}\ge \epsilon\; .
\] 

\begin{otherthm} \label{t:lower_general}
Let $X$ be a compact metric space.
Let $\cC$ be a convex subset of $\cM(X)$.
For each $\epsilon>0$, let $A(\cC,\epsilon)$ denote the maximal number of pairwise $\epsilon$-apart measures in $\cC$.
Then, for any $p\ge 1$,
$$
\underline{\mo}(\cC, \mathsf{W}_p) \ge \liminf_{\epsilon \to 0} \frac{\log A(\cC, \epsilon)}{-\log \epsilon} \, .
$$
The same inequality holds for the distance $\mathsf{LP}$.
\end{otherthm}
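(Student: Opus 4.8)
The plan is to bound the covering number of $\cC$ (with respect to $\mathsf{W}_p$, and similarly for $\mathsf{LP}$) from below by a function of $A(\cC,\epsilon')$ for a suitable $\epsilon'$ comparable to $\epsilon$. The key point is that a large family of pairwise $\epsilon$-apart measures is, after taking convex combinations, a large \emph{separated} set in Wasserstein distance, and a separated set of cardinality $n$ cannot be covered by fewer than $n$ balls of the appropriate radius; combined with the definition of metric order via $\liminf \log\log(\cdot)/(-\log\epsilon)$, this yields the claimed inequality.

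Concretely, suppose $\mu_1,\dots,\mu_k$ are pairwise $\epsilon$-apart measures in $\cC$, where $k = A(\cC,\epsilon)$. For each subset $S \subset \{1,\dots,k\}$ of size $\lfloor k/2\rfloor$, form the average $\nu_S \coloneqq \frac{1}{|S|}\sum_{i\in S}\mu_i$; by convexity, $\nu_S\in\cC$. I would then show that if $S\neq S'$ then $\mathsf{W}_p(\nu_S,\nu_{S'})$ is bounded below by a constant times $\epsilon$ (times a factor like $k^{-1/p}$, which is harmless on the doubly-logarithmic scale). The mechanism: pick $i \in S\setminus S'$; under any transport plan from $\nu_S$ to $\nu_{S'}$, a mass of at least $1/k$ sitting on $\supp\mu_i$ must be moved to $\supp(\mu_j)$ for various $j\neq i$, hence moved a distance at least $\epsilon$; so the cost is at least $(1/k)\epsilon^p$ and $\mathsf{W}_p(\nu_S,\nu_{S'}) \ge k^{-1/p}\epsilon$. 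This produces a $k^{-1/p}\epsilon$-separated set in $\cC$ of cardinality $\binom{k}{\lfloor k/2\rfloor} \ge 2^{k/2}/\sqrt{k}$ (for $k$ large). By \eqref{e.comparison} (relative version, \cref{rel_cov}), the covering number $D\big((\cC,\mathsf{W}_p),\tfrac12 k^{-1/p}\epsilon\big)$ is at least $2^{k/2}/\sqrt{k}$, so $\log D \gtrsim k = A(\cC,\epsilon)$.

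It remains to pass to metric orders. Writing $\epsilon' = \tfrac12 k^{-1/p}\epsilon$ where $k = A(\cC,\epsilon)$, we have $\log\epsilon'^{-1} = \log\epsilon^{-1} + \tfrac1p\log k + \log 2$. If $A(\cC,\epsilon)$ grows at most polynomially in $\epsilon^{-1}$ along some sequence, then along that sequence $\log k = O(\log\epsilon^{-1})$, so $\log\epsilon'^{-1} \asymp \log\epsilon^{-1}$ (more precisely $\log\epsilon'^{-1}/\log\epsilon^{-1} \to 1$ along appropriate sequences, or in any case stays bounded), and then
\[
\frac{\log\log D\big((\cC,\mathsf{W}_p),\epsilon'\big)}{-\log\epsilon'} \ge \frac{\log(\tfrac12 \log A(\cC,\epsilon) - \tfrac12\log\log A(\cC,\epsilon))}{\log\epsilon^{-1} + \tfrac1p\log A(\cC,\epsilon) + O(1)} \, ,
\]
whose $\liminf$ as $\epsilon\to 0$ is at least $\liminf \log A(\cC,\epsilon)/(-\log\epsilon)$. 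If instead $A(\cC,\epsilon)$ is super-polynomial along every sequence then the right-hand side of the desired inequality is $+\infty$; but in that case $\log\log D((\cC,\mathsf{W}_p),\epsilon') \ge \log\log(2^{k/2}/\sqrt k) \sim \log k$ grows super-polynomially too, and one checks directly that $\mo(\cC,\mathsf{W}_p)=\infty$ as well — so the inequality holds trivially. I would organize the argument so both cases are handled uniformly by a single chain of inequalities.

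The same argument works verbatim for $\mathsf{LP}$: if $S\neq S'$ and $i\in S\setminus S'$, then for $E = \supp\mu_i$ and any $\delta < \epsilon$ we have $\nu_{S'}(V_\delta(E)) = 0$ while $\nu_S(E) \ge 1/k$, so $\mathsf{LP}(\nu_S,\nu_{S'}) \ge \min\{1/k,\epsilon\}$; this gives a $\min\{1/k,\epsilon\}$-separated set, and the same doubly-logarithmic bookkeeping applies (here the perturbation to $\log\epsilon'^{-1}$ is $\log k$ rather than $\tfrac1p\log k$, which is still negligible on the relevant scale under the polynomial-growth dichotomy). I expect the main obstacle to be purely presentational: carefully verifying that the $k^{-1/p}$ (resp.\ $1/k$) factor degrading the separation radius does not affect the metric order, i.e.\ setting up the case distinction on polynomial versus super-polynomial growth of $A(\cC,\epsilon)$ so that the estimate reads cleanly in both regimes. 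The transport-plan lower bound on $\mathsf{W}_p(\nu_S,\nu_{S'})$ and the binomial coefficient estimate are routine.
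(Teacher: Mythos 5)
Your overall strategy (pass from pairwise $\epsilon$-apart measures to convex combinations indexed by subsets, show these are separated, count them) is the same as the paper's, and your transport-plan lower bound for $\mathsf{W}_p(\nu_S,\nu_{S'})$ is the right mechanism. However, there is a genuine gap, and it is exactly the step you dismiss as ``harmless on the doubly-logarithmic scale.''

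You take \emph{all} subsets $S$ of size $\lfloor k/2\rfloor$. For two such subsets with $|S\setminus S'|=1$, your estimate only guarantees that a mass of order $1/k$ must be transported distance $\ge\epsilon$, whence $\mathsf{W}_p(\nu_S,\nu_{S'})\gtrsim k^{-1/p}\epsilon$. So your separated set lives at scale $\epsilon'\asymp k^{-1/p}\epsilon$ with $k=A(\cC,\epsilon)$, and this shrinkage of scale by the factor $k^{-1/p}$ is \emph{not} negligible. Concretely, if $\log A(\cC,\epsilon)/(-\log\epsilon)\to c\in(0,\infty)$, then $-\log\epsilon' = -\log\epsilon + \tfrac1p\log A + O(1)\sim (1+c/p)(-\log\epsilon)$, while your cardinality bound gives $\log\log D(\cC,\epsilon')\sim\log A\sim c(-\log\epsilon)$; the ratio therefore tends to $\tfrac{c}{1+c/p}<c$, not to $c$. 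Even in the super-polynomial case, where the right-hand side of the theorem is $+\infty$, the $\tfrac1p\log A$ term dominates the denominator and your bound caps out at $p$, so the claim that ``one checks directly that $\mo(\cC,\mathsf{W}_p)=\infty$ as well'' does not follow from your inequalities. A sanity check: for $\cC=\cM(X)$ with $X$ a $d$-cube and $p=1$, one has $A(\cC,\epsilon)\asymp\epsilon^{-d}$ and $\underline\mo(\cC,\mathsf{W}_1)=d$ by \cref{t:mo_box}, but your chain only yields $\underline\mo\ge d/(1+d)$.

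What is missing is the combinatorial refinement the paper uses: instead of all $\binom{k}{k/2}$ subsets, one selects a maximal \emph{Hamming-separated} subfamily (separation $\ge k/4$, say). This still has exponentially many elements, $e^{\Theta(k)}$ (by a Bernstein/large-deviations volume bound on Hamming balls), but now any two surviving convex combinations $\nu_S$, $\nu_{S'}$ disagree on a \emph{constant fraction} of the indices, so a mass $\ge 1/4$ (not $\ge 1/k$) must move distance $\ge\epsilon$, giving $\mathsf{W}_p(\nu_S,\nu_{S'})\ge 4^{-1/p}\epsilon$. The separation scale is then $\Theta(\epsilon)$ with no $k$-dependent degradation, $-\log\epsilon'$ differs from $-\log\epsilon$ only by an additive constant, and the bound $\underline\mo\ge\liminf \log A/(-\log\epsilon)$ falls out cleanly in both the polynomial and super-polynomial regimes. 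The same repair is needed for the $\mathsf{LP}$ case, where your $\min\{1/k,\epsilon\}$ separation has the identical defect. (There is also a minor slip in your displayed numerator: you wrote $\log(\tfrac12\log A-\tfrac12\log\log A)$, which is of order $\log\log A$, where you presumably intended something of order $\log A$ coming from $\log\binom{k}{k/2}\approx \tfrac{k}{2}\log 2$; but fixing this typo does not rescue the argument, because the denominator still carries the $\tfrac1p\log A$ term.)
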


\begin{proof}[Proof of the leftmost inequality in \cref{t:mo_box}]
We apply \cref{t:lower_general} with $\cC = \cM(X)$.
If $\{x_1, \dots, x_N\}$ is an $\epsilon$-separated subset of $X$ then the Dirac measures $\delta_{x_1}$, \dots, $\delta_{x_N}$ are pairwise $\epsilon$-apart. 
This observation shows that $A(\cM(X), \epsilon) \ge S(X,\epsilon)$.
The result follows. 
\end{proof}

To prove \cref{t:lower_general}, we will need the following elementary large-deviations estimate (see e.g.\ \cite[p.~32]{GrimS} for a proof): 

\begin{lemma}[Bernstein inequality]\label{l:Bernstein}
Let $H_n$ (a random variable) be the number of heads on $n$ tosses of a fair coin.
Then for any $\delta>0$,
$$
\mathrm{Prob} \left[ \frac{H_n}{n} \le \frac{1}{2} - \delta \right] \le e^{-\frac{\pi}{4} \delta^2 n} \, .
$$
\end{lemma}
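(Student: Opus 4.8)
The plan is to prove this via the exponential Markov inequality (Chernoff bounding); this in fact produces a constant better than $\pi/4$, so the stated bound follows a fortiori. Write $H_n = \sum_{i=1}^n X_i$ where the $X_i$ are independent Bernoulli random variables with $\mathrm{Prob}[X_i=1] = \mathrm{Prob}[X_i=0] = \tfrac12$, and put $Y_i \coloneqq \tfrac12 - X_i$, so that each $Y_i$ takes the values $\pm\tfrac12$ with equal probability and $S_n \coloneqq \sum_{i=1}^n Y_i = \tfrac{n}{2} - H_n$. The event $\{H_n/n \le \tfrac12 - \delta\}$ is precisely $\{S_n \ge \delta n\}$, so it suffices to bound the probability of the latter.

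First I would apply Markov's inequality to the nonnegative variable $e^{tS_n}$, for an arbitrary parameter $t>0$, and use independence of the $Y_i$:
\[
\mathrm{Prob}[S_n \ge \delta n] = \mathrm{Prob}\bigl[e^{tS_n} \ge e^{t\delta n}\bigr] \le e^{-t\delta n}\,\mathbb{E}\bigl[e^{tS_n}\bigr] = e^{-t\delta n}\bigl(\mathbb{E}[e^{tY_1}]\bigr)^n \, .
\]
A direct computation gives $\mathbb{E}[e^{tY_1}] = \tfrac12\bigl(e^{t/2}+e^{-t/2}\bigr) = \cosh(t/2)$.

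The only mildly nonroutine ingredient is the sub-Gaussian bound $\cosh(s) \le e^{s^2/2}$, valid for every real $s$; this follows at once by comparing the two power series term by term, using $(2k)! \ge 2^k\,k!$ for all $k\ge 0$. Applying it with $s=t/2$ gives $\mathbb{E}[e^{tY_1}] \le e^{t^2/8}$, whence
\[
\mathrm{Prob}[S_n \ge \delta n] \le \exp\!\bigl(n\bigl(\tfrac{t^2}{8} - t\delta\bigr)\bigr) \, .
\]
It remains only to optimize the exponent: the quadratic $t \mapsto \tfrac{t^2}{8} - t\delta$ attains its minimum at the admissible value $t = 4\delta>0$, where it equals $-2\delta^2$. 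This yields $\mathrm{Prob}[S_n \ge \delta n] \le e^{-2\delta^2 n}$, and since $2 > \tfrac{\pi}{4}$ the claimed inequality follows.

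There is essentially no obstacle in this argument; the only points worth recording are that the optimizing $t$ is positive (so the Markov step is valid for that value) and that the resulting constant $2$ dominates $\pi/4$, so no finer analysis is needed — indeed the weaker constant $\pi/4$ presumably appears in the cited reference \cite{GrimS} because a slightly different route (e.g.\ via a Gaussian-comparison argument) is taken there. If one preferred to avoid even the series comparison, one could instead invoke Hoeffding's lemma directly, which gives exactly the same sub-Gaussian estimate $\mathbb{E}[e^{tY_1}] \le e^{t^2/8}$ for a centered variable bounded in $[-\tfrac12,\tfrac12]$.
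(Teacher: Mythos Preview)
Your proof is correct: the Chernoff/Hoeffding argument is carried out cleanly, and the resulting bound $e^{-2\delta^2 n}$ is indeed stronger than the stated $e^{-(\pi/4)\delta^2 n}$. Note that the paper itself does not prove this lemma at all---it simply cites \cite[p.~32]{GrimS}---so your self-contained argument is a genuine addition rather than a reproduction of anything in the text.
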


\begin{proof}[Proof of \cref{t:lower_general} (with R\'emi Peyre)]  
Fix $\epsilon>0$, and let $N \coloneqq 8 \lfloor A(\cC,\epsilon)/8 \rfloor$. Observe that $A(\cC,\epsilon)-7 \le N \le A(\cC,\epsilon)$, and so we can find measures $\nu_1, \dots, \nu_N \in \cC$ that are pairwise $\epsilon$-separated.

Denote
\begin{equation}\label{e.F}
F \coloneqq \Bigl\{f \colon \{1,\dots,N\} \to \{0, 1\}\ \Big|\ \sum_{i=1}^N f (y) = \frac{N}{2}\Bigr\}.
\end{equation}
We endow $F$ with the Hamming distance:
\[
\mathsf{Hamm} (f, g) \coloneqq \card \{i \in  \{1,\dots,N\} |\ f (i) \neq g (i)\}
\]
(which is always an even number between $0$ and $N$).
Let us estimate the cardinality of a ball $B$ of radius $N/4$ in $F$ and centered at some $f$.
If $g$ is an element of $B$, that is, $k \coloneqq \frac{1}{2} \mathsf{Hamm} (f, g) \le N/8$, then there are exactly $k$ elements of $f^{-1}(\{0\})$ and $k$ elements of $f^{-1}(\{1\})$ at which $g$ differs from $f$.
As both sets $f^{-1}(\{0\})$ and $f^{-1}(\{1\})$ have cardinality $N/ 2$, we obtain:
\[
\card B =
\sum_{k = 0}^{N/8} \binom{N/2}{k}^2 \leq \biggl[ \sum_{k = 0}^{N/8} \binom{N/2}{k}\biggr]^2 
\, .
\]
The quantity between square brackets equals $2^{N/2}$ times the probability of obtaining at most $N/8$ heads on $N/2$ tosses of a fair coin. By \cref{l:Bernstein}, this probability is at most $e^{-\frac{\pi}{4} \left(\frac{1}{4}\right)^2 \frac{N}{2}}$.
So 
\begin{equation}\label{e.ball}
\card B \le 2^N e^{- \frac{\pi}4 \cdot\frac{N}{4^2}}  \, . 
\end{equation}

Choose a maximal $N/4$-separated subset $F'$ of $F$.
Then $F'$ is $N/4$-dense, that is, the balls of radius $N/4$ with centers in $F'$ form a covering of $F$.
The cardinality of $F$ itself is $\binom{N}{N/2} \ge (2N)^{-1/2} 2^N$ (by Stirling's formula). 
Using \eqref{e.ball}, we conclude that
\begin{equation}\label{e.Fprime}
\card F' \geq \frac{\card{F}}{\card B} \geq (2N)^{-1/2} e^{\pi N/4^3} \, .
\end{equation}

Now, for each $f \in F'$, consider the measure:
\[
\mu_f \coloneqq \frac{2}{N} \sum_{i=1}^N f(i) \nu_i \, ,
\]
which by convexity belongs to $\cC$. 
Consider the subset $\cF \coloneqq \{ \mu_f \mid f \in F'\}$ of $\cC$, which has the same cardinality as~$F'$. 
This set has the following property, whose proof will be given later:

\begin{claim}\label{sep_estimate}
The set $\cF$ is $4^{-1 / p} \epsilon$-separated with respect to the Wasserstein distance~$\mathsf{W}_p$.
\end{claim}

In particular, $S((\cC, \mathsf{W}_p), 4^{-1 / p} \epsilon) \geq \card F'$.
On the other hand, it follows from \eqref{e.Fprime} that $\card F' \ge e^{cN}$ for all sufficiently large $N$, where $c>0$ is a constant.
So:
$$
\frac{\log \log S ((\cC, \mathsf{W}_p), 4^{-1/p}\epsilon)}{- \log(4^{-1/p}\epsilon)}\ge
\frac{\log N + \log c}{- \log \epsilon +\frac1p \log 4} 
$$
Since $N\ge A(\cC,\epsilon)-7$, taking $\liminf$ as $\epsilon \to 0$ we obtain the conclusion of the \lcnamecref{t:mo_box} for the 
Wasserstein distance~$\mathsf{W}_p$.

As regards the L\'evy--Prokhorov distance $\mathsf{LP}$, 
inequalities \eqref{compaWpLP} allows us to compare it with the $\mathsf{W}_1$ distance, and so \cref{sep_estimate} implies that $\cF$ is $(4(1+\diam X))^{-1} \epsilon$-separated with respect to $\mathsf{LP}$, which allows us to conclude as before.

This completes the proof of \cref{t:mo_box}, modulo the \lcnamecref{sep_estimate}.
\end{proof}

\begin{proof}[Proof of \cref{sep_estimate}]
Fix two distinct elements $f$, $g$ of $F'$, and let us estimate $\mathsf{W}_p(\mu_f,\mu_g)$.
Let $S_f$ and $S_g$ be the supports of $\mu_f$ and $\mu_g$, respectively.

We claim that:
$$
(x,y) \in (S_f \setminus S_g) \times S_g \  \Rightarrow \ \mathsf{d}(x,y) \ge \epsilon \, .
$$
Indeed, if $y \in S_g$ then $y \in \supp \nu_j$ 
for some $j \in \{1,\dots, N\}$ such that $g(j)=1$,
while if $x \in S_f \setminus S_g$ then $x \in \supp \nu_i$ 
for some $i \in \{1,\dots, N\}$ such that $f(i)=1$ and $g(i)=0$; in particular, $i\neq j$.
So $\nu_i$ and $\nu_j$ are $\epsilon$-apart, which guarantees that $\mathsf{d}(x,y) \ge \epsilon $, as claimed.

Also note that:
$$
\mu_f(S_f \setminus S_g) = \frac{2}{N} \card \big\{i \in \{1,\dots, N\} \mid f(i)=1, g(i)=0 \big\} = \frac{\mathsf{Hamm} (f, g)}{N}  \ge \frac{1}{4} \, ,
$$
since $F$ is $N/4$-separated.

For any transport plan $\pi$ from $\mu_f$ to $\mu_g$, using the remarks above we can estimate:
\begin{multline*}
\int_{X \times X} [\mathsf{d}(x,y)]^p d\pi (x,y) 
=   \int_{S_f \times S_g}  (\cdots) 
\ge \int_{(S_f \setminus S_g) \times S_g} (\cdots) 
\\
\ge \epsilon^p \pi\big((S_f \setminus S_g) \times S_g\big) 
=   \epsilon^p \mu_f(S_f \setminus S_g) 
\ge \frac{\epsilon^p}{4} \, .	
\end{multline*}
So, by definition of the Wasserstein distance, we have $\mathsf{W}_p (\mu_f, \mu_g) \ge \frac{\epsilon}{4^{1/p}}$, completing the proof of the \lcnamecref{sep_estimate}.
\end{proof}

\section{Examples of dynamics with high topological emergence} \label{s:top_em_examples}

Let $f$ be a continuous self-map of a compact metric space $X$. 
We recall that $\cM^\mathrm{erg}_f(X)$ denotes the space of invariant ergodic probability measures. 

As explained in the introduction, the \emph{topological emergence} of $f$  is the relative covering number of $\cM^\mathrm{erg}_f(X)$ (defined in \cref{rel_cov})
endowed either with a Wasserstein distance~$\mathsf{W}_p$, $1\le p<\infty$, or the L\'evy-Prokhorov distance~$\mathsf{LP}$, that is:
\begin{equation}\label{e.def_top_emergence}
\Eme_\mathrm{top}(f)(\epsilon) \coloneqq D_{\cM(X)}(\cM^\mathrm{erg}_f, \epsilon) \, .
\end{equation}
We are concerned with the asymptotic behavior of this function for small $\epsilon$.
Since $\cM^\mathrm{erg}_f(X)$ is included in $\cM(X)$, by \cref{t:mo_box} and \cref{boundM(X)LP} we have:
\begin{equation}\label{e.upper_bound_top_emergence}
\limsup_{\epsilon\to 0} \frac{\log \log \Eme_\mathrm{top}(f)(\epsilon)}{-\log \epsilon} = \overline{\mo}(\cM^\mathrm{erg}_f(X)) \le 
\overline{\mo}(\cM(X))\le \overline \dim (X).
\end{equation}
Sometimes, this bound is far from being optimal. For instance, when $f$ is uniquely ergodic, then 
$\Eme_\mathrm{top}(f)(\epsilon)=1$ does not grow at all.  
If $f$ is the identity of $X$, then  $\cM^\mathrm{erg}_f(X)$ is isometric to $X$, and so $\Eme_\mathrm{top}(f)(\epsilon)$ is comparable to $\epsilon^{-d}$ if $X$ has well defined box-counting dimension $d$. 

On the other hand, \cref{emergence:topo:example} gives examples of hyperbolic compact sets for which the above bound is optimal. Let us explain and prove them.

\subsection{Conformal expanding repellers}
Let $M$ be a Riemannian manifold, $U$ an open subset of $M$ and  $f \colon U \to M$ be $C^{1+\alpha}$ map which leaves invariant a compact subset $K$ of $U$ (i.e. $f^{-1}(K)=K$). 
We say that $(K,f)$ is a \emph{conformal expanding repeller} if  $f$ is conformal and expanding at $K$: for each $x\in K$, the derivative $Df(x)$ expands the Riemannian metric by a scalar factor greater than~$1$.
Then its box-counting dimension $\dim(K)$ is well-defined, and it equals the Hausdorff dimension: see \cite[Corol.~9.1.7]{PU}.

\begin{otherthm}\label{t:example_conformal}
Let $(K,f)$ be a conformal expanding repeller of dimension~$d$. 
Then the topological emergence of $f|K$ is stretched exponential with exponent~$d$:
\[
\lim_{\epsilon\to 0} \, \frac{\log \log \Eme_\mathrm{top}(f|K) (\epsilon)}{-\log \epsilon} = d\; .
\]
\end{otherthm}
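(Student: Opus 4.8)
The plan is to establish the two inequalities separately. The upper bound $\limsup_{\epsilon\to 0} \frac{\log\log \Eme_\mathrm{top}(f|K)(\epsilon)}{-\log\epsilon} \le d$ is immediate from \eqref{e.upper_bound_top_emergence}, since $\overline\dim(K) = d$ by hypothesis (and this is also the Hausdorff dimension by \cite[Corol.~9.1.7]{PU}). So the entire content is the lower bound
\[
\liminf_{\epsilon\to 0} \frac{\log\log \Eme_\mathrm{top}(f|K)(\epsilon)}{-\log\epsilon} \ge d \, .
\]
By \cref{t:lower_general} applied to the convex set $\cC = \cM_f(K)$ (and noting $\cM_f^\mathrm{erg}(K)$ spans the same statistics, or better, by arranging the apart measures to be themselves ergodic), it suffices to produce, for each small $\epsilon>0$, a family of $N(\epsilon)$ pairwise $\epsilon$-apart \emph{ergodic} invariant measures with $\liminf_{\epsilon\to 0}\frac{\log N(\epsilon)}{-\log\epsilon} \ge d$; equivalently $N(\epsilon) \gtrsim \epsilon^{-d+o(1)}$. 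Two measures are $\epsilon$-apart when their supports are at mutual distance $\ge \epsilon$, so concretely I want roughly $\epsilon^{-d+o(1)}$ periodic orbits of $f|K$ that are pairwise $\epsilon$-separated as subsets of $K$; the measures $\mu_i$ uniformly distributed on these orbits are ergodic and pairwise $\epsilon$-apart.

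The construction of many well-separated periodic orbits is the crux. The natural device is a measure $\mu$ of maximal dimension on $K$: since $(K,f)$ is a conformal expanding repeller, there is an ergodic invariant measure whose dimension equals $\dim K = d$ (this is where the conformality is essential — it gives a genuine variational/thermodynamic theory of dimension, cf.\ the Bowen-type formula and the references to Gibbs measures announced in the introduction). For such a measure, a ball $B(x,r)$ has $\mu$-measure $\approx r^{d}$ (up to subexponential corrections), so any set of $\mu$-measure bounded below must meet $\gtrsim r^{-d+o(1)}$ disjoint $r$-balls. Now I invoke Katok's entropy formula (recalled in \cref{s:entropy} and flagged as used here): for $\mu$ ergodic with positive entropy $h = h_\mu(f) > 0$, for any $\gamma>0$ and $n$ large there is a set of at least $e^{(h-\gamma)n}$ points that is $(n,\epsilon_0)$-separated for a fixed $\epsilon_0>0$ and carries most of the $\mu$-mass; combined with a standard shadowing/specification argument for the hyperbolic set $K$, one promotes such a separated set to a family of genuine periodic orbits of period $\approx n$, still pairwise separated in the $X$-metric by some definite amount and with empirical measures close to $\mu$. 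Choosing $n = n(\epsilon)$ appropriately (so that the dynamical scale $e^{-\lambda n}$, with $\lambda$ an expansion rate, is comparable to $\epsilon$), the near-$\mu$ distribution of these orbits together with the lower bound $\mu(B(x,\epsilon))\le \epsilon^{d-o(1)}$ forces that among the $e^{(h-\gamma)n}\approx \epsilon^{-c}$ orbits one can select a subfamily of size $\epsilon^{-d+o(1)}$ that is pairwise $\epsilon$-apart in $K$ — because any $\epsilon$-ball can absorb the orbits of at most $\epsilon^{d-o(1)}$ of them (each such orbit concentrates a definite fraction of a measure that is $\epsilon^{d-o(1)}$-thin on balls). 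This yields $N(\epsilon)\ge \epsilon^{-d+o(1)}$ and hence the desired liminf bound.

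I expect the main obstacle to be the careful interlocking of the three scales — the Bowen scale $r$ at which $\mu(B(x,r))\approx r^d$, the dynamical scale tied to the period $n$, and the target resolution $\epsilon$ — so that the counting is not degraded by the $o(1)$ corrections (the subexponential slack in the dimension of $\mu$, the $\gamma$ in Katok's formula, and the shadowing constants). A clean way to organize this is: fix $\gamma$, let $\epsilon\to 0$, set $n \approx \frac{-\log\epsilon}{\lambda}$, obtain $e^{(h-\gamma)n}$ periodic orbits $\epsilon$-close to $\mu$; then run a greedy selection — repeatedly pick an orbit and delete all orbits whose support lies within $\epsilon$ of it — and bound the number deleted at each step using that the measure (close to $\mu$) of an $\epsilon$-neighborhood of a single periodic orbit of period $n\approx |\log\epsilon|$ is at most (number of orbit points) $\times \epsilon^{d-o(1)} \le |\log\epsilon| \cdot \epsilon^{d-o(1)} = \epsilon^{d-o(1)}$, while each surviving orbit carries mass $\approx e^{-(h-\gamma)n} = \epsilon^{(h-\gamma)/\lambda}$; hence the greedy process runs for at least $\epsilon^{d-o(1)}/\,(\text{fraction absorbed per step})$ steps. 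Letting $\gamma\to 0$ and $\epsilon\to 0$ gives exponent exactly $d$. The remaining routine points are that the uniform measures on periodic orbits are ergodic and pairwise $\epsilon$-apart once their supports are $\epsilon$-separated, and that replacing $\cM_f$ by $\cM_f^\mathrm{erg}$ in the statement costs nothing since the constructed apart measures are already ergodic.
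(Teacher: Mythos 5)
Your upper bound is handled exactly as in the paper (it follows from \eqref{e.upper_bound_top_emergence}), and your overall plan for the lower bound --- apply \cref{t:lower_general} to $\cC=\cM_f(K)$, use a measure of maximal dimension, Katok's theorem, and specification/shadowing to mass-produce periodic orbits --- matches the paper's strategy. The divergence, and the gap, is in how you arrange the periodic orbits to be pairwise $\epsilon$-apart.

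You attempt a greedy selection using the bound $\mu(B(x,\epsilon))\lesssim \epsilon^{d}$ and the statement that ``each surviving orbit carries mass $\approx e^{-(h-\gamma)n}$.'' But periodic orbits are $\mu$-null, so this bookkeeping is incoherent as written; you presumably mean the Bowen balls of the shadowed points, but then the deleted-count estimate requires the Bowen-ball diameter to match $\epsilon$, i.e.\ you need $e^{-\chi_\mu n}\asymp\epsilon$ where $\chi_\mu$ is the Lyapunov exponent of the maximal-dimension measure, not a generic rate $\lambda$. Once you make that identification, the dimension formula $\chi_\mu\cdot d=h_\mu$ (equation \eqref{entropy:formula} in the paper) shows that the initial count $e^{h_\mu n}$ is \emph{already} $\epsilon^{-d+o(1)}$, so no selection is needed at all --- and that is what the paper does. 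Concretely, the paper restricts the $(\mathsf{d}_n,\rho)$-separated set $F_n$ to the set $B_n$ of points with $\|Df^n\|\le e^{(\chi_\mu+\delta)n}$ (which you never mention, but which is the linchpin); bounded distortion propagates this Lyapunov control to the shadowing periodic points (estimate \eqref{e.Lyapunov_per}); and a direct argument via expansiveness and a ``macroscopic expansion'' claim (\cref{macroscopic_exp}) shows that the whole union $\Pi_n$ of the periodic orbits is automatically $(\mathsf{d},\epsilon_n)$-separated with $\epsilon_n=e^{-(\chi_\mu+3\delta)(n+1)}$. This replaces your measure-theoretic counting with a clean metric estimate and avoids relying on (unstated and not verified here) exact-dimensionality of $\mu$. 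Finally, note that your parenthetical ``or better, by arranging the apart measures to be themselves ergodic'' does not bypass the need for convexity: \cref{t:lower_general} builds convex combinations, so it must be applied to $\cM_f(K)$, and the passage to $\cM_f^{\mathrm{erg}}(K)$ goes through the density of ergodic measures (which follows from specification), as the paper explains at the end.

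Bottom line: the framework and the objects you invoke are the right ones, but the pairwise-apartness argument has a real gap; the fix is the Lyapunov-controlled restriction $F_n\subset B_n$ together with the direct separation estimate, and then no greedy selection is necessary.
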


\begin{proof}
First, we can assume that $K$ is transitive since it is always a finite disjoint union of transitive sets; moreover, up to taking an iterate of $f$, we can suppose that $f|K$ is topologically mixing -- see \cite[Thm.~3.3.8]{PU}.

By standard results \cite[{\S}9.1]{PU}, there exists an invariant ergodic probability measure $\mu$ supported on $K$ of \emph{maximal dimension}. The Lyapunov exponent $\chi_\mu\coloneqq  \int \log \| Df \| \, d\mu $ 
and metric entropy $h_\mu$ are related as follows:
\begin{equation}\label{entropy:formula}
	\chi_\mu \cdot d = h_\mu \, , \quad \text{where } d = \dim K \,.
\end{equation}

Let $\rho_0 > 0$ be such that $U$ contains the $\rho_0$-neighborhood of $K$. Reducing $\rho_0$ if necessary, 
there exists  $\lambda>1$  such that $f$ is $\lambda$-expanding on the $\rho_0$-neighborhood of $K$, in the sense that $ \|Df^{-1}\|^{-1} \ge \lambda$. Then 
we have the following property \cite[{\S}4.1]{PU}: for all $x \in K$ and all $n\ge 1$, the connected component { $V^n_x$} of $x$ in the preimage by $f^n$ of the (Riemannian) ball $B(f^n(x),\rho_0)$ is included in $B(x,\lambda^{-n} \rho_0)$. Moreover $V^n_x$ is sent by $f^n$ diffeomorphically onto $B(f^n(x),\rho_0)$. Note that $\rho_0$ is an expansiveness constant for $f|K$, in the sense that if $x \neq y$ then there exists $n\ge 0$ such that $\mathsf{d}(f^n(x),f^n(y)) \ge \rho_0$.

Let $\mathsf{d}$ be the metric on $M$ induced by the Riemannian structure, and for each $n \ge 1$, let $\mathsf{d}_n$ denote the time-$n$ Bowen metric on $K$, defined by:
\[
\mathsf{d}_n(x,y) \coloneqq \max_{0 \le i < n} \mathsf{d}(f^i(x),f^i(y)) \, .
\] 
By the bounded distortion property \cite[Lemma~4.4.2]{PU}, 
there exists a constant  $C_0>1$ such that for any $n \ge 0$, if a pair of points $(x,y) \in K\times U$ satisfies $\mathsf{d}_n(x,y) < \rho_0$ then $\|Df^{n}(y)\| \le C_0 \|Df^{n}(x)\|$.

{ Reducing $\rho_0$ if necessary, we assume that every pair of points $(x,y) \in K\times U$ such that $\mathsf{d}(x,y)<\rho_0$ can be joined by a unique geodesic segment of minimal length, denoted $[x,y]$.}

\begin{claim} \label{macroscopic_exp}
If $n \ge 1$ and $(x,y) \in K\times U$ are such that $\mathsf{d}_{n+1}(x,y) <  \rho_1 { \coloneqq C_0^{-2} \rho_0}$ then:
\begin{equation*} 
\frac{\mathsf{d}(f^{n}(x),f^{n}(y))}{\mathsf{d}(x,y)} \le 
{ C_0} \|Df^{n}(x)\| \, .
\end{equation*}
\end{claim}

\begin{proof}
Fix $x \in K$ and $n \ge 1$.
As explained above, $f^n$ maps $V^n_x$ diffeomorphically onto $B(f^n(x),\rho_0)$; let $f_x^{-n} \coloneqq (f^n|V^n_x)^{-1}$ be its inverse.
Note that $V^n_x$ is exactly the $\mathsf{d}_{n+1}$-ball of center $x$ and radius $\rho_0$.
Now consider $y \in U$ such that $\mathsf{d}_{n+1}(x,y) <  \rho_1 \coloneqq C_0^{-2} \rho_0$.
We have $\mathsf{d}(f^n(x),f^n(y)) < \rho_1$, by definition of the Bowen metric. 
Consider the geodesic segment $S \coloneqq [f^n(x), f^n(y)]$. 
Since $S$ is contained in $B(f^n(x),\rho_1) \subset B(f^n(x),\rho_0)$, the curve $f_x^{-n}(S)$ is well-defined and is contained in $V^n_x$.
Since this curve joins $x$ and $y$, we have:
\begin{align}
\mathsf{d}(x,y) \le  \mathrm{len}(f_x^{-n}(S)) 
&\le C_0 \|Df^n(x)\|^{-1} \mathrm{len}(S)  \label{e.um} \\
&< C_0 \|Df^n(x)\|^{-1} \rho_1 \notag \\
&\le C_0^{-1} \|Df^n(x)\|^{-1} \rho_0 \label{e.dois} \, ,
\end{align}
where the estimate \eqref{e.um} follows from the bounded distortion property and conformality of the derivatives, and \eqref{e.dois} follows from the definition of $\rho_1$.

We claim that the geodesic segment $[x,y]$ is contained in the interior of~$V^n_x$.
Indeed, if that is not the case, there exists 
a subsegment $[x,z]\subset  V^n_x$ such that $z \in \partial V^n_x$.
On one hand, $f^n(z) \in f^n(\partial V^n_x) \subset \partial B(f^n(x),\rho_0)$; on the other hand, using bounded distortion again,
\begin{align}
\mathsf{d}(f^n(x),f^n(z)) 
\le  \mathrm{len}(f^n([x,z]))
&\le C_0 \|Df^n(x)\| \mathsf{d}(x,z) \label{e.bootstrap}\\
&\le C_0 \|Df^n(x)\| \mathsf{d}(x,y) \notag \\ 
&< \rho_0 \qquad\text{(by \eqref{e.dois}),} \notag
\end{align}
a contradiction. 
This confirms that $[x,y]$ is contained in the interior of $V^n_x$.

We are now allowed to apply estimate \eqref{e.bootstrap} with $z=y$ and therefore conclude the validity of \cref{macroscopic_exp}.
\end{proof}

Fix a small $\delta > 0$.
By Katok's \cref{t:Katok} (see the appendix), there exists a positive number $\rho < \rho_1$ such that for all sufficiently large $n$, 
the least number $N_\mu(n,\rho,1/2)$ of balls of radii $\rho$ in the $\mathsf{d}_n$ metric necessary to cover a set of $\mu$-measure $\ge 1/2$ satisfies:
$$
N_\mu(n,\rho,1/2) > e^{(h_\mu-\delta)n} \, .
$$
For each $n\ge 1$, let $B_n$ be the set of points $x \in K$ such that 
$\| Df^n(x) \| \le e^{(\chi_\mu + \delta) n}$.
By Birkhoff theorem, if $n$ is large enough then $\mu(B_n)>1/2$.
Take a $(\mathsf{d}_n,\rho)$-separated set $F_n \subset B_n$ of maximal cardinality.
Then the balls of radii $\rho$ and centered at points in $F_n$ cover $B_n$. 
Therefore:
\begin{equation}\label{e.Fn_entropy}
\card F_n \ge N_\mu (n,\rho,1/2) > e^{(h_\mu-\delta)n} \, ,
\end{equation}
provided $n$ is large enough.

By the specification property of topologically mixing repellers (see e.g.\ \cite[Prop.~11.3.1]{VO}), 
there exists an integer $n_0 \ge 0$ (depending on $\rho$) such that for every $n$, each point $x \in F_n$ is shadowed by an $(n+n_0)$-periodic point $y\in K$ in such a way that
$\mathsf{d}_n(x,y) < \rho/2$.
Let $G_n$ be the set of periodic points $y$ obtained in this way. 
Note that $G_n$ has the same cardinality as $F_n$.
Also note that, by bounded distortion, $\| Df^n(y) \| \le C_0 \| Df^n(x) \| \le C_0 e^{(\chi_\mu + \delta) n}$ and so, if $n$ is large enough, 
\begin{equation}\label{e.Lyapunov_per}
\| Df^{n+n_0}(y) \| \le e^{(\chi_\mu + 2\delta)n} \, .
\end{equation}

Let $\Pi_n \coloneqq \bigcup_{k\ge 0} f^k(G_n)$ be the union of the orbits of the points in $G_n$.
By periodicity, the points $y \in \Pi_n$ satisfy the same estimate \eqref{e.Lyapunov_per}.

\begin{claim}
The set $\Pi_n$ is $(\mathsf{d},\epsilon_n)$-separated with $\epsilon_n \coloneqq e^{-(\chi_\mu + 3\delta)(n+1) }$, provided $n$ is large enough.
\end{claim}
 
\begin{proof}
Take a pair of distinct points $y$, $z \in \Pi_n$, and let us prove that $\mathsf{d}(y,z) > \epsilon_n$.
Both points are fixed by $f^{n+n_0}$, so, by expansiveness, there exists $k$ in the range $0 \le k < n + n_0$ such that $\mathsf{d}(f^k(y),f^k(z)) \ge \rho_1$ since $\rho_1\le \rho_0$.
Assume that $k$ is minimal.
If $k=0$ then the desired estimate is trivial, so consider $k>0$. 
Then $\mathsf{d}_k(y,z) < \rho_1$ and the following estimates hold:
\begin{alignat*}{2}
\mathsf{d}(y,z) &\ge C_0^{-1} \mathsf{d}(f^{k-1}(y),f^{k-1}(z)) \|Df^{k-1}(y)\|^{-1} &\quad&\text{(by \cref{macroscopic_exp})}\\
                &\ge C_1^{-1} \rho_1  \|Df^{k-1}(y)\|^{-1}&\quad&\text{with } C_1\coloneqq C_0 \cdot \|Df\|\\ 
				&\ge C_1^{-1} \rho_1  \|Df^{n+n_0}(y)\|^{-1}  &\quad&\text{(since $f$ is expanding)}\\
				&>   C_1^{-1} \rho_1 \cdot e^{-(\chi_\mu + 2\delta) n} &\quad&\text{(by \eqref{e.Lyapunov_per}).}
\end{alignat*}
This implies the sough inequality when $n$ is large enough.
\end{proof}

So any two distinct ergodic measures supported in the finite invariant set $\Pi_n$ are $\epsilon_n$-apart (in the sense defined in \cref{section:mo:was}). The number $A_n$ of such ergodic measures satisfies:
$$
A_n \ge \frac{\card G_n}{n + n_0} = \frac{\card F_n}{n + n_0} \ge  e^{(h_\mu-2\delta) n}
$$
if $n$ is sufficiently large (by \eqref{e.Fn_entropy}).

Now, given $\epsilon>0$ sufficiently small, take $n$ such that $\epsilon_n \le \epsilon < \epsilon_{n-1}$.
Consider the convex set $\cC \coloneqq \cM_f(K)$ of all $f$-invariant measures;
then, in the notation of \cref{t:lower_general}, we have 
$A(\cC,\epsilon) \ge A(\cC,\epsilon_n) \ge A_n$ and so
$$
\frac{\log A(\cC,\epsilon)}{- \log \epsilon} \ge \frac{\log A_n}{- \log \epsilon_{n-1}} \ge \frac{h_\mu - 2\delta}{\chi_\mu + 3\delta} \, . 
$$
So \cref{t:lower_general} yields $\underline{\mo}(\cM_f(K)) \ge (h_\mu - 2\delta)/(\chi_\mu + 3\delta)$.
As $\delta$ is arbitrarily close to $0$, we conclude that $\underline{\mo}(\cM_f(K))$ is at least $h_\mu/\chi_\mu$, which by \eqref{entropy:formula} equals $d = \dim K$.

As a consequence of specification (see \cite[Thrm.~11.3.4]{VO}), the closure of $\cM^\mathrm{erg}_f(K)$ equals $\cM_f(K)$. Therefore: 
$$
\underline{\mo}(\cM_f^\mathrm{erg}(K)) = \underline{\mo}(\cM_f(K)) \ge d \, .
$$
On the other hand, $\overline{\mo}(\cM_f^\mathrm{erg}(K)) \le d$ by \eqref{e.upper_bound_top_emergence}.
So $\mo(\cM_f^\mathrm{erg}(K)) = d$, as we wanted to show.
\end{proof}


\subsection{Hyperbolic sets of conservative surface diffeomorphisms}
Let $M$ be a surface and let $f \colon M \to M$ be a $C^{1+\alpha}$ diffeomorphism. Let $K \subset M$ be a \emph{hyperbolic} set for $f$. This means that $K$ is an invariant compact set $K$ and there exists an invariant splitting $E^\mathrm{s}\oplus E^\mathrm{u}$ 
of the tangent bundle $TM$ of $M$ restricted to $K$ such that the line bundles $E^\mathrm{s}$ and $E^\mathrm{u}$ are respectively contracted and expanded. In other words, there exists $\lambda>1$ such that for every $z\in K$: 
\[\left\{\begin{array} {ccc} D_zf(E^\mathrm{s}_z)=E^\mathrm{s}_{f(z)}&\& & \|D_zf|E^\mathrm{s}\|^{-1}>\lambda \, ,\\
Df_z(E^\mathrm{u})=E^\mathrm{u}_{f(z)}&\& &  \|D_zf|E^\mathrm{u}\|>\lambda\; .\end{array}\right.\]
Let us assume moreover that the compact set $K$ is \emph{locally maximal}, that is, it admits a neighborhood $U$ such that $K=\bigcap_{n\in \Z} f^n(U)$.

\begin{otherthm}\label{t:example_surface}
If $f$ is conservative then the topological emergence of $f|K$ is stretched exponential with exponent $d \coloneqq \dim(K)$:
\[
\lim_{\epsilon\to 0} \, \frac{\log \log \Eme_\mathrm{top}(f|K) (\epsilon)}{-\log \epsilon} = d\; .
\]
\end{otherthm}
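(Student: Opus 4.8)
The plan is to follow the scheme of the proof of \cref{t:example_conformal}, the essential new feature being that, $K$ being effectively two-dimensional, the approximating periodic measures must be controlled simultaneously along the stable and the unstable directions. As there, I would first reduce to the case where $K$ is a topologically mixing basic set, so that specification holds and $\overline{\cM^{\mathrm{erg}}_f(K)}=\cM_f(K)$; it then suffices to show $\underline{\mo}(\cM_f(K))\ge d$, since the opposite inequality $\overline{\mo}(\cM^{\mathrm{erg}}_f(K))\le\overline{\dim}(K)=d$ is \eqref{e.upper_bound_top_emergence}. The dimension enters through a measure of maximal dimension: by the McCluskey--Manning formula $d=s_{\mathrm u}+s_{\mathrm s}$, where $s_{\mathrm u},s_{\mathrm s}$ solve the Bowen equations $P_{f|K}(-t\log\|Df|E^{\mathrm u}\|)=0$ and $P_{f^{-1}|K}(-t\log\|Df^{-1}|E^{\mathrm s}\|)=0$; since $f$ is conservative the potentials $-\log\|Df|E^{\mathrm u}\|$ and $\log\|Df|E^{\mathrm s}\|$ differ by a coboundary (the Jacobian equals $1$ and the angle between $E^{\mathrm s}$ and $E^{\mathrm u}$ is bounded below on $K$), so $s_{\mathrm s}=s_{\mathrm u}$ and $d=2s_{\mathrm u}$. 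I would then take $\mu$ to be the equilibrium state of the H\"older potential $-\tfrac d2\log\|Df|E^{\mathrm u}\|$; it is ergodic -- in fact a Gibbs measure -- and the variational principle together with $P(-\tfrac d2\log\|Df|E^{\mathrm u}\|)=0$ gives $h_\mu=\tfrac d2\,\chi$, where $\chi\coloneqq\int\log\|Df|E^{\mathrm u}\|\,d\mu=-\int\log\|Df|E^{\mathrm s}\|\,d\mu$. Thus $d=2h_\mu/\chi$, and the goal becomes to produce $\approx e^{2h_\mu n}$ invariant measures that are pairwise $\approx e^{-\chi n}$-apart.

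Fixing $\delta>0$, I would invoke Katok's \cref{t:Katok} for both $f$ and $f^{-1}$, Birkhoff's theorem for $\log\|Df|E^{\mathrm u}\|$ and $\log\|Df^{-1}|E^{\mathrm s}\|$, and -- crucially -- the exponential large-deviation estimates valid for the Gibbs measure $\mu$, to obtain, for every large $n$, a set $\tilde B_n$ with $\mu(\tilde B_n)\ge\tfrac12$ all of whose points $x$ satisfy, for every sub-window $[j,j+m]\subset[-n,n]$, both $\|Df^m|E^{\mathrm u}_{f^j x}\|\le e^{(\chi+\delta)m}n^{C}$ and $\|Df^{-m}|E^{\mathrm s}_{f^{-j}x}\|\le e^{(\chi+\delta)m}n^{C}$ (for a fixed constant $C$; the polynomial factor absorbs the short windows, where a priori only the uniform hyperbolicity constant is available). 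Inside $\tilde B_n$ I would take a maximal subset $E_n$ that is $\rho$-separated for the two-sided Bowen metric $\max_{|i|\le n}\mathsf{d}(f^i x,f^i y)$; by Katok's theorem, $\#E_n>e^{2(h_\mu-\delta)n}$. For each $x\in E_n$, the specification property yields a periodic point $p_x$ of period $N(x)=2n+O(1)$ whose orbit shadows the segment $(f^i x)_{|i|\le n}$, with $p_x$ close to $x$; by bounded distortion $p_x$ inherits the above control, and -- here using conservativity to convert the backward stable control on the ``past half'' into forward unstable control -- one gets $\|Df^k|E^{\mathrm u}_{f^j p_x}\|\le e^{(\chi+2\delta)|k|}\,n^{O(1)}$ for all $j$ and all $|k|\le N$.

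The crux is the separation estimate: for distinct $x,z\in E_n$, the periodic orbits $O(p_x)$ and $O(p_z)$ should be $\epsilon_n$-apart with $\epsilon_n\coloneqq c\,e^{-(\chi+3\delta)n}$ and $n$ large. I would argue this as in the corresponding claim of \cref{t:example_conformal}. Take a pair $(w_1,w_2)\in O(p_x)\times O(p_z)$ realizing the distance between these two distinct orbits of period $\le N$. Expansiveness forces $\mathsf{d}(f^k w_1,f^k w_2)$ to exceed the local-product scale for some $k$ with $|k|\le N/2$ -- because $\{i:|i|\le N/2\}$ meets every residue class modulo the period -- and, choosing $|k|$ minimal, the two orbit pieces stay close on $[-(|k|-1),|k|-1]$. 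Decomposing $\mathsf{d}(w_1,w_2)$ into its stable and unstable components via the local product structure and pushing the divergence at time $k$ back to time $0$ contracts the relevant component by at most the unstable expansion (resp.\ the backward stable expansion) over $|k|\le N/2$ steps, which by the previous paragraph is at most $e^{(\chi+2\delta)|k|}n^{O(1)}$; since $|k|\le N/2=n+O(1)$, this yields $\mathsf{d}(w_1,w_2)\gtrsim e^{-(\chi+3\delta)n}$. Collecting one ergodic measure per orbit -- discarding the at most $O(N)$-fold overcounting of orbits -- I would obtain at least $e^{2(h_\mu-2\delta)n}$ pairwise $\epsilon_n$-apart invariant ergodic measures, so that \cref{t:lower_general} with $\cC=\cM_f(K)$ gives $\underline{\mo}(\cM_f(K))\ge 2(h_\mu-2\delta)/(\chi+3\delta)$. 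Letting $\delta\to0$ and using $\overline{\cM^{\mathrm{erg}}_f(K)}=\cM_f(K)$ together with \eqref{e.upper_bound_top_emergence} then yields $\mo(\cM^{\mathrm{erg}}_f(K))=d$, as desired.

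\textbf{Main obstacle.} The delicate point is this separation estimate. In the conformal case a single forward exponent governs everything, and one needs only that period-$n$ orbits are $\approx e^{-\chi n}$-apart; here instead two period-$(\approx 2n)$ orbits must be shown to be separated at scale $\approx e^{-\chi n}$, i.e.\ at the rate of \emph{half} the period -- and only this brings the emergence exponent up to the full value $d=2h_\mu/\chi$ rather than $h_\mu/\chi$. Achieving it requires both (i) building the periodic orbits with uniform, sub-window derivative control, for which the Gibbs/large-deviation property of the measure of maximal dimension is used, and (ii) conservativity, to trade stable for unstable control along the backward halves of the orbits. The remaining bookkeeping relating separated points, their periodic shadows, and orbits is routine and proceeds exactly as in \cref{t:example_conformal}.
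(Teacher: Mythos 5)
Your proposal follows the same overall scheme as the paper's proof: reduce to a topologically mixing basic set; use conservativity to identify the measures of maximal stable and unstable dimension as a single ergodic $\mu$ with $h_\mu=\tfrac d2\chi_\mu$; apply Katok's theorem for the bilateral Bowen metric together with specification to manufacture $\approx e^{2h_\mu n}$ periodic orbits of period $\approx 2n$; and then show they are pairwise $\approx e^{-\chi_\mu n}$-apart so that \cref{t:lower_general} applies. Your Bowen-equation phrasing of the first step is equivalent to the paper's use of the McCluskey--Manning measure of maximal dimension; both rest on the same cohomology argument and on \eqref{entropy:formula2}.

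The one place where you genuinely diverge is the derivative control along the periodic orbits, used for the separation estimate. The paper defines $B_n$ by Birkhoff control only over the single two-sided window $[-n,n]$, transfers it to the shadow points $y\in G_n$ by bounded distortion, and then handles the rest of each orbit $\Pi_n$ as in the conformal case ``by periodicity.'' In the conformal case this is clean because the period equals the derivative window $n+n_0$, so the full-period derivative norm is constant along the orbit by conjugacy. Here the period is $2(n+n_0)$ while the separation estimate needs a bound on $\|Df^{k}|E^{\mathrm u}\|$ with $k\le n+n_0$ at the point of the orbit where the minimizing pair sits -- a \emph{half-period} window, which is not conjugation-invariant -- and this does not follow automatically from the estimate at $y\in G_n$. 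Your Gibbs large-deviations sub-window estimates, combined with the conservativity identity $\|Df^k|E^{\mathrm u}\|\asymp\|Df^k|E^{\mathrm s}\|^{-1}$ (Jacobian one, bounded angle), furnish exactly the uniform bound $\|Df^k|E^{\mathrm u}_{f^jp_x}\|\lesssim e^{(\chi+O(\delta))|k|}$ along the whole orbit that the estimate requires, closing a step on which the paper is terse. Your ``Main obstacle'' paragraph correctly identifies that getting separation at the scale of half the period -- rather than the full period -- is precisely what lifts the exponent to $d=2h_\mu/\chi_\mu$ instead of $d/2$. The tradeoff is that you invoke more machinery (Gibbs large deviations) than the paper does; a Pliss-type argument or a sharper two-sided Birkhoff set would achieve the same end more elementarily, but your route is correct.
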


\begin{proof}
First, we can assume that $K$ is transitive since it is always a finite disjoint union of such sets; moreover, up to taking an iterate of $f$, we can consider that $f|K$ is topologically mixing -- see \cite[Thm.~18.3.1, p.~574]{KH}.

From standard results on dimension theory of hyperbolic sets (see e.g.\ \cite[Thrm.~22.2]{Pesin}),
the box-counting dimension $d \coloneqq \dim K$ is well defined, and it equals $d^\mathrm{s} +d^\mathrm{u}$, where $d^\mathrm{s}$ (resp.\ $d^\mathrm{u}$) is the box-counting dimension of $K$ intersected with any local stable (resp.\ unstable) manifold. Moreover, for every $\star\in \{\mathrm{u},\mathrm s\}$, there exists an invariant ergodic probability measure $\mu^\star $ supported on $K$ of \emph{maximal $\star$-dimension}. The  Lyapunov exponent $\chi_{\mu^\star}\coloneqq  \int \log \| Df |E^\star\| \, d\mu $  and the metric entropy $h_{\mu^\star}$ are related as follows:
\begin{equation}\label{entropy:formula2}
	\chi_{\mu^\star} \cdot d^\star  = h_{\mu^\star}\,.
\end{equation}
Those measures are obtained as the unique equilibrium states for the functions:
$$
\phi^\mathrm{s}(x) \coloneqq - \log \| Df(x) | E^\mathrm{s}_x \|  \, , \qquad  
\phi^\mathrm{u}(x) \coloneqq \log \| Df(x) | E^\mathrm{u}_x \|  \, .
$$
The dynamics being consevative, the functions $\phi^\mathrm{s}$ and $\phi^\mathrm{u}$ are cohomologous. Thus by uniqueness of equilibria:
\[\mu^\mathrm s=\mu^\mathrm u=:\mu \qand  -\chi_{\mu^\mathrm s}=\chi_{\mu^\mathrm u}=: \chi_\mu\, ,\]
and so by \eqref{entropy:formula2} and using $d=d^\mathrm u+d^\mathrm s$:
\begin{equation}\label{entrop_formula_conserv} \chi_\mu \cdot \frac d2 =h_\mu\, .\end{equation}
 
Let us fix continuous families of local stable and unstable manifolds $(W^\mathrm{s}_\mathrm{loc} (x))_{x\in K}$ and $(W^\mathrm{u}_\mathrm{loc} (x))_{x\in K}$, small enough to be $\lambda^{-1}$-contracted by respectively $f$ and $f^{-1}$. Furthermore, whenever $x$ and $y\in K$ are close enough, then 
 $W^\mathrm{u}_\mathrm{loc}(x)$ intersects $W^\mathrm{s}_\mathrm{loc}(y)$ at a unique point, called the \emph{bracket} of $x$ and $y$ and denoted $[x,y]$. 
By local maximality of $K$, the point $[x,y]$ belongs to $K$. 

Let $\tilde{\mathsf{d}}_n$ denote the bilateral Bowen metric on $M$, defined by:
\[
\tilde{\mathsf{d}}_n(x,y) \coloneqq \max_{-n < i < n} \mathsf{d}(f^i(x),f^i(y)) \, .
\] 
We denote by $\mathsf d^\mathrm u$ (resp.\ $\mathsf d^\mathrm s$) the distance along the local unstable (resp.\ stable) manifolds. Using the contraction along the local stable and unstable manifolds by $f$ and $f^{-1}$, we obtain:
\begin{claim}\label{expansivityus} There exists $\rho_0>0$ small and $c>0$ such that for any  $x\neq y\in K$ which are $\rho_0$-close, there exists $k\ge 1$ such that $\tilde{\mathsf{d}}_{k}(x,y)<\rho_0 \le \tilde{\mathsf{d}}_{k+1}(x,y)$ and:
\[\mathsf d^\mathrm u(f^k(x),  f^k([x, y]))> c \cdot \rho_0\quad \text{or} \quad
\mathsf d^\mathrm s(f^{-k}(x), f^{-k}([y, x]))> c \cdot  \rho_0\; .\]
 \end{claim}

\medskip 

By the bounded distortion property \cite[Prop.~22.1]{Pesin},
there exists a constant  $C_0>1$ such that for any $n \ge 0$ and $x\in K$, the following estimates hold for every $y\in M$ such that $\tilde{\mathsf{d}}_n(x,y)<\rho_0$:
\begin{equation}\label{dist:dim21}
\begin{aligned}
y &\in W^\mathrm{u}_\mathrm{loc} (x)  &\quad &\Rightarrow  &\quad
\|Df^{n}|T_y W^\mathrm{u}_\mathrm{loc} (x)\|  &\le C_0 \|Df^{n}|E^\mathrm{u}_x\|   \, ,\\
y &\in W^\mathrm{u}_\mathrm{loc} (x)\cap K   &\quad &\Rightarrow  &\quad
\|Df^{-n}|E^\mathrm{s}_y\|   &\le C_0 \|Df^{-n}|E^\mathrm{s}_x\|   \, ,\\
y &\in W^\mathrm{s}_\mathrm{loc} (x) &\quad &\Rightarrow  &\quad
\|Df^{-n}|T_y W^\mathrm{s}_\mathrm{loc} (x)\| &\le C_0 \|Df^{-n}|E^\mathrm{s}_x\|  \, ,\\
y &\in W^\mathrm{s}_\mathrm{loc} (x)\cap K  &\quad  &\Rightarrow  &\quad
\|Df^{n}|E^\mathrm{u}_x\|   &\le C_0 \|Df^{n}|E^\mathrm{u}_x\|   \, .
\end{aligned}
\end{equation}
Using the bracket, it follows for every $x, y\in K$ such that $\tilde{\mathsf{d}}_n(x,y)<\rho_0$:
\begin{equation}\label{dist:dim22}
\|Df^{n}|E^\mathrm{u}_y\|  \le C^2_0 \|Df^{n}|E^\mathrm{u}_x\| \qand 
\|Df^{-n}|E^\mathrm{s}_y\|  \le C^2_0 \|Df^{-n}|E^\mathrm{s}_x\|\; .\end{equation}

For each $n\ge 0$, let $B_n$ be the set of points $x\in K$ such that 
\[  \|D_xf^{-n}|E^\mathrm{s}\|\ge  e^{(-\chi_s-\delta)n} \qand  \|D_xf^{n}|E^\mathrm{u}\|\le  e^{(\chi_u+\delta)n}\; .\]
Again, for $n$ large enough, by the Birkhoff ergodic Theorem we have $\mu(B_n)> 1/2$.
By the same argument as in the proof of \cref{t:example_conformal} (using \cref{c.Katok} instead of \cref{t:Katok}), 
there exists a positive number $\rho < \rho_0$ such that for all sufficiently large $n$ we can find a $(\tilde {\mathsf{d}}_n,\rho)$-separated set $F_n \subset B_n$ of cardinality at least $e^{(h_\mu - \delta)2n}$. 
As before, we use specification \cite[Thrm.~18.3.9]{KH} to shadow each $x \in F_n$ by a periodic point $y = f^{2n+2n_0}(y)$ in such a way that $\tilde {\mathsf{d}}_n(x,y) < \rho/2$, where $n_0 \ge 0$ is  independent of $n$.
Let $G_n$ be the set of periodic points $y$ obtained in this way; it has the same cardinality as $F_n$.
Since $x \in B_n$, it follows from \eqref{dist:dim22} that: 
\begin{equation}\label{e.domingo}
  \|Df^{-n-n_0}|E^\mathrm{s}_y\|\ge  e^{(-\chi_s-2\delta)n} \qand  \|Df^{n+n_0}|E^\mathrm{u}_y\|\le  e^{(\chi_u+2\delta)n}\; .\end{equation}
provided $n$ is large enough.

Let $\Pi_n$ be the union of the orbits of the points in $G_n$.

\begin{claim}
If $n$ is large enough then
the set $\Pi_n$ is $(\mathsf{d},\epsilon_n)$-separated with $\epsilon_n \coloneqq e^{-(\chi_\mu + 3\delta)(n+1)}$.
\end{claim}

\begin{proof}
Take a pair of distinct points $x$, $y \in \Pi_n$, and let us prove that $\mathsf{d}(x,y) > \epsilon_n$. If $\mathsf d(x,y) > \rho_0$ then there is nothing to prove. Otherwise, as both points are fixed by $f^{2n+2n_0}$, so, by \cref{expansivityus}, there exists $k$ with $1\le k \le n + n_0$ such that $\tilde {\mathsf d}_k(x,y)<\rho_0$ and:
\[
\mathsf d^\mathrm u(f^k(x),  f^k([x, y]))> c \cdot \rho_0\quad \text{or} \quad
\mathsf d^\mathrm s(f^{-k}(x), f^{-k}([y, x]))> c \cdot  \rho_0 \; .
\]
Let us consider the case where the first inequality holds; the other case is similar. 
Putting $z\coloneqq[x,y]$, we have:
\begin{alignat*}{2}
\mathsf{d}^\mathrm{u}(x,z) &\ge C_0^{-1} {\mathsf{d}^\mathrm{u}(f^{k}(x),f^k(z))}\cdot {\|Df^{k}|E^\mathrm{u}_{x}\|^{-1}} &\quad&\text{(by \eqref{dist:dim21})}\\
&\ge C_0^{-1} \cdot c \cdot \rho_0 \cdot \|Df^{n+n_0}|E^\mathrm{u}_{x}\|^{-1}  &\quad&\text{(since $Df|E^\mathrm{u}$ is expanding)} \\
				&>   C_0^{-1} \cdot c \cdot \rho_0  \cdot e^{-(\chi_\mu + 2\delta) n} &\quad&\text{(by \eqref{e.domingo}).}
\end{alignat*}
Since local stable and unstable manifolds are uniformly transverse, there exists a constant $C_1>0$ such that $\mathsf{d}(x,y) \ge C_1 \cdot   \mathsf{d}^\mathrm{u}(x,z)$. This implies:
\[\mathsf{d}(x,y)  \ge C_1\cdot C_0^{-1} \cdot c \cdot \rho_0  \cdot e^{-(\chi_\mu + 2\delta) n}\; .\]
It follows that $\mathsf{d}(x,y) > \epsilon_n$, for $n$ uniformly sufficiently large.
\end{proof}

The same argument as in the proof of \cref{t:example_conformal} (based on \cref{t:lower_general} again) yields that 
$\underline{\mo}(\cM_f(K)) \ge 2(h_\mu - 2\delta)/(\chi_\mu + 3\delta)$.
As $\delta$ is arbitrarily close to $0$, we conclude that $\underline{\mo}(\cM_f(K))$ is at least $2 h_\mu/\chi_\mu = d = \dim K$ by \eqref{entrop_formula_conserv}. It follows that $\mo(\cM_f^\mathrm{erg}(K)) = d$.
This completes the proof of \cref{t:example_conformal}.
\end{proof}

\section{Metric emergence and quantization of measures}\label{s:quantization}

\subsection{Quantization of measures}

The problem of \emph{quantization of measures} consists in approximating efficiently a given measure by another measure with finite support: see \cite{GrafL}.

Let $(Y,\mathsf{d})$ be a compact metric space.
Consider the set of probability measures $\cM(Y)$ endowed with  a metric also denoted $\mathsf{d}$, which can be either
a $q$-Wasserstein metric $\mathsf{W}_q$, $q \in [1,\infty)$ or the L\'evy--Prokhorov metric~$\mathsf{LP}$.

\begin{definition}\label{def:Q}  
The \emph{quantization number} of a measure $\mu \in \cM(Y)$ at a scale (or resolution) $\epsilon>0$, denoted $Q_\mu(\epsilon)$, is defined as the least integer $N$ such that there exists a probability measure $\nu$ with $\mathsf{d}(\mu,\nu) \le \epsilon$ and supported on a set of cardinality $N$. 
\end{definition}

Here is a reformulation of the definition when a Wassertein metric is used:

\begin{proposition}\label{rephrasing}
The quantization number $Q_\mu(\epsilon)$ for the $q$-Wasserstein metric $\mathsf{W}_q$ is the minimal cardinality $N$ of a set  $F = \{x_1, \dots, x_N\}$ so that:
\[\int_Y \left(\mathsf{d}(x, F)\right)^q d\mu(x) \leq \epsilon^q\; .\]
\end{proposition}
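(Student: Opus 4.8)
The statement is a routine reformulation, and the plan is simply to prove the two inequalities $Q_\mu(\epsilon)\le N^\ast$ and $Q_\mu(\epsilon)\ge N^\ast$ separately, where I write $N^\ast$ for the quantity on the right-hand side of the proposition, namely the least $\card F$ among finite sets $F=\{x_1,\dots,x_N\}\subset Y$ with $\int_Y \mathsf{d}(x,F)^q\,d\mu(x)\le\epsilon^q$, and $\mathsf{d}(x,F)\coloneqq\min_{y\in F}\mathsf{d}(x,y)$.

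For $Q_\mu(\epsilon)\ge N^\ast$ I would take any finitely supported $\nu$ with $\mathsf{W}_q(\mu,\nu)\le\epsilon$ and $\card(\supp\nu)\le Q_\mu(\epsilon)$, set $F\coloneqq\supp\nu$, and note that every coupling $\pi\in\Pi(\mu,\nu)$ is concentrated on $Y\times F$, hence $\mathsf{d}(x,y)\ge\mathsf{d}(x,F)$ for $\pi$-a.e.\ $(x,y)$. Integrating against $\pi$ and using that its first marginal is $\mu$ gives $\int_Y\mathsf{d}(x,F)^q\,d\mu(x)\le\int \mathsf{d}(x,y)^q\,d\pi$; taking the infimum over $\pi$ yields $\int_Y\mathsf{d}(x,F)^q\,d\mu(x)\le\mathsf{W}_q(\mu,\nu)^q\le\epsilon^q$, so $F$ is admissible for $N^\ast$ and $N^\ast\le\card F\le Q_\mu(\epsilon)$. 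Note that this half does not even need the infimum defining $\mathsf{W}_q$ to be attained.

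For $Q_\mu(\epsilon)\le N^\ast$ I would start from a set $F=\{x_1,\dots,x_{N^\ast}\}$ realizing $N^\ast$ and produce an explicit $\epsilon$-quantizer by the nearest-point (Voronoi) construction: choose Borel sets $A_1,\dots,A_{N^\ast}$ partitioning $Y$ with $x\in A_i\Rightarrow\mathsf{d}(x,x_i)=\mathsf{d}(x,F)$ (for instance $A_i\coloneqq\{x:\mathsf{d}(x,x_i)=\mathsf{d}(x,F)\}\setminus\bigcup_{j<i}A_j$), put $\nu\coloneqq\sum_i\mu(A_i)\,\delta_{x_i}$, and use the coupling $\pi\in\Pi(\mu,\nu)$ defined by $\pi(E\times\{x_i\})\coloneqq\mu(E\cap A_i)$, which has the correct marginals precisely because the $A_i$ form a partition. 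Estimating the cost of $\pi$ then gives $\mathsf{W}_q(\mu,\nu)^q\le\sum_i\int_{A_i}\mathsf{d}(x,x_i)^q\,d\mu(x)=\int_Y\mathsf{d}(x,F)^q\,d\mu(x)\le\epsilon^q$, so $\nu$ is supported on at most $N^\ast$ points and witnesses $Q_\mu(\epsilon)\le N^\ast$; combining the two bounds completes the argument.

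I do not expect a genuine obstacle here, since the proposition is essentially a restatement; the only point deserving a word of care is the measurability of the partition $\{A_i\}$ together with the verification that the coupling $\pi$ above indeed has marginals $\mu$ and $\nu$. Both are immediate because $F$ is finite and each $\mathsf{d}(\cdot,x_i)$ is continuous, so — in contrast with the general theory of quantization — no appeal to existence of optimal plans or to Strassen's theorem is required.
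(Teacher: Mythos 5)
Your proof is correct and follows essentially the same two-inequality strategy as the paper: for $Q_\mu(\epsilon)\le N^\ast$ you push $\mu$ forward under a measurable nearest-point selection (your Voronoi cells $A_i$ are exactly the fibers $h^{-1}(x_i)$ of the paper's map $h$, and your coupling $\pi$ is $(\id\times h)_*\mu$), and for the reverse you bound the cost of a coupling from below by the Voronoi cost. The one genuine (if small) simplification is in the direction $Q_\mu(\epsilon)\ge N^\ast$: the paper disintegrates an \emph{optimal} plan $\pi=\int\delta_\xi\otimes\nu_\xi\,d\mu(\xi)$ and uses $\supp\nu_\xi\subset F'$, whereas you simply observe that \emph{every} $\pi\in\Pi(\mu,\nu)$ is concentrated on $Y\times F$, deduce $\mathsf{d}(x,y)\ge\mathsf{d}(x,F)$ $\pi$-a.e., integrate, and take the infimum at the end; this avoids both the disintegration theorem and the (true, but unneeded here) fact that an optimal plan exists. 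Both routes yield the same inequality $\epsilon^q\ge\int\mathsf{d}(\cdot,F)^q\,d\mu$, so the difference is one of economy rather than substance.
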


\begin{proof}
Fix $\epsilon>0$ and let $F \subset Y$ be a set of  minimal cardinality $N$ such that $\int [\mathsf{d}(x,F)]^q d\mu(x) \le \epsilon^q$. 

Take a measurable map $h \colon Y \to F$ that associates to each element in $Y$ a closest element in $F$ (w.r.t.\ the $\mathsf{d}$ metric). Let $\nu \coloneqq h_* \mu \in \cM(Y)$; this is a measure supported on $F$. We claim that $\mathsf{W}_q (\mu,\nu) \le  \epsilon$. Indeed, $\pi \coloneqq (\id \times h)_* (\mu)$ is a transport plan from $\mu $ to $ \nu$ with cost 
$$
\int [\mathsf{d}(x,h(x))]^q d\mu(x) = \int [\mathsf{d}(x,F)]^q d\mu(x) \leq \epsilon^q \, .
$$
We have shown that $Q_{\mu}(\epsilon) \le N$.

Let us prove the reverse inequality. 
Let $\nu \in \cM(Y)$ be a measure whose support $F' \subset Y$ has cardinality $Q_{\mu}(\epsilon)$ and such that $\mathsf{W}_q({\mu},\nu) \le  \epsilon$. This means that there is a transport plan $\pi \in \cM(Y \times Y)$ from ${\mu}$ to $\nu$ with cost at most $\epsilon^q$. 
Consider a disintegration of $\pi$, that is, a family $(\nu_\xi)$ of elements of $\cM(Y)$, defined for ${\mu}$-almost every $\xi \in Y$, such that $\pi = \int \delta_\xi \otimes \nu_\xi d{\mu}(\xi)$. As the second marginal of $\pi$ equals $\nu$, whose support is the finite set $F'$, it follows that $\supp \nu_\xi \subset F'$ for ${\mu}$-almost every $\xi$.
Therefore:
$$
\epsilon^q \geq \textrm{cost}(\pi)
=   \iint [\mathsf{d}(\xi,\eta)]^q d\nu_\xi(\eta) d\mu(\xi) 
\ge \int [\mathsf{d} (\xi,F')]^q d \mu(\xi)  
 \, .
$$
This shows that $N \le \card F'= Q_{\mu}(\epsilon)$.
\end{proof}

Here is a similar characterization of the quantization number for the case of the L\'evy--Prokhorov metric:

\begin{proposition}\label{rephrasing_LP}
The quantization number $Q_{\mu}(\epsilon)$ for the $\mathsf{LP}$ metric is the least number of closed balls of radius $\epsilon$ that cover a set of $\mu$-measure at least $1-\epsilon$.
\end{proposition}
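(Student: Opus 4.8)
The plan is to prove the two inequalities $Q_\mu(\epsilon)\le M$ and $Q_\mu(\epsilon)\ge M$ separately, where $M$ denotes the least number of closed $\epsilon$-balls whose union has $\mu$-measure at least $1-\epsilon$ (this $M$ is finite since $X$ is compact). Throughout I would argue directly from the definition of $\mathsf{LP}$ (or its Strassen reformulation recalled above), bearing in mind that $\mathsf{LP}(\mu,\nu)\le\epsilon$ asserts only that the defining condition holds for every $\epsilon'>\epsilon$; the passage from $\epsilon'$ back to $\epsilon$ is deferred to the end and handled by continuity of $\mu$ from above, using that $\bigcap_{\epsilon'>\epsilon}\bar B(x,\epsilon')=\bar B(x,\epsilon)$ and that a union over a finite set commutes with this intersection.

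For $Q_\mu(\epsilon)\le M$: fix closed balls $\bar B(x_1,\epsilon),\dots,\bar B(x_M,\epsilon)$ whose union $E$ satisfies $\mu(E)\ge 1-\epsilon$, partition $E$ into Borel pieces $A_i\subset\bar B(x_i,\epsilon)$ (for instance $A_i = E\cap\bar B(x_i,\epsilon)\setminus\bigcup_{j<i}\bar B(x_j,\epsilon)$), and let $h\colon X\to\{x_1,\dots,x_M\}$ send each $A_i$ to $x_i$ and the remaining set $X\setminus E$ to $x_1$; this $h$ is Borel since the balls are closed. Then $\nu\coloneqq h_*\mu$ is supported on at most $M$ points, and the transport plan $\pi\coloneqq(\id\times h)_*\mu$ assigns to $\{(x,y):\mathsf{d}(x,y)>\epsilon\}$ mass at most $\mu(X\setminus E)\le\epsilon$, so Strassen's theorem gives $\mathsf{LP}(\mu,\nu)\le\epsilon$ and hence $Q_\mu(\epsilon)\le M$. (If one prefers to avoid Strassen, the same $\nu$ is readily checked to satisfy $\nu(E')\le\mu(V_{\epsilon'}(E'))+\epsilon'$ and $\mu(E')\le\nu(V_{\epsilon'}(E'))+\epsilon'$ for all Borel $E'$ and all $\epsilon'>\epsilon$, using that each point of $A_i$ lies within $\epsilon<\epsilon'$ of $x_i$.)

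For $Q_\mu(\epsilon)\ge M$: let $\nu$ be supported on a finite set $F'$ with $\card F' = Q_\mu(\epsilon)$ and $\mathsf{LP}(\mu,\nu)\le\epsilon$. Testing the defining inequality of $\mathsf{LP}$ with the closed set $E=F'$ yields $1=\nu(F')\le\mu(V_{\epsilon'}(F'))+\epsilon'$ for every $\epsilon'>\epsilon$, hence $\mu\bigl(\bigcup_{x\in F'}\bar B(x,\epsilon')\bigr)\ge\mu(V_{\epsilon'}(F'))\ge 1-\epsilon'$, since $V_{\epsilon'}(F')\subset\bigcup_{x\in F'}\bar B(x,\epsilon')$. (Alternatively, disintegrating an almost-optimal transport plan from $\mu$ to $\nu$ and projecting onto the first coordinate gives the same inequality, in parallel with the proof of the preceding \lcnamecref{rephrasing}.) Letting $\epsilon'\downarrow\epsilon$ along a sequence and invoking continuity of $\mu$ from above, we obtain $\card F'$ closed $\epsilon$-balls covering a set of $\mu$-measure at least $1-\epsilon$, whence $M\le\card F'=Q_\mu(\epsilon)$.

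I do not anticipate a genuine obstacle here: once the right sets are chosen the argument is pure bookkeeping. The only point that needs care is the strict-versus-nonstrict convention in the definitions of $\mathsf{LP}$ and of the neighborhoods $V_\epsilon$, which is precisely why the estimates are carried out at level $\epsilon'>\epsilon$ and the limit $\epsilon'\downarrow\epsilon$ is taken only at the very end; a secondary, routine verification is the Borel measurability of the pieces $A_i$ and of the map $h$, which is immediate.
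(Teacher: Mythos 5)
Your argument is correct and is the natural one the paper has in mind when it dismisses the proof as ``straightforward.'' The only subtlety, the strict-versus-nonstrict inequality in the definition of $\mathsf{LP}$ (and whether $V_\epsilon$ is open), is handled correctly by working at level $\epsilon'>\epsilon$ and passing to the limit via continuity of $\mu$ from above together with the identity $\bigcap_{\epsilon'>\epsilon}\bigcup_{i}\bar B(x_i,\epsilon')=\bigcup_i\bar B(x_i,\epsilon)$ for finite unions.
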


\begin{proof}
Straightforward.
\end{proof}

Similarly to the definition of the lower and upper box-counting dimensions, following \cite[p.~155]{GrafL} the \emph{lower} and \emph{upper quantization dimensions} of $\mu\in \cM(Y)$ 
are defined as:
\[
\underline\dim(\mu)  \coloneqq  \liminf_{\epsilon \to 0} \frac{\log Q_\mu(\epsilon)}{-\log \epsilon} \qand
\overline \dim(\mu)  \coloneqq  \limsup_{\epsilon \to 0} \frac{\log Q_\mu(\epsilon)}{-\log \epsilon}\; .
\]
If these numbers coincide then they are denoted by $\dim(\mu)$ and called \emph{quantization dimension}.
Furthermore, the \emph{lower} and \emph{upper quantization orders} are defined as:
\[
\underline \qo(\mu)  \coloneqq  \liminf_{\epsilon \to 0} \frac{\log\log Q_\mu(\epsilon)}{-\log \epsilon} \qand
\overline \qo(\mu)  \coloneqq  \limsup_{\epsilon \to 0} \frac{\log\log Q_\mu(\epsilon)}{-\log \epsilon}\; .
\]
If these numbers coincide then they are denoted by $\qo(\mu)$ and called \emph{quantization order}.

\begin{proposition}\label{l:quant_cover} 
For any resolution $\epsilon>0$, the quantization number of any $\mu \in \cM(Y)$ is bounded from above by the covering number of~$Y$, 
that is:
$$
Q_\mu(\epsilon) \le D_Y(\epsilon) \, .
$$
In particular,
\begin{alignat*}{3}
\underline \dim(\mu) &\le \underline \dim(Y) &\quad&\text{and}&\quad \overline \dim(\mu) &\le \overline \dim(Y) \, , 
\\
\underline \qo(\mu) &\le \underline \mo(Y) &\quad&\text{and}&\quad \overline \qo(\mu) &\le \overline \mo(Y)\, .	
\end{alignat*}
\end{proposition}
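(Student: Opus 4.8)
The plan is to turn a near-optimal $\epsilon$-dense subset of $Y$ into a finitely-supported measure that is $\epsilon$-close to $\mu$ in whichever admissible metric is in use; this simultaneously yields the bound $Q_\mu(\epsilon) \le D_Y(\epsilon)$, from which the dimension and order inequalities are immediate.

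First I would fix $\epsilon>0$ and choose an $\epsilon$-dense set $F \subset Y$ of minimal cardinality $D_Y(\epsilon)$, together with a Borel map $h \colon Y \to F$ sending each point to a nearest point of $F$ (measurability of such a selection is routine, exactly as in the proof of \cref{rephrasing}). By $\epsilon$-density we have $\mathsf{d}(x, h(x)) = \mathsf{d}(x, F) \le \epsilon$ for every $x \in Y$. Set $\nu \coloneqq h_* \mu$, a probability measure supported on $F$, and consider the transport plan $\pi \coloneqq (\id \times h)_* \mu \in \Pi(\mu, \nu)$.

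Next I would verify that $\mathsf{d}(\mu,\nu) \le \epsilon$ for both types of metric. For a $q$-Wasserstein metric, the cost of $\pi$ is $\int_Y [\mathsf{d}(x,h(x))]^q \, d\mu(x) \le \epsilon^q$, so $\mathsf{W}_q(\mu,\nu) \le \epsilon$; equivalently, this is immediate from \cref{rephrasing}, since $\int_Y [\mathsf{d}(x,F)]^q \, d\mu(x) \le \epsilon^q$. For the L\'evy--Prokhorov metric, Strassen's characterization applies to $\pi$: the set $\{(x,y) \in Y \times Y \st \mathsf{d}(x,y) > \epsilon\}$ is $\pi$-null, hence $\mathsf{LP}(\mu,\nu) \le \epsilon$; equivalently, \cref{rephrasing_LP} gives it at once, because the $D_Y(\epsilon)$ closed $\epsilon$-balls centered at the points of $F$ cover all of $Y$, which has $\mu$-measure $1 \ge 1 - \epsilon$. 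In every case $Q_\mu(\epsilon) \le \card F = D_Y(\epsilon)$, as claimed.

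Finally, the consequences follow by taking logarithms. For $0 < \epsilon < 1$ one has $\frac{\log Q_\mu(\epsilon)}{-\log\epsilon} \le \frac{\log D_Y(\epsilon)}{-\log\epsilon}$, and passing to $\liminf$ and $\limsup$ as $\epsilon \to 0$ yields the bounds on $\underline{\dim}(\mu)$ and $\overline{\dim}(\mu)$. Applying $\log$ once more gives the bounds on $\underline{\qo}(\mu)$ and $\overline{\qo}(\mu)$: this is legitimate for all sufficiently small $\epsilon$ whenever $Q_\mu(\epsilon) \to \infty$ (so that $\log Q_\mu(\epsilon) > 1$ and hence also $\log D_Y(\epsilon) > 1$, where $x \mapsto \log x$ is increasing), and if $Q_\mu(\epsilon)$ stays bounded then $\underline{\qo}(\mu) = \overline{\qo}(\mu) = 0$ and the inequalities hold trivially. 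I do not expect a genuine obstacle here; the only points needing minor care are the measurability of the nearest-point selection $h$ and the clean invocation of Strassen's theorem in the $\mathsf{LP}$ case.
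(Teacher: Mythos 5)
Your proof is correct and follows essentially the same route as the paper's: push $\mu$ forward under a nearest-point map to an $\epsilon$-dense set to get a finitely supported measure at distance $\le\epsilon$, then pass to $\log$ and $\log\log$. You simply spell out the measurability of the selection, the Strassen/\cref{rephrasing_LP} argument for $\mathsf{LP}$, and the small-$\epsilon$ bookkeeping, all of which the paper treats as immediate.
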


\begin{proof}
Given an $\epsilon$-dense set $F$ of cardinality $N$, we can transport any measure $\mu \in \cM(Y)$ to a measure supported on $F$ with cost $\leq \epsilon^q$ with respect to the cost function $\mathsf{d}^q$.  This shows that  $Q_\mu(\epsilon) \le  D_Y(\epsilon)$ with respect to the $\mathsf{W}_q$ distance. 
In view of \cref{rephrasing_LP}, the same statement is also immediate for the $\mathsf{LP}$ distance. Then it follows that quantization dimensions are bounded by box-counting dimensions, and quantization orders are bounded by metric orders.
\end{proof}

\begin{example}\label{quantization 1d} 
Consider $Y = [0,1]$ with the usual metric, and endow the space $\cM([0,1])$ with the metric $\mathsf{W}_q$.
Consider the Lebesgue measure on $[0,1]$; its quantization number is:
$$
Q_{\Leb}(\epsilon) = \left\lceil \frac{1}{2 (q+1)^{1/q} \, \epsilon} \right\rceil \, ,
$$
and in particular the quantization dimension is $1$.
Indeed, given $N\ge 1$, the probability measure on $[0,1]$ supported on $N$ points which is $\mathsf{W}_q$-closest to Lebesgue is: 
$$
\nu_N \coloneqq \frac{1}{N}\sum_{j=1}^N \delta_{\frac{2j-1}{2N}} \, ,
\quad\text{for which} \quad
\mathsf{W}_q(\nu_N, \Leb) = \frac{1}{2 (q+1)^{1/q} \, N} 
$$
(see \cite[p.~69]{GrafL}), so the asserted formula for $Q_{\Leb}(\epsilon)$ follows.
\end{example}

\begin{example}\label{quantization higher d}
If $\mu$ is a compactly supported measure on $\R^d$ which is absolutely continuous with respect to Lebesgue measure then $\dim(\mu) = d$; 
actually there is a precise asymptotic formula for the quantization number $Q_\mu(\epsilon)$  with respect to the $\mathsf{W}_q$ distance: see \cite[p.~78, p.~52]{GrafL}.
\end{example}

\begin{example}
See the paper \cite{LM} for the computation of the quantization dimension of certain self-similar measures ($F$-conformal measures) supported on fractal sets defined by conformal iterated function systems; let us note that the answer depends on the exponent $q$.
\end{example}

\begin{example}
The metric entropy of an ergodic measure can be described in terms of quantization numbers: see \cref{ss:entropy_quantization}.
\end{example}

In this paper, we are mostly interested in the situation where the quantization orders are positive, and so the quantization dimensions are infinite.

In view of \cref{l:quant_cover}, the next result yields measures with maximal quantization order:

\begin{otherthm}\label{t:fat_measure}
Let $Y$ be a Borel subset of a compact metric space $Z$. 
Then there exists a probability measure $\mu \in \cM(Y)$ such that:
$$\underline \qo (\mu)= \underline{\mo}(Y) \qand 
\overline \qo (\mu) = \overline{\mo}(Y) \, .
$$
\end{otherthm}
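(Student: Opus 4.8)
The plan is to realise $\mu$ as a countable convex combination $\mu=\sum_n a_n\mu_n$, where each $\mu_n$ is the normalised counting measure on a maximum‑cardinality $\epsilon_n$‑separated subset of $Y$, with the scales $\epsilon_n$ and the weights $a_n$ arranged so that $\mu$ stays ``as spread out as $Y$'' at every small scale. First I would observe that \cref{l:quant_cover} (together with \cref{rephrasing_LP} for the $\mathsf{LP}$ case) applies verbatim when $Y$ is merely a Borel subset of the compact space $Z$: any finite $\epsilon$‑dense subset of $Y$ still produces a transport plan of cost $\le\epsilon^q$. Hence $\underline\qo(\mu)\le\underline\mo(Y)$ and $\overline\qo(\mu)\le\overline\mo(Y)$ automatically, and only the reverse inequalities require work. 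If $\overline\dim(Y)<\infty$ then $\underline\mo(Y)=\overline\mo(Y)=0$ and the claim is trivial (take a Dirac mass supported in $Y$); so one may assume $\overline\dim(Y)=\infty$, whence $S(Y,e^{-m})\to\infty$ as $m\to\infty$.

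The engine is the following quantisation lower bound, which I would read off from \cref{rephrasing}: \emph{if $G\subset Y$ is $r$‑separated with $\card G=g$ and $\mu(\bar B(y,r/4))\ge m$ for every $y\in G$, then $Q_\mu(\eta)\ge g/2$ whenever $\eta\le c_q\,r\,(gm)^{1/q}$}, where $c_q=4^{-1}2^{-1/q}$. Indeed the balls $\bar B(y,r/4)$, $y\in G$, are pairwise disjoint; any ball $\bar B(z,r/4)$ meets at most one of them, so for a finite set $F=\{z_1,\dots,z_N\}$ with $N<g/2$ at least $g/2$ of the balls $\bar B(y,r/4)$ are disjoint from $\bigcup_i\bar B(z_i,r/4)$, and on each of these $\mathsf d(\,\cdot\,,F)\ge r/4$; integrating, $\int\mathsf d(\,\cdot\,,F)^q\,d\mu\ge(g/2)(r/4)^q m$, so no $F$ of cardinality $<g/2$ realises $Q_\mu(\eta)$. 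The same $\mu$ will handle the L\'evy--Prokhorov distance: by \eqref{compaWpLP} with $p=1$ one has $\mathsf W_1\le(1+\diam Z)\,\mathsf{LP}$, so $Q^{\mathsf{LP}}_\mu(\eta)\ge Q^{\mathsf W_1}_\mu\big((1+\diam Z)\eta\big)$ and the constant is asymptotically harmless.

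For the construction I would let $\{\epsilon_n\}_{n\ge1}$ be the strictly decreasing enumeration of $\{e^{-m}:m\ge m_0\}\cup\{\delta_j:j\ge1\}$, where $\delta_j<e^{-2^{j}}$ is chosen (off the powers of $e$, after a harmless perturbation) so that $\frac{\log\log S(Y,\delta_j)}{-\log\delta_j}\to\overline\mo(Y)$; such $\delta_j$ exist by definition of the $\limsup$. Then $1\le\epsilon_n/\epsilon_{n+1}\le e$, and since only $O(\log m)$ of the $\delta_j$ exceed $e^{-m}$ one has $\log(1/\epsilon_n)\sim n$. I would fix weights $a_n>0$ with $\sum_n a_n=1$ and $a_n\asymp n^{-2}$, so that $\log(1/a_n)=o(\log(1/\epsilon_n))$. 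Let $\Lambda_n\subset Y$ be an $\epsilon_n$‑separated set of maximum cardinality $s_n:=S(Y,\epsilon_n)$, set $\mu_n:=s_n^{-1}\sum_{y\in\Lambda_n}\delta_y$ and $\mu:=\sum_n a_n\mu_n$; since $\bigcup_n\Lambda_n$ is a countable subset of $Y$ we have $\mu\in\cM(Y)$. As $\mu\ge a_n\mu_n$, we get $\mu(\bar B(y,\epsilon_n/4))\ge a_n/s_n$ for each $y\in\Lambda_n$, so the lemma applied with $r=\epsilon_n$, $g=s_n$, $m=a_n/s_n$ (so $gm=a_n$) gives $Q_\mu(\eta_n)\ge s_n/2$ with $\eta_n:=c_q\,\epsilon_n\,a_n^{1/q}$; shrinking $c_q$ if necessary, $\eta_n\downarrow0$ strictly.

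Finally I would extract the orders. As $Q_\mu$ is nonincreasing, $Q_\mu(\epsilon)\ge s_n/2$ for every $\epsilon\in(\eta_{n+1},\eta_n]$. By the above, $-\log\eta_{n+1}=(1+o(1))\log(1/\epsilon_n)$ (using $\epsilon_n/\epsilon_{n+1}\le e$ and $\log(1/a_{n+1})=o(\log(1/\epsilon_n))$) and $\log\log(s_n/2)=(1+o(1))\log\log S(Y,\epsilon_n)$ (using $s_n\to\infty$), so on each such interval $\frac{\log\log Q_\mu(\epsilon)}{-\log\epsilon}\ge(1+o(1))\frac{\log\log S(Y,\epsilon_n)}{-\log\epsilon_n}$; taking $\liminf$ as $\epsilon\to0$ and using that the liminf of $\frac{\log\log S(Y,\delta)}{-\log\delta}$ along the subsequence $(\epsilon_n)$ is at least the full liminf gives $\underline\qo(\mu)\ge\underline\mo(Y)$. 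Evaluating instead at $\epsilon=\eta_n$ with $n$ the index of $\delta_j$ and letting $j\to\infty$ gives $\overline\qo(\mu)\ge\overline\mo(Y)$. Combined with the upper bounds, $\underline\qo(\mu)=\underline\mo(Y)$ and $\overline\qo(\mu)=\overline\mo(Y)$. The delicate point---and the reason for merging the grid $\{e^{-m}\}$ into the scale sequence and for letting the weights decay only polynomially---is that $Q_\mu(\epsilon)$ must be large for \emph{every} small $\epsilon$, not merely along the sequence $(\eta_n)$; the bounded consecutive ratios $\epsilon_n/\epsilon_{n+1}$, together with the negligibility of the resolution shift $\epsilon_n\mapsto\eta_n$, is exactly what keeps a gap in $\epsilon$ from spoiling the $\liminf$.
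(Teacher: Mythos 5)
Your argument is essentially the paper's proof, phrased a little differently. Both constructions take $\mu=\sum_n a_n\mu_n$ with $\mu_n$ uniform on a maximal $\epsilon_n$-separated subset of $Y$; both transfer a lower bound on $Q_{\mu_n}$ into a lower bound on $Q_\mu$ at a controlled shift of the scale (the paper via \cref{l:submeasure} combined with \cref{l:cost}, you via \cref{l:submeasure} folded into a direct disjoint-balls count read off from \cref{rephrasing}); and both close with a logarithmic sandwich argument that converts bounds along a discrete scale sequence into the full $\liminf$ and $\limsup$, which works precisely because consecutive values of $\log(1/\epsilon_n)$ are asymptotically equal. The remaining differences are cosmetic: you take the grid $\{e^{-m}\}$ merged with a subsequence realizing the $\limsup$ (the grid alone already suffices, by the same sandwich) together with weights $a_n\asymp n^{-2}$, while the paper takes $\epsilon_i=2^{-i^2}$ and $t_i=2^{-i}$; and you argue directly in $\mathsf{W}_q$ whereas the paper first reduces to $\mathsf{W}_1$ via \eqref{Wp_monotone}--\eqref{compaWpLP}.

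One small slip is worth fixing. Your reduction to the case $\overline\dim(Y)=\infty$ via a Dirac mass does not work: if $Y$ has at least two points but $\overline\dim(Y)<\infty$, then indeed $\underline\mo(Y)=\overline\mo(Y)=0$, but $Q_{\delta_y}\equiv 1$ forces $\log\log Q_{\delta_y}(\epsilon)=-\infty$ for every $\epsilon$, hence $\underline\qo(\delta_y)=\overline\qo(\delta_y)=-\infty\neq 0$. The correct trivial dichotomy is ``$Y$ finite'' (in which case $\mu$ uniform on $Y$ does the job); for infinite $Y$ your main construction applies unchanged, since the only input you actually use is $S(Y,\epsilon_n)\to\infty$, which holds whenever $Y$ is infinite regardless of its box dimension.
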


The proof is given in \cref{ss:fat_measure}.

\subsection{Ergodic decomposition} 

Let $X$ be a compact metric space and let $f \colon X \to X$ be a continuous map. 
Recall that the empirical measure at a point $x \in X$ is defined as $\mathbf{e}^f(x) \coloneqq \lim \frac{1}{n} \sum_{i=0}^{n-1} \delta_{f^i x}$, 
when this limit exists.
By the ergodic decomposition theorem (see \cite[\S~13]{DGS} or \cite[\S~II.6]{Mane}), there exists a Borel set $X_0 \subset X$ with full probability (that is, $\mu(X_0)=1$ for every $\mu \in \cM_f(X)$) such that for every $x\in X_0$, the empirical measure $\mathbf{e}^f(x)$ is $f$-invariant and ergodic. 
So for any $\mu \in \cM_f(X)$, the measure $\mathbf{e}^f_* \mu \in \cM(\cM(X))$ gives full weight to the set $\cM_f^\mathrm{erg}(X) \subset \cM_f(X)$ of ergodic measures, and its barycenter $\mathrm{bar}(\mathbf{e}^f_* \mu) \coloneqq \int \nu \, d(\mathbf{e}^f_* \mu)(\nu)$ is $\mu$. The probability measure $\mathbf{e}^f_* \mu$ is called the \emph{ergodic decomposition} of $\mu$.
There is a canonical bijection $\cM_f(X) \to \cM(\cM_f^\mathrm{erg}(X))$, namely $\mu \mapsto \mathbf{e}^f_* \mu$.

\begin{remark}\label{r:semicont_erg_dec}  
Generic conservative diffeomorphisms (in any topology) constitute continuity points of the ergodic decomposition of Lebesgue measure: see \cite[Thrm.~B]{AB}.
We will see later in \cref{ss:KAM} non-trivial examples of continuity points w.r.t.\ the $C^\infty$ topology.
\end{remark}

Let us note the following property for later use:

\begin{lemma}[Factors and ergodic decompositions] \label{l:push_erg}
Suppose $X$ and $Y$ are compact metric spaces and let 
$f \colon X \to X$ and $g \colon Y \to Y$ be continuous maps which are semi-conjugate ($g \circ \phi = \phi \circ f$) via a continuous $\phi \colon X \to Y$. 
Let $\Phi \colon \cM(X) \to \cM(Y)$ be the map $\mu \mapsto \phi_*\mu$. 
Then $\Phi(\cM_f(X)) \subset \cM_g(Y)$, 
$\Phi(\cM_f^\mathrm{erg}(X)) \subset \cM_g^\mathrm{erg}(Y)$, and:
$$
\forall \mu \in \cM_f(X), \quad \mathbf{e}^g_*(\phi_* \mu) = \Phi_*(\mathbf{e}^f_*(\mu)) \, .
$$
\end{lemma}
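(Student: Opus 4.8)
The plan is to verify the three assertions in turn, all of which follow from elementary properties of pushforward of measures under the continuous semiconjugacy $\phi$, together with the characterization of invariance and ergodicity via the empirical function. First I would record the obvious functoriality: since $g \circ \phi = \phi \circ f$, for any $\mu \in \cM(X)$ we have $g_*(\phi_* \mu) = \phi_*(f_* \mu)$; hence if $f_*\mu = \mu$ then $g_*(\phi_*\mu) = \phi_*\mu$, which gives $\Phi(\cM_f(X)) \subset \cM_g(Y)$. More generally, by induction $g^i \circ \phi = \phi \circ f^i$, so $\phi(f^i x) = g^i(\phi x)$ for all $i \geq 0$ and all $x \in X$; this pointwise identity is the workhorse for everything else.

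Next I would establish the key pointwise relation between empirical measures: for $x \in X$ such that $\mathbf{e}^f(x)$ exists, I claim $\mathbf{e}^g(\phi x)$ exists and equals $\phi_*(\mathbf{e}^f(x))$. Indeed $\mathbf{e}^g_n(\phi x) = \frac1n \sum_{i=0}^{n-1} \delta_{g^i(\phi x)} = \frac1n \sum_{i=0}^{n-1} \delta_{\phi(f^i x)} = \phi_*\bigl(\mathbf{e}^f_n(x)\bigr)$, and since $\phi_* \colon \cM(X) \to \cM(Y)$ is continuous with respect to the weak topology, passing to the limit as $n \to \infty$ gives $\mathbf{e}^g(\phi x) = \phi_*(\mathbf{e}^f(x))$. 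Equivalently, on the full-probability set $X_0$ (where the empirical function is defined, invariant, and ergodic) we have the identity of maps $\mathbf{e}^g \circ \phi = \Phi \circ \mathbf{e}^f$.

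For the ergodicity claim, let $\mu \in \cM_f^\mathrm{erg}(X)$; I want to show $\phi_* \mu$ is ergodic for $g$. Using the criterion recalled in the introduction — a measure is ergodic iff its empirical function is a.e.\ constant — we know $\mathbf{e}^f$ is $\mu$-a.e.\ equal to a constant, necessarily $\mu$ itself. Since $\mu(X_0) = 1$, the relation $\mathbf{e}^g \circ \phi = \Phi \circ \mathbf{e}^f$ holds $\mu$-a.e., so $\mathbf{e}^g$ is $\phi_*\mu$-a.e.\ equal to the constant $\phi_*\mu$; hence $\phi_*\mu$ is ergodic for $g$ (invariance already being known). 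This gives $\Phi(\cM_f^\mathrm{erg}(X)) \subset \cM_g^\mathrm{erg}(Y)$; note the ``a.e.'' manipulation is legitimate because $\phi_*\mu(A) = \mu(\phi^{-1}A)$ and $\phi^{-1}(\text{full }\phi_*\mu\text{-measure set}) \supset X_0 \cap \{$constancy locus$\}$ has full $\mu$-measure.

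Finally, for the ergodic decomposition identity, fix $\mu \in \cM_f(X)$. Both $\mathbf{e}^g_*(\phi_*\mu)$ and $\Phi_*(\mathbf{e}^f_*\mu)$ are elements of $\cM(\cM(Y))$, so it suffices to test them against continuous functions, or more simply to observe that they are pushforwards of $\mu$ under two maps $X \dashrightarrow \cM(Y)$ that agree $\mu$-a.e. Explicitly, $\mathbf{e}^g_*(\phi_*\mu) = (\mathbf{e}^g)_*\bigl(\phi_* \mu\bigr) = (\mathbf{e}^g \circ \phi)_* \mu$ and $\Phi_*(\mathbf{e}^f_* \mu) = (\Phi \circ \mathbf{e}^f)_* \mu$; since $\mathbf{e}^g \circ \phi = \Phi \circ \mathbf{e}^f$ on the $\mu$-full set $X_0$, the two pushforward measures coincide. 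I expect no serious obstacle here; the only point requiring a modicum of care is the bookkeeping of null sets — making sure that the various ``almost everywhere'' statements are with respect to the correct measures and that pushing forward along $\phi$ preserves fullness — but this is routine. The continuity of $\phi_*$ on $(\cM(X), \text{weak})$, which underlies the interchange of $\phi_*$ with the limit defining $\mathbf{e}$, is standard and follows from $\phi$ being continuous.
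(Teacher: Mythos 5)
Your proof is correct, but it takes a genuinely different route from the paper's for the second and third assertions. The first assertion is handled identically. For the second, the paper argues simply that the $\phi$-preimage of a $g$-invariant Borel set is $f$-invariant, and invokes the invariant-set characterization of ergodicity; for the third, the paper shows by a direct computation that $\nu := \phi_*\mu$ is the barycenter of $\Phi_*(\hat\mu)$, notes that $\Phi_*(\hat\mu)$ is carried on $\cM_g^\mathrm{erg}(Y)$, and then appeals to the \emph{uniqueness} of the ergodic decomposition. You instead isolate a single pointwise identity, $\mathbf{e}^g \circ \phi = \Phi \circ \mathbf{e}^f$ on the full-probability set $X_0$, obtained from $\mathbf{e}^g_n(\phi x) = \phi_*(\mathbf{e}^f_n(x))$ and weak continuity of $\phi_*$, and then read off both the ergodicity assertion (via the ``empirical function is a.e.\ constant'' criterion recalled in the introduction) and the ergodic-decomposition identity (as equality of pushforwards of $\mu$ under two maps that agree $\mu$-a.e.). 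Your approach is more unified and more computational; it never needs the uniqueness theorem as a black box, since comparing $(\mathbf{e}^g\circ\phi)_*\mu$ with $(\Phi\circ\mathbf{e}^f)_*\mu$ directly amounts to a hands-on verification. The paper's argument for part two is arguably slicker on its own, but your single-lemma organization is clean and all the measurability/null-set bookkeeping you flag is indeed routine: the set $\{y : \mathbf{e}^g(y) = \phi_*\mu\}$ is Borel, its $\phi$-preimage contains $X_0$ intersected with the constancy locus of $\mathbf{e}^f$, hence it has full $\phi_*\mu$-measure.
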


When no confusion arises, we will write $\phi_*$ instead of $\Phi$, so the last equation becomes $\mathbf{e}^g_*(\phi_* \mu) = \phi_{**}(\mathbf{e}^f_*(\mu))$.

\begin{proof}
Let $\mu \in \cM_f(X)$ and let $\nu \coloneqq \phi_* (\mu)$.
Then $g_*\nu = (g\circ \phi)_*(\mu) = (\phi \circ f)_*(\mu) = \nu$, that is, $\nu \in \cM_g(Y)$, proving the first assertion.

Note that that if $B \subset Y$ is a $g$-invariant Borel set then $\phi^{-1}(B)$ is $f$-invariant; it follows that $\nu$ is ergodic if $\mu$ is, proving the second assertion.

Let $\hat\mu \coloneqq \mathbf{e}^f_*(\mu)$ and $\hat\nu \coloneqq \mathbf{e}^g_*(\nu)$ be the corresponding ergodic decompositions.
For every Borel set $B \subset Y$, we have:
\begin{multline*}
\nu(B) =
\mu(\phi^{-1}(B)) = 
\int_{\cM(X)} \eta(\phi^{-1}(B)) \, d \hat\mu (\eta)  \\=
\int_{\cM(X)} (\Phi(\eta))(B) \, d \hat\mu (\eta)  =
\int_{\cM(Y)} \xi(B) \, d(\Phi_*(\hat\mu))(\xi) \, .
\end{multline*}
This means that $\nu$ is the barycenter of $\Phi_*(\hat\mu)$.
Since $\hat\mu$ gives full weight to $\cM_f^\mathrm{erg}(X)$, the measure   
$\Phi_*(\hat\mu)$ gives full weight to $\cM_g^\mathrm{erg}(Y)$, and by uniqueness of the ergodic decomposition, it follows that $\Phi_*(\hat\mu)$ equals $\hat\nu$, the ergodic decomposition of~$\nu$.
\end{proof}

\subsection{Metric emergence}

Given a continuous self-map $f \colon X \to X$ of a compact metric space $X$, 
we consider the set $\cM(X)$ with a metric $\mathsf{d} \in \{\mathsf{W}_p \st 1\le p<\infty\} \cup \{\mathsf{LP}\}$.
We have introduced in \cref{def.metric_em} the \emph{metric emergence} of a measure $\mu \in \cM(X)$.
In the case $\mu$ is invariant, we have the following characterization of metric emergence:

\begin{proposition}\label{p:emergence_Q}
For every dynamics $f \colon X \to X$, 
the metric emergence of any invariant measure $\mu \in \cM_f(X)$ equals the quantization number of the ergodic decomposition $\hat\mu \coloneqq \mathbf{e}^f_* \mu$ (considered as a measure on $\cM(X)\,$):
$$
\Eme_\mu(f)(\epsilon) = Q_{\hat\mu}(\epsilon) \, ,
$$
where $Q_{\hat\mu}$ is the quantization number of $\hat \mu$ for the metric $\mathsf{W}_1$ of $\cM(\cM(X))$.
\end{proposition}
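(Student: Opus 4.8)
The plan is to reduce the statement directly to the reformulation of quantization numbers given in \cref{rephrasing}, applied with exponent $q = 1$ and with the compact metric space taken to be $Y \coloneqq (\cM(X), \mathsf{d})$. Recall from the ergodic decomposition theorem (quoted above) that the empirical function $x \mapsto \mathbf{e}^f(x)$ is Borel and defined on a set $X_0 \subset X$ of full $\mu$-measure; hence $\hat\mu \coloneqq \mathbf{e}^f_* \mu$ is a well-defined element of $\cM(\cM(X)) = \cM(Y)$, and its quantization number $Q_{\hat\mu}(\epsilon)$ with respect to the metric $\mathsf{W}_1$ on $\cM(Y)$ makes sense.

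First I would invoke \cref{rephrasing} with $q=1$: it asserts that $Q_{\hat\mu}(\epsilon)$ is the minimal cardinality $N$ of a set $F = \{\mu_1, \dots, \mu_N\} \subset \cM(X)$ such that
\[
\int_{\cM(X)} \mathsf{d}(\nu, F) \, d\hat\mu(\nu) \le \epsilon,
\qquad \text{where } \mathsf{d}(\nu,F) \coloneqq \min_{1 \le i \le N} \mathsf{d}(\nu,\mu_i) \, .
\]
Next, for any finite set $F \subset \cM(X)$ the function $\nu \mapsto \mathsf{d}(\nu, F)$ is continuous (hence Borel and bounded on the compact space $\cM(X)$), so the change-of-variables formula for the pushforward $\hat\mu = \mathbf{e}^f_* \mu$ gives
\[
\int_{\cM(X)} \mathsf{d}(\nu, F) \, d\hat\mu(\nu)
= \int_{X_0} \mathsf{d}(\mathbf{e}^f(x), F) \, d\mu(x)
= \int_X \min_{1 \le i \le N} \mathsf{d}(\mathbf{e}^f(x), \mu_i) \, d\mu(x) \, .
\]
Thus the constraint appearing in \cref{rephrasing} is literally $\int_X \min_i \mathsf{d}(\mathbf{e}^f(x),\mu_i)\, d\mu(x) \le \epsilon$, which by the remark concluding \cref{def.metric_em} — valid precisely because $\mu$ is $f$-invariant, so that $\mathbf{e}^f_n(x) \to \mathbf{e}^f(x)$ for $\mu$-a.e.\ $x$ and the limit passes inside the integral by bounded convergence — is exactly the constraint defining metric emergence. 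Since in both optimization problems the measures $\mu_1, \dots, \mu_N$ range over all of $\cM(X)$, the two minimal cardinalities agree, giving $\Eme_\mu(f)(\epsilon) = Q_{\hat\mu}(\epsilon)$.

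The argument is essentially bookkeeping, and I do not expect a genuine obstacle. The only points that require a little care are that $\mathbf{e}^f$ is defined only $\mu$-almost everywhere (harmless, since one integrates against $\mu$ and $\hat\mu$ is by construction the pushforward of $\mu$ along this partially defined Borel map), and that one is comparing two minimizations over the same parameter set $\cM(X)^N$, so no measures can be spuriously gained or lost when translating between the two formulations.
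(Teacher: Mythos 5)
Your proof is correct and takes the same route as the paper, whose argument is literally ``combine \cref{def.metric_em,rephrasing}''; you have simply spelled out the change-of-variables step $\int_{\cM(X)} \mathsf{d}(\nu,F)\,d\hat\mu(\nu) = \int_X \mathsf{d}(\mathbf{e}^f(x),F)\,d\mu(x)$ and the bounded-convergence justification that the $\limsup$ in \eqref{emergence def} reduces to the plain integral when $\mu$ is invariant, both of which the paper leaves implicit.
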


\begin{proof}
Combine \cref{def.metric_em,rephrasing}.
\end{proof}

\begin{remark}
Given a parameter $q\ge 1$, we may define the  \emph{$q$-emergence}
of an $f$-invariant measure $\mu$  at scale $\epsilon>0$ as:
$$
\Eme_\mu^{(q)}(\epsilon) \coloneqq 
\min \left\{ N \st \exists F \subset \cM(X) \text{ with } \card F \le N, \  \int \mathsf{d}(\mathbf{e}^f(x),F)^q d\mu(x) \le \epsilon^q \right\} .
$$
By \cref{rephrasing}, $q$-emergence is the quantization number of the ergodic decomposition with respect to the metric $\mathsf{W}_q$ on $\cM(\cM(X))$. For simplicity we will focus our study on $q=1$. 
\end{remark}

Metric and topological emergences may be compared as follows:

\begin{proposition}\label{compa metric_topo emergence}
For every dynamics $f \colon X \to X$, the metric emergence of any invariant measure $\mu \in \cM_f(X)$ is at most the topological emergence:
\[
\Eme_\mu(f)(\epsilon)\le \Eme_{\mathrm{top}}(f)(\epsilon)\; ,\quad \forall \epsilon>0\; ,
\]
provided both emergences are computed using the same metric~$\mathsf{W}_p$ or $\mathsf {LP}$ on $\cM(X)$.
\end{proposition}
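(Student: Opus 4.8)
The plan is to invoke \cref{p:emergence_Q}, which identifies $\Eme_\mu(f)(\epsilon)$ with the quantization number $Q_{\hat\mu}(\epsilon)$ of the ergodic decomposition $\hat\mu = \mathbf{e}^f_*\mu$, viewed as a probability measure on the space $\cM(X)$ equipped with the chosen metric $\mathsf{W}_p$ or $\mathsf{LP}$. On the other hand, by definition \eqref{e.def_top_emergence}, the topological emergence $\Eme_\mathrm{top}(f)(\epsilon)$ is the relative covering number $D_{\cM(X)}(\cM^\mathrm{erg}_f(X),\epsilon)$, i.e. the minimal number of closed $\epsilon$-balls of $\cM(X)$ needed to cover the set $\cM^\mathrm{erg}_f(X)$ of ergodic measures. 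So the statement reduces to the inequality $Q_{\hat\mu}(\epsilon) \le D_{\cM(X)}(\cM^\mathrm{erg}_f(X),\epsilon)$.

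This is essentially \cref{l:quant_cover}, but applied relative to the Borel subset $Y \coloneqq \cM^\mathrm{erg}_f(X)$ of the ambient space $Z \coloneqq \cM(X)$, rather than to $Y$ itself as a standalone metric space. The key point, established by the ergodic decomposition theorem as recalled just before \cref{r:semicont_erg_dec}, is that $\hat\mu$ gives full weight to $\cM^\mathrm{erg}_f(X)$. Hence the same transport argument used to prove \cref{l:quant_cover} works verbatim: given a collection $F$ of $N = D_{\cM(X)}(\cM^\mathrm{erg}_f(X),\epsilon)$ points of $\cM(X)$ whose closed $\epsilon$-balls cover $\cM^\mathrm{erg}_f(X)$, choose a measurable nearest-point map $h\colon \cM^\mathrm{erg}_f(X) \to F$ (so $\mathsf{d}(\nu,h(\nu)) \le \epsilon$ for every ergodic $\nu$), and let $\tilde\nu \coloneqq h_*\hat\mu$, a measure supported on the finite set $F$. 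For the Wasserstein case, $(\id\times h)_*\hat\mu$ is a transport plan from $\hat\mu$ to $\tilde\nu$ whose cost with respect to $\mathsf{d}^p$ is $\int \mathsf{d}(\nu,h(\nu))^p \, d\hat\mu(\nu) \le \epsilon^p$, so $\mathsf{W}_p(\hat\mu,\tilde\nu) \le \epsilon$; for the L\'evy--Prokhorov case one argues identically using \cref{rephrasing_LP} (the $\epsilon$-balls covering $\cM^\mathrm{erg}_f(X)$ cover a set of $\hat\mu$-measure $1 \ge 1-\epsilon$). Either way $Q_{\hat\mu}(\epsilon) \le N$, which is the desired bound.

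There is essentially no obstacle here: the only thing to be slightly careful about is measurability of the nearest-point selection $h$, which is routine (the covering balls being closed, one partitions $\cM^\mathrm{erg}_f(X)$ into the finitely many Borel pieces according to which ball, in some fixed order, first contains a given point). A short remark that \cref{l:quant_cover} was stated for covering numbers $D_Y$ but its proof only uses that the relevant finite set $\epsilon$-covers the support of the measure, so it applies with $D_{\cM(X)}(\cM^\mathrm{erg}_f(X),\epsilon)$ in place of $D_{\cM(X)}(\epsilon)$, makes the whole argument a one-paragraph affair. Thus the proof is: apply \cref{p:emergence_Q}, then run the transport/covering argument of \cref{l:quant_cover} relative to $\cM^\mathrm{erg}_f(X)$, using that the ergodic decomposition $\hat\mu$ is concentrated on $\cM^\mathrm{erg}_f(X)$.
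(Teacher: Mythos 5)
Your proof is correct and follows the same route as the paper: both invoke \cref{p:emergence_Q} and then bound $Q_{\hat\mu}(\epsilon)$ by the relative covering number $D_{\cM(X)}(\cM^\mathrm{erg}_f(X),\epsilon)$ via the transport argument underlying \cref{l:quant_cover}, using that $\hat\mu$ is concentrated on $\cM^\mathrm{erg}_f(X)$. You are right that \cref{l:quant_cover} as literally stated gives the covering number of $Y$ as a standalone space, so a small extension to relative covering numbers is needed, and you supply it; one tiny remark is that by \cref{p:emergence_Q} the outer quantization metric on $\cM(\cM(X))$ is always $\mathsf{W}_1$ (the $\mathsf{W}_p$/$\mathsf{LP}$ choice lives on the inner space $\cM(X)$), so your case split at the outer level is not needed, though it is harmless since the pointwise bound $\mathsf{d}(\nu,h(\nu))\le\epsilon$ gives the conclusion regardless.
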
  
\begin{proof}
By \cref{p:emergence_Q}, the metric emergence $\Eme_\mu(f)(\epsilon)$ equals the quantization number $Q_{\hat\mu}(\epsilon)$ of the ergodic decomposition $\hat\mu \coloneqq \mathbf{e}^f_* \mu$. Note that $\hat\mu$ is a measure on   $Y \coloneqq \cM_f^{\mathrm{erg}}(X)$ which is a Borel subset of $Z\coloneqq\cM(X)$. By \cref{l:quant_cover}, $Q_{\hat\mu}(\epsilon)$ is at most the relative covering number $D_Z(Y,\epsilon)$, which equals the topological emergence $\Eme_{\mathrm{top}}(f)(\epsilon)$ by its own definition~\eqref{e.def_top_emergence}.
\end{proof}

We are now able to deduce the variational principle for emergence announced at the introduction:

\begin{proof}[Proof of \cref{t:var_princ}] 
Applying \cref{t:fat_measure} with $Y\coloneqq \cM_f^\mathrm{erg}(X)$, $Z\coloneqq \cM (X)$ and $q=1$,
we obtain a probability measure $\nu \in \cM(\cM_f^\mathrm{erg}(X))$ such that: 
\[
\underline {\qo}(\mu)  = \underline {\mo}(\cM_f^\mathrm{erg}(X))\qand 
\overline {\qo}(\mu)   = \overline {\mo}(\cM_f^\mathrm{erg}(X))\; .
\]	
Let $\mu \coloneqq \int_{\cM(X)}\eta d\nu(\eta)$. Since $\nu$ gives full weight to $\cM_f^{\mathrm{erg}}(X)$, the measure $\mu$ is invariant and its ergodic decomposition is $\nu$. 
Bearing in mind \cref{p:emergence_Q} and the definitions of lower and upper quantizations orders and metric orders, we obtain the equalities stated in \cref{t:var_princ}.
\end{proof}

\subsection{Some properties of quantization numbers}\label{ss:quant_properties}

In this \lcnamecref{ss:quant_properties} we prove a few general properties about quantization numbers that will be needed later. 
To simplify matters, \emph{all quantization numbers in this \lcnamecref{ss:quant_properties} are computed w.r.t.\ the $\mathsf{W}_1$ metric.}

\begin{lemma}\label{l:continuity}
For all $\mu_1$, $\mu_2 \in \cM(Y)$,
$$
\mathsf{W}_1(\mu_1,\mu_2) \le \epsilon
\quad \Rightarrow \quad Q_{\mu_2}(2\epsilon) \le Q_{\mu_1}(\epsilon) \, .
$$
\end{lemma}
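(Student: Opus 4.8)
The plan is to directly invoke the characterization of the quantization number via transport plans (\cref{rephrasing}) together with the triangle inequality for $\mathsf{W}_1$. Suppose $\mathsf{W}_1(\mu_1,\mu_2)\le\epsilon$, and let $N \coloneqq Q_{\mu_1}(\epsilon)$. By \cref{rephrasing}, there is a finite set $F=\{x_1,\dots,x_N\}\subset Y$ with $\int_Y \mathsf{d}(x,F)\,d\mu_1(x)\le\epsilon$; equivalently, there is a measure $\nu_1$ supported on a set of cardinality $N$ with $\mathsf{W}_1(\mu_1,\nu_1)\le\epsilon$.

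The key step is then simply the triangle inequality in $(\cM(Y),\mathsf{W}_1)$:
\[
\mathsf{W}_1(\mu_2,\nu_1) \le \mathsf{W}_1(\mu_2,\mu_1) + \mathsf{W}_1(\mu_1,\nu_1) \le \epsilon + \epsilon = 2\epsilon \, .
\]
Since $\nu_1$ is supported on a set of cardinality at most $N$, \cref{def:Q} immediately gives $Q_{\mu_2}(2\epsilon)\le N = Q_{\mu_1}(\epsilon)$, which is the desired conclusion.

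Alternatively, one can phrase the argument entirely at the level of transport plans without passing through \cref{rephrasing}: pick an optimal plan $\pi_{12}\in\Pi(\mu_1,\mu_2)$ realizing $\mathsf{W}_1(\mu_1,\mu_2)\le\epsilon$ and a plan $\pi_{1\nu}\in\Pi(\mu_1,\nu_1)$ of cost $\le\epsilon$, then glue them along their common $\mu_1$-marginal (using the gluing lemma / disintegration) to obtain a plan on $Y\times Y\times Y$ whose $(\mu_2,\nu_1)$-marginal has cost $\le 2\epsilon$ by the triangle inequality for $\mathsf{d}$ inside the integral. This is exactly the standard proof that $\mathsf{W}_1$ satisfies the triangle inequality, so there is no real obstacle here — the lemma is essentially a restatement of the triangle inequality combined with the definition of the quantization number, and the factor of $2$ is the price paid for enlarging the scale. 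I do not anticipate any genuine difficulty; the only point requiring a line of care is making sure the support of $\nu_1$ has cardinality $\le N$ (not exactly $N$), which is automatic since $Q$ is defined as a minimal cardinality and any measure supported on $\le N$ points witnesses the bound $Q_{\mu_2}(2\epsilon)\le N$.
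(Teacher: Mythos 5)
Your argument is correct and is precisely the ``immediate'' proof the paper has in mind: take a measure $\nu_1$ supported on $Q_{\mu_1}(\epsilon)$ points with $\mathsf{W}_1(\mu_1,\nu_1)\le\epsilon$ and apply the triangle inequality to get $\mathsf{W}_1(\mu_2,\nu_1)\le 2\epsilon$. The alternative gluing-lemma reformulation you sketch is sound but adds nothing beyond what the triangle inequality for $\mathsf{W}_1$ already encapsulates.
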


\begin{proof}
Immediate. 
\end{proof}

The next two lemmas deal with pushing forward a measure under a Lipschitz map, and the effect of this operation on the quantization numbers:

\begin{lemma}\label{l:push_Lip}
Let $f \colon (Y,\mathsf{d}) \to (Z,\mathsf{d})$ be a $\kappa$-Lipschitz map between compact metric spaces.
Let $F \colon (\cM(Y),\mathsf{W}_1) \to (\cM(Z),\mathsf{W}_1)$ be the map $\mu \mapsto f_* \mu$.
Then $F$ is $\kappa$-Lispchitz.
\end{lemma}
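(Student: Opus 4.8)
The plan is to verify directly that the map $F \colon (\cM(Y), \mathsf{W}_1) \to (\cM(Z), \mathsf{W}_1)$, $\mu \mapsto f_* \mu$, does not increase distances by more than the factor $\kappa$. First I would recall the transport-plan description of the $\mathsf{W}_1$ distance: given $\mu_1$, $\mu_2 \in \cM(Y)$, pick an optimal transport plan $\pi \in \Pi(\mu_1,\mu_2)$, so that
\[
\mathsf{W}_1(\mu_1,\mu_2) = \int_{Y \times Y} \mathsf{d}(x,y) \, d\pi(x,y) \, .
\]
The natural candidate for a transport plan between $f_*\mu_1$ and $f_*\mu_2$ is the pushforward $(f \times f)_* \pi$, where $f \times f \colon Y \times Y \to Z \times Z$. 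One checks immediately that this measure has the correct marginals: the first projection $p_1 \colon Z \times Z \to Z$ satisfies $p_1 \circ (f \times f) = f \circ p_1$, so $(p_1)_* (f\times f)_* \pi = f_* (p_1)_* \pi = f_* \mu_1$, and symmetrically for the second marginal. Hence $(f\times f)_*\pi \in \Pi(f_*\mu_1, f_*\mu_2)$.

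Next I would estimate the cost of this plan. Using the change-of-variables formula for pushforwards and then the $\kappa$-Lipschitz hypothesis $\mathsf{d}(f(x),f(y)) \le \kappa \, \mathsf{d}(x,y)$,
\[
\int_{Z \times Z} \mathsf{d}(z,w) \, d\big((f\times f)_*\pi\big)(z,w)
= \int_{Y \times Y} \mathsf{d}(f(x),f(y)) \, d\pi(x,y)
\le \kappa \int_{Y \times Y} \mathsf{d}(x,y) \, d\pi(x,y) \, .
\]
Since $\mathsf{W}_1(f_*\mu_1, f_*\mu_2)$ is the infimum of the cost over all transport plans in $\Pi(f_*\mu_1, f_*\mu_2)$, it is bounded by the cost of this particular plan, which by the displayed inequality is at most $\kappa \int \mathsf{d}(x,y)\, d\pi(x,y) = \kappa \, \mathsf{W}_1(\mu_1,\mu_2)$. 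This is exactly the assertion that $F$ is $\kappa$-Lipschitz.

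There is no real obstacle here; the only point requiring a moment of care is the compatibility of projections with $f \times f$ that guarantees the pushforward plan has the right marginals, and the measurability of $f$ (automatic, since $f$ is continuous on a compact metric space). One could alternatively run the argument via the Kantorovich--Rubinstein duality, bounding $\int g \, d(f_*\mu_1) - \int g \, d(f_*\mu_2) = \int (g\circ f)\, d\mu_1 - \int (g \circ f)\, d\mu_2$ for $1$-Lipschitz $g$ and noting $g \circ f$ is $\kappa$-Lipschitz, but the transport-plan route is the shortest and is what I would write.
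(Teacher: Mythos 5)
Your proof is correct and follows essentially the same route as the paper's: push forward a transport plan by $f\times f$, check the marginals, and bound the cost using the Lipschitz hypothesis. The paper's version is just terser.
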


\begin{proof}
Given $\mu_1$, $\mu_2 \in \cM(Y)$, consider a transport plan $\pi \in \cM(Y \times Y)$.
Then $\tilde \pi \coloneqq (f \times f)_*(\pi)$ is a transport plan from $f_*\mu_1$ to $f_*\mu_2$ with:
$$
\mathrm{cost}(\tilde \pi) = \int \mathsf{d}(f(x),f(y))\, d\pi(x,y) \le \kappa \int \mathsf{d}(x,y) \, d\pi(x,y)  = \kappa \, \mathrm{cost}(\pi) \, .
$$
So $\mathsf{W_1}(f_*\mu_1 , f_*\mu_2) \le \kappa \mathsf{W_1}(\mu_1 , \mu_2)$.
\end{proof}

\begin{lemma}\label{l:quant_Lip} 
Let $(Y,\mathsf{d})$ and $(Z,\mathsf{d})$ be compact metric spaces.
Let $f \colon Y \to Z$ be a $\kappa$-Lipschitz map.
Given a measure $\mu \in \cM(Y)$, consider its push-forward $\nu \coloneqq f_*\mu \in \cM(Z)$.
Then for every $\epsilon>0$, we have:
\[
Q_\mu (\epsilon) \ge Q_\nu (\kappa\epsilon) \, .
\]
\end{lemma}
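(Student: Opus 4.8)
The plan is to use \cref{rephrasing}, which characterizes the quantization number for $\mathsf{W}_1$ as the minimal cardinality of a set $F$ whose associated ``distance integral'' is controlled. Specifically, I would first invoke \cref{rephrasing} to say that $Q_\nu(\kappa\epsilon)$ is the least cardinality $N'$ of a subset $F' \subset Z$ satisfying $\int_Z \mathsf{d}(z,F') \, d\nu(z) \le \kappa\epsilon$. The goal is then to produce such an $F'$ with $\card F' \le Q_\mu(\epsilon)$, which would immediately give the desired inequality $Q_\nu(\kappa\epsilon) \le Q_\mu(\epsilon)$.

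Next I would unpack $Q_\mu(\epsilon)$: again by \cref{rephrasing}, there is a set $F = \{x_1,\dots,x_N\} \subset Y$ with $N = Q_\mu(\epsilon)$ and $\int_Y \mathsf{d}(x,F) \, d\mu(x) \le \epsilon$. The natural candidate is $F' \coloneqq f(F) = \{f(x_1),\dots,f(x_N)\} \subset Z$, which has cardinality at most $N$. The remaining step is the estimate: for every $x \in Y$, the $\kappa$-Lipschitz property gives $\mathsf{d}(f(x),F') \le \mathsf{d}(f(x),f(x_i)) \le \kappa\,\mathsf{d}(x,x_i)$ for each $i$, hence $\mathsf{d}(f(x),F') \le \kappa\,\mathsf{d}(x,F)$. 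Integrating against $\mu$ and using the change-of-variables formula $\int_Z g \, d\nu = \int_Y (g\circ f)\,d\mu$ (since $\nu = f_*\mu$), we get
\[
\int_Z \mathsf{d}(z,F') \, d\nu(z) = \int_Y \mathsf{d}(f(x),F') \, d\mu(x) \le \kappa \int_Y \mathsf{d}(x,F) \, d\mu(x) \le \kappa\epsilon \, .
\]
Thus $F'$ witnesses $Q_\nu(\kappa\epsilon) \le \card F' \le N = Q_\mu(\epsilon)$.

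There is no real obstacle here; the statement is essentially a direct corollary of \cref{rephrasing} combined with the trivial observation that Lipschitz maps contract distances to finite sets by the Lipschitz constant. The only mild point of care is the reduction to measurability of the map $x \mapsto$ (a closest point of $F$) if one wanted to phrase things via transport plans instead, but going through the ``distance integral'' reformulation in \cref{rephrasing} sidesteps that entirely. Alternatively, one could deduce this from \cref{l:push_Lip} together with a scaling property of quantization numbers, but the direct argument above is cleaner and self-contained.
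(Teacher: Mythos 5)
Your proof is correct and is essentially the paper's argument in a slightly different guise: the paper pushes forward the optimal finitely-supported approximating measure $\tilde\mu$ and invokes \cref{l:push_Lip} to get $\mathsf{W}_1(f_*\mu,f_*\tilde\mu)\le\kappa\epsilon$, whereas you push forward the optimal quantizer \emph{set} $F$ and verify the distance-integral bound of \cref{rephrasing} directly. The two are equivalent via \cref{rephrasing} itself; your version has the mild advantage of not needing \cref{l:push_Lip}, as you note.
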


\begin{proof}
Given $\mu \in \cM(Y)$ and $\epsilon>0$, let $\tilde \mu \in \cM(Y)$ be a measure supported on $n \coloneqq Q_\mu(\epsilon)$ points with $\mathsf{W}_1(\mu, \tilde\mu) \le \epsilon$.
By \cref{l:push_Lip}, the measures $\nu \coloneqq f_*\mu$ and $\tilde \nu \coloneqq f_*\tilde \mu$ satisfy $\mathsf{W}_1(\nu, \tilde\nu) \le \kappa\epsilon$. Since $\tilde \nu$ is supported on at most $n$ points, we conclude that $Q_\nu (\kappa\epsilon) \le n$.
\end{proof}

The next two lemmas will be used several times, in particular in the proof of \cref{t:fat_measure}:

\begin{lemma}\label{l:submeasure}
Let $\mu$, $\mu_1\in \cM(Y)$ be such that $\mu \ge t \mu_1$, for some $t>0$.
Then:
$$
Q_{\mu}(t \epsilon) \ge  Q_{\mu_1}(\epsilon) \, .
$$
\end{lemma}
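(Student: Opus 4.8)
The plan is to compare the quantization of $\mu$ with that of the dominated measure $\mu_1$ by relating their optimal transport plans onto finite supports. Recall from \cref{rephrasing} that $Q_\mu(t\epsilon)$ for the $\mathsf{W}_1$ metric is the minimal cardinality $N$ of a set $F = \{x_1,\dots,x_N\} \subset Y$ such that $\int_Y \mathsf{d}(x,F)\, d\mu(x) \le t\epsilon$. So it suffices to show that any such set $F$ witnessing $Q_\mu(t\epsilon) \le N$ also witnesses $Q_{\mu_1}(\epsilon) \le N$.

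First I would take a finite set $F$ with $\card F = Q_\mu(t\epsilon) =: N$ and $\int_Y \mathsf{d}(x,F)\, d\mu(x) \le t\epsilon$. The key observation is that the function $x \mapsto \mathsf{d}(x,F)$ is non-negative, so the hypothesis $\mu \ge t\mu_1$ (meaning $\mu - t\mu_1$ is a non-negative measure) gives
\[
t \int_Y \mathsf{d}(x,F)\, d\mu_1(x) \le \int_Y \mathsf{d}(x,F)\, d\mu(x) \le t\epsilon \, .
\]
Dividing by $t>0$ yields $\int_Y \mathsf{d}(x,F)\, d\mu_1(x) \le \epsilon$, and hence $Q_{\mu_1}(\epsilon) \le N = Q_\mu(t\epsilon)$ by \cref{rephrasing}. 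This is exactly the claimed inequality.

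There is essentially no obstacle here; the only point requiring care is the meaning of $\mu \ge t\mu_1$, which I interpret (as is standard) as the signed measure $\mu - t\mu_1$ being a non-negative Borel measure, so that integrals of non-negative functions are monotone. Once that is granted, the proof is a one-line estimate. If one prefers the formulation via measures with finite support rather than finite sets (\cref{def:Q}), one can alternatively argue: given $\nu$ supported on $N$ points with $\mathsf{W}_1(\mu,\nu)\le t\epsilon$, extract from an optimal plan the induced closest-point map to $\supp\nu$ and apply the same averaging argument against $\mu_1$; but the reformulation in \cref{rephrasing} makes the direct route cleanest, so that is the one I would write up.
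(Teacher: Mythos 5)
Your proof is correct, and it is genuinely simpler than the paper's. The paper works directly from \cref{def:Q}: it takes an optimal transport plan $\pi$ from $\mu$ to a finitely supported $\nu$, reweights it by the Radon--Nikodym derivative $f = \frac{d\mu_1}{d\mu}$ (using $\mu\ge t\mu_1$ to get $0\le f\le t^{-1}$), and shows the resulting coupling $\tilde\pi$ has first marginal $\mu_1$, second marginal supported in $\supp\nu$, and cost at most $t^{-1}\cdot\mathrm{cost}(\pi)$. You instead invoke \cref{rephrasing} to pass to the ``nearest point of $F$'' formulation, after which the lemma reduces to the monotonicity of the integral $\int\mathsf{d}(x,F)\,d(\cdot)$ under the domination $\mu\ge t\mu_1$ -- a one-line estimate that entirely avoids disintegrations and reweighted couplings. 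Both routes are valid; yours buys brevity by offloading the transport-theoretic content onto \cref{rephrasing} (already proved in the paper), while the paper's is self-contained at the level of transport plans. The alternative you sketch at the end is essentially the paper's argument.
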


\begin{proof}
Let $\tilde \epsilon\coloneqq  t \epsilon$. 
Let $\nu$ be a measure supported on a set of cardinality $\ell \coloneqq Q_{\mu}(\tilde\epsilon)$ and 
such that $\mathsf{W}_1(\mu,\nu) \leq \tilde\epsilon$.
Let $\pi$ be a transport plan from $\mu$ to $\nu$ with cost (w.r.t.\ $\mathsf{d}$) not greater than $\tilde \epsilon$.

The Radon--Nikodym derivative $f \coloneqq \frac{d \mu_1}{d \mu}$ is well-defined and satisfies $0 \le f \le t^{-1}$ at $\mu$-a.e.\ point.
Consider the measure $\tilde\pi$ on $Y \times Y$ defined by:
$$
d\tilde\pi(x,y) = f(x) d\pi(x,y) \, .
$$
Then $\tilde\pi$ is a probability, its first marginal is $\mu_1$, and its second marginal is some measure $\tilde\nu$ which is absolutely continuous with respect to $\nu$ and therefore supported on a set of cardinality at most $\ell$.
We have:
\begin{multline*}
\mathsf{W}_1(\mu_1,\tilde\nu)\le \mathrm{cost}(\tilde\pi) = \int \mathsf{d}(x,y) \, d\tilde\pi(x,y)  = 
\int \mathsf{d}(x,y) \, f(x) \, d\pi(x,y) 
\\
\le t^{-1} \int \mathsf{d}(x,y) \, d\pi(x,y) = 
t^{-1} \mathrm{cost}(\pi) \leq t^{-1} \tilde \epsilon
 \, .
\end{multline*}
That is, $\mathsf{W}_1(\mu_1,\tilde\nu) \leq t^{-1} \tilde\epsilon = \epsilon$.
It follows that $\card \supp \tilde \nu\ge  Q_{\mu_1}(\epsilon)$, and so $\ell \ge  Q_{\mu_1}(\epsilon)$, as claimed.
\end{proof}

\begin{lemma}\label{l:cost}
Let $\epsilon > 0$ and let $F \subset Y$ be an $\epsilon$-separated set.
Let $n \coloneqq \card F$ and let $\mu$ be the equidistributed probability measure with support $F$.
Let $\nu$ be any probability measure whose support has cardinality $m < n$.
Then:
$$
\mathsf{W}_1(\mu,\nu) \ge  \frac{n-m+1}{n}  \cdot \frac{\epsilon}{2} \, .
$$
\end{lemma}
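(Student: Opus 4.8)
The plan is to bound from below the cost of any transport plan from $\mu$ to $\nu$ by controlling how much mass each point of $\supp\nu$ can receive cheaply. Write $\supp F = \{x_1,\dots,x_n\}$, so $\mu = \frac1n\sum_j \delta_{x_j}$, and let $S\coloneqq\supp\nu$ with $\card S = m < n$. Fix any transport plan $\pi\in\Pi(\mu,\nu)$. For each $j$, decompose the unit mass $\frac1n$ sitting at $x_j$ according to where $\pi$ sends it; since the marginals force $\pi$ to be supported on $\{x_1,\dots,x_n\}\times S$, all of $x_j$'s mass goes to points of $S$. The key observation is that, because $F$ is $\epsilon$-separated, any point $y\in S$ can be within distance $\epsilon/2$ of \emph{at most one} $x_j$ (if it were $\epsilon/2$-close to both $x_j$ and $x_k$, then $\mathsf d(x_j,x_k)\le\epsilon$, contradiction). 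Hence we may choose an injection $\iota\colon S\hookrightarrow\{1,\dots,n\}$ such that whenever $y\in S$ is $(\epsilon/2)$-close to some $x_j$, then $j = \iota(y)$; the remaining indices, at least $n-m$ of them, lie outside the image of $\iota$.

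Next I would estimate the cost. For an index $j$ \emph{not} in the image of $\iota$, every point $y\in S$ that receives mass from $x_j$ satisfies $\mathsf d(x_j,y) > \epsilon/2$, so the contribution of the mass leaving $x_j$ to $\mathrm{cost}(\pi) = \int \mathsf d(x,y)\,d\pi$ is at least $\frac1n\cdot\frac{\epsilon}{2}$. There are at least $n-m$ such indices. Moreover, among the $m$ indices in the image of $\iota$, consider the point $y_0 = \iota^{-1}(j_0)$; the mass arriving at $y_0$ from points $x_j$ with $j\ne j_0$ also travels distance $>\epsilon/2$. A cleaner bookkeeping: for each $y\in S$, at most one source $x_j$ lies within $\epsilon/2$ of $y$, so of the total mass $\nu(\{y\})$ arriving at $y$, all but at most $\frac1n$ of it comes from sources at distance $>\epsilon/2$. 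Summing over $y\in S$: the mass transported a distance $\le\epsilon/2$ is at most $m\cdot\frac1n$, hence the mass transported a distance $>\epsilon/2$ is at least $1 - \frac{m}{n} = \frac{n-m}{n}$. Therefore
\[
\mathrm{cost}(\pi) \ge \frac{\epsilon}{2}\cdot\frac{n-m}{n}.
\]
To recover the slightly sharper constant $\frac{n-m+1}{n}$ claimed in the statement, I would refine the counting: among the at most $m$ units of "cheap" source-slots (one per point of $S$), not all can simultaneously be realized because the marginal at each $x_j$ is exactly $\frac1n$ and an optimal-looking matching still leaves a surplus; more carefully, the number of pairs $(x_j,y)$ with $\mathsf d(x_j,y)\le\epsilon/2$ and positive $\pi$-mass is at most $m$, but each such $x_j$ appears for a unique $y$, so at most $m$ of the $n$ sources are "fully cheap," and at least one of those $m$ must still spill part of its mass to a far point whenever $m\ge 1$ — yielding the extra $\frac1n$. (If $m=0$ the bound is vacuous since $\nu$ cannot be a probability measure; for $m\ge 1$ the $+1$ is extracted this way.) Taking the infimum over all $\pi\in\Pi(\mu,\nu)$ gives $\mathsf W_1(\mu,\nu)\ge\frac{n-m+1}{n}\cdot\frac{\epsilon}{2}$.

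The main obstacle is getting the constant exactly right rather than just $\frac{n-m}{n}$; the crude version follows immediately from the separation-plus-pigeonhole argument, but squeezing out the additional $\frac1n$ requires arguing that a perfect cheap matching of $S$ into $F$ is incompatible with the uniform marginal on $F$ unless $m = n$. I would handle this by a direct counting argument on the bipartite "cheap" incidence structure: since distinct points of $S$ have distinct cheap partners in $F$ and there are strictly fewer of them than points of $F$, the cheap mass is strictly less than $m/n$ by at least $1/n$ once one tracks that the transported mass is a sum of $n$ equal atoms of size $1/n$. All the remaining steps — verifying $\tilde\pi$-type marginal computations, the triangle-inequality bound $\mathsf d(x_j,y)>\epsilon/2$ from $\epsilon$-separation, and passing to the infimum — are routine.
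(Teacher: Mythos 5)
Your basic ``cheap/far'' dichotomy is sound and correctly yields a lower bound of $\tfrac{n-m}{n}\cdot\tfrac{\epsilon}{2}$: since $F$ is $\epsilon$-separated, each $y\in\supp\nu$ lies within distance $\epsilon/2$ of at most one point of $F$, so the $\pi$-mass transported a distance $\le\epsilon/2$ is bounded by $m\cdot\tfrac1n$, and the remaining mass $\ge\tfrac{n-m}{n}$ travels at least $\epsilon/2$. That much is fine and is a genuinely different route from the paper.

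However, your refinement to squeeze out the extra $\tfrac1n$ does not work, and the gap is real. Two problems. First, the ``cheap'' relation is a partial function from $S$ to $F$ (each $y$ has at most one cheap $x_j$), but it need \emph{not} be injective: a single $x_j$ can lie within $\epsilon/2$ of several points of $S$ (those points of $S$ are merely within $\epsilon$ of one another, which nothing forbids). So the injection $\iota$ you posit, and the claim ``each such $x_j$ appears for a unique $y$,'' are not available in general. Second, and more decisively, your assertion that the cheap mass must fall short of $m/n$ by at least $1/n$ is false. Take $n=3$, $m=2$, $\supp\nu=\{x_1,x_2\}$ and $\nu=\tfrac13\delta_{x_1}+\tfrac23\delta_{x_2}$: the plan sending all of $x_1$ to $x_1$, all of $x_2$ to $x_2$, and all of $x_3$ to $x_2$ has cheap mass exactly $\tfrac23=\tfrac{m}{n}$, and no cheap source spills anything. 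Your argument then only gives cost $\ge\tfrac13\cdot\tfrac{\epsilon}{2}$, whereas the true cost in this example is $\ge\tfrac13\cdot\epsilon$ (the far mass from $x_3$ travels $>\epsilon$, not merely $>\epsilon/2$, because $x_3$ is $\epsilon$-far from $x_2$). Your accounting cannot see this, because once a destination $y$ has used up its one cheap slot, you charge each remaining arriving atom only $\epsilon/2$; the actual geometry forces the sources sharing that destination to pay, in total, at least $\epsilon$ per pair of them.

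The paper captures exactly this missing contribution by a different decomposition: one first reduces to deterministic transport plans $T\colon\{1,\dots,n\}\to\{1,\dots,m\}$ (extreme points of the row-stochastic polytope), and then for each destination $j$ with $s=\card T^{-1}(j)\ge 2$, the sum $\sum_{i\in T^{-1}(j)}\mathsf{d}(x_i,y_j)$ is bounded below by $s\epsilon/2$ using the pairwise triangle inequalities $\mathsf{d}(x_i,y_j)+\mathsf{d}(x_{i'},y_j)\ge\mathsf{d}(x_i,x_{i'})>\epsilon$. Summing over $j$, the cost is at least $\tfrac{n_*}{n}\cdot\tfrac{\epsilon}{2}$ where $n_*$ counts indices in fibers of size $\ge 2$, and a short pigeonhole count gives $n_*\ge n-m+1$ when $m<n$. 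That per-fiber pairwise estimate, rather than a per-destination cheap-slot count, is what produces the constant. If you want to salvage your approach, you would need to bound the far cost \emph{per destination} by tracking how much mass collides at each $y$, which is essentially the paper's fiber decomposition in disguise; the cheap-mass count alone cannot recover the $+1$.
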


\begin{proof}
Let  $\supp \mu = \{x_1, \dots, x_n \}$ and  $\supp \nu = \{y_1, \dots, y_m \}$.
Since $\mu$ is equidistributed, transport plans from $\mu$ to $\nu$ take the form:
$$
\pi = \pi_A =  \frac{1}{n} \sum_{i=1}^n \sum_{j=1}^m a_{ij} \delta_{(x_i,y_j)} \, ,
$$
where $A = (a_{ij})$ is a row-stochastic $n \times m$ matrix (that is, each $a_{ij}$ is non-negative and $\sum_{j=1}^m a_{ij} = 1$ for every $i$). 
The cost of $\pi_A$  is:
$$
\mathrm{cost}(\pi_A) = \frac{1}{n} \sum_{i=1}^n \sum_{j=1}^m a_{ij} \mathsf{d}(x_i,y_j) \, ,
$$
which can be viewed as an affine function on the set of row-stochastic matrices.
This set is compact and convex, and its extremal points consist on the matrices that contain exactly one entry equal to $1$ on each row.
So it is sufficient to consider matrices of this type in order to find a lower bound for the cost.
Thus consider a row-stochastic matrix $A = A_T$ whose nonzero entries are $a_{i,T(i)} = 1$ for some map $T \colon \{1,\dots,n\} \to \{1,\dots,m\}$.

\begin{claim}
For every $j \in \{1,\dots, m\}$ such that $s \coloneqq \card T^{-1}(j) \ge 2$, the following holds:
\begin{equation}\label{e:claim_estimate}
\sum_{i \in T^{-1}(j)} \mathsf{d}(x_i,y_j) \ge   \frac{s\cdot \epsilon}{2}\, .
\end{equation}
\end{claim}

\begin{proof}[Proof of the claim] Indeed, 
write $T^{-1}(j) = \{i_1, \dots, i_s\}$; then the left hand side of \eqref{e:claim_estimate} equals:
$$
\sum_{k=1}^s \mathsf{d}(x_{i_k},y_j)
= \frac{1}{s-1} \sum_{1\le k<\ell\le s} \big[\mathsf{d}(x_{i_k},y_j) + \mathsf{d}(x_{i_\ell},y_j)\big] \, . 
$$
For every $1\le k < \ell\le s$, since $F$ is $\epsilon$-separated, it hold:
\[
 \mathsf{d}(x_{i_k},y_j) + \mathsf{d}(x_{i_\ell},y_j) \ge \mathsf{d}(x_{i_k}, x_{i_\ell}) \ge \epsilon \, .
\]
So we obtain:
$$
\sum_{k=1}^s \mathsf{d}(x_{i_k},y_j)
\ge \frac{1}{s-1} \cdot \frac{s(s-1)}{2} \cdot \epsilon  
= \frac{s \cdot \epsilon}{2} \, ,
$$
as claimed. 
\end{proof}

Using \eqref{e:claim_estimate}, we estimate:
$$
\mathrm{cost}(\pi_{A_T}) 
=    \frac{1}{n} \sum_{i=1}^n \mathsf{d}(x_i,y_{T(i)})
=    \frac{1}{n} \sum_{j=1}^m \sum_{i \in T^{-1}(j)} \mathsf{d}(x_i,y_j)
\ge  \frac{n_*}{n}  \frac{\epsilon}{2}  \, ,
$$
where
\begin{align*}
n_* 
\coloneqq \sum_{\substack{j\in \{1,\dots,m\}, \\ \card T^{-1}(j)> 1}} \card T^{-1}(j)
&=  n - \sum_{\substack{j\in \{1,\dots,m\}, \\ \card T^{-1}(j)\le 1}} \card T^{-1}(j) \\
&= n - \card \big\{j\in \{1,\dots,m\} \st \card T^{-1}(j) = 1 \big\} \\
&=   n - m + \card \big\{j\in \{1,\dots,m\} \st \card T^{-1}(j) \neq 1 \big\} \\
&\ge n - m + 1 \, ,
\end{align*}
since $m<n$.
We conclude that $\mathrm{cost}(\pi_A)$ is at least $\frac{n-m+1}{n}   \frac{\epsilon}{2} $ for every matrix $A$ of type $A_T$, and therefore for every row-stochastic matrix $A$.
The \lcnamecref{l:cost} follows.
\end{proof}

\subsection{Existence of a measure with essentially maximal quantization numbers}\label{ss:fat_measure}

In this subsection we prove \cref{t:fat_measure}, which was used to deduce \cref{t:var_princ}.

\begin{proof}[Proof of \cref{t:fat_measure}]\label{proof:t:fat_measure}
It is sufficient to prove the \lcnamecref{t:fat_measure}  assuming that $\cM(Y)$ is metrized with the $\mathsf{W}_1$ distance. 
Indeed, by the first inequality in \eqref{Wp_monotone} (see p.~\pageref{compaWpLP}), if the exponent $q$ is reduced then the metric $\mathsf{W}_q$ does not increase, and so neither do quantization numbers and orders. 
Furthermore, by the second inequality in \eqref{compaWpLP}, the metric $\mathsf{LP}$ is bounded from below by a constant factor of the metric $\mathsf{W}_1$, and so quantization numbers and orders with respect to  $\mathsf{LP}$ are bounded from below by the corresponding quantities with respect to $\mathsf{W}_1$.
So from now on we assume that $\cM(Y)$ is metrized with the $\mathsf{W}_1$ distance.

By \cref{l:quant_cover}, it is sufficient to show the existence of a measure $\mu\in \cM(Y)$ such that:
\begin{equation}\label{suffi:t:fat_measure} \underline \qo(\mu) \ge \underline \mo(Y)\qand  \overline \qo(\mu) \ge \overline \mo(Y)\, .\end{equation}

Recall that, given $\epsilon>0$, the corresponding packing number is denoted by $S_Y(\epsilon)$. We set $\epsilon_i\coloneqq  2^{-i^2}$ for every $i\ge 1$. Let $F_i \subset Y$ be a $4\epsilon_i$-separated set of cardinality $n_i\coloneqq  S_Y(4 \epsilon_i)$, and let $\mu_i \in \cM(Y)$ be the equidistributed probability measure with support $F_i$.
By \cref{l:cost}, if $\nu$ is a probability measure whose support has cardinality at most  $m_i \coloneqq \lceil n_i/2 \rceil$ then
$$
\mathsf{W}_1(\mu_i, \nu) \ge \epsilon_i \, .
$$
That is, in terms of quantization number:
\begin{equation}\label{Qmui:mi} 
	Q_{\mu_i}(\epsilon_i)  \ge m_i \, .
\end{equation}

Now consider the following probability measure:
$$
\mu \coloneqq \sum_{i=1}^\infty t_i \mu_i \, , \quad \text{where} \quad  t_i \coloneqq 2^{-i} \, .
$$
By \cref{l:submeasure}, for every $i\ge 1$ we have $Q_{\mu}(\tilde\epsilon_i) \ge Q_{\mu_i}( \epsilon_i)$, where $\tilde\epsilon_i \coloneqq t_i \epsilon_i$.
Using \eqref{Qmui:mi} we obtain:
\begin{equation}\label{ine:varia:principle}
  \frac{\log\log Q_\mu(\tilde\epsilon_i)}{-\log \tilde\epsilon_i}\ge   \frac{\log\log m_i}{-\log \tilde \epsilon_i}
\mathrel{\underset{i \to \infty}{\scalebox{1.7}{$\sim$}}}
  \frac{\log\log n_i}{-\log (4\epsilon_i)} \coloneqq  \frac{\log \log S_Y(4 \epsilon_i)}{-\log (4 \epsilon_i)}
\;  .
\end{equation}

\begin{claim}\label{lemm:cluster}
The following equalities hold:
\begin{alignat}{2}
\liminf_{i \to \infty} \frac{\log \log Q_\mu(\tilde \epsilon_i)}{-\log \tilde \epsilon_i} &= \underline{\qo}(\mu)\; ,  &\quad 
\limsup_{i \to \infty} \frac{\log \log Q_\mu(\tilde \epsilon_i)}{-\log \tilde \epsilon_i} &=\overline{\qo}(\mu)\; , \label{e.cluster_qo}
\\
\liminf_{i \to \infty} \frac{\log \log S_Y(4 \epsilon_i)}{-\log (4 \epsilon_i)}&= \underline{\mo}(Y)\; ,  &\quad 
\limsup_{i \to \infty} \frac{\log \log S_Y(4\epsilon_i)}{-\log (4 \epsilon_i)}&=\overline{\mo}(Y)\; . \label{e.cluster_mo}
\end{alignat}
\end{claim}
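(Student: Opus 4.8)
The plan is to reduce all four equalities in \cref{lemm:cluster} to a single elementary fact about non‑increasing functions and ``logarithmically dense'' sequences of scales, and then to apply it twice. The fact is: \emph{if $\phi\colon(0,1)\to[1,\infty)$ is non‑increasing with $\phi(\eta)\to\infty$ as $\eta\to0^{+}$, and $(\eta_i)_{i\ge1}$ is a positive sequence decreasing to $0$ with $\log\eta_{i+1}/\log\eta_i\to1$, then}
\[
\liminf_{i\to\infty}\frac{\log\log\phi(\eta_i)}{-\log\eta_i}=\liminf_{\eta\to0^{+}}\frac{\log\log\phi(\eta)}{-\log\eta}\,,
\]
\emph{and the same holds with $\limsup$ throughout.} Granting this, the \lcnamecref{lemm:cluster} is immediate: apply it with $\phi=Q_\mu$ and $\eta_i=\tilde\epsilon_i=t_i\epsilon_i=2^{-(i^2+i)}$ to obtain \eqref{e.cluster_qo}, and with $\phi=S_Y$ and $\eta_i=4\epsilon_i=2^{2-i^2}$ to obtain \eqref{e.cluster_mo}. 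In both cases $-\log\eta_i$ and $-\log\eta_{i+1}$ are, as functions of $i$, quadratic polynomials with the same leading coefficient, so the hypothesis $\log\eta_{i+1}/\log\eta_i\to1$ holds; the functions $Q_\mu$ and $S_Y$ are evidently non‑increasing; and they tend to $\infty$ provided $Y$ is infinite (which we may assume, the case $Y$ finite being trivial since then both functions are eventually constant and all four quantities in \eqref{e.cluster_qo}--\eqref{e.cluster_mo} vanish), because $\supp\mu\supseteq\bigcup_iF_i$ with $\card F_i=S_Y(4\epsilon_i)\to\infty$. One also uses that $\underline\mo(Y)$ and $\overline\mo(Y)$ may be computed with $S_Y$ in place of $D_Y$.

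For the fact itself I would write $\psi(\eta)\coloneqq\frac{\log\log\phi(\eta)}{-\log\eta}$, which is defined and strictly positive on some interval $(0,\eta_0)$ since $\phi(\eta)\to\infty$, and let $\underline L$ and $\overline L$ denote the lower and upper limits of $\psi$ at $0$. Because $(\eta_i)$ runs along a sequence tending to $0$, one has for free that $\liminf_i\psi(\eta_i)\ge\underline L$ and $\limsup_i\psi(\eta_i)\le\overline L$, so only the two reverse inequalities require work. For the $\liminf$ one, I would choose $\delta_k\to0^{+}$ with $\psi(\delta_k)\to\underline L$ and, for each large $k$, let $i=i(k)$ be the index with $\eta_{i+1}\le\delta_k<\eta_i$, so that $i(k)\to\infty$; then monotonicity of $\phi$ gives $\phi(\eta_{i(k)})\le\phi(\delta_k)$, whence (for $k$ large)
\[
\psi(\eta_{i(k)})\le\psi(\delta_k)\cdot\frac{-\log\delta_k}{-\log\eta_{i(k)}}\le\psi(\delta_k)\cdot\frac{-\log\eta_{i(k)+1}}{-\log\eta_{i(k)}}\,,
\]
where the last step uses $\delta_k\ge\eta_{i(k)+1}$ and the positivity of $\psi(\delta_k)$ (valid for $k$ large because $\phi\to\infty$) to push the ratio across the inequality. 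The rightmost ratio tends to $1$ by hypothesis, so $\limsup_k\psi(\eta_{i(k)})\le\underline L$; since the $\liminf$ of the full sequence is at most the $\liminf$ of any subsequence, $\liminf_i\psi(\eta_i)\le\underline L$. The $\limsup$ inequality is proved symmetrically, now comparing $\delta_k$ with $\eta_{i(k)+1}$ instead (using $\phi(\eta_{i(k)+1})\ge\phi(\delta_k)$), so that the stray factor becomes the reciprocal one, which again tends to $1$, and one uses that the $\limsup$ of the full sequence is at least the $\liminf$ of any subsequence.

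The argument is entirely soft; there is no real obstacle. The only points that need a word of care are the arithmetic check that the scales $\tilde\epsilon_i$ and $4\epsilon_i$ are logarithmically dense — which is precisely why $\epsilon_i=2^{-i^2}$ is used rather than a sequence with gaps as large as, say, $\eta_{i+1}=\eta_i^{2}$, for which $\log\eta_{i+1}/\log\eta_i\equiv2$ — together with the routine bookkeeping of $\liminf$/$\limsup$ under extraction of subsequences and the harmless degenerate case $Y$ finite. This is what upgrades \eqref{ine:varia:principle}, which only controls the single subsequence $(\tilde\epsilon_i)$, into a statement about the genuine orders $\underline\qo(\mu)$, $\overline\qo(\mu)$, $\underline\mo(Y)$, $\overline\mo(Y)$.
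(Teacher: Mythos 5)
Your proof is correct and takes essentially the same approach as the paper's: a sandwich argument exploiting the monotonicity of the relevant counting function and the fact that the discrete scales are ``logarithmically dense'' (the paper states this as $\log(4\epsilon_i)\sim\log(4\epsilon_{i+1})$). The paper gives the argument directly for $S_Y$ and declares the $Q_\mu$ case ``essentially the same,'' whereas you extract the common kernel as a general lemma about non-increasing $\phi$ and log-dense scales and handle the positivity and $\phi\to\infty$ side conditions explicitly; this is a presentational difference, not a substantive one.
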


\begin{proof}[Proof of the \lcnamecref{lemm:cluster}]
Let us prove \eqref{e.cluster_mo}; the proof of \eqref{e.cluster_qo} is essentially the same.	
Given $\epsilon>0$, let $i$ be such that $\epsilon \in [4\epsilon_{i+1}, 4\epsilon_{i}]$. We have $S_Y(4\epsilon_{i}) \le S_Y(\epsilon)\le S_Y(4\epsilon_{i+1})$ and so:
$$
\frac{\log \log S_Y(4\epsilon_{i+1})}{-\log (4\epsilon_{i})}\ge \frac{\log \log S_Y(\epsilon)}{-\log \epsilon}\ge \frac{\log \log S_Y(4\epsilon_{i})}{-\log (4\epsilon_{i+1})}\; .
$$
Since $\log(4\epsilon_{i}) \sim \log(4\epsilon_{i+1})$ as $i \to \infty$, inequalities \eqref{e.cluster_mo} follow.
\end{proof}

Combining \eqref{ine:varia:principle} with \cref{lemm:cluster} we obtain inequality \eqref{suffi:t:fat_measure} and the \lcnamecref{t:fat_measure}. 
\end{proof}

\section{Examples of conservative dynamics with high metric emergence}\label{section:metric:emer}

We are going to study the emergence of dynamics on the annulus $\A$:
$$
 \A \coloneqq \cercle \times [0,1] \quad \text{with } \cercle \coloneqq \R/\Z \, .
$$
Lebesgue measure on either of theses sets is denoted by $\Leb$.

The \emph{horizontal flow} 
associated to a $C^\infty$ function $\omega \colon [0,1]\to \R$ is defined as:
\begin{equation}\label{e.horizontal_flow}
R^t_\omega \colon (\theta, \rho)\in \A\mapsto (\theta + \omega(\rho)t, \rho)\in \A \; .
\end{equation}
So $(R^t_\omega)_t$ is a conservative smooth flow on the annulus.
Assume that $\omega$ has no critical points.
Then, for every fixed $t\neq 0$, Lebesgue almost every $\rho \in [0,1]$ has the property that $\omega(\rho)\cdot t$ is irrational, and therefore for every $\theta \in \cercle$, the empirical measure $\mathbf{e}^{R^t_\omega}(\theta, \rho)$ equals: 
$$
\lambda_\rho \coloneqq \Leb_{\cercle} \otimes \delta_\rho \quad \text{(Lebesgue measure on the circle $\cercle \times \{\rho\}$).}
$$
Hence the ergodic decomposition of the Lebesgue measure with respect to the time $t$ map $R^t_\omega$ does not depend on $t \neq 0$ and is given by: 
\begin{equation}\label{e.erg_dec_Rt}
\mathbf{e}_*^{R^t_\omega}(\Leb) = 
\int_{0}^1 \delta_{\lambda_\rho} \, d\rho \, .
\end{equation}

\subsection{Robust examples of at least polynomial emergence}\label{ss:horizontal}

\begin{proposition}\label{emergence poly}
Suppose $\omega \colon [0,1]\to \R$ is a smooth function  without critical points
and let $(R^t_\omega)_t$ be the corresponding horizontal flow.
For every $t\neq 0$, the metric emergence of the time $t$ map $R^t_\omega$ 
with respect to the Wasserstein metric $\mathsf{W}_1$ is:
\[
\Eme_\Leb(R^t_\omega) (\epsilon) =
\lceil (4\epsilon)^{-1} \rceil \; . 
\]
\end{proposition}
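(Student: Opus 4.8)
The plan is to use the characterization of metric emergence for invariant measures via quantization numbers (\cref{p:emergence_Q}): since $\Leb$ is $R^t_\omega$-invariant, $\Eme_\Leb(R^t_\omega)(\epsilon) = Q_{\hat\mu}(\epsilon)$ where $\hat\mu = \mathbf{e}^{R^t_\omega}_*(\Leb)$ is the ergodic decomposition computed in \eqref{e.erg_dec_Rt}, namely $\hat\mu = \int_0^1 \delta_{\lambda_\rho}\, d\rho$, a measure on $\cM(\A)$. The key geometric observation is that the map $\iota \colon [0,1] \to \cM(\A)$, $\rho \mapsto \lambda_\rho$, is an \emph{isometry} onto its image when $\cM(\A)$ carries the $\mathsf{W}_1$ metric: indeed, transporting $\lambda_\rho = \Leb_\cercle \otimes \delta_\rho$ to $\lambda_{\rho'} = \Leb_\cercle \otimes \delta_{\rho'}$, the cheapest plan (and in fact the only one whose cost we need to bound below) moves each fiber vertically, so $\mathsf{W}_1(\lambda_\rho,\lambda_{\rho'}) = |\rho - \rho'|$; the lower bound follows because the projection $\A \to [0,1]$ is $1$-Lipschitz and pushes $\lambda_\rho$ to $\delta_\rho$, and $\mathsf{W}_1(\delta_\rho,\delta_{\rho'}) = |\rho-\rho'|$, combined with \cref{l:quant_Lip} / \cref{l:push_Lip}. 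Hence $\hat\mu$ is, up to this isometric identification, exactly the Lebesgue measure on $[0,1]$ sitting inside $\cM(\A)$.

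Next I would reduce the computation of $Q_{\hat\mu}(\epsilon)$ to the one-dimensional quantization problem already solved in \cref{quantization 1d}. The subtlety is that in \cref{def:Q} the approximating measure $\nu$ is allowed to be supported anywhere in $\cM(\A)$, not just on the segment $\{\lambda_\rho : \rho \in [0,1]\}$; so one must check that allowing off-segment support points does not help. For this I use that the nearest-point projection $\Pi$ of $\cM(\A)$ onto the image of $\iota$ — more precisely the composition $\lambda \mapsto \pi_*\lambda \mapsto \lambda_{(\text{that value})}$ where $\pi\colon\A\to[0,1]$ is the vertical projection — is $1$-Lipschitz for $\mathsf{W}_1$ by \cref{l:push_Lip}, and fixes every $\lambda_\rho$. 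Given any $N$-point set $F \subset \cM(\A)$ achieving $\int \mathsf{W}_1(\iota(\rho),F)\,d\rho \le \epsilon$, replacing $F$ by $\Pi(F)$ (still at most $N$ points, now inside the segment) only decreases the integrand pointwise. Therefore, via the isometry $\iota$, $Q_{\hat\mu}(\epsilon)$ equals the $\mathsf{W}_1$-quantization number of $\Leb_{[0,1]}$, which by \cref{quantization 1d} with $q=1$ is $\lceil 1/(2\cdot 2\cdot \epsilon)\rceil = \lceil (4\epsilon)^{-1}\rceil$.

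The main obstacle I anticipate is the rigorous justification that "projecting the support points to the segment does not increase the cost," i.e. that $\mathsf{W}_1(\iota(\rho), \Pi(\nu)) \le \mathsf{W}_1(\iota(\rho),\nu)$ for every $\nu \in \cM(\A)$ and every $\rho$. This is where the specific structure of the annulus and the $1$-Lipschitz vertical projection $\pi$ must be invoked carefully: one writes $\Pi(\nu) = \iota(\mathrm{bar}(\pi_*\nu))$ is not quite a contraction onto the segment pointwise, so instead I would argue directly that $\pi_*\colon(\cM(\A),\mathsf{W}_1)\to(\cM([0,1]),\mathsf{W}_1)$ is $1$-Lipschitz (\cref{l:push_Lip}), that $\pi_*\iota(\rho) = \delta_\rho$, and then that on $\cM([0,1])$ the further map $\eta\mapsto\delta_{\mathrm{bar}(\eta)}$ is $\mathsf{W}_1$-nonexpansive (since $|\mathrm{bar}(\eta)-\mathrm{bar}(\eta')| \le \mathsf{W}_1(\eta,\eta')$, which is the $1$-Lipschitz dependence of the barycenter, a standard consequence of Kantorovich duality with test function $x\mapsto x$). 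Composing, $\rho \mapsto |\rho - \mathrm{bar}(\pi_*\nu)| \le \mathsf{W}_1(\iota(\rho),\nu)$, and since $\lambda_{\mathrm{bar}(\pi_*\nu)}$ lies in $\iota([0,1])$ with $\mathsf{W}_1(\iota(\rho),\lambda_{\mathrm{bar}(\pi_*\nu)}) = |\rho - \mathrm{bar}(\pi_*\nu)|$, the replacement is justified. Everything else is routine bookkeeping with \cref{rephrasing} and \cref{quantization 1d}.
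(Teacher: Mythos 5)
Your proposal takes the same basic route as the paper -- identify the ergodic decomposition $\hat\mu = \int_0^1 \delta_{\lambda_\rho}\,d\rho$, observe that $\rho \mapsto \lambda_\rho$ is a $\mathsf{W}_1$-isometry onto its image, and reduce to the one-dimensional quantization computation of \cref{quantization 1d} via \cref{p:emergence_Q} and \cref{rephrasing}. But you correctly identify and repair a subtlety that the paper leaves implicit: the quantization number $Q_{\hat\mu}(\epsilon)$ is computed with approximating measures supported anywhere in $\cM(\A)$, while the isometric identification with $[0,1]$ only gives a priori the inequality $Q_{\hat\mu}(\epsilon) \le Q_{\Leb_{[0,1]}}(\epsilon)$; the paper asserts equality without comment. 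Your argument that the map $\Pi \colon \nu \mapsto \lambda_{\mathrm{bar}(\pi_*\nu)}$ is a $1$-Lipschitz retraction of $(\cM(\A),\mathsf{W}_1)$ onto the segment $\{\lambda_\rho\}$, fixing it pointwise, is exactly what is needed to establish the reverse inequality (replacing an arbitrary $N$-point set $F$ by $\Pi(F)$ never increases the cost, since for $x$ on the segment $\mathsf{W}_1(x,\Pi(y)) = \mathsf{W}_1(\Pi(x),\Pi(y)) \le \mathsf{W}_1(x,y)$). Each ingredient -- $1$-Lipschitz-ness of $\pi_*$ from \cref{l:push_Lip}, $1$-Lipschitz dependence of the barycenter via Kantorovich duality with the test function $x \mapsto x$, and the formula $\mathsf{W}_1(\lambda_\rho,\lambda_{\rho'}) = |\rho-\rho'|$ -- is correctly justified. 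So the proof is correct, follows the paper's approach, and in fact supplies a justification for the step ``$Q_{\hat\mu}(\epsilon) = Q_{\Leb_{[0,1]}}(\epsilon)$'' that the paper states without proof.
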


\begin{proof}
As seen in \eqref{e.erg_dec_Rt}, the ergodic decomposition $\hat{\mu} \coloneqq \mathbf{e}_*^{R^t_\omega}(\Leb)$ is equidistributed on the curve $\{\lambda_\rho: \rho\in [0,1]\}$.
This curve endowed with the Wasserstein metric $\mathsf{W}_1$ is isometric to the unit interval $[0,1]$ endowed its usual distance; the isometry sends the measure $\hat{\mu}$ to the Lebesgue measure on $[0,1]$. 
Thus
 $Q_{\hat{\mu}}(\epsilon) =  Q_{\Leb|[0,1]}(\epsilon) = \lceil (4\epsilon)^{-1} \rceil$, by \cref{quantization 1d} with $q=1$. Using \cref{p:emergence_Q} we conclude.
\end{proof}

KAM theory ensures that most of the invariant circles of $R^t_\omega$ persist for any conservative $C^\infty$ perturbation. As a consequence, we obtain $C^\infty$-open sets of conservative surface diffeomorphisms whose metric emergence is at least of the order of $\epsilon^{-1}$: see \cref{ss:KAM}, more specifically \cref{c:robust_poly}.

\subsection{Construction of a smooth conservative flow with high emergence at a given scale}\label{ss:main_construction}

The heart of the proof of \cref{t:main} is the following result:
 
\begin{proposition}\label{p:main}
There exists $C>0$ such that for every $\epsilon_*>0$, there exists a smooth conservative diffeomorphism $h$ of $\A$ satisfying the following property.
For every function $\omega \in C^\infty([0,1], \R)$  
without critical points
and for every $t\neq 0$, the map $\Psi^t \coloneqq h \circ R^t_\omega \circ h^{-1}$ satisfies:
\[
\Eme_{\Leb}(\Psi^t) (\epsilon_*) \ge \exp(C \epsilon_*^{-2}) \, ,
\]
where the emergence is computed with respect to the Wasserstein metric $\mathsf{W}_1$. Furthermore, $h$ equals identity on a neighborhood of  the boundary of $\A$.
\end{proposition}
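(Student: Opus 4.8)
The plan is to first reduce the statement to a property of a single curve of probability measures, and then to construct $h$ realizing that property. Note that $h$ semi-conjugates $R^t_\omega$ to $\Psi^t$ (indeed $\Psi^t\circ h=h\circ R^t_\omega$), and that $h$ is conservative, so $h_*\Leb=\Leb$. Hence by \cref{l:push_erg} and the identity \eqref{e.erg_dec_Rt}, the ergodic decomposition of $\Leb$ with respect to $\Psi^t$ is
\[
\hat\mu\;=\;h_{**}\Bigl(\int_0^1\delta_{\lambda_\rho}\,d\rho\Bigr)\;=\;\int_0^1\delta_{\,h_*\lambda_\rho}\,d\rho\,,
\]
which depends neither on $t\neq0$ nor on $\omega$. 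By \cref{p:emergence_Q} and \cref{rephrasing} (applied with $q=1$), $\Eme_{\Leb}(\Psi^t)(\epsilon_*)=Q_{\hat\mu}(\epsilon_*)$ equals the least cardinality of a finite set $F\subset\cM(\A)$ with $\int_0^1\mathsf{W}_1(h_*\lambda_\rho,F)\,d\rho\le\epsilon_*$. So it is enough to build a smooth conservative $h$, equal to the identity near $\partial\A$, for which the curve $\gamma\colon\rho\mapsto h_*\lambda_\rho$ cannot be approximated, in this averaged sense, by fewer than $\exp(C\epsilon_*^{-2})$ measures.

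For that I would use the following sufficient condition: there are disjoint subintervals $V_1,\dots,V_P\subset[0,1]$ of equal length with $\Leb\bigl(\bigcup_pV_p\bigr)\ge 3/4$, where $P\coloneqq 4\lceil\exp(C\epsilon_*^{-2})\rceil$, together with measures $m_1,\dots,m_P\in\cM(\A)$ pairwise more than $8\epsilon_*$ apart for $\mathsf{W}_1$, such that $\mathsf{W}_1(\gamma(\rho),m_p)\le\epsilon_*$ for all $p$ and all $\rho\in V_p$. Indeed, given an admissible $F$, Markov's inequality gives $\Leb(B)\ge1/2$ for $B\coloneqq\{\rho:\mathsf{W}_1(\gamma(\rho),F)\le2\epsilon_*\}$, hence $\Leb\bigl(B\cap\bigcup_pV_p\bigr)\ge1/4$ and $B$ meets at least $P/4=\lceil\exp(C\epsilon_*^{-2})\rceil$ of the $V_p$; picking one $\rho\in B\cap V_p$ for each such $p$, the measures $\gamma(\rho)$ lie within $2\epsilon_*$ of $F$ and are pairwise more than $8\epsilon_*-2\epsilon_*>4\epsilon_*$ apart, so $\#F\ge\exp(C\epsilon_*^{-2})$. (For $\epsilon_*$ bounded below the conclusion is vacuous since emergence is always $\ge1$; for $\epsilon_*$ small a separated family of the required size exists in $\cM(\A)$ — which is why $C$ can be chosen uniformly — and it will be produced as part of the construction, as values of $\gamma$ itself.)

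To build such an $h$ I would fix a grid on $\A$ of mesh $\asymp\epsilon_*$ with $\asymp\epsilon_*^{-2}$ cells; a probability measure on $\A$ is then determined up to $O(\epsilon_*)$ in $\mathsf{W}_1$ by its vector of cell-masses, and on these vectors the $\mathsf{W}_1$-metric is comparable to $\epsilon_*$ times the earth-mover distance on the cell graph. This ``coarse pattern space'' is a simplex of dimension $\asymp\epsilon_*^{-2}$; by the estimate behind the leftmost inequality of \cref{t:mo_box} (restricted to loop-supported measures) it contains an $8\epsilon_*$-separated set of cardinality $\ge\exp(C\epsilon_*^{-2})$, each element being realized by a measure supported on a single essential loop winding through the prescribed cells with the prescribed time-fractions. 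I would enumerate a path $m_1,\dots,m_P$ in pattern space with $\mathsf{W}_1(m_p,m_{p+1})=O(\epsilon_*)$ (each step a local redistribution of $O(\epsilon_*)$ mass across $O(1)$ cells) passing within $\epsilon_*$ of each point of that net; being space-filling, such a path has $P\gtrsim\exp(C\epsilon_*^{-2})$ steps. Then I would construct $h$ inside $\cercle\times[a,1-a]$ (and $\equiv\id$ outside) band by band: partition $[a,1-a]$ into $P$ consecutive ``visit bands'' $\cercle\times V_p$, each of $\rho$-width $\asymp1/P$, separated by even thinner ``transition bands'' of total $\rho$-width $\le1/4$; on the $p$-th visit band $h$ sends each horizontal circle to essentially the same loop realizing $m_p$, while on the $p$-th transition band it performs the local redistribution carrying the $p$-th pattern to the $(p+1)$-th. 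Each elementary move is the time-one map of an explicit, compactly supported Hamiltonian isotopy, and $h$ is the composition of all of them; \cref{l:push_Lip} and \cref{l:continuity} are used to keep track of the $\mathsf{W}_1$-distances so that within $\cercle\times V_p$ one stays within $\epsilon_*$ of $m_p$ and consecutive $m_p$ stay $O(\epsilon_*)$-close, and the transition bands are absorbed into a harmless relabelling of the $V_p$.

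The hard part is this last construction: assembling a single smooth, area-preserving diffeomorphism of $\A$ — equal to the identity near $\partial\A$ — that realizes the whole sequence of loop-patterns along a space-filling path, carrying out area-preservingly and smoothly the local redistributions of the leaf-measure inside extremely thin transition bands, keeping the image of the horizontal foliation a genuine smooth foliation of $\A$ by essential circles, and controlling the necessarily enormous (of order $\epsilon_*\exp(C\epsilon_*^{-2})$) derivatives of $h$. Everything else is routine bookkeeping with the metrics and estimates collected in \cref{s:mo,s:quantization}.
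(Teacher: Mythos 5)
Your reduction is exactly the paper's: emergence is the $\mathsf{W}_1$-quantization number of the ergodic decomposition $\hat\mu=\int_0^1\delta_{h_*\lambda_\rho}\,d\rho$, and it suffices to arrange for many $\rho$-bands on which the circle measures $h_*\lambda_\rho$ realize pairwise well-separated ``patterns.'' Your Markov-plus-counting argument from the visit-band sufficient condition is also fine. The gap is that you never actually construct $h$. You explicitly flag the construction as ``the hard part'' and then describe only its desiderata; but that construction \emph{is} the content of the proposition. Moreover, the device you propose for making it tractable — a space-filling path through the $\epsilon_*$-net, so that consecutive patterns $m_p,m_{p+1}$ are $O(\epsilon_*)$-close and the transition bands perform only a small redistribution — is both unnecessary and not clearly achievable as stated: the transition bands are discarded by the Markov argument anyway, and a smooth conservative diffeomorphism is perfectly free to distort wildly across an arbitrarily thin transition band at the cost of huge derivatives (the paper's $h$ does exactly this, with derivatives of order $\exp(C\epsilon_*^{-2})$). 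Insisting on a path only adds the burden of producing a connected, locally-short Hamiltonian circuit through an exponentially large separated set, without explaining how the resulting family of curves glues into a single global area-preserving diffeomorphism equal to the identity near $\partial\A$.

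The paper avoids all of this with a concrete combinatorial construction. It places $N\asymp\epsilon_*^{-2}$ small ``color'' squares in the lower half of $\A$ (these play the role of your grid cells), selects $M\asymp\exp(C\epsilon_*^{-2})$ color patterns that are $N/4$-separated in Hamming distance — using precisely the Bernstein-inequality estimate \eqref{e.Fprime} from the proof of \cref{t:lower_general}, so no path or adjacency structure at all — lays out $M$ thin disjoint rows of boxes in the upper half encoding these patterns, and defines $h$ as the composition of explicit compactly supported Hamiltonian time-one maps that translate each upper box to a lower box of matching color along a ``Tetris'' route avoiding the remaining boxes. Separation of $\tilde\lambda_\rho$ across distinct rows (\cref{cl.apart}) then comes from the fact that any transport plan must move at least half the mass a distance $\asymp\epsilon_*$ between differently-colored squares, and \cref{l:cost,l:submeasure} close the estimate. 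So: same reduction, same high-level ``many separated circle-measures'' idea, but the geometric heart of the proof — the coloring, the box layout, and the Hamiltonian assembly of $h$ — is missing from your proposal, and the path-based substitute you gesture at is both harder and not worked out.
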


The proof of the \lcnamecref{p:main} will occupy the rest of this subsection. 

\begin{proof}
We will actually construct a sequence $h_n$ of diffeomorphisms such that the corresponding flows $\Psi^t_n \coloneqq h^{-1}_n \circ R^t_\omega \circ h_n$ have high emergence at a certain scale $\epsilon_n$; then we will show that for every $\epsilon_*>0$ we can choose an appropriate $h=h_n$ and obtain the conclusion of \cref{p:main}.
The proof is divided into several steps.

\medskip 
\noindent\textbf{Zeroth step.}
Let $n \ge 3$ be an arbitrary integer.
We will fix several numbers depending on $n$.
Let $N \coloneqq 32\cdot  n^2$. 
Let $M = m\cdot n$ be the multiple of $n$ as big as possible such that:
\begin{equation}\label{e.def_M}
M \le (2N)^{-1/2}e^{\pi N/4^3} \, .
\end{equation}
 It is clear from this definition that: 
\begin{equation}\label{e.magnitude_M}
\log M \asymp  n^2 \, .
\end{equation}
Finally, let $\eta \coloneqq 1/(1000 n)$ and $\kappa \coloneqq 1-\eta$.

\medskip 
\noindent\textbf{First step.}
The real proof begins with the construction of certain families of boxes in the annulus $\A$. 
An \emph{$a \times b$-box} is a set of the form $I \times J$ where $I \subset \cercle$ and $J \subset [0,1]$ are closed intervals of respective lengths $a$ (the  \emph{width} of the box) and $b$ (the \emph{height} of the box).
An \emph{$a$-square} is an $a \times a$-box.
A \emph{$k \times \ell$-family} is a disjoint collection of boxes of the form $I_i \times J_j$ where $1 \le i \le k$, $1 \le j \le \ell$. Such a family can be partitioned (in the obvious way) into $k$ subfamilies called \emph{columns} and into $\ell$ subfamilies called \emph{rows}.

Let $\cG$ be a  $8n \times 4n$-family of $\frac{1}{10 n}$-squares
contained in the lower half-annulus $\cercle \times [0,\frac{1}{2}]$  
and such that the gaps between rows and between columns is $\frac{1}{40n}$.

Inside each square $G$ from the family $\cG$ we take a $n \times m$-family $\cL_G$ of $\frac{2\kappa}{N} \times \frac{1}{11M}$-boxes;
it is possible to construct such a family since:
$$
\max\left\{ n\cdot \frac{2\kappa}{N} , \, m \cdot \frac{1}{11M} \right\}  = \max\left\{\frac{\kappa}{16n},\frac{1}{11n}\right\} < \frac{1}{10n} = 
\text{width of $G$.}
$$
Let $\cL \coloneqq \bigsqcup_{G \in \cG} \cL_G$; this is a family composed of $NM$ boxes.

Let $\cU$ be a $\frac{N}{2} \times M$-family of $\frac{2\kappa}{N} \times \frac{1}{11M}$-boxes
contained in the upper half-annulus $\cercle \times [\frac{1}{2}, 1]$.

\medskip
\noindent\textbf{Second step.}
We will need some auxiliary combinatorial data, namely certain coloring of our boxes.
We start by painting each $\cG$-square with a different color, and then we paint each $\cL_G$-box with the same color as $G$.
We claim that 
it is possible to paint each $\cU$-box with one of the $N$ previously chosen colors so that the following properties hold:
\begin{itemize}
	\item no row contains repeated colors (that is, exactly $N/2$ different colors appear in each row), and
	\item for any pair of distinct rows, there are at least $N/4$ colors that appear in one row but not in the other.
\end{itemize}
Indeed, if each choice of $N/2$ among $N$ colors can be identified with a function $f \colon \{1,\dots,N\} \to \{0,1\}$ such that $\sum_{k=1}^N f(k) = \frac{N}{2}$. The set $F$ of such functions was considered previously in the proof of \cref{t:lower_general}, where we have shown the existence of a set $F' \subset F$ which is $N/4$-separated w.r.t.\ the Hamming distance and has cardinality at least $(2N)^{-1/2}e^{\pi N/4^3}$: see estimate \eqref{e.Fprime}. Thus, by \eqref{e.def_M}, we can select $M$ distinct elements of the set $F'$. Each of these 
specifies a way of coloring a row of the family $\cU$; the order of the colors inside each row being arbitrary. This gives the desired coloring of the family $\cU$.

\medskip
\noindent\textbf{Third step.}
We will find a smooth conservative diffeomorphism  $h$ of the annulus that maps each $\cU$-box to a $\cL$-box of the same color by means of a translation, and which equals the identity near the boundary of the annulus. Essentially, this diffeomorphism exists because for each color $k$, there are \emph{at most} $M$ $\cU$-boxes of color $k$ (at most one box for each row), while there are \emph{exactly} $M=m\cdot n$ $\cL$-boxes of color $k$. Let us construct $h$ precisely.

We index the members of the family $\cU$ as $U_1$, $U_2$, \dots, $U_{NM/2}$ in such a way that $U_1$, \dots, $U_{N/2}$ form the bottom row, $U_{N/2+1}$, \dots, $U_{N}$ form the second from bottom row, and so on.
Then we select distinct $\cL$-boxes $L_1$, $L_2$, \dots, $L_{NM/2}$ in such a way that each $L_i$ has the same color as $U_i$, and whenever $L_i$ and $L_j$ have the same color and $i<j$ then $L_i$ is not above $L_j$.

For each $i=1,\dots,NM/2$, we will choose a smooth path $u_i \colon [0,1] \to \R^2$ starting from $u_i(0) = 0$
such that $t \in [0,1] \mapsto B_i(t) \coloneqq U_i + u_i(t)$ is a well-defined path of boxes in $\A$, starting at $B_i(0) = U_i$ and finishing at $U_i(1) = L_i$. We require the path of boxes $P_i \coloneqq \bigcup_{t\in [0,1]} B_i(t)$ to be disjoint from the set  
\begin{equation}\label{e.obstacle}
\partial \A \cup \bigcup_{j<i} L_j \cup \bigcup_{j>i} U_j \, .
\end{equation}
\begin{figure}[!h]
	\includegraphics[width=.55\textwidth]{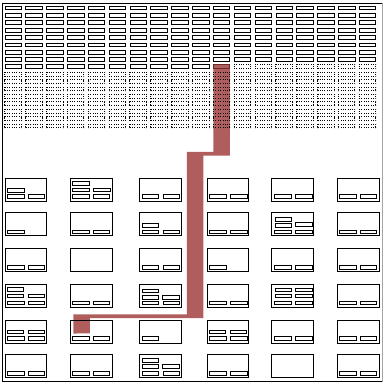}
	\caption{A path $P_i$ that avoids the obstacle set \eqref{e.obstacle}.}\label{f.tetris}
\end{figure}

These paths can be taken as follows: we start with the box $U_i$ and move it always either directly downwards or horizontally (like a Tetris piece). 
Note that $\frac{1}{40 n} > \frac{2\kappa}{N}$ (since $n \ge 3$); this means that the gaps between the squares of $\cG$ are greater than the width of the box.
Therefore it is possible to move between gaps and reach the destination $L_i$ avoiding the obstacle set \eqref{e.obstacle}: see \cref{f.tetris}.

Let $\phi_i \colon \A \to [0,1]$ be a smooth function that equals $1$ on the set $P_i$ and equals $0$ outside a small neighborhood of it (which is still disjoint from the set \eqref{e.obstacle}).
Now, writing $u_i(t) \eqqcolon (v_i(t), w_i(t))$, define a (non-autonomous) Hamiltonian $H_i \colon \A \times [0,1] \to \R$ by:
$$
H_i(\theta,\rho,t) \coloneqq \phi_i(\theta,\rho) \, \big[ w_i'(t) \rho - v_i'(t)\theta \big] \, .
$$
Let $f_i \in \Diff^\infty_\Leb(\A)$ be the time one map of the associated Hamiltonian flow.
Then $f_i$ translates the box $U_i$ to the box $L_i$, and equals the identity on the set \eqref{e.obstacle}.
It follows that the diffeomorphism
$$
h \coloneqq f_{NM/2} \circ \cdots \circ f_2 \circ f_1 
$$
translates each box $U_i$ to the corresponding $L_i$, and equals the identity on a neighborhood of $\partial \A$.

\medskip
\noindent\textbf{Fourth step.}
Let $\omega \colon [0,1]\to \R$ be any smooth function 
without critical points.
Consider the conservative flow $\Psi^t \coloneqq h \circ R^t_\omega \circ h^{-1}$ (see \cref{KAMacEmergence}).
We will estimate the emergence of the time $t$ maps from below, at an appropriate scale.

\begin{figure}[!h]
 \centering
 \includegraphics[width=.7\textwidth]{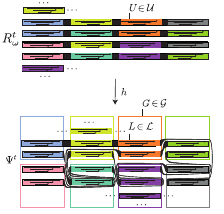}
 \caption{The flow $(\Psi^t)_t$.}
\label{KAMacEmergence}
\end{figure}

For each $\rho \in [0,1]$, let $\lambda_\rho$ denote Lebesgue measure on the circle $\cercle \times \{\rho\}$.
Recall from \eqref{e.erg_dec_Rt} that for every $t \neq 0$, the ergodic decomposition of Lebesgue measure on $\A$ with respect to $R^t_\omega$ is 
$\mathbf{e}^{R^t}_*(\Leb) = \int_0^1 \delta_{\lambda_\rho} \, d\rho$,
and in particular it is independent of $t$.
It follows that the ergodic decomposition of Lebesgue with respect to $\Psi^t \coloneqq h \circ R^t_\omega \circ h^{-1}$ is: 
\begin{equation}\label{e.erg_dec_Psi}
\hat\mu \coloneqq \mathbf{e}^{\Psi^t}_*(\Leb) = \int_0^1 \delta_{\tilde \lambda_\rho} \, d\rho 
\quad \text{where} \quad 
\tilde \lambda_\rho \coloneqq h_*(\lambda_\rho) \, .
\end{equation}
We need to estimate the quantization number of this measure.

Let $J$ be the set of $\rho\in[0,1]$ such that the circle $\cercle \times \{\rho\}$ intersects the boxes of the family $\cU$; then $J$ is a disjoint union of intervals $J_1$, \dots, $J_M$, each of them of length $\frac{1}{11M}$.

\begin{claim}\label{cl.together}
If $\rho$, $\rho' \in J$ belong to the same interval $J_i$ then:
$$
\mathsf{W}_1 (\tilde \lambda_\rho, \tilde \lambda_{\rho'}) \le \dfrac{1}{11 M} + 2\eta \, .
$$
\end{claim}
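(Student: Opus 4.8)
The plan is to build an explicit transport plan between $\tilde\lambda_\rho$ and $\tilde\lambda_{\rho'}$ by splitting each circle $\cercle\times\{\rho\}$, $\cercle\times\{\rho'\}$ into a ``good'' part and a ``bad'' part and coupling them separately. Since $\rho,\rho'$ lie in the same interval $J_i$, the circle $\cercle\times\{\rho\}$ meets exactly the $N/2$ boxes of $\cU$ forming its $i$-th row, along horizontal sub-segments of total width $\frac N2\cdot\frac{2\kappa}{N}=\kappa=1-\eta$; call their union the good part, so its Lebesgue mass is $\kappa$ and the mass of the complementary bad part is $\eta$. The same decomposition, with the same boxes, applies to $\cercle\times\{\rho'\}$. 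Pushing forward by $h$ we get $\tilde\lambda_\rho=\tilde\lambda_\rho^{\mathrm{good}}+\tilde\lambda_\rho^{\mathrm{bad}}$ (as submeasures of masses $\kappa$ and $\eta$), and likewise for $\rho'$.

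First I would estimate the cost of coupling the good parts. Here I use the fact established in the third step that $h$ restricted to any box $U_j$ of $\cU$ coincides with a single rigid translation onto the matching box $L_j$ of $\cL$: indeed, the maps $f_k$ with $k<j$ fix $U_j$ (it lies in their obstacle set \eqref{e.obstacle}), the map $f_j$ translates $U_j$ onto $L_j$, and the maps $f_k$ with $k>j$ fix $L_j$ (again in \eqref{e.obstacle}). Consequently $h$ sends the horizontal segment $U_j\cap(\cercle\times\{\rho\})$ and the horizontal segment $U_j\cap(\cercle\times\{\rho'\})$ to two horizontal segments inside $L_j$ with the same $\theta$-range, differing only by the vertical translation of amplitude $|\rho-\rho'|\le|J_i|=\tfrac{1}{11M}$. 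Transporting one onto the other by that vertical translation therefore costs at most $\tfrac{1}{11M}$ per unit of mass; summing over the $N/2$ boxes $U_j$ of the $i$-th row gives a coupling of $\tilde\lambda_\rho^{\mathrm{good}}$ with $\tilde\lambda_{\rho'}^{\mathrm{good}}$ of total cost at most $\kappa\cdot\tfrac{1}{11M}\le\tfrac{1}{11M}$.

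For the bad parts I would just use a crude bound: $\tilde\lambda_\rho^{\mathrm{bad}}$ and $\tilde\lambda_{\rho'}^{\mathrm{bad}}$ are two measures of equal mass $\eta$ on $\A$, so any coupling between them has cost at most $\eta\cdot\diam(\A)\le 2\eta$ (any product metric on $\cercle\times[0,1]$ has diameter at most $2$; no control on how $h$ moves the bad part is needed). Concatenating the good-to-good and bad-to-bad couplings yields a genuine coupling of $\tilde\lambda_\rho$ with $\tilde\lambda_{\rho'}$ whose cost is at most $\tfrac{1}{11M}+2\eta$, and hence $\mathsf{W}_1(\tilde\lambda_\rho,\tilde\lambda_{\rho'})\le\tfrac{1}{11M}+2\eta$.

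The only delicate point is the verification that $h|_{U_j}$ is the pure rigid translation $f_j|_{U_j}$ — this is where the ordering of the composition $h=f_{NM/2}\circ\cdots\circ f_1$ and the obstacle sets \eqref{e.obstacle} enter; everything else is bookkeeping of masses and of the geometry of rows and columns. I do not expect any real obstacle here, since the rigidity of each $f_j$ on its box is built into the construction in the third step.
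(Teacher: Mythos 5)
Your proof is correct and follows essentially the same route as the paper's. The paper bounds $\mathsf{W}_1(\tilde\lambda_\rho,\tilde\lambda_{\rho'})$ by the cost of the single coupling $(\theta\mapsto(h(\theta,\rho),h(\theta,\rho')))_*\Leb_\cercle$, estimating $\mathsf{d}(h(\theta,\rho),h(\theta,\rho'))\le 1/(11M)$ when $\theta\in I$ (since both points lie in the same $\cU$-box and $h$ restricted to that box is an isometry, as you verify in more detail via the obstacle sets) and $\le\diam\A\le 2$ otherwise, with $\Leb(I^{\mathsf c})=\eta$ — which is exactly the same decomposition and the same bookkeeping as your good/bad split.
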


\begin{proof}[Proof of the claim]
Fix $\rho$, $\rho' \in J_i$.
We will use the following bound:
$$
\mathsf{W}_1 (\tilde \lambda_\rho, \tilde \lambda_{\rho'}) \le \int_{\cercle} \mathsf{d}\big(h(\theta,\rho), h(\theta,\rho')\big)  \, d\theta \, .
$$
Indeed, the right hand side is the cost of transporting each point $h(\theta,\rho)$ to $h(\theta,\rho')$.
Let $I \subset \cercle$ be the union of the projections of the $\cU$-boxes on the first coordinate; this is a union of
$\frac{N}{2}$ intervals of length $\frac{2\kappa}{N}$. 
Note that:
$$
\mathsf{d}\big(h(\theta,\rho), h(\theta,\rho')\big)  \le 
\begin{cases}
	1/(11 M) 		&\quad \text{if $\theta \in I$,}\\
	\diam \A \le 2	&\quad \text{otherwise;}
\end{cases}
$$
indeed if $\theta \in I$ then both points $(\theta,\rho)$ and $(\theta,\rho')$ belong to the same $\cU$-box $U$, which has height $\frac{1}{11M}$ and furthermore $h|_{U}$ is an isometry.
Finally, using the fact that $\Leb(I^\mathsf{c}) = 1-\kappa = \eta$, we obtain the asserted upper bound for the Wasserstein distance.
\end{proof}

\begin{claim}\label{cl.apart}
If $\rho \in J_i$, $\rho' \in J_j$ with $i \neq j$ then:
$$
\mathsf{W}_1 (\tilde \lambda_\rho, \tilde \lambda_{\rho'}) \ge \frac{1-3\eta}{80n} \, .
$$
\end{claim}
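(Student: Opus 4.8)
The plan is to bound $\mathsf{W}_1(\tilde\lambda_\rho,\tilde\lambda_{\rho'})$ from below by pairing these two measures against a single, carefully designed $1$-Lipschitz test function $g\colon\A\to\R$, using Kantorovich--Rubinstein duality $\mathsf{W}_1(\mu,\nu)=\sup\{\int g\,d(\mu-\nu)\st g\text{ is }1\text{-Lipschitz}\}$. Two inputs drive the construction. First, a \emph{combinatorial} input: if $K_i,K_j\subset\{1,\dots,N\}$ denote the sets of colors carried by the $\cU$-boxes of rows $i$ and $j$, then the coloring fixed in the second step — built from the Hamming-separated set $F'$ of \eqref{e.Fprime} — gives $\card K_i=\card K_j=N/2$ and $\card(K_i\setminus K_j)=\card(K_j\setminus K_i)\ge N/4$. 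Second, a \emph{geometric} input: $h$ translates the $\cU$-box of row $i$ of color $k$ isometrically into a $\cL$-box contained in the $\cG$-square $G_k$ of color $k$, and since $\rho\in J_i$ the circle $\cercle\times\{\rho\}$ crosses that box along an arc of $\lambda_\rho$-measure $\tfrac{2\kappa}{N}$; hence $\tilde\lambda_\rho(G_k)\ge\tfrac{2\kappa}{N}$ for every $k\in K_i$, and symmetrically $\tilde\lambda_{\rho'}(G_k)\ge\tfrac{2\kappa}{N}$ for every $k\in K_j$.

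The next step is to bound the ``leakage'' of each measure into the squares of the other. Since $\rho'\in J_j$ and the intervals $J_1,\dots,J_M$ are pairwise disjoint, the circle $\cercle\times\{\rho'\}$ meets only the $\cU$-boxes of row $j$; and since $h$ is a bijection of $\A$ carrying $\bigcup\cU$ onto $\bigcup\cL$, it also carries $\A\setminus\bigcup\cU$ onto $\A\setminus\bigcup\cL$. Consequently $\tilde\lambda_{\rho'}$ places no mass inside any $\cL$-box of a color $k\notin K_j$, and the only $\tilde\lambda_{\rho'}$-mass in the $\tfrac1{80n}$-neighborhood $V_k$ of $G_k$ comes from the ``leftover'' arc of $\cercle\times\{\rho'\}$, of length $1-\kappa=\eta$. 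As distinct $\cG$-squares are at distance at least $\tfrac1{40n}$ (the chosen gap), the sets $V_k$ are pairwise disjoint, whence $\sum_{k\in K_i\setminus K_j}\tilde\lambda_{\rho'}(V_k)\le\eta$; by the mirror argument, $\sum_{k\in K_j\setminus K_i}\tilde\lambda_{\rho}(V_k)\le\eta$.

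With these in hand I would set $g\coloneqq\sum_{k\in K_i\setminus K_j}g_k-\sum_{k\in K_j\setminus K_i}g_k$, where $g_k(x)\coloneqq\max\{0,\tfrac1{80n}-\mathsf{d}(x,G_k)\}$ is the tent of height $\tfrac1{80n}$ over $G_k$, supported in $V_k$. Because the $\cG$-squares are at distance at least $\tfrac1{40n}=2\cdot\tfrac1{80n}$ from one another, a short case check on pairs of points shows that $g$ is globally $1$-Lipschitz in spite of the mixed signs. Integrating, the positive block of $g$ contributes at least $\tfrac1{80n}\bigl(\card(K_i\setminus K_j)\cdot\tfrac{2\kappa}{N}\bigr)\ge\tfrac1{80n}\cdot\tfrac{\kappa}{2}$ against $\tilde\lambda_\rho$ and at most $\tfrac1{80n}\eta$ against $\tilde\lambda_{\rho'}$, while the negative block contributes symmetrically; summing, $\mathsf{W}_1(\tilde\lambda_\rho,\tilde\lambda_{\rho'})\ge\int g\,d(\tilde\lambda_\rho-\tilde\lambda_{\rho'})\ge\tfrac2{80n}\bigl(\tfrac{\kappa}{2}-\eta\bigr)=\tfrac1{80n}(\kappa-2\eta)=\tfrac{1-3\eta}{80n}$, using $\kappa=1-\eta$.

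The crux I expect is the leakage bound of the second paragraph: one must be sure that the part of $\cercle\times\{\rho'\}$ lying outside all $\cU$-boxes, after being dragged by the long composition $h=f_{NM/2}\circ\cdots\circ f_1$ along the Tetris-like paths $P_i$, cannot end up inside a $\cL$-box of a color absent from row $j$, and that its total mass is no more than $\eta$ — precisely the point where one exploits that $h$ respects the partition $\A=(\bigcup\cU)\sqcup(\A\setminus\bigcup\cU)$. A lesser but genuine point is the global Lipschitz estimate for the composite tent $g$, whose $1$-Lipschitz character forces working at the half-gap scale $\tfrac1{80n}$ and thereby pins down the constant.
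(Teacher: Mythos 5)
Your proof is correct and uses exactly the same combinatorial and geometric inputs as the paper (the color sets $K_i$, $K_j$ with $\card(K_i\setminus K_j)\ge N/4$; the estimate $\tilde\lambda_\rho(G_k)\ge 2\kappa/N$ for $k\in K_i$; the ``leakage'' bound $\le\eta$ coming from the fact that $h$ maps the $\cU$-boxes of row $j$ into $\cG$-squares of colors in $K_j$), but assembles them by the \emph{dual} route --- Kantorovich--Rubinstein duality with an explicit $1$-Lipschitz test function built from tents --- rather than the paper's \emph{primal} route. The paper's proof takes $F \coloneqq h(E)$ where $E$ is the union of the $\cU$-boxes of row $i$ with colors in $K_i\setminus K_j$, lets $V$ be the open $\tfrac{1}{40n}$-neighborhood of $F$, establishes $\tilde\lambda_\rho(F)\ge(1-\eta)/2$ and $\tilde\lambda_{\rho'}(V)\le\eta$, and then bounds the cost of an arbitrary transport plan $\pi$ directly: $\mathrm{cost}(\pi)\ge\tfrac{1}{40n}\,\pi(F\times V^{\mathsf c})\ge\tfrac{1}{40n}\cdot\tfrac{1-3\eta}{2}$. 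That primal argument gets to use the full gap $\tfrac{1}{40n}$ as the separation between $F$ and $V^{\mathsf c}$, and needs only the ``one-sided'' family $K_i\setminus K_j$. Your dual argument is forced to use tents of height $\tfrac{1}{80n}$ (half the gap) so that the mixed-sign sum $g$ stays $1$-Lipschitz across different $\cG$-squares; you then make up the lost factor of $2$ by including the symmetric block $K_j\setminus K_i$ with opposite sign. The two computations land on the same bound $\tfrac{1-3\eta}{80n}$, a pleasant instance of primal--dual symmetry. Neither approach is clearly simpler; the primal one avoids the Lipschitz verification but requires reasoning about a generic transport plan, while the dual one is entirely a calculation once the test function is exhibited.
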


\begin{proof}[Proof of the claim]
Fix $\rho \in J_i$, $\rho' \in J_j$ with $i \neq j$.
Let $\cR'$ be the family of $\cU$-boxes that intersect the circle $\cercle \times \{\rho'\}$ (that is, a row of boxes).
Let $\cR$ be the family of $\cU$-boxes that intersect the circle $\cercle \times \{\rho\}$ and whose colors are distinct from those of the $\cR'$-boxes.
By construction, the family $\cR$ contains at least $N/4$ boxes; let $E$ be their union.
Since $\lambda_\rho(U) = 2\kappa/N$ for each $\cR$-box $U$, we have $\lambda_\rho(E) \ge \kappa/2 = (1-\eta)/2$.
So $F\coloneqq h(E)$ satisfies $\tilde\lambda_\rho(F) \ge (1-\eta)/2$.
The set $F$ is contained in the union of the $\cG$-squares whose colors appear in the family $\cR$.
Recall that $\frac{1}{40n}$ is the minimal separation between $\cG$-squares, so
if $V$ is the open $\frac{1}{40n}$-neighborhood of $F$ then $V$ does not intersect any $\cG$-square with other colors.
In particular, $\cR'$-boxes are disjoint from $h^{-1}(V)$.
Since the union of $\cR'$-boxes has $\lambda_{\rho'}$ measure equal to $\kappa = 1-\eta$, it follows that
$\tilde\lambda_{\rho'}(V) = \lambda_{\rho'}(h^{-1}(V)) \le \eta$.

Consider an arbitrary transport plan $\pi$ from $\tilde \lambda_\rho$ to $\tilde \lambda_{\rho'}$.
Then:
$$
\pi( F \times V^\mathsf{c} ) \ge \pi(F \times \A) - \pi(\A \times V) = \tilde \lambda_\rho (F) - \tilde\lambda_{\rho'}(V) \ge \frac{1-\eta}{2}-\eta = \frac{1-3\eta}{2} \, ,
$$
and so:
$$
\mathrm{cost}(\pi) 
= \int_{\A \times \A} \mathsf{d}(x,y) \, d\pi(x,y) 
\ge \int_{F \times V^\mathsf{c}} (\cdots) 
\ge \frac{1}{40n} \pi( F \times V^\mathsf{c} ) 
\ge \frac{1-3\eta}{80n} \, .
$$
Since this estimate holds for all transport plans $\pi$, we obtain the asserted lower bound for the Wasserstein distance.
\end{proof}

For every {$i \in \{1,\dots, M\}$}, let us fix a point $\rho_i$ in the interval $J_i$. The following measures 
{ $\hat{\mu}_1$, $\hat{\nu} \in \cM(\cM(\A))$}
correspond respectively to the ergodic decomposition of $\Psi^t|h(\mathbb T\times J)$ and to the probability measure equidistributed on the set $\{\tilde\lambda_{\rho_i} \st 1 \le i \le M\}$:
\begin{equation}\label{e.def_mu1}
\hat{\mu}_1 \coloneqq 11 \int_J  \delta_{\tilde \lambda_\rho} \, d\rho \in \cM(\cM(\A))
\quad\text{and}\quad
\hat\nu \coloneqq \frac{1}{M} \sum_{i=1}^M \delta_{\tilde\lambda_{\rho_i}} \, .
\end{equation}
It follows from \cref{cl.together} that $\mathsf{W}_1(\hat \nu, \hat \mu_1) \le \frac{1}{11 M} + 2\eta$; indeed each $\delta_{\tilde\lambda_{\rho}}$ with $\rho \in J_i$ can be transported to $\delta_{\tilde\lambda_{\rho_i}}$ { at} a cost no greater than $\frac{1}{11 M} + 2\eta$.

On the other hand, by \cref{cl.apart}, the measure $\hat{\nu}$ is equidistributed on a $\frac{1-3\eta}{80n}$-separated set 
of cardinality $M$.
So, by \cref{l:cost}, the $\mathsf{W}_1$-distance from $\hat\nu$ to any probability measure supported on $M/2$ points is bigger than $\frac{1-3\eta}{ 320 n}$. 
Therefore the $\mathsf{W}_1$-distance from $\hat\mu_1$ to any probability measure supported on $M/2$ points is bigger than:
\begin{equation}\label{e.def_epsilon} 
\frac{1-3\eta}{320n} - \left( \frac{1}{11 M} + 2\eta \right) \eqqcolon 11 \epsilon \, .
\end{equation}
In other words, 
$Q_{\hat\mu_1}(11 \epsilon) \ge \frac{M}{2}$.
By definitions \eqref{e.erg_dec_Psi}, \eqref{e.def_mu1}, we have $\hat\mu \ge \frac{1}{11} \hat\mu_1$,
and so \cref{l:submeasure} yields $Q_{\hat\mu}\left(\epsilon\right) \ge \frac{M}{2}$.
This quantization number is the metric emergence (by \cref{p:emergence_Q}), so we obtain:
\begin{equation}\label{e.conclusion}
\Eme_{\Leb}(\Psi^t) (\epsilon) \ge \frac{M}{2} \quad \text{for all } t\neq 0.
\end{equation}

\medskip
\noindent\textbf{Conclusion.}
If $\epsilon = \epsilon_n$ is defined by \eqref{e.def_epsilon} then $\epsilon_n \asymp  n^{-1} \asymp \epsilon_{n+1}$. 
For every sufficiently small number $\epsilon_*>0$, we can find $n \ge 3$ such that $\epsilon_{n+1} < \epsilon_* \le \epsilon_n$.
If $(\Phi^t)_t = (\Phi^t_n)_t$ is the flow constructed above, then for every $t \neq 0$ we have:
\begin{alignat*}{2}
\log \Eme_{\Leb}(\Psi^t) (\epsilon_*) \ge \log \Eme_{\Leb}(\Psi^t) (\epsilon_n) 
&\ge\log  	\frac{M}{2}		&\quad &\text{(by \eqref{e.conclusion})} \\
&\asymp n^2	&\quad &\text{(by \eqref{e.magnitude_M})}\\
&\asymp \epsilon_n^{-2}
\asymp \epsilon_*^{-2} \, .
\end{alignat*}
This ends the proof of \cref{p:main}.
\end{proof}

\begin{figure}[htb]
 \centering
\includegraphics[width=.75\textwidth]{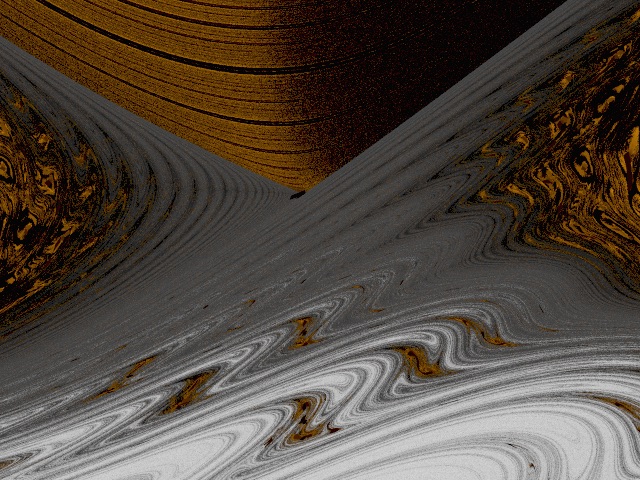}
 \caption{Interestingly, the geometry of the construction depicted at the bottom of \cref{KAMacEmergence} looks like the pattern obtained at the boundary of the so-called stochastic sea of the standard map for a certain parameter (depicted in gray in the above numerical experimentation).}
\end{figure}


\subsection{Construction of a smooth conservative flow with high emergence at every scale} \label{ss:main_example}

In this subsection, we will prove \cref{t:main}.
The main ingredient is \cref{p:main}.

\begin{proof}[Proof of \cref{t:main}]
Let us assume that the space $\cM(\A)$ is metrized with Wasserstein metric $\mathsf{W}_1$.
We will construct a conservative flow $(\Phi^t)_t$ on the annulus $\A$ whose metric emergence with respect to $\mathsf{W}_1$ is stretched exponential with exponent~$2$.
In the end of the proof we will see that the same holds 
if emergence is computed with respect to other Wasserstein metrics $\mathsf{W}_p$ or the L\'evy--Prokhorov metric $\mathsf{LP}$.

For each $i\ge 1$, let $h_i \in \Diff^\infty_\Leb(\A)$ be given by \cref{p:main} for $\epsilon_* = \epsilon_i  \coloneqq 2^{-i^2}$.
We define a smooth diffeomorphism between the annuli $\A$ and $\A_i \coloneqq \cercle\times [2^{-i}, 2^{-i+1}]$ as follows:
\[
g_i \colon (\theta, \rho)\in \A \mapsto  (\theta, 2^{-i}( \rho+1))\in \A_i \; .
\]
Let $h$ be the homeomorphism of the annulus $\A$ such that for each $i\ge 1$, 
\[
h(\A_i) = \A_i \qand
h | \A_i \coloneqq g_i \circ h_i \circ g_i^{-1} \; .
\]
Since each $g_i$ has constant jacobian, $h$ is conservative. 
Furthermore, $h$ equals the identity on the boundary of the annulus and is smooth on its interior.
Next, fix a smooth function $r \colon \R \to \R$ that vanishes on $(-\infty,0] \cup [1,+\infty)$ and is positive on the interval $(0,1)$. Let $(\eta_i)$ be a sequence of positive numbers converging very rapidly to $0$. 
Define a function $\zeta \colon [0,1] \to \R$ by:
\[
\zeta(\rho) \coloneqq 
\begin{cases}
	\eta_i \cdot r(2^i\rho-1)	&\quad\text{if $\rho \in [2^{-i}, 2^{-i+1}]$;} \\
	0					&\quad\text{if $\rho=0$.}
\end{cases}
\]
If $\eta_i$ tends to $0$ sufficiently rapidly then $\zeta$ becomes a smooth function.
Furthermore, it is positive Lebesgue a.e. 
Let $\omega \colon [0,1] \to \R$ be the smooth function such that $\omega(0) = \omega'(0) = 0$ and $\omega'' = \zeta$.
Observe that $\omega'$ is strictly positive on $(0,1]$. 
Furthermore, the $C^i$-norm of $\omega|[2^{-i} , 2^{-i+1}]$ is small when $(\eta_j)_{j\ge i}$ is small.
Hence we can choose inductively $\eta_i$ sufficiently small so that the push forward of the vector field $\partial_t R^t_\omega$ by $h$, namely
\[
h_*\partial_t R^t_\omega :(\theta, \rho)\mapsto 
\big[\omega\circ p_2\circ h^{-1}(\theta, \rho)\big]
\cdot \partial_\theta h\circ h^{-1}(\theta, \rho) \, ,
\]
has $C^i$-norm restricted to each $\A_i$ smaller than $1$ for every $i$.

Thus this vector field and its flow  
$\Phi^t = h \circ R^t_\omega \circ h^{-1}$ are smooth. 
The construction of the smooth conservative flow $(\Phi^t)_t$ is completed, and is depicted in \cref{f.onion}.
We are left to show that the flow has stretched exponential emergence with exponent $2$. 
Of course, the homeomorphism $h$ cannot be smooth on the whole annulus, because otherwise  the flow would have polynomial emergence (by \cref{emergence poly} and lemmas from \cref{ss:quant_properties}).

\begin{figure}[hbt]
	\includegraphics[width=.8\textwidth]{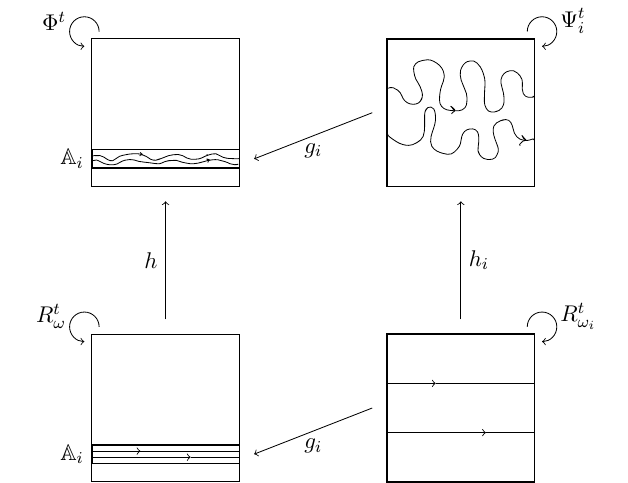}
	\caption{Proof of \cref{t:main}.}\label{f.onion}
\end{figure}

Let $\hat\mu \in \cM(\cM(\A))$ denote the ergodic decomposition of $\Leb$ with respect to $\Phi^t$
(which is indeed independent of $t \neq 0$, since it is given by formula \eqref{e.erg_dec_Psi}).
Recalling \cref{p:emergence_Q}, we have $\Eme_\Leb(\Phi^t)(\epsilon) = Q_{\hat{\mu}}(\epsilon)$ for every $\epsilon$.

\begin{claim}\label{cl.dejavu} 
For every $i\ge 1$ and $t\neq 0$ we have 
$\Eme_\Leb(\Phi^t)(\epsilon_{i+1}) \ge \exp(C \epsilon_i^{-2})$,
where $C>0$ is a constant.
\end{claim}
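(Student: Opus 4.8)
The plan is to exploit the fact that $\Phi^t$ leaves each sub-annulus $\A_i\coloneqq\cercle\times[2^{-i},2^{-i+1}]$ invariant and that, after an explicit affine change of coordinates, $\Phi^t|\A_i$ is exactly one of the flows to which \cref{p:main} already applies. So the bound for $\Phi^t$ at scale $\epsilon_{i+1}$ will be harvested from the bound for the $i$-th model flow at the coarser scale $\epsilon_i$, the only subtlety being the rescaling.

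First I would pin down the restricted dynamics. Since both $R^t_\omega$ and $h$ preserve $\A_i$, so does $\Phi^t=h\circ R^t_\omega\circ h^{-1}$; and since $h|\A_i=g_i\circ h_i\circ g_i^{-1}$, a direct computation gives $g_i^{-1}\circ R^t_\omega\circ g_i=R^t_{\omega_i}$ with $\omega_i(\rho)\coloneqq\omega\big(2^{-i}(\rho+1)\big)$, hence
\[
g_i^{-1}\circ(\Phi^t|\A_i)\circ g_i \;=\; h_i\circ R^t_{\omega_i}\circ h_i^{-1}\;=:\;\Psi_i^t \qquad\text{on }\A .
\]
Here $\omega_i$ is smooth and $\omega_i'(\rho)=2^{-i}\omega'\!\big(2^{-i}(\rho+1)\big)>0$ on $[0,1]$ (because $\omega'>0$ on $(0,1]$ by the construction of $\omega$), so $\omega_i$ has no critical points; since $h_i$ is the diffeomorphism furnished by \cref{p:main} for $\epsilon_*=\epsilon_i\coloneqq 2^{-i^2}$, that proposition gives $\Eme_\Leb(\Psi_i^t)(\epsilon_i)\ge\exp(C\epsilon_i^{-2})$, with $C$ the universal constant of \cref{p:main}.

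Next I would transport this bound to $\Phi^t$ through the conjugacy $g_i$ and the decomposition $\Leb=\sum_{j\ge1}2^{-j}\big(2^j\Leb|\A_j\big)$. Writing $\hat\mu\coloneqq\mathbf{e}^{\Phi^t}_*\Leb$, $\hat\nu_i\coloneqq\mathbf{e}^{\Psi_i^t}_*\Leb$ and $\hat\mu_i\coloneqq\mathbf{e}^{\Phi^t}_*(2^i\Leb|\A_i)$ (all well defined, as in \eqref{e.erg_dec_Psi}), the argument runs as follows. As $g_i$ has constant jacobian, $(g_i)_*\Leb$ is the normalized Lebesgue measure on $\A_i$, and since $g_i$ conjugates $\Psi_i^t$ to $\Phi^t|\A_i$, \cref{l:push_erg} gives $\hat\mu_i=(g_i)_{**}\hat\nu_i$, hence $\hat\nu_i=(g_i^{-1})_{**}\hat\mu_i$; because $g_i^{-1}$ is $2^i$-Lipschitz, \cref{l:push_Lip,l:quant_Lip} then yield $Q_{\hat\mu_i}(\delta)\ge Q_{\hat\nu_i}(2^i\delta)$ for all $\delta>0$. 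Also $\hat\mu\ge 2^{-i}\hat\mu_i$ (pushforward by the $\Leb$-a.e.\ defined empirical function being linear), so \cref{l:submeasure} gives $Q_{\hat\mu}(2^{-i}\delta)\ge Q_{\hat\mu_i}(\delta)$. Taking $\delta\coloneqq 2^{-i}\epsilon_i$, using that $Q$ is non-increasing in the scale, and noting $\epsilon_{i+1}=2^{-(i+1)^2}\le 2^{-i^2-2i}=2^{-2i}\epsilon_i$, I would chain these into
\[
\Eme_\Leb(\Phi^t)(\epsilon_{i+1})=Q_{\hat\mu}(\epsilon_{i+1})\ge Q_{\hat\mu}(2^{-2i}\epsilon_i)\ge Q_{\hat\mu_i}(2^{-i}\epsilon_i)\ge Q_{\hat\nu_i}(\epsilon_i)=\Eme_\Leb(\Psi_i^t)(\epsilon_i)\ge\exp(C\epsilon_i^{-2}),
\]
which is the claim.

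The only delicate point is the bookkeeping with scales: the rescaling $g_i$ is an isometry in the $\theta$-direction but a $2^{-i}$-contraction in the $\rho$-direction, so one has to track carefully how it distorts $\mathsf{W}_1$-distances on $\cM(\A)$ — a factor $2^{-i}$ is lost in the submeasure step and a factor $2^{i}$ in the Lipschitz-pushforward step — and verify that the resulting net loss of $2^{-2i}$ relative to $\epsilon_i$ is comfortably absorbed by the gap between $\epsilon_i=2^{-i^2}$ and $\epsilon_{i+1}=2^{-(i+1)^2}$. Beyond this there is no real obstacle: the statement reduces to \cref{p:main} together with the elementary lemmas of \cref{ss:quant_properties}.
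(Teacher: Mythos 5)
Your proposal follows the paper's own proof very closely: same auxiliary measures $\hat\mu_i$ and $\hat\nu_i$, same appeal to \cref{l:push_erg,l:submeasure,l:quant_Lip}, and the same final bookkeeping $\epsilon_{i+1}\le 2^{-2i}\epsilon_i$. There is, however, one small but genuine gap in the Lipschitz-pushforward step.

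You write that ``because $g_i^{-1}$ is $2^i$-Lipschitz, \cref{l:push_Lip,l:quant_Lip} then yield $Q_{\hat\mu_i}(\delta)\ge Q_{\hat\nu_i}(2^i\delta)$.'' But $g_i^{-1}$ is only defined on $\A_i$, so $(g_i^{-1})_*$ is a map $\cM(\A_i)\to\cM(\A)$, and applying \cref{l:quant_Lip} with $Y=\cM(\A_i)$ and $Z=\cM(\A)$ gives a lower bound on the quantization number of $\hat\mu_i$ \emph{computed relative to the subspace $\cM(\A_i)$}. This is an upper bound on (not the same as) $Q_{\hat\mu_i}$ computed in the ambient space $\cM(\A)$ — the quantity that the previous step (\cref{l:submeasure} applied to $\hat\mu\ge 2^{-i}\hat\mu_i$) actually controls — so the inequalities do not chain as written. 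In general, for a measure $\mu$ supported in a subspace $Y'\subset Y$, one has $Q_\mu^{Y}\le Q_\mu^{Y'}$, and the inequality can be strict. The paper handles this by composing $g_i^{-1}$ with a $1$-Lipschitz retraction $p_i\colon\A\to\A_i$ to get a map $q_i\coloneqq g_i^{-1}\circ p_i\colon\A\to\A$, which is still $2^i$-Lipschitz and still pushes $\hat\mu_i$ forward to $\hat\nu_i$, but is defined on all of $\A$; then $q_{i*}\colon\cM(\A)\to\cM(\A)$ is a globally defined $2^i$-Lipschitz map and \cref{l:quant_Lip} applies directly at the level of $\cM(\cM(\A))$. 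Equivalently, one could observe that $(p_i)_*\colon\cM(\A)\to\cM(\A_i)$ is a $1$-Lipschitz retraction, so $Q_{\hat\mu_i}^{\cM(\A)}=Q_{\hat\mu_i}^{\cM(\A_i)}$ and your inequality is recovered. Either fix is one line, but it is needed; as written the key Lipschitz comparison is not justified.
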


\begin{proof}[Proof of the claim]
Let $\mu_i \coloneqq 2^i \, \Leb|\A_i = g_{i*}(\Leb)$; this is a $\Phi^t$-invariant probability measure.
Its ergodic decomposition $\hat{\mu}_i$ is bounded from above by $2^i \hat{\mu}$ and so, by \cref{l:submeasure}, for all $\epsilon>0$ we have:
$$
Q_{\hat\mu}(\epsilon) \ge Q_{\hat\mu_i}(2^i \epsilon) .
$$

Let $\omega_i \coloneqq \omega\circ g_i$, and 
consider the conservative flow $\Psi_i^t \coloneqq h_i \circ R^t_{\omega_i} \circ h_i^{-1}$ and its ergodic decomposition $\hat\nu_i \coloneqq \mathbf{e}_*^{\Psi_i^t}(\Leb)$ (for $t \neq 0$).
Note that there exists a $1$-Lipschitz retraction $p_i \colon \A \to \A_i$.
Let $q_i \colon \A \to \A$ be the map $q_i \coloneqq g_i^{-1} \circ p_i$. 
By \cref{l:push_erg} we have $\hat\nu_i = q_{i**}(\hat\mu_i)$, that is, $\hat\nu_i$ is the push-forward of $\hat\mu_i$ under the map $q_{i*} \colon \cM(\A) \to \cM(\A)$.
Since $q_i$ is $2^i$-Lipschitz, so is $q_{i*}$.
Hence, by \cref{l:quant_Lip},
$$
Q_{\hat\mu_i}(2^i \epsilon) \ge Q_{\hat\nu_i}(2^{2i} \epsilon) \, .
$$
In summary, we have shown that $Q_{\hat\mu}(\epsilon) \ge Q_{\hat\nu_i}(2^{2i} \epsilon)$, that is,
$$
\Eme_\Leb(\Phi^t)(\epsilon) \ge \Eme_{\Leb}(\Psi^t_i)(2^{2i} \epsilon) \, .
$$
Taking $\epsilon = 2 \epsilon_{i+1}$ and noting that $2^{2i} \epsilon =  \epsilon_i$, we obtain:
$$
\Eme_\Leb(\Phi^t)(\epsilon_{i+1}) \ge
\Eme_\Leb(\Phi^t)(2 \epsilon_{i+1}) \ge \Eme_{\Leb}(\Psi^t_i)(\epsilon_i) \ge 
\exp(C \epsilon_i^{-2}) \, ,
$$
where the last inequality is the main property of the flow $(\Psi_i^t)_t$, coming from \cref{p:main}. 
This proves the \lcnamecref{cl.dejavu}.
\end{proof}

Next, we claim that: 
\begin{equation}\label{e.dejavu}
\liminf_{\epsilon\to 0}
\frac{\log \log Q_{\hat\mu}(\epsilon)}{-\log \epsilon} 
\ge 2 \, .
\end{equation}
Indeed, given a small $\epsilon>0$, let $i$ be such that $\epsilon \in [\epsilon_{i+2}, \epsilon_{i+1}]$.
We have $Q_{\hat\mu}(\epsilon)\le Q_{\hat\mu}(\epsilon_{i+1})$ 
and so, using \cref{cl.dejavu}, 
\[
\frac{\log \log Q_{\hat\mu}(\epsilon)}{-\log \epsilon} \ge
\frac{\log \log Q_{\hat\mu}(\epsilon_{i+1})}{-\log \epsilon_{i+2}} \ge
\frac{\log C - 2 \log \epsilon_i}{-\log \epsilon_{i+2}} 
\, .
\]
The right-hand side tends to $2$ as $i \to \infty$, so \eqref{e.dejavu} follows.

Inequality \eqref{e.dejavu} means that the lower quantization order of $\hat\mu$ is at least~$2$, that is, $\underline{\qo}(\hat\mu)\ge 2$. 
Up to this moment we were assuming that $\cM(\A)$ is metrized with Wasserstein metric $\mathsf{W}_1$, but now let us use 
any Wasserstein metric $\mathsf{W}_p$, $1\le p < \infty$, or the L\'evy--Prokhorov metric $\mathsf{LP}$.
By inequalities \eqref{Wp_monotone}, \eqref{compaWpLP} from p.~\pageref{compaWpLP}, we still have $\underline{\qo}(\hat\mu)\ge 2$ with respect to the other metrics.
On the other hand, by \cref{l:quant_cover,t:mo_box,r.KT}, we have:
\[
\overline{\qo}(\hat{\mu}) \le \overline{\mo}(\cM(\A)) \le \overline{\dim}(\A) = 2 \, .
\]
Therefore $\qo(\hat{\mu})=2$.
This means exactly that:
\[
\lim_{\epsilon\to 0} \frac{\log \log \Eme_{\Leb}(\Phi^t) (\epsilon)}{-\log \epsilon} = 2 \, ,
\]
which completes the proof of \cref{t:main}.
\end{proof}

\section{Genericity of high emergence} \label{s:generic} 

In this \lcnamecref{s:generic}, we will prove our \cref{coroC,coroD} on the genericity of high emergence among surface diffeomorphisms. Both proofs are based on \cref{p:main}.  Another fundamental tool is the creation of \emph{periodic spots}; we recall the relevant results in \cref{ss:spots}. Furthermore, for the conservative \cref{coroC}, we also need a KAM theorem, which is discussed along with some of its consequences in \cref{ss:KAM}.

\emph{Throughout this section, let $(M,\Leb)$ be a compact surface endowed with a normalized smooth volume (i.e.\ area) measure.}

\subsection{Creation of periodic spots}\label{ss:spots}

\cref{coroC,coroD} are proved using the following concept:

\begin{definition}[Periodic spot] 
An open subset $O\subset M$ is a \emph{periodic spot} for a continuous self-map $f$ of $M$ if  there exists $p\ge 1$ such that $f^p(O)=O$ and the restriction $f^p|O$ is the identity map on $O$.
\end{definition}

Diffeomorphisms displaying a periodic spot appears densely in many open subsets of dynamical systems.
In the conservative setting we have:

\begin{otherthm}[{\cite[Thrm.~5]{GTS07}} and {\cite[Thrm.~1]{GT10}}]
\label{dense_periodic_spotC}
For every $r\in [1, \infty]$, if $\cU$ is the open subset of $\Diff^r_\Leb(M)$ formed by dynamics having an elliptic periodic point, then there exists a dense subset $\cD\subset \cU$ formed by dynamics displaying a periodic spot. 
\end{otherthm}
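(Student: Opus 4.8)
The plan is to produce the periodic spot by a perturbation localized near the given elliptic periodic orbit, forcing the local return map to become a rigid rational rotation on an open set. So let $f\in\cU$ have an elliptic periodic point $x$ of period $p$, and fix $\varepsilon>0$; we want $\bar f\in\Diff^r_\Leb(M)$ with $\|\bar f-f\|_{C^r}<\varepsilon$ admitting a periodic spot. First I would pass to the first return map: replacing $f$ by $g\coloneqq f^p$ and choosing a chart centered at $x$ in which $\Leb$ is standard (such a chart exists by Darboux's theorem), we may regard $g$ as a local area-preserving diffeomorphism of a neighbourhood of $0$ in $\R^2$ fixing $0$, whose linear part $Dg(0)$ is --- by ellipticity together with area preservation --- conjugate to a rotation $R_\alpha$, and a linear change of coordinates makes it equal to $R_\alpha$. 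A perturbation of $g$ supported near $0$ corresponds to a perturbation of $f$ supported near the orbit of $x$, spread over $p$ disjoint charts, and a periodic spot of period $P$ for the perturbed return map yields one of period $pP$ for the corresponding $\bar f$; so it is enough to build a periodic spot for $g$ by a small conservative perturbation near $0$.

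Next I would put $g$ in Birkhoff--Moser normal form: after a conservative coordinate change near $0$, as $C^\infty$-close to the identity as desired on a sufficiently small disk, $g$ agrees up to a remainder of arbitrarily high flatness at $0$ with an integrable twist map $N\colon(\theta,I)\mapsto(\theta+\omega(I),I)$ in action--angle coordinates, with $\omega(0)=\alpha$ (a preliminary small perturbation ensures $\alpha$ is non-resonant to the relevant order and that the twist is nondegenerate). The idea would then be: pick a thin action-annulus on which $\omega$ is monotone and varies little, shift $\omega$ there by an arbitrarily $C^0$-small amount so that it attains a rational value $p''/q''$, flatten $\omega$ to be identically $p''/q''$ on a slightly smaller annulus while also killing the normal-form remainder there; the resulting invariant annulus is then an exact periodic spot of period $q''$ for the perturbed return map, and hence of period $pq''$ for $\bar f$.

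The obstacle --- and, I believe, the real content of \cite{GTS07,GT10} --- is that this flattening must be small in the full $C^r$ topology, in particular for $r=\infty$, and it \emph{cannot} be achieved by a single localized modification of this kind: flattening $\omega$ replaces a nonzero derivative (comparable to the twist coefficient) by zero, so the modification is not $C^1$-small in action coordinates, and although placing the annulus at a small action scale turns it into a $C^0$- and $C^1$-small modification of the return map in the original coordinates, the competing requirements --- the annulus must be thin and sit inside the normal-form disk, yet wide enough for the flattened derivative to be negligible --- are incompatible, and the higher $C^k$ norms blow up. The way around it is to descend into the cascade of elliptic islands generated by the twist around resonant periodic orbits --- essentially the renormalization of first-return maps near (conservative) homoclinic configurations developed in those papers --- until one reaches a scale at which the rescaled return map is $C^\infty$-close to a model map for which producing an exact periodic spot costs a modification that is small in every $C^r$ norm at once; this is what should make the statement hold uniformly for all $r\in[1,\infty]$ (and also why one only gets density, not openness, of $\cD$, since a periodic spot is not robust). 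I expect this renormalization step, carried out while staying in the smooth area-preserving category, to be the main difficulty; the reduction to a fixed point, the normal-form reduction, and the assembly of the local perturbation into a global diffeomorphism equal to $f$ off a neighbourhood of the orbit are routine.
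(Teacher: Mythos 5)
The paper gives no self-contained argument for this statement: it simply says the result follows by combining \cite[Thrm.~5]{GTS07} with \cite[Thrm.~1]{GT10}, with no details reproduced. So the real comparison is between your sketch and those two theorems. Your preliminary reductions are correct: passing to $g\coloneqq f^p$ near the elliptic orbit, choosing an area-preserving (Darboux) chart, normalizing the linear part to a rotation, reducing $g$ to Birkhoff normal form $(\theta,I)\mapsto(\theta+\omega(I),I)$ plus a flat remainder, and observing that a local perturbation of $g$ spreads to a $C^r$-small conservative perturbation of $f$. Your diagnosis of why the obvious flattening perturbation fails is also correct: flattening $\omega$ on a thin action annulus at height $I_0$ costs $O(\beta\,\delta)$ in $C^0$ and $O(\beta)$ in $C^1$ of the action variable, but the $C^k$ norms for $k\ge 2$ scale like inverse powers of the annulus width, and the competing constraints (annulus inside the normal-form disk, width small compared to $I_0$, flattened derivative negligible) cannot be met for all $r$ simultaneously, certainly not for $r=\infty$.

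The genuine gap is the one you yourself name. What replaces the naive flattening is the renormalization scheme for first-return maps near a generic elliptic orbit: \cite[Thrm.~1]{GT10} shows that after a $C^r$-arbitrarily small conservative perturbation, the rescaled return maps near such an orbit form a family that is $C^\infty$-dense in a space of area-preserving maps of a disk, and the construction of that universal family rests on the arbitrarily degenerate homoclinic tangencies produced in \cite[Thrm.~5]{GTS07}. Since the identity map of the disk lies in the closure of the universal family, and a renormalized return map equal to the identity on an open set is precisely a periodic spot for the original diffeomorphism, density of $\cD$ follows. Your proposal correctly locates this content and explains why it is necessary, but does not reproduce it, so taken as a self-contained proof it stops exactly where the theorem begins. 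Since the paper makes the same deferral, the mismatch is not in the route but in the fact that the route is only indicated, not traversed.
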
 

The statement above follows from the combination of \cite[Thrm.~5]{GTS07} and \cite[Thrm.~1]{GT10}.

In the dissipative setting we have:

\begin{otherthm}[Turaev, {\cite[Lemma~2]{Turaev}}]\label{dense_periodic_spotD}
For every $r\in [2, \infty]$, there exists a non-empty open set $\cU\subset \Diff^r(M)$ and a dense set $\cD\subset \cU$ formed by dynamics displaying a periodic spot. 
\end{otherthm}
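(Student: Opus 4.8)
The plan is to deduce the statement from Turaev's universality theorem for the Newhouse domain, which is the only hard ingredient; the rest is soft. Take $\cU$ to be a nonempty $C^r$-open subset of the \emph{Newhouse domain} of $\Diff^r(M)$, which is nonempty for every $r\in[2,\infty]$ by Newhouse's work on persistent homoclinic tangencies (see \cite{Ne2}): every $g\in\cU$ carries a \emph{thick} horseshoe $\Lambda_g$, obtained as the hyperbolic continuation of a fixed one, and consequently the diffeomorphisms having a quadratic homoclinic tangency $q$ of a stable and an unstable branch of some hyperbolic periodic point $P\in\Lambda_g$ form a $C^r$-dense subset of $\cU$. Fix such a $g$. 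For a suitable sequence of first-return times $p\to\infty$, the return map $T_p$ of $g$ to an appropriate small rectangle $\Pi$ near $q$ becomes, after an affine rescaling $\Phi$ of coordinates, $C^r$-close on a fixed-size disc $D$ to a limiting H\'enon-like map (the classical rescaling lemma near a homoclinic tangency); moreover the rectangles $\Pi,g(\Pi),\dots,g^{p-1}(\Pi)$ are pairwise disjoint.

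The decisive point --- this is the content of \cite[Lemma~2]{Turaev} and its proof --- is that by further perturbing $g$ inside $\cU$, staying $C^r$-close to the originally prescribed element of $\cU$, and taking the excursion long, the rescaled return map $\Phi\circ T_p\circ\Phi^{-1}$ can be made $C^r$-close on $D$ to \emph{any} prescribed orientation-preserving diffeomorphism of $D$; since in the dissipative category there is no constraint on Jacobians, it can in particular be made $C^r$-close to $\id_D$. Conjugating back by $\Phi$, we conclude: $C^r$-densely in $\cU$ there is a diffeomorphism $g$ together with an iterate $g^p$ which on the small disc $D_0\coloneqq\Phi^{-1}(D)$ is $C^r$-close to the identity, the images $D_0,g(D_0),\dots,g^{p-1}(D_0)$ being pairwise disjoint.

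Finally, a soft local perturbation upgrades ``$C^r$-close to the identity on $D_0$'' to an exact periodic spot. Since $g^p|_{D_0}$ is close to $\id$, there is a disc $O$ compactly contained in $D_0$ with $g^{-p}(\overline O)\subset D_0$; choose a $C^r$-diffeomorphism $\sigma$ supported in $D_0$ with $\sigma|_O=g^{-p}|_O$, which can be taken $C^r$-small because $g^{-p}$ is $C^r$-close to the identity near $\overline O$. Then $g'\coloneqq g\circ\sigma$ is $C^r$-close to $g$, and by disjointness of $D_0,g(D_0),\dots,g^{p-1}(D_0)$ one gets $(g')^p(x)=g^p(\sigma(x))=x$ for all $x\in O$ and $(g')^p(O)=O$; thus $O$ is a periodic spot for $g'$, and since $g'$ is $C^r$-close to the prescribed diffeomorphism of $\cU$, the set $\cD$ of diffeomorphisms displaying a periodic spot is dense in $\cU$. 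The genuine obstacle is the universality step: to place the identity disc (indeed, any disc diffeomorphism) in the $C^r$-closure of the rescaled return maps over the Newhouse domain one needs a multi-parameter perturbation inside it --- adjusting the position of the tangency and the multipliers of $P$ along the continuation of $\Lambda_g$ --- while controlling $C^r$-norms under the rescaling $\Phi$, whose distortion grows with the excursion length; this is exactly \cite[Lemma~2]{Turaev}, which we simply invoke, the soft perturbation above being added if needed to make the identity exact rather than approximate.
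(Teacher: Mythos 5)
The paper gives no proof of its own for this statement: it is attributed verbatim to Turaev's Lemma~2 (with the added remark that the open set $\cU$ contains the \emph{absolute} Newhouse domain, consisting of diffeomorphisms carrying a horseshoe with a robust homoclinic tangency together with a volume expanding \emph{and} a volume contracting periodic point). Your proposal takes the same route --- it ultimately just invokes Turaev's Lemma~2 --- and your expository sketch of its proof (rescaling of return maps near a homoclinic tangency, universality over the Newhouse domain, then a soft correction to promote ``$C^r$-close to $\id$ on $D_0$'' to an exact periodic spot using disjointness of $D_0,g(D_0),\dots,g^{p-1}(D_0)$) is accurate in outline, and the soft-perturbation step is correctly carried out.

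One point is imprecise, however. You write that ``since in the dissipative category there is no constraint on Jacobians, [the rescaled return map] can in particular be made $C^r$-close to $\id_D$,'' and later describe the tuning as ``adjusting the multipliers of $P$.'' The absence of a conservativity constraint does \emph{not} by itself make the Jacobian of the renormalized return map freely adjustable: if, say, every periodic orbit of the horseshoe is volume contracting, the Jacobian of the rescaled return map tends to $0$ (or $\infty$) with the excursion length and cannot approach~$1$. The reason one can hit Jacobian $\approx 1$ (hence approximate $\id_D$) is precisely the defining feature of the \emph{absolute} Newhouse domain that the paper's remark emphasizes: the horseshoe must contain both volume expanding and volume contracting periodic orbits, and Turaev's construction uses excursions spending carefully chosen amounts of time near each in order to balance the determinant. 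Small perturbations of the multipliers of a single periodic point $P$ cannot change the sign of $\log\lvert\det Dg^{\mathrm{per}}(P)\rvert$, so your description of the tuning undersells the mechanism; this is not a logical gap since you ultimately cite Turaev, but it is the one place where your gloss misrepresents what is actually happening inside his proof.
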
 

We add that the set $\cU$ in \cref{dense_periodic_spotD} contains the absolute Newhouse domain: it is formed by diffeomorphisms displaying a horseshoe having a robust homoclinic tangency, a volume expanding periodic point, and a volume contracting periodic point.

\subsection{KAM and stability of high emergence}\label{ss:KAM}

A \emph{twist map} is a conservative diffeomorphism $f_0 \colon \A \to \A$ of the form $f_0 = R^t_\omega$, where
$\omega \colon [0,1]\to \R$ is a smooth function without critical points, and $t \neq 0$. 

\begin{otherthm}[{Moser--P\"oschel's twist mapping theorem \cite{Moser}, \cite{Poschel}, \cite[\S~3.2.1]{BroerS}}] \label{t:KAM}
Let $f$ be a twist map.
Fix a number $\eta>0$ a neighborhood $\cU$ of the identity map in $\Diff^\infty(\A)$. 
Then there exist a closed subset $D \subset (0,1)$ with Lebesgue measure at least $1-\eta$
and a neighborhood $\cV$ of $f$ in $\Diff^\infty_\Leb(\A)$ such that for every $g \in \cV$, 
there exists $h\in \cU$ such that
the map $f|\cercle\times D$ is conjugate to $g|h(\cercle\times D)$ via $h$:
$$
g \circ h(z) = h \circ f(z),\quad \forall z\in \cercle\times D\; .
$$
\end{otherthm}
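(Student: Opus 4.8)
The plan is to invoke the classical KAM theory for area-preserving twist maps, in the $C^\infty$ version due to Moser and the parametrized refinement of P\"oschel; I will only sketch the structure, since in the paper this theorem is simply cited from \cite{Moser,Poschel,BroerS}.

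\textbf{Twist condition and Diophantine frequencies.} Since $\omega$ has no critical point, $\omega'$ is nowhere zero, so $\rho\mapsto\alpha(\rho)\coloneqq\omega(\rho)\,t$ is a diffeomorphism onto an interval: this is exactly the monotone twist condition for the exact area-preserving map $f=R^t_\omega$. Fix any exponent $\tau>1$ and, for $\gamma>0$, let $D_\gamma\subset(0,1)$ be the closed set of $\rho$ for which $\alpha(\rho)$ is $(\gamma,\tau)$-Diophantine, i.e.\ $|q\,\alpha(\rho)-p|\ge\gamma q^{-\tau}$ for all $p\in\Z$, $q\ge1$. Pulling back the standard measure estimate for Diophantine numbers through $\alpha$ (using $\inf|\omega'|>0$) gives $\Leb\big((0,1)\setminus D_\gamma\big)\to0$ as $\gamma\to0$, so one may fix $\gamma$ small enough that $D\coloneqq D_\gamma$ has Lebesgue measure at least $1-\eta$; note that $D$ depends only on $f$ and $\eta$.

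\textbf{Persistence of each Diophantine circle.} I would apply Moser's invariant-curve theorem: there is a neighborhood $\cV_0$ of $f$ in $\Diff^\infty_\Leb(\A)$ so small that each $g\in\cV_0$ is still a twist map (an open condition) satisfying the hypotheses of the theorem, and for every $\rho_0\in D$ there is a $C^\infty$ embedding $\iota_{\rho_0}\colon\cercle\to\A$, close to $\theta\mapsto(\theta,\rho_0)$, whose image is $g$-invariant and on which $g$ acts as the rigid rotation by $\alpha(\rho_0)$. The quantitative content of KAM is that these embeddings come with bounds in every $C^k$-norm that are uniform in $\rho_0\in D$ and tend to $0$ as $g\to f$; they are extracted from the Newton iteration, the small divisors being controlled by the Diophantine condition and the loss of derivatives absorbed by smoothing.

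\textbf{Assembling a global conjugacy close to the identity.} Following P\"oschel, the family $(\iota_{\rho_0})_{\rho_0\in D}$ defines a map $h_0\colon\cercle\times D\to\A$, $(\theta,\rho_0)\mapsto\iota_{\rho_0}(\theta)$, which is $C^\infty$ in $\theta$ and Whitney-$C^\infty$ in $\rho_0$ on the closed set $D$, with all Whitney jets close to those of the inclusion. By the Whitney extension theorem one extends $h_0$ to a $C^\infty$ map $h\colon\A\to\A$ still close to the identity, hence a diffeomorphism; shrinking $\cV_0$ to a smaller neighborhood $\cV$ of $f$ then forces $h\in\cU$. On $\cercle\times D$ the conjugacy identity $g\circ h=h\circ f$ holds by construction, since $h$ carries $\cercle\times\{\rho_0\}$ --- on which $f$ is the rotation by $\alpha(\rho_0)$ --- onto the $g$-invariant circle on which $g$ is that same rotation. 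The real difficulty is concentrated in these last two steps: running the KAM scheme with estimates \emph{uniform in the frequency parameter} $\rho_0$, and obtaining the Whitney-regular dependence of the conjugacies on $\rho_0$ needed to glue them into a single smooth diffeomorphism of $\A$. By contrast, the twist and intersection properties of $g$, the measure estimate for $D$, the Whitney extension, and the closeness of $h$ to the identity are comparatively routine once this parametrized KAM statement is in hand.
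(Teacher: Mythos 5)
The paper does not prove this statement: it is quoted as a black box with citations to Moser, P\"oschel, and Broer--Sevryuk, so there is no in-text argument to compare against. Your sketch is an accurate summary of what those references contain -- the twist condition from $\omega'\neq 0$, the Diophantine set of nearly full measure, persistence of each Diophantine circle via the invariant-curve theorem, and P\"oschel's Whitney-smooth dependence on the frequency allowing the individual conjugacies to be glued and Whitney-extended to a single diffeomorphism $h$ of $\A$ close to the identity -- and it correctly isolates the genuinely non-routine ingredient, namely the parametrized KAM estimates uniform in $\rho_0$ that yield Whitney regularity of $h_0$ on $\cercle\times D$.
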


As a corollary of \cref{t:KAM}, we will prove below:

\begin{corollary}\label{c:robust_erg_dec}
Let $f \in \Diff^\infty_\Leb(M)$ be a conservative surface diffeomorphism that acts as a twist map on a embedded annulus $A \subset M$; more precisely, assume that there exist a smooth embedding $h_1 \colon \A \to M$ with constant jacobian and image $h_1(\A) = A$ and a twist map $f_0 \colon \A \to \A$
such that $f \circ h_1 = h_1 \circ f_0$. 
Then for every $\epsilon_1 > 0$, for every $g \in \Diff^\infty_\Leb(M)$ sufficiently close to $f$, there exists a $g$-invariant embedded sub-annulus
$B \subset A$ 
such that $\Leb(A \setminus B) < \epsilon_1$ and  
$$
\mathsf{W}_1 \left(\mathbf{e}^g_*(\mu_B), \mathbf{e}^f_*(\mu_A)  \right) < \epsilon_1 \, ,
$$
where $\mu_A$ and $\mu_B$ are the normalized Lebesgue measures on $A$ and $B$, respectively.
\end{corollary}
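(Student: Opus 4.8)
The plan is to transport the problem to the model twist map via $h_1$, invoke the Moser--P\"oschel theorem (\cref{t:KAM}) for the perturbed map there, extract a genuine invariant sub-annulus from the persisting KAM circles, and then compare the two ergodic decompositions circle by circle. Write $f_0 = R^t_\omega$ for the twist map ($\omega$ without critical points, $t\neq 0$). Since $h_1$ has constant jacobian, $(h_1)_*(\Leb_\A) = \mu_A$ (where $\Leb_\A$ is normalized Lebesgue on $\A$); since $f\circ h_1 = h_1\circ f_0$ is a conjugacy on the $f$-invariant set $A$, \cref{l:push_erg} gives $\mathbf{e}^f_*(\mu_A) = (h_1)_{**}\big(\mathbf{e}^{f_0}_*(\Leb_\A)\big)$, and by \eqref{e.erg_dec_Rt} this equals $(h_1)_{**}\big(\int_0^1\delta_{\lambda_\rho}\,d\rho\big)$ with $\lambda_\rho = \Leb_\cercle\otimes\delta_\rho$. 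Once I produce a $g$-invariant embedded sub-annulus $B = h_1(B_0)\subset A$ on which $g$ is conjugate via $h_1$ to a diffeomorphism $g_0$ of $B_0$, the same lemma yields $\mathbf{e}^g_*(\mu_B) = (h_1)_{**}\big(\mathbf{e}^{g_0}_*(\mu_{B_0})\big)$ and constancy of the jacobian gives $\Leb(A\setminus B) = \Leb(A)\,\Leb_\A(\A\setminus B_0)$. Since $(h_1)_*\colon(\cM(\A),\mathsf{W}_1)\to(\cM(M),\mathsf{W}_1)$ is Lipschitz with a fixed constant (\cref{l:push_Lip}), it suffices, given $\epsilon_1 > 0$, to produce for every $g$ in a suitable neighborhood of $f$ such a $B_0$ with $\Leb_\A(\A\setminus B_0)$ and $\mathsf{W}_1\big(\mathbf{e}^{g_0}_*(\mu_{B_0}),\mathbf{e}^{f_0}_*(\Leb_\A)\big)$ both smaller than a threshold $\epsilon_0 = \epsilon_0(\epsilon_1) > 0$. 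A minor technical point: $g_0 \coloneqq h_1^{-1}\circ g\circ h_1$ is a priori defined only where $g(A)\subset A$; but for $g$ $C^\infty$-close to $f$ it is well defined and $C^\infty$-close to $f_0$ on a neighborhood of $\cercle\times[\delta,1-\delta]$ for a small fixed $\delta$, and since $f_0$ fixes every horizontal circle one may glue it to $f_0$ near $\partial\A$ through a standard conservative interpolation, obtaining a genuine $g_0\in\Diff^\infty_\Leb(\A)$ which is $C^\infty$-close to $f_0$ and agrees with $h_1^{-1}\circ g\circ h_1$ on $\cercle\times[\delta,1-\delta]$.

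Fix $\epsilon_0 > 0$. Choose a small $\eta>0$ and a small neighborhood $\cU$ of $\id$ in $\Diff^\infty(\A)$, and apply \cref{t:KAM} to $f_0$: this gives a closed set $D\subset(0,1)$ with $\Leb(D)\ge 1-\eta$ and a $\Diff^\infty_\Leb(\A)$-neighborhood $\cV$ of $f_0$ so that every $g_0\in\cV$ satisfies $g_0\circ h = h\circ f_0$ on $\cercle\times D$ for some $h\in\cU$; shrinking $\cU$ makes $\|h-\id\|_{C^0}$ and $|\det Dh - 1|$ uniformly small. Pick $\rho_-<\rho_+$ in $D$ with $\rho_-$ and $1-\rho_+$ small but still larger than $\sup_{\cU}\|\,\cdot - \id\|_{C^0}$ (possible because $\Leb(D)\ge 1-\eta$), and with $\delta<\rho_-$. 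Because $f_0$ fixes each circle $\cercle\times\{\rho\}$, the curves $C_\pm \coloneqq h(\cercle\times\{\rho_\pm\})$ are $g_0$-invariant, disjoint, essential simple closed curves; since $h$ is isotopic to the identity and $C^0$-close to it, they are correctly ordered, so the closed region $B_0$ between them is a $g_0$-invariant sub-annulus and in fact $B_0 = h(\cercle\times[\rho_-,\rho_+])$. As $C_\pm$ lie within $\|h-\id\|_{C^0}$ of $\cercle\times\{\rho_\pm\}$, one gets $\Leb_\A(\A\setminus B_0)\le \rho_- + (1-\rho_+) + 2\|h-\id\|_{C^0} < \epsilon_0$, and $B\coloneqq h_1(B_0)$ is the desired sub-annulus, with $g|_B$ conjugate to $g_0|_{B_0}$ via $h_1$ and $\Leb(A\setminus B)<\Leb(A)\,\epsilon_0$.

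For the Wasserstein estimate set $D_0 \coloneqq D\cap[\rho_-,\rho_+]$, so $h(\cercle\times D_0)\subset B_0$ and, since $\det Dh\approx 1$ and $\Leb_\A(B_0)\approx 1$, the mass $m\coloneqq \mu_{B_0}\big(B_0\setminus h(\cercle\times D_0)\big)$ is small. From $g_0^n\circ h = h\circ f_0^n$ on $\cercle\times D$ and the a.e.\ identity $\mathbf{e}^{f_0}(\theta,\rho) = \lambda_\rho$ (valid whenever $\omega(\rho)t\notin\Q$, hence for a.e.\ $\rho$) one gets $\mathbf{e}^{g_0}(x) = h_*(\lambda_{\pi(x)})$ for $\mu_{B_0}$-a.e.\ $x\in h(\cercle\times D_0)$, where $\pi(x)\coloneqq p_2\circ h^{-1}(x)$. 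Hence $\mathbf{e}^{g_0}_*(\mu_{B_0})$ equals $\int\delta_{h_*\lambda_\rho}\,d\sigma(\rho)$ plus a positive measure of mass $m$, where $\sigma$ is the pushforward under $\pi$ of the restriction of $\mu_{B_0}$ to $h(\cercle\times D_0)$; a change of variables identifies $\sigma$ as absolutely continuous on $D_0$ with density $\frac{1}{\Leb_\A(B_0)}\int_\cercle|\det Dh(\theta,\cdot)|\,d\theta$ (and $0$ off $D_0$), so $\sigma$ is within small total variation of Lebesgue on $[0,1]$. Writing $\hat\sigma$ for the normalization of $\sigma$, chain three bounds: $\mathsf{W}_1\big(\mathbf{e}^{g_0}_*(\mu_{B_0}),\int\delta_{h_*\lambda_\rho}\,d\hat\sigma\big)\le 2m\,\diam\big(\cM(\A)\big)$; then, via the coupling $\int\delta_{(h_*\lambda_\rho,\lambda_\rho)}\,d\hat\sigma$ and $\mathsf{W}_1(h_*\lambda_\rho,\lambda_\rho)\le\|h-\id\|_{C^0}$, $\mathsf{W}_1\big(\int\delta_{h_*\lambda_\rho}\,d\hat\sigma,\int\delta_{\lambda_\rho}\,d\hat\sigma\big)\le\|h-\id\|_{C^0}$; and finally, since $\rho\mapsto\delta_{\lambda_\rho}$ is Lipschitz from $([0,1],|\cdot|)$ into $(\cM(\cM(\A)),\mathsf{W}_1)$ and the measures $\int\delta_{\lambda_\rho}\,d\hat\sigma$ and $\int_0^1\delta_{\lambda_\rho}\,d\rho = \mathbf{e}^{f_0}_*(\Leb_\A)$ are its pushforwards of $\hat\sigma$ and $\Leb_{[0,1]}$, \cref{l:push_Lip} bounds $\mathsf{W}_1\big(\int\delta_{\lambda_\rho}\,d\hat\sigma,\mathbf{e}^{f_0}_*(\Leb_\A)\big)$ by a constant times $\|\hat\sigma-\Leb_{[0,1]}\|_{TV}$. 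Each term is made $<\epsilon_0/3$ by taking $\eta$, $\cU$, $\rho_-$ and $1-\rho_+$ small enough; this finishes the proof.

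The main obstacle is not any single inequality but the robustness mechanism itself. The perturbed ergodic decomposition lives on a nowhere-dense lamination of KAM circles transported by $h$, together with an \emph{uncontrolled} part on $B_0\setminus h(\cercle\times D_0)$ on which the empirical measures need not even converge; one must argue this bad part is $\mathsf{W}_1$-negligible — which works precisely because it carries small mass and $\cM(\A)$ has finite diameter — while at the same time tracking that $h$, although $C^\infty$-close to the identity, is \emph{not} conservative, so the transported base measure $\sigma$ is only approximately Lebesgue. The other delicate points are the conservative extension of $g_0$ to all of $\A$ and the order in which the small parameters are chosen ($\epsilon_0$, then $\eta$ and $\cU$ and the resulting $D$, $\cV$, then $\rho_\pm$ and $\delta$, then finally the neighborhood of $f$).
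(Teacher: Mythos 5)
Your argument is correct and rests on the same two pillars as the paper's proof — the Moser--P\"oschel theorem (\cref{t:KAM}) plus a short chain of $\mathsf{W}_1$ transport estimates — but it organizes the chain differently, and it is considerably more explicit about the reduction to the model case. In the paper, the model case ($M=A=\A$, $h_1=\id$, $B=A$) is handled by introducing the auxiliary \emph{non-conservative} diffeomorphism $\tilde f := h^{-1}\circ g\circ h$ (which coincides with $f$ on $\cercle\times D$) and the auxiliary $\tilde f$-invariant measures $\nu := h^{-1}_*(\Leb)$ and $\nu_1 := c^{-1}\nu|_{\cercle\times D}$; the four ergodic decompositions $\circled{1}=\mathbf{e}_*^g(\Leb)$, $\circled{2}=\mathbf{e}_*^{\tilde f}(\nu)$, $\circled{3}=\mathbf{e}_*^{\tilde f}(\nu_1)=\mathbf{e}_*^f(\nu_1)$, $\circled{4}=\mathbf{e}_*^f(\Leb)$ are then compared using the Radon--Nikodym/density estimate (\cref{l.L1_W1}) for the gaps $\circled{2}\text{--}\circled{3}$ and $\circled{3}\text{--}\circled{4}$ and the fact that $h_{**}$ is close to the identity for $\circled{1}\text{--}\circled{2}$. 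You bypass the auxiliary measures $\nu$, $\nu_1$ and instead decompose the target measure $\mathbf{e}^{g_0}_*(\mu_{B_0})$ into the part carried by the transported KAM circles $\{h_*\lambda_\rho : \rho\in D_0\}$ and a small-mass remainder, controlling the three gaps by (i) a small-mass/finite-diameter bound, (ii) a direct coupling estimate with cost $\|h-\id\|_{C^0}$, and (iii) \cref{l:push_Lip} plus a total-variation bound on $\hat\sigma-\Leb_{[0,1]}$. Both chains of estimates are correct and of comparable length; the paper's is a little more uniform in flavor since it re-uses the same density lemma twice, while yours is more ``hands-on''. Where your write-up genuinely adds detail is the general case: the paper simply asserts that ``\cref{t:KAM} also ensures'' the existence of a $g$-invariant sub-annulus and that the proof carries over ``verbatim'', whereas you explicitly build $B_0 = h(\cercle\times[\rho_-,\rho_+])$ as the region between two persisting KAM circles and address the real technical point that $h_1^{-1}\circ g\circ h_1$ must be glued to $f_0$ near $\partial\A$ to produce an honest element of $\Diff_\Leb^\infty(\A)$ before Moser--P\"oschel can be invoked. (One purely cosmetic inaccuracy: in your first bound, $m\,\diam(\cM(\A))$ already suffices; the factor $2$ is superfluous but harmless.)
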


In the case that $f$ itself is a twist map (so $M = A = \A$, $f=f_0$, and $h_1 = \id$), we can actually take $B = A$.
In particular, $f$ becomes a continuity point for the ergodic decomposition of Lebesgue measure (c.f.\ \cref{r:semicont_erg_dec}).

We will use the following general estimate:

\begin{lemma}\label{l.L1_W1}
Let $(X,\mathsf{d})$ be a compact metric space and $\mu$, $\nu \in \cM(X)$.
If $\nu$ is absolutely continuous w.r.t.\ $\mu$, with density $r \coloneqq \frac{d\nu}{d\mu}$, then:
$$
\mathsf{W}_1(\nu,\mu) \le (\diam X) \|r-1\|_{L^1(\mu)} \, . 
$$
\end{lemma}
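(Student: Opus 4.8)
The plan is to exhibit an explicit transport plan from $\mu$ to $\nu$ whose cost is controlled by the right-hand side, using the Hahn--Jordan-type decomposition of $r-1$. First I would dispose of the trivial case: if $\|r-1\|_{L^1(\mu)}=0$ then $\nu=\mu$ and $\mathsf{W}_1(\nu,\mu)=0$, so from now on assume $m \coloneqq \int_X (r-1)_+\,d\mu>0$. Since $\int_X (r-1)\,d\mu = \nu(X)-\mu(X)=0$, we also have $\int_X (r-1)_-\,d\mu=m$, and hence $\|r-1\|_{L^1(\mu)}=2m$.

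Next I would split off the common part of the two measures. Let $\sigma$ be the measure on $X$ with density $\min(1,r)$ with respect to $\mu$, and let $\mu_-$ and $\nu_+$ be the measures with densities $(1-r)_+ = (r-1)_-$ and $(r-1)_+$, respectively. Then $\mu = \sigma + \mu_-$ and $\nu = \sigma + \nu_+$, with $\mu_-(X)=\nu_+(X)=m$. Define
\[
\pi \coloneqq (\id\times\id)_*\sigma \; + \; \tfrac{1}{m}\,\mu_-\otimes\nu_+ \;\in\; \cM(X\times X) \, .
\]
A direct computation of marginals gives $(p_1)_*\pi = \sigma+\mu_- = \mu$ and $(p_2)_*\pi = \sigma+\nu_+ = \nu$, so $\pi \in \Pi(\mu,\nu)$. (One checks easily that $\pi$ is a probability: $\sigma(X)=1-m$ and the product term has total mass $m$.)

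Finally I would estimate the cost of $\pi$. The first summand is supported on the diagonal $\{x=y\}$ and contributes nothing; for the second summand we bound $\mathsf{d}(x,y)\le\diam X$ pointwise, so
\[
\int_{X\times X}\mathsf{d}(x,y)\,d\pi(x,y) \;\le\; \frac{1}{m}\,(\diam X)\,\mu_-(X)\,\nu_+(X) \;=\; m\,\diam X \;=\; \tfrac12\,(\diam X)\,\|r-1\|_{L^1(\mu)} \, .
\]
By the definition of $\mathsf{W}_1$ this yields $\mathsf{W}_1(\nu,\mu)\le\tfrac12(\diam X)\|r-1\|_{L^1(\mu)}$, which is even slightly stronger than the stated inequality. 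There is no genuine obstacle here: the only care needed is the bookkeeping in the decomposition $\mu = \sigma + \mu_-$, $\nu = \sigma + \nu_+$ and the verification that $\pi$ has the correct marginals.
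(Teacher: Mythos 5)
Your proof is correct, and in fact sharper than the stated lemma: it yields the constant $\tfrac12 \diam X$ rather than $\diam X$. However, you take a genuinely different route from the paper. The paper's proof is a one-liner invoking the Kantorovich--Rubinstein duality formula
$\mathsf{W}_1(\mu,\nu)=\sup\{\int f\,d(\mu-\nu) : \mathrm{Lip}(f)\le 1\}$:
for any $1$-Lipschitz $f$ and any constant $c$, one has $\int f\,d(\mu-\nu)=\int (f-c)(1-r)\,d\mu$, and since $f$ may be shifted so that $\|f-c\|_\infty\le\diam X$ (or even $\tfrac12\diam X$) the bound follows immediately. You instead work on the primal side, building an explicit coupling: split off the common part $\sigma$ of $\mu$ and $\nu$ (density $\min(1,r)$), transport $\sigma$ to itself along the diagonal at zero cost, and couple the remaining mutually singular pieces $\mu_-$ and $\nu_+$ (each of mass $m=\tfrac12\|r-1\|_{L^1(\mu)}$) by the product plan. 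Your marginal and mass checks are correct, and the cost estimate is clean. The dual argument is shorter and leans on a cited theorem; your primal argument is longer but self-contained, needs no duality, and makes the transport that achieves the bound completely explicit. Both are valid; neither has a gap.
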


\begin{proof}
This is an immediate consequence of the Kantorovich--Rubinstein duality formula \cite[p.~207]{Villani}.
\end{proof}

\begin{proof}[Proof of \cref{c:robust_erg_dec}]	
Let us first consider the simpler case where $f = f_0$ is a twist map on $M = A = \A$, and so $h_1 = \id$.
Let $\eta>0$ be small, and let $\cU$ be a small neighborhood of the identity map in $\Diff^\infty(\A)$.
We apply \cref{t:KAM}, obtaining a set $D\subset[0,1]$ and a neighborhood $\cV$ of the twist map $f$ in $\Diff^\infty_\Leb(\A)$.
Take an arbitrary $g \in \cV$.
We need to prove that the ergodic decompositions of Lebesgue measure with respect to $f$ and $g$ are approximately the same.

Consider the (non-conservative) diffeomorphism $\tilde f \coloneqq h^{-1} \circ g \circ h$; by construction it equals $f$ on $\cercle \times D$.
The measure $\nu \coloneqq h_*^{-1}(\Leb)$ is $\tilde f$-invariant.
Let $c \coloneqq \nu (\cercle \times D)$ (a number close to $1$),
and let $\nu_1 \coloneqq c^{-1} \cdot \nu|\cercle \times D$.
Then $\nu_1$ is $\tilde f$-invariant.
Since $\tilde f$ equals $f$ on $\supp \nu_1 = \cercle \times D$, this measure is also $f$-invariant.
We will prove that the four ergodic decompositions below are close to each other:
$$
\circled{1} \coloneqq \mathbf{e}_*^{g}(\Leb), \quad 
\circled{2} \coloneqq \mathbf{e}_*^{\tilde f}(\nu), \quad 
\circled{3} \coloneqq \mathbf{e}_*^{\tilde f}(\nu_1) = \mathbf{e}_*^{f}(\nu_1), \quad 
\circled{4} \coloneqq \mathbf{e}_*^{f}(\Leb).
$$

Since $h$ is close to identity, the distances $\mathsf{d}(h(x),x)$ are uniformly bounded by a small constant $\epsilon_2$. Therefore:
$$
\forall \xi \in \cM(\A) \, , \quad
\mathsf{W}_1 ( h_* \xi, \xi) \le \int \mathsf{d}(h(x),x) \, d\xi(x) \le \epsilon_2 \, ,
$$
That is, $h_* \colon (\cM(\A),\mathsf{W}_1) \to (\cM(\A),\mathsf{W}_1)$ is $\epsilon_2$-close to the identity map. 
Repeating the argument, we see that $h_{**}$ is also close to the identity. By \cref{l:push_erg}, $h_{**}(\circled{2})=\circled{1}$; this proves that the measures $\circled{1}$ and $\circled{2}$ are close.

The Radon--Nikodym derivative $r \coloneqq \frac{d \nu}{d \Leb}$ is smooth and uniformly close to $1$.
We have:
$$
\frac{d \nu_1}{ d\nu 
} = c^{-1} \, \mathbf{1}_{\cercle \times D} \, .
$$
(Here $\mathbf{1}$ denotes characteristic function.)
On the other hand, for all $(\theta,\rho) \in \A$, the empirical measure $\mathbf{e}^f(\theta,\rho)$ is Lebesgue on the circle $\cercle \times \{\rho\}$, denoted $\lambda_\rho$.
It follows that the ergodic decomposition of $\Leb$ and $\nu_1$ with respect to $f$ are:
$$
\circled{4} = \mathbf{e}_*^{f}(\Leb) = \int_{0}^1 \delta_{\lambda_\rho} \, d\rho \, ,
$$
and
$$
\circled{3} = \mathbf{e}_*^{f}(\nu_1) = 
c^{-1} \int_D \bar{r}(\rho) \, \delta_{\lambda_\rho} \, d\rho \, ,
$$
where $\bar{r}(\rho) \coloneqq \int r \, d\lambda_\rho$.
So $\circled{3}$ is absolutely continuous with respect to $\circled{4}$, with density:
$$
\frac{d\circled{3}}{d\circled{4}} \big(\delta_{\lambda_\rho}\big) = c^{-1} \bar{r}(\rho) \mathbf{1}_{D}(\rho) \, . 
$$
This function is close to $1$ in $L^1(\circled{4})$; so \cref{l.L1_W1} implies that the measures $\circled{3}$ and $\circled{4}$ are close.
Finally, we have:
$$
\frac{d\circled{3}}{d\circled{2}}  = c^{-1} \, \mathbf{1}_S  \, , 
\quad \text{where }
S \coloneqq \supp \circled{3} = \{ \delta_{\lambda_\rho} \st \rho \in D\} \, .
$$
This function is close to $1$ in $L^1(\circled{2})$,  
so \cref{l.L1_W1} implies that the measures $\circled{2}$ and $\circled{3}$ are close.
The upshot is that $\circled{1}=\mathbf{e}_*^{g}(\Leb)$ and $\circled{4}=\mathbf{e}_*^{f}(\Leb)$ are close.
This completes the proof of the \lcnamecref{c:robust_erg_dec} in the case $f$ is a twist map.

The general situation can be reduced to the previous case. 
Indeed, \cref{t:KAM} also ensures that if $f \colon M \to M$ acts as a twist map on a annulus $A$, then any perturbation $g$ of $f$ admits a $g$-invariant sub-annulus $B \subset A$ which is close to $A$. { Then the proof is verbatim the same by substituting the measures $\lambda_\rho$ by their pushforward by $h_1$, $g$ by $h_1\circ g\circ h_1^{-1}$, and $h$ by $h_1\circ h\circ h_1^{-1}$.}  
\end{proof}

As another corollary of \cref{t:KAM}, we obtain open sets with at least polynomial emergence, so justifying an assertion made in \cref{ss:horizontal}. (Readers anxious to see the proof of \cref{coroC} may skip this.)

\begin{corollary}\label{c:robust_poly}
Under the same hypotheses as \cref{c:robust_erg_dec},
there exists $C>0$ such that for every $g \in \Diff^\infty_\Leb(M)$ sufficiently close to $f$, its emergence of $g$ with respect to the $\mathsf{W}_1$ metric satisfies:
$$
\Eme_\Leb(g)(\epsilon) \ge C \epsilon^{-1} \, , \quad  \forall \epsilon>0 \, ,
$$
\end{corollary}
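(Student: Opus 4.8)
The plan is to work with the ergodic decomposition. By \cref{p:emergence_Q} we have $\Eme_\Leb(g)(\epsilon)=Q_{\hat\mu_g}(\epsilon)$, where $\hat\mu_g\coloneqq\mathbf{e}^g_*(\Leb)\in\cM(\cM(M))$ and $Q$ is the $\mathsf{W}_1$-quantization number; so it suffices to bound $Q_{\hat\mu_g}(\epsilon)$ from below by $C\epsilon^{-1}$, uniformly for $g$ in a neighborhood of $f$ and for all $\epsilon>0$. Note that the single-scale $\mathsf{W}_1$-closeness of ergodic decompositions supplied by \cref{c:robust_erg_dec} is not enough: a lower bound of order $\epsilon^{-1}$ must be obtained from the geometry of the KAM invariant circles themselves.

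The first step is to extract this geometry from \cref{t:KAM}, arguing exactly as in the proof of \cref{c:robust_erg_dec}. Fix $\eta=1/2$ and a neighborhood $\cU$ of $\id$ in $\Diff^\infty(\A)$ small enough that every $h\in\cU$ and its inverse are $2$-Lipschitz. Then \cref{t:KAM} provides a closed set $D\subset(0,1)$ with $\Leb(D)\ge 1/2$ and a neighborhood $\cV$ of $f$ such that every $g\in\cV$ has a $g$-invariant set $\Lambda_g=H_g(\cercle\times D)$ with $g\circ H_g=H_g\circ f_0$ on $\cercle\times D$, where $H_g=h_1\circ h_g$ and $h_g\in\cU$ (if $f$ is itself a twist map then $h_1=\id$; the general case reduces to this exactly as in \cref{c:robust_erg_dec}). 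Since $\omega$ has no critical points, $f_0$ acts on $\cercle\times\{\rho\}$ as an irrational rotation for a.e.\ $\rho\in D$, so on the circle $C_\rho\coloneqq H_g(\cercle\times\{\rho\})$ the empirical function $\mathbf{e}^g$ is constant and equals $\nu_\rho\coloneqq(H_g)_*\lambda_\rho$, with $\lambda_\rho$ Lebesgue measure on $\cercle\times\{\rho\}$. Because $h_1$ is fixed and $\cU$ is bounded, the following hold with constants independent of $g$: (i) there is an $L$-Lipschitz map $\tilde\rho_g\colon M\to[0,1]$ with $\tilde\rho_g(x)=\rho$ whenever $x\in C_\rho$ (extend the radial coordinate of $H_g^{-1}$); testing transport plans against $\tilde\rho_g$ and using $\int\tilde\rho_g\,d\nu_\rho=\rho$ then gives the one-sided bi-Lipschitz estimate $|\rho-\rho'|\le L\,\mathsf{W}_1(\nu_\rho,\nu_{\rho'})$; and (ii) by the co-area formula and the uniform pinching of the Jacobian of $H_g$, the pushforward $(\tilde\rho_g)_*(\Leb|_{\Lambda_g})$ dominates $c_0\,\Leb|_D$ for some uniform $c_0>0$.

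The second step is a short covering estimate. Fix $g\in\cV$ and $\epsilon>0$, and let $\cF=\{\zeta_1,\dots,\zeta_N\}\subset\cM(M)$ satisfy $\int_M\min_i\mathsf{W}_1(\mathbf{e}^g(x),\zeta_i)\,d\Leb(x)\le\epsilon$; by \cref{rephrasing} it is enough to show $N\ge C\epsilon^{-1}$. Restricting the integral to $\Lambda_g$ and changing variables by (ii),
\[
\epsilon\ \ge\ \int_{\Lambda_g}\min_i\mathsf{W}_1(\nu_{\tilde\rho_g(x)},\zeta_i)\,d\Leb(x)\ \ge\ c_0\int_D\min_i\mathsf{W}_1(\nu_\rho,\zeta_i)\,d\rho\,.
\]
If $\nu_\rho$ and $\nu_{\rho'}$ both lie within $\mathsf{W}_1$-distance $r$ of a common $\zeta_i$ then $\mathsf{W}_1(\nu_\rho,\nu_{\rho'})<2r$, hence $|\rho-\rho'|<2Lr$ by (i); so $\{\rho\in D:\min_i\mathsf{W}_1(\nu_\rho,\zeta_i)<r\}$ is covered by $N$ intervals of length $2Lr$ and has measure at most $2LNr$. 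Therefore $\int_D\min_i\mathsf{W}_1(\nu_\rho,\zeta_i)\,d\rho\ge r(\Leb(D)-2LNr)$, and taking $r=\Leb(D)/(4LN)$ gives the bound $\Leb(D)^2/(8LN)$. Combining, $\epsilon\ge c_0\Leb(D)^2/(8LN)\ge c_0/(32LN)$, so $N\ge c_0/(32L\epsilon)$, and one may take $C\coloneqq c_0/(32L)$.

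I expect the only genuine work to be in the first step: reading off from \cref{t:KAM} the uniform quantitative picture --- an invariant family of rotational circles of total Lebesgue measure at least $1/2$, equipped with a uniformly Lipschitz radial projection and a uniformly pinched transverse Jacobian --- and verifying that $D$, $L$ and $c_0$ may be chosen independently of $g\in\cV$. This is morally contained in the proof of \cref{c:robust_erg_dec}, which however only records $\mathsf{W}_1$-closeness of ergodic decompositions at a fixed scale; that statement alone cannot produce $\epsilon^{-1}$ growth, so one must keep the circles rather than just their statistical average.
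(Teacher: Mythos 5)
Your argument is correct and proves the statement, but it takes a genuinely different route from the paper's.

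The paper's proof conjugates $g$ back to the twist map by the KAM conjugacy $h$ and works with the auxiliary map $\tilde f = h^{-1}\circ g\circ h$; the restricted ergodic decomposition becomes exactly $\frac{1}{\Leb(D)}\int_D \delta_{\lambda_\rho}\,d\rho$, a measure supported on a subset of $\cM(\A)$ that is \emph{isometric} to $D\subset[0,1]$. It then chains together \cref{l:quant_Lip}, \cref{l:submeasure}, and the explicit one-dimensional quantization formula of \cref{quantization 1d} (via the distribution-function trick) to reduce to $Q_{\Leb|[0,1]}(\epsilon)\asymp\epsilon^{-1}$. You instead keep $g$ and the deformed circles $C_\rho = H_g(\cercle\times\{\rho\})$ as they are, record two uniform quantitative consequences of KAM --- a globally Lipschitz radial coordinate $\tilde\rho_g$ giving $|\rho-\rho'|\le L\,\mathsf{W}_1(\nu_\rho,\nu_{\rho'})$ via Kantorovich--Rubinstein duality, and a lower density bound for $(\tilde\rho_g)_*(\Leb|_{\Lambda_g})$ --- and then run a self-contained covering/counting argument: any $N$ quantization centers can be $\mathsf{W}_1$-close to only $N$ $\rho$-intervals each of length $O(Lr)$, so the quantization cost on $D$ is at least of order $1/(LN)$. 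This bypasses the conjugation and the quantization lemmas entirely. What the paper's route buys is modularity and the exact isometry $\lambda_\rho\mapsto\rho$; what your route buys is that it is essentially from scratch, with the elementary covering estimate replacing three citations, at the price of having to verify the Lipschitz extension and Jacobian uniformity explicitly (which, as you note, is implicit in the proof of \cref{c:robust_erg_dec} and in the ``$h^{\pm1}$ are $2$-Lipschitz with jacobian $\le 2$'' normalization the paper uses). Both reach constants of the same form.
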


Note that this is \emph{not} a consequence of \cref{c:robust_erg_dec} by itself, since we bound the emergence of the perturbations at \emph{every} scale.

\begin{proof} 
We will provide a proof in the case that $f$ itself is a twist map (so $M = A = B = \A$, $f=f_0$, and $h_1 = \id$), leaving for the reader to adapt the proof for the general situation.

Let $g \in \Diff^\infty_\Leb(\A)$ be a perturbation of $f$.
Applying \cref{t:KAM}, we obtain $h \in \Diff^\infty(\A)$ close to identity such that $g \circ h = h \circ f$ on $\cercle \times D$, where $D \subset (0,1)$ is a closed set with almost full measure; say at least $1/2$.
We can assume that $h^{\pm 1}$ are $2$-Lipschitz and have jacobian at most $2$.
As in the proof of \cref{c:robust_erg_dec}, let 
$\tilde f \coloneqq  h^{-1} \circ g \circ h 
$ and $\nu \coloneqq h_*^{-1}(\Leb)$;
then $\nu$ is { $\tilde f$-invariant.}

Consider the ergodic decompositions $\hat\mu \coloneqq \mathbf{e}_*^g(\Leb)$ and $\hat\nu \coloneqq \mathbf{e}_*^{\tilde{f}}(\nu)$.
Then, for arbitrary $\epsilon>0$, 
\[
\begin{aligned}
\Eme_{\Leb}
(g)(\epsilon) 
&=   Q_{\hat\mu}(\epsilon)			 							&\quad &\text{(by \cref{p:emergence_Q})} \\ 
&\ge Q_{h^{-1}_{**}(\hat\mu)}(\mathrm{Lip}(h^{-1}_*) \epsilon)	&\quad &\text{(by \cref{l:quant_Lip})} \\ 
&\ge Q_{\hat\nu}(2 \epsilon)									&\quad &\text{(by \cref{l:push_erg,l:push_Lip})} \, .
\end{aligned}
\]
Note that $\nu \ge \tfrac{1}{2} \Leb$, by the bound on the jacobian.
Let $\mu_1$ be the normalized Lebesgue measure on $\cercle \times D$.
Since $\Leb(D) \ge \tfrac{1}{2}$ we have $\mu_1 \le 2 \Leb$, and so $\nu \ge \tfrac{1}{4} \mu_1$.
Furthermore, $\mu_1$ is also $\tilde{f}$-invariant so the ergodic decomposition $\hat{\mu}_1 \coloneqq \mathbf{e}_*^{\tilde{f}}(\nu)$ is well-defined.
We have $\hat\nu \ge \tfrac{1}{4} \hat\mu_1$ and so, by \cref{l:submeasure},
\[
{\Eme_{ \Leb}(\epsilon) }\ge Q_{\hat\nu}(2 \epsilon) \ge  Q_{\hat\mu_1}(8 \epsilon) \, . 
\]
Since $\tilde{f}$ equals $f$ on the support of $\mu_1$, 
we have:
$$
\hat\mu_1 = 
\frac{1}{\Leb(D)} \int_D \delta_{\lambda_\rho} \, d\rho \, ,
$$
where $\lambda_\rho$ denotes Lebesgue measure on the circle $\cercle \times \{\rho\}$.
Similarly to the proof of \cref{emergence poly}, the measure $\hat\mu_1$ is supported on a set which is isometric to $D$ under the isometry $\lambda_\rho \mapsto \rho$; moreover, the isometry carries $\hat\mu_1$ to the normalized Lebesgue measure on $D$ (call it $\lambda$).
Therefore  $Q_{\hat\mu_1}(8 \epsilon) = Q_{\lambda}(8 \epsilon)$.

We are left to estimate the quantization number of the measure $\lambda$.
Consider its distribution function $F \colon [0,1] \to [0,1]$ defined by $F(x) \coloneqq \lambda([0,x])$.
Since $\lambda \le 2 \Leb$, the function $F$ is $2$-Lipschitz.
Furthermore, $F_*(\lambda) = \Leb$.
So, by \cref{l:quant_Lip},
$$
Q_{\lambda}(8 \epsilon) \ge Q_{\Leb}(16 \epsilon) \, .
$$
We have seen in \cref{quantization 1d} that quantization number of $1$-dimensional Lebesgue measure is $Q_{\Leb}(\epsilon) \asymp \epsilon^{-1}$.
We conclude that $\Eme_\mu(g)(\epsilon)$ is at least of the order of $\epsilon^{-1}$, as we wanted to show.
\end{proof}

\subsection{Genericity of high emergence: conservative setting}\label{s:generic_cons} 

Here is a consequence of \cref{p:main}, combined with \cref{c:robust_erg_dec}:

\begin{lemma}\label{forC}
Suppose that $f\in \Diff^\infty_\Leb(M)$ admits a periodic spot~$O$. 
Let $\epsilon_0 > 0$, and let $\cU \subset \Diff^\infty_\Leb(M)$ be a neighborhood of $f$. 
Then there exists a nonempty open set $\cV \subset \cU$ such that 
for every $g\in \cV$, its metric emergence  w.r.t.\ $\mathsf{W}_1$ metric satisfies:
\[\sup_{\epsilon<\epsilon_0} \frac{\log \log \Eme_\Leb(g)(\epsilon)}{\log \epsilon} \ge 2-\epsilon_0\; .\]
\end{lemma}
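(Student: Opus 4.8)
The plan is to insert, via a $C^\infty$‑small conservative perturbation supported inside (the orbit of) the periodic spot, a copy of the construction of \cref{p:main} at the prescribed small scale, and then to use the KAM result \cref{c:robust_erg_dec} to render the resulting high emergence robust. Throughout, $\epsilon_*>0$ will be a small parameter fixed at the very end in terms of $\epsilon_0$.

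First I would normalise the spot. If $f^p(O)=O$ and $f^p|_O=\id$, then $x\mapsto(\text{period of }x)$ on $O$ has closed sublevel sets $\{x:f^q(x)=x\}$, so the set where the period attains its maximal value $p^*$ is a nonempty open $O''\subseteq O$ with $f^{p^*}|_{O''}=\id$; choosing $x_1\in O''$ (whose $f$‑orbit consists of $p^*$ distinct points) and a small enough neighbourhood yields an open $O^*\subseteq O$ with $f^{p^*}|_{O^*}=\id$ and $O^*,f(O^*),\dots,f^{p^*-1}(O^*)$ pairwise disjoint; rename $O^*,p^*$ as $O,p$. Next fix, once and for all, a smooth embedding $\iota^+\colon\A^+:=\cercle\times[-1,2]\to O$ with constant jacobian (Moser's lemma) and bounded distortion; set $A:=\iota^+(\cercle\times[0,1])$, $W:=\iota^+(\A^+)\subset O$, $\iota:=\iota^+|_{\cercle\times[0,1]}$, $a:=\Leb(A)$, and $\rho:=\min_{i\neq j}\operatorname{dist}(f^i(A),f^j(A))>0$. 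Now let $h=h_{\epsilon_*}$ be the diffeomorphism of $\A$ given by \cref{p:main}, fix $\omega\in C^\infty([0,1],\R)$ without critical points, and put $\Psi^t:=h\circ R^t_\omega\circ h^{-1}$.

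The delicate point is the construction of $g_0$. Since $h=\id$ near $\partial\A$, for $\rho$ near $0$ or $1$ the map $\Psi^t$ is the shear $(\theta,\rho)\mapsto(\theta+t\omega(\rho),\rho)$; extend $\Psi^t$ to a conservative diffeomorphism $\Psi^t_+$ of $\A^+$ by interpolating these boundary shears, over the two collars, down to the identity near $\partial\A^+$ (this only interpolates a one‑variable function, so no critical‑point obstruction appears, which is why \cref{p:main} with $\omega$ free of critical points suffices). Then $\tau:=\iota^+\circ\Psi^t_+\circ(\iota^+)^{-1}$ on $W$, extended by the identity, is a conservative diffeomorphism of $M$ equal to $\id$ off $W$. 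Crucially, $\Psi^t_+=\id+t\,V+O(t^2)$ in every $C^r$ norm for a fixed vector field $V$ (conjugation by the fixed, ``wild'' $h$ costs only a fixed factor), hence $\|\tau-\id\|_{C^r}\le K_r\,t$, so for $t=t(\epsilon_*)>0$ small enough $g_0:=\tau\circ f\in\cU$. A direct orbit computation (using $\operatorname{supp}\tau\subset W$, disjoint from $f(A),\dots,f^{p-1}(A)$) gives $g_0^i|_A=f^i|_A$ for $i<p$, $g_0^p(A)=A$, and $g_0^p|_A=\iota\circ\Psi^t\circ\iota^{-1}$; so $g_0$ cyclically permutes the disjoint sets $f^i(A)$ with first‑return map $g_0^p|_A$. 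Since $\Psi^t=hR^t_\omega h^{-1}$, the (smooth, constant‑jacobian) embedding $h_1:=\iota\circ h$ satisfies $g_0^p\circ h_1=h_1\circ R^t_\omega$, i.e.\ $g_0^p$ acts on $A$ as the \emph{twist map} $R^t_\omega$ in the sense of \cref{c:robust_erg_dec}. Also, since $\iota$ conjugates $(\A,\Psi^t)$ to $(A,g_0^p|_A)$, \cref{l:push_erg} gives $\mathbf{e}^{g_0^p}_*\mu_A=\iota_{**}(\widehat m)$ with $\widehat m:=\mathbf{e}^{\Psi^t}_*\Leb_\A$, and by \cref{p:emergence_Q} together with \cref{p:main}, $Q_{\widehat m}(\epsilon_*)=\Eme_\Leb(\Psi^t)(\epsilon_*)\ge\exp\!\big(C\epsilon_*^{-2}\big)$.

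Now apply \cref{c:robust_erg_dec} with $f$ replaced by $g_0^p$ and $g$ by $g^p$ (legitimate, since $g\mapsto g^p$ is $C^\infty$‑continuous): for every $\epsilon_1>0$ and every $g$ in a suitable neighbourhood of $g_0$ there is a $g^p$‑invariant sub‑annulus $B\subset A$ with $\Leb(A\setminus B)<\epsilon_1$ and $\mathsf{W}_1\!\big(\mathbf{e}^{g^p}_*\mu_B,\mathbf{e}^{g_0^p}_*\mu_A\big)<\epsilon_1$. Then $\Omega':=\bigsqcup_{i=0}^{p-1}g^i(B)$ is $g$‑invariant, the pieces $g^i(B)\subset g^i(A)$ are pairwise $\tfrac{\rho}{2}$‑separated for $g$ close to $g_0$, so $\Leb(\Omega')=p\,\Leb(B)\ge pa/2$ once $\epsilon_1<a/2$, and the first‑return map of $g$ to $B$ is $g^p|_B$. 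Hence $\mathbf{e}^g_*\mu_{\Omega'}=\Xi_*\big(\mathbf{e}^{g^p}_*\mu_B\big)$ for $\Xi\colon\nu\mapsto\tfrac1p\sum_{i=0}^{p-1}(g^i)_*\nu$, which is bi‑Lipschitz onto its image with inverse‑Lipschitz constant bounded uniformly over $g$ in a small neighbourhood $\cV\subset\cU$ of $g_0$ (decompose a transport plan into its $p^2$ source/target blocks: the off‑diagonal mass is $\le\mathrm{cost}/(\rho/2)$, each diagonal block pulls back to a partial transport plan on $\A$). Chaining \cref{l:submeasure} ($\Leb\ge\Leb(\Omega')\,\mu_{\Omega'}$), \cref{l:quant_Lip} (applied to $\Xi^{-1}$ and to $(\iota_{**})^{-1}$, both Lipschitz with fixed constants), \cref{l:continuity} (using the $\mathsf{W}_1$‑estimate above with $\epsilon_1$ small), \cref{p:emergence_Q} and the displayed bound for $Q_{\widehat m}$, one obtains a constant $\kappa_0>0$ depending only on $f$, $O$ and the fixed $\iota^+$ (not on $\epsilon_*$) with $\Eme_\Leb(g)(\kappa_0\epsilon_*)\ge\exp\!\big(C\epsilon_*^{-2}\big)$ for all $g\in\cV$. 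Finally, given $\epsilon_0$, pick $\epsilon_*$ small enough that $\kappa_0\epsilon_*<\epsilon_0$, that $\exp(C\epsilon_*^{-2})\ge e$, and that $\dfrac{\log(C\epsilon_*^{-2})}{-\log(\kappa_0\epsilon_*)}\ge 2-\epsilon_0$ (this ratio tends to $2$ as $\epsilon_*\to0$). Then for every $g\in\cV$, evaluating the supremum at $\epsilon=\kappa_0\epsilon_*<\epsilon_0$ gives $\dfrac{\log\log\Eme_\Leb(g)(\kappa_0\epsilon_*)}{-\log(\kappa_0\epsilon_*)}\ge\dfrac{\log(C\epsilon_*^{-2})}{-\log(\kappa_0\epsilon_*)}\ge 2-\epsilon_0$, which is the claim. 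The hard part is precisely the second paragraph: turning the fixed but wild map $h_{\epsilon_*}$ of \cref{p:main} into a $C^\infty$‑small perturbation of $f$, which is made possible by composing with a \emph{near‑identity} twist $R^t_\omega$ (allowed for every $t\neq0$ by \cref{p:main}) and using the collar interpolation to absorb its boundary rotation, while still keeping $g_0^p$ genuinely a twist map on $A$ so that \cref{c:robust_erg_dec} applies.
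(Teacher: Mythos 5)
Your proposal is correct and follows essentially the same route as the paper's proof: apply \cref{p:main} at a tiny scale $\epsilon_*$, realize $\Psi^t$ inside the periodic spot via a constant-Jacobian embedding of a slightly larger annulus with a near-identity time $t$ (so the perturbation is $C^\infty$-small), invoke \cref{c:robust_erg_dec} for robustness, and chain \cref{p:emergence_Q}, \cref{l:push_erg}, \cref{l:push_Lip}, \cref{l:quant_Lip}, \cref{l:continuity}, \cref{l:submeasure} to transfer the lower bound to all nearby $g$. The only substantive difference is that you carry out in full the case of a genuine period $p>1$ (first-return map, the averaging map $\Xi$, and the $\rho/2$-separation argument for its Lipschitz inverse), which the paper sidesteps with "for simplicity of writing, let us assume that $O$ consists of fixed points."
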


\begin{proof}
Assume that $f$ has a periodic spot $O$.
For simplicity of writing, let us assume that $O$ consists of fixed points.

Let $\hat{\A} \coloneqq \cercle \times [-1,2]$.
Take a smooth embedding $h_1 \colon \hat{\A} \to O$ with constant Jacobian $J$.
Fix a small $\epsilon_*>0$; how small it needs to be will become apparent at the end.

Let $\omega \colon \R \to \R$ be a smooth function that has no critical points in $[0,1]$ and vanishes outside $[-1,2]$.
By \cref{p:main}, we can find $h \in \Diff_\Leb^\infty(\A)$ that equals the identity on the neighborhood of the boundary
such that the maps $\Psi^t \coloneqq h \circ R^t_\omega\circ h^{-1}$ has high emergence at scale $\epsilon_*$:
$$
\Eme(\Psi^t)(\epsilon_*) \ge \exp(C\epsilon_*^{-2}) \, , \quad \forall t \neq 0 \, ,
$$
where $C$ is a constant.
We fix $t \neq 0$ very close to $0$ and write $\psi \coloneqq \Psi^t$.
We can extend $h$ and $\psi$ to smooth conservative diffeomorphisms $\hat h$ and $\hat\psi$ of the bigger annulus $\hat\A$, putting $\hat{h}(\theta,\rho) \coloneqq (\theta,\rho)$ and $\hat\psi(\theta,\rho) \coloneqq (\theta+t\omega(\rho),\rho)$ for $(\theta,\rho) \in \hat{\A}\setminus \A$. 
Define $\tilde f \colon M \to M$ by:
$$
\tilde{f}(x) \coloneqq 
\begin{cases}
	h_1 \circ \hat\psi \circ h_1^{-1}(x) &\quad\text{if $x \in h_1(\hat{\A})$;} \\
	f(x) &\quad\text{otherwise.}
\end{cases}
$$
Then $\tilde f$ is a smooth conservative diffeomorphism, and it is $C^\infty$-close to $f$ (since $t$ is close to $0$).
So we can assume that $\tilde f$ belongs to the given neighborhood $\cU$ of $f$.
Note that $\tilde f$ acts as a twist map on the embedded annulus $A \coloneqq h_1(\A)$, which has measure $J$ (the jacobian of $h_1$). 

Let $g \in \Diff^\infty_\Leb(M)$ be a small perturbation of $\tilde f$.
By \cref{c:robust_erg_dec}, $g$ admits an invariant sub-annulus $B \subset A$
such that:
$$
\Leb(A \setminus B) < \tfrac{J}{2} \quad  \text{and} \quad
\mathsf{W}_1 \left(\mathbf{e}^g_*(\mu_B), \mathbf{e}^{\tilde f}_*(\mu_A)  \right) < L^{-1}\epsilon_* \, ,
$$
where $\mu_A$ (resp.\ $\mu_B$) is the normalized Lebesgue measure on $A$ (resp.\ $B$).
Since $\mu_A  = h_{1*}(\Leb_\A)$, by \cref{l:push_erg}, $\mathbf{e}^{\tilde f}_*(\mu_A) = h_{1**}(\mathbf{e}^{\psi}_*(\Leb_\A))$.
Let $L$ be the Lipschitz constant of $h_1$;
then, by \cref{l:push_Lip,l:quant_Lip},
$$
Q_{\mathbf{e}^{\tilde f}_*(\mu_A)} ( L^{-1}\epsilon_* ) \ge Q_{\mathbf{e}^{\psi}_*(\Leb_\A)} (\epsilon_*) \ge \exp(C\epsilon_*^{-2}) \, .
$$
It follows from \cref{l:continuity} that:
$$
Q_{\mathbf{e}^{g}_*(\mu_B)} ( 2 L^{-1}\epsilon_* ) \ge Q_{\mathbf{e}^{\tilde f}_*(\mu_A)} ( L^{-1} \epsilon_* ) \ge \exp(C\epsilon_*^{-2}) \, .
$$
Since $\Leb(B) \ge \frac{J}{2}$, we have $\mu_B \le 2J^{-1} \Leb$ and so $\mathbf{e}^{g}_*(\Leb) \le 2J^{-1}  \mathbf{e}^{g}_*(\mu_B)$.
It follows from \cref{l:submeasure} that:
$$
Q_{\mathbf{e}^{g}_*(\Leb)} (  4L^{-1}J^{-1} \epsilon_* )  \ge Q_{\mathbf{e}^{g}_*(\mu_B)} ( 2 L^{-1}\epsilon_* )  \ge \exp(C\epsilon_*^{-2}) \, .
$$
Let $\epsilon \coloneqq 4L^{-1}J^{-1}\epsilon_*$.
Since $\epsilon_*$ is very small, we conclude that $\epsilon < \epsilon_0$ and 
$$
\frac{\log \log \Eme_\Leb(g)(\epsilon)}{\log \epsilon} \ge 2-\epsilon_0 \, .
$$
Therefore the neighborhood $\cV$ of $\tilde f$ formed by the perturbations $g$ has the required properties.
\end{proof}

\begin{proof}[Proof of \cref{coroC}] 
Consider the following two subsets of $\Diff^\infty_\Leb(M)$:
\begin{itemize}
	\item $\cW$ is the set of weakly stable diffeomorphisms, i.e., those that robustly have only hyperbolic periodic points (if any);
	\item $\cU$ is the set of diffeomorphisms that admit at least one elliptic periodic point.
\end{itemize}
These two sets are open and disjoint.
Furthermore, since the periodic points of a generic area preserving map are either hyperbolic or elliptic, 
the union $\cW \cup \cU$ is dense in $\Diff^\infty_\Leb(M)$.

By \cref{dense_periodic_spotC}, there is a dense subset $\cD\subset \cU$ formed by diffeomorphisms displaying a periodic spot.
For each $f\in \cD$, let $(\cU_{f,n})$ be a neighborhood basis for $f$.
By \cref{forC}, there exists a nonempty open subset $\cV_{f,n} \subset \cU_{f,n}$ such that:
\[
\forall g \in \cV_{f,n} \, , \quad
\sup_{\epsilon<1/n} \frac{\log \log \Eme_\Leb(g)(\epsilon)}{\log \epsilon} \ge 2-\frac{1}{n}\; .
\]
The set $\cO_n \coloneqq \bigcup_{f\in \cD} \cV_{f, n}$ is open and dense in $\cU$.
Then $\cR\coloneqq \bigcap_n \cO_n$ is a  residual subset of $\cU$  which satisfies:
\[
\forall g \in \cR \, , \quad 
\limsup_{\epsilon \to 0 } \frac{\log \log \Eme_\Leb(g)(\epsilon)}{\log \epsilon} \ge 2 \, .
\]
Thus $\cW \cup \cR$ is a residual subset of $\Diff^\infty_\Leb(M)$ formed by diffeomorphisms that are either weakly stable or have $\limsup$ stretched exponential emergence with exponent $2$.
\end{proof}
	
\subsection{Genericity of high emergence: dissipative setting} \label{s:generic_dissip}

Here is another consequence of \cref{p:main}:

\begin{lemma}\label{forD}
Let $r \in [1,\infty]$. 
Suppose that $f\in \Diff^r(M)$ admits a periodic spot~$O$. 
Let $\epsilon_0 > 0$, and let $\cU \subset \Diff^\infty(M)$ be a neighborhood of $f$. 
Then there exists a nonempty open set $\cV \subset \cU$ such that 
for every $g\in \cV$, its metric emergence  w.r.t.\ $\mathsf{W}_1$ metric satisfies:
\[\sup_{\epsilon<\epsilon_0} \frac{\log \log \Eme_\Leb(g)(\epsilon)}{\log \epsilon} \ge 2-\epsilon_0\; .\]
\end{lemma}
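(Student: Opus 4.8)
The plan is to mimic the proof of \cref{forC}, replacing the conservative KAM ingredient (\cref{c:robust_erg_dec}) by a hyperbolic persistence argument, since in the dissipative setting we have no volume constraint and hence a much softer perturbation toolkit. First I would assume, as in \cref{forC}, that $f$ has a periodic spot $O$ consisting of fixed points (the general case reduces to this by passing to an iterate). Inside $O$ I embed a thin annulus $A_0 = h_1(\hat\A)$ via a smooth embedding $h_1 \colon \hat\A \to O$, $\hat\A \coloneqq \cercle \times [-1,2]$, with $h_1$ and $h_1^{-1}$ Lipschitz with constant $L$. Pick a small $\epsilon_* > 0$ (to be fixed at the end). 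Apply \cref{p:main} to obtain $h \in \Diff^\infty_\Leb(\A)$, equal to the identity near $\partial\A$, such that for a smooth $\omega$ without critical points on $[0,1]$ and vanishing outside $[-1,2]$, the map $\psi \coloneqq h \circ R^t_\omega \circ h^{-1}$ satisfies $\Eme_\Leb(\psi)(\epsilon_*) \ge \exp(C\epsilon_*^{-2})$ for every $t \neq 0$. Since $\omega$ is flat at the endpoints, $\psi$ extends by the identity outside $\A$ to a diffeomorphism $\hat\psi$ of $\hat\A$, and then to a smooth diffeomorphism $\tilde f$ of $M$ which equals $h_1 \circ \hat\psi \circ h_1^{-1}$ on $A_0$ and equals $f$ elsewhere. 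Taking $t$ close to $0$ makes $\tilde f$ as $C^r$-close to $f$ as we wish, so $\tilde f \in \cU$.

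The difference from the conservative proof is the stabilization step. Here we want to arrange that $\tilde f$ acts on a sub-annulus of $A_0$ as a \emph{normally hyperbolic} (in fact normally contracted) foliation by invariant circles, so that we may invoke the persistence of normally hyperbolic laminations (Hirsch--Pugh--Shub theory, as mentioned in the introduction to \cref{s:generic}) rather than KAM. Concretely, I would modify the construction so that outside the region where $h$ is supported, the normal derivative of $\tilde f$ along the $\rho$-direction is a genuine contraction: replace the trivial extension by one that multiplies the $\rho$-coordinate by a factor $\theta$-independent and strictly less than $1$ on an outer collar of the annulus, arranging a sub-annulus $A_1 \subset A_0$ of almost full relative measure which is normally attracting with its circle fibers invariant and on which $\tilde f$ still equals $h_1\circ\hat\psi\circ h_1^{-1}$. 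Then for every $g \in \Diff^r(M)$ sufficiently $C^1$-close to $\tilde f$ there is a $g$-invariant $C^1$ sub-annulus $B$, $C^1$-close to $A_1$, with a $g$-invariant continuous foliation by circles topologically conjugate (via a homeomorphism $C^0$-close to the identity) to the $\psi$-invariant foliation; hence the ergodic decomposition $\mathbf{e}^g_*(\mu_B)$ is $\mathsf{W}_1$-close to $h_{1**}(\mathbf{e}^\psi_*(\Leb_\A))$, say within $L^{-1}\epsilon_*$ after accounting for $h_1$. From here the argument is verbatim that of \cref{forC}: use \cref{l:push_Lip,l:quant_Lip} to transfer the quantization lower bound $\exp(C\epsilon_*^{-2})$ of $\mathbf{e}^\psi_*(\Leb_\A)$ to $\mathbf{e}^{\tilde f}_*(\mu_A)$, then \cref{l:continuity} to pass from $\tilde f$ to $g$ and from $\mu_A$ to $\mu_B$, then \cref{l:submeasure} (using $\Leb(B)$ bounded below) to pass from $\mu_B$ to $\Leb$. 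Setting $\epsilon \coloneqq 4L^{-1}J^{-1}\epsilon_*$ and taking $\epsilon_*$ small enough that $\epsilon < \epsilon_0$ gives $\log\log\Eme_\Leb(g)(\epsilon)/\log\epsilon \ge 2-\epsilon_0$, and $\cV$ is the neighborhood of $\tilde f$ consisting of such $g$.

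The main obstacle is the stabilization step: in the dissipative setting $\mathbf{e}^g_*(\Leb)$ is governed not by $\Leb|_B$ but by the basin of attraction of $B$, so I must be careful that the normally attracting sub-annulus $B$ absorbs a definite fraction of Lebesgue measure (uniformly in $g$), or else that the relevant empirical measures for Lebesgue-generic points in a fixed neighborhood of $B$ are still close to the $\tilde\lambda_\rho$; the \cref{l:submeasure} step in particular needs $\mathbf{e}^g_*(\Leb)$ comparable to $\mathbf{e}^g_*(\mu_B)$, which is what forces $B$ to be attracting rather than merely invariant. A clean way to ensure this is to make the whole embedded annulus $A_0$ normally attracting, with a trapping neighborhood $W \supset A_0$ that is mapped strictly inside itself; then for $g$ close to $\tilde f$ the maximal invariant set in $W$ is the perturbed annulus $B$, every orbit starting in $W$ converges to $B$ respecting the (perturbed) circle foliation, and $\mathbf{e}^g_*(\mu_W) = \mathbf{e}^g_*(\mu_B)$ because $\Leb$-a.e.\ point of $W$ has its empirical measure supported on $B$. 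Since $\Leb(W)$ is a fixed positive constant independent of the perturbation, \cref{l:submeasure} applies. The rest is routine bookkeeping with the lemmas of \cref{ss:quant_properties}.
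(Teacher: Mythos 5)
Your general framework is right—use \cref{p:main} inside the periodic spot, extend to $\tilde f$, and then invoke normal hyperbolicity to pass to perturbations $g$ of $\tilde f$. But your stabilization step has a genuine gap. You make the embedded annulus $A_0$ normally attracting and then assert that a small perturbation $g$ of $\tilde f$ carries a $g$-invariant \emph{circle foliation} $C^0$-close to the $\psi$-invariant foliation. This persistence claim is false. Making the ambient annulus normally attracting guarantees (by HPS theory) that the \emph{annulus} persists as an invariant $C^1$ submanifold, but not its invariant circle foliation: inside the annulus, $\tilde f$ acts as a twist map whose invariant circles foliate a continuum, so each individual circle is \emph{not} normally hyperbolic (within the annulus, the direction transverse to a leaf has zero normal Lyapunov exponent). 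A generic dissipative perturbation collapses that continuum of neutral circles into finitely many attractors/repellers or Mather sets; the foliation and the identification of empirical measures with the $\tilde\lambda_\rho$ both break down, which is exactly what KAM handles in the conservative case and what you cannot control here without more structure.

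The paper sidesteps this by \emph{discretizing} the continuum of circles before perturbing. It constructs a sequence of flows $\Psi^t_n$ close to $\Psi^t$ for which only finitely many of the circles $\cC_{i/n}$, $\cC_{(i+.5)/n}$ are invariant, alternating exponentially repelling and exponentially attracting. Each of these circles is genuinely normally hyperbolic, so it (and its basin) persists under $C^1$ perturbation. Since the finitely many attracting circles carry rotations whose rotation numbers vary continuously, one can arrange them to be irrational (or very-high-period rational) robustly, giving a well-defined ergodic decomposition equal, up to small Wasserstein distance, to the finite atomic measure $\hat\mu_n := \frac1n\sum_i \delta_{\tilde\lambda_{(i+.5)/n}}$, which is itself close (via \cref{l:continuity}) to the target $\hat\mu$. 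That discretization — replacing a degenerate continuum by finitely many normally hyperbolic invariant curves, with attracting ones carrying almost all of Lebesgue measure through their basins — is the missing idea in your argument; without it the persistence of the ergodic decomposition under perturbation is not justified.
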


\begin{proof}
Let us first consider the simpler case where $M$ is the annulus and $f$ is the identity map. 
Let $\epsilon_0>0$ be given.
Fix a positive $\epsilon_* < \epsilon_0$ small enough such that:
$$
\frac{\log C - 2 \log\epsilon_*}{\log 4 - \log \epsilon_*} \ge 2 - \epsilon_0 \, ,
$$
where $C>0$ is the constant from \cref{p:main}.
Choose and fix a smooth function $\omega \colon [0,1] \to \R$ without critical points.
Applying \cref{p:main}, 
we obtain a smooth conservative diffeomorphism $h \colon \A \to \A$ that equals identity on a neighborhood of the boundary of the annulus, such that the flow $\Psi^t \coloneqq h \circ R^t_\omega \circ h^{-1}$
has the following property:
\[
\forall t \neq 0, \quad 
\Eme_{\Leb}(\Psi^t) (\epsilon_*) \ge \exp(C \epsilon_*^{-2}) \, .
\]

For each $\rho \in [0,1]$, let $\lambda_\rho$ denote Lebesgue measure on the circle $\cercle \times \{\rho\}$, and let $\tilde\lambda_\rho \coloneqq h_*(\lambda_\rho)$ be its push-forward under $h$.
So $\tilde\lambda_\rho$ is supported on the curve $\cC_\rho \coloneqq h(\cercle \times \{\rho\})$.
Consider the following sequence of elements of $\cM(\cM(\A))$:
$$
\hat\mu_n \coloneqq 
\frac{1}{n}\sum_{i=0}^{n-1} \delta_{\tilde \lambda_{(i+.5)/n}} \, . 
$$
Note that the sequence $(\hat\mu_n)$ tends to the measure $\hat \mu$ defined by \eqref{e.erg_dec_Psi},
which is exactly the ergodic decomposition of any $\Psi^t$ ($t\neq 0$).
By \cref{l:continuity}, if $n$ is large enough 
then
$Q_{\hat{\mu}_n}(\epsilon_*/2)$ is at least $Q_{\hat{\mu}}(\epsilon_*)$,
which by construction is at least $\exp(C \epsilon_*^{-2})$.

Let $((\Psi_n^t)_t)_n$ be a sequence of flows on the annulus $\A$ converging to $(\Psi^t)_t$ and such that, for each $n\ge 1$, the flow $(\Psi_n^t)_t$ satisfies:
\begin{itemize}
	\item for every $i \in \{0,1,\dots,n\}$, the curve $\cC_{i/n}$ is invariant and exponentially repelling; 
	\item for every $i \in \{0,1,\dots,n-1\}$, the curve $\cC_{(i+.5)/n}$ is invariant and exponentially attracting, with basin $h(\mathbb T\times (i/n , (i+1)/n))$; 
\end{itemize}
Let $f_n \coloneqq \Psi_n^{t_n}$, where $(t_n)$ is a sequence of non-zero numbers tending to zero.
Then $f_n$ converges $f = \id$ in the $C^\infty$ topology.
Tweaking the sequence $(t_n)$ if necessary, we can assume that each $f_n$ acts as an irrational rotation on each attracting cycle $\cC_{(i+.5)/n}$, $i \in \{0,1,\dots,n-1\}$.
Then every point in the basin of $\cC_{(i+.5)/n}$ has a well-defined empirical measure with respect to $f_n$, which is $\tilde \lambda_{(i+.5)/n}$.
Each of these basins has Lebesgue measure $1/n$, so the measure $\mathbf{e}^{f_n}_*(\Leb)$,
(which with some abuse of terminology we will call the ergodic decomposition of~$f_n$) 
is well defined and equals $\hat{\mu}_n$.
So for large enough $n$, 
the diffeomorphism $f_n$ displays high emergence at scale $\epsilon_*/2$:
$$
\Eme_{\Leb}(f_n)(\epsilon_*/2) = 
Q_{\hat{\mu}_n}(\epsilon_*/2) \ge \exp(C \epsilon_*^{-2}).
$$
(Strictly speaking, \cref{p:emergence_Q} does not apply since $\Leb$ measure is not $f_n$-invariant, but it still works since the empirical measures are $\Leb$-a.e.\ well defined and ergodic.)
For the remainder of the proof, we fix a large $n$ such that $f_n$ has the above properties, and moreover belongs to the given neighborhood $\cU$ of $f=\id$.

Now, if 
$g$ is a small $C^1$-perturbation of $f_n$ then by persistence of normally contracting submanifolds (see e.g.\ \cite[Thm.~2.1]{BB}), 
$g$ has $n$ attracting curves
$C^1$-close to the curves $\cC_{(i+.5)/n}$, and {their basins are} bounded by repelling curves
that are $C^1$-close to the curves $\cC_{i/n}$.
The rotation numbers along these attracting curves
are either irrational or rational with a large denominator, so every point in the union of the basins has a well-defined empirical measure with respect to $g$, which is close to $\tilde \lambda_{(i+.5)/n}$.
Thus $g$ has a well-defined ergodic decomposition, which is close to $\hat{\mu}_n$.
It follows from \cref{l:continuity} that:
$$
\Eme_{\Leb}(g)(\epsilon_*/4) = Q_{\mathbf{e}^g_*(\Leb)}(\epsilon_*/{ 4}
) \ge
Q_{\hat{\mu}_n}(\epsilon_*/2) \ge \exp(C \epsilon_*^{-2}).
$$
So it follows from the definition of $\epsilon_*$ that:
$$
\epsilon \coloneqq \frac{\epsilon_*}{4}
\quad \Rightarrow \quad \frac{\log \log \Eme_\Leb(g)(\epsilon)}{\log \epsilon} \ge 2-\epsilon_0 \, .
$$
Letting $\cV$ be a $C^1$-neighborhood of the diffeomorphism $f_n$ where such estimates hold, we conclude the proof of the \lcnamecref{forD} in the case $M = \A$, $f = \id$.

If $f$ is an arbitrary surface diffeomorphism admitting a periodic spot $O$, then we embed an annulus in $O$ and reproduce the construction above. 
Emergences can be estimated from below similarly. 
Details are left for the reader. 
\end{proof}

\begin{proof}[Proof of \cref{coroD}] 
The proof is entirely analogous to the proof of \cref{coroC}, using \cref{dense_periodic_spotD} instead of  \cref{dense_periodic_spotC} and \cref{forD} instead of \cref{forC}.
\end{proof}

\begin{appendix}
\section{Entropy}\label{s:entropy}

\subsection{Entropy in terms of covering numbers}\label{ss:Katok}

Let us explain how entropies are related to covering numbers. 
We  use these relations in \cref{s:top_em_examples}.

Let $f \colon X \to X$ be a continuous self-map of a compact metric space $(X,\mathsf{d})$.
For each integer $n \ge 1$, define the \emph{Bowen metric}:
\begin{equation}\label{e.Bowen}
\mathsf{d}_n(x,y) \coloneqq \max_{0 \le i < n} \mathsf{d}(f^i(x),f^i(y)) \, .
\end{equation} 
Let $N(n,\epsilon)  \coloneqq D_{\mathsf{d}_n}(\epsilon)$ denote the least number of balls of radii $\epsilon$ in the $\mathsf{d}_n$-metric necessary to cover $X$.
We recall the following:

\begin{definition}\label{def:topo_entropy} The \emph{topological entropy} of $f$ is:
\[h_\mathrm{top}(f) \coloneqq \lim_{\epsilon\to 0}\lim_{n\to \infty} \frac1n \log N(n, \epsilon).\]
\end{definition}

Fix an invariant measure $\mu \in \cM_f(X)$.
Given $n\ge 1$, $\epsilon > 0$, and $0 < \delta < 1$, let $N_\mu(n,\epsilon,\delta)$ denote the least number of balls of radii $\epsilon$ in the $\mathsf{d}_n$-metric necessary to cover a set of $\mu$-measure at least $1-\delta$.

Though metric entropy is most commonly defined in terms of measurable partitions, the following result by Katok allows us to define it in terms of covering numbers:

\begin{otherthm}[Katok~\cite{Katok}, Theorem~I.I]\label{t:Katok}
If $\mu$ is ergodic then for every $\delta$ in the range $0<\delta<1$,
\begin{equation}\label{e.Katok}
h_\mu(f) 
= \lim_{\epsilon \to 0} \liminf_{n\to \infty} \frac{1}{n} \log N_\mu(n,\epsilon,\delta)
= \lim_{\epsilon \to 0} \limsup_{n\to \infty} \frac{1}{n} \log N_\mu(n,\epsilon,\delta) \, .
\end{equation}
\end{otherthm}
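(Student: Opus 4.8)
The plan is to deduce everything in \eqref{e.Katok} from the two inequalities
\[
\lim_{\epsilon\to0}\limsup_{n\to\infty}\tfrac1n\log N_\mu(n,\epsilon,\delta)\ \le\ h_\mu(f)\ \le\ \lim_{\epsilon\to0}\liminf_{n\to\infty}\tfrac1n\log N_\mu(n,\epsilon,\delta),
\]
valid for every ergodic $\mu$ and every $\delta\in(0,1)$: since $\liminf_n\le\limsup_n$ and since $N_\mu(n,\epsilon,\delta)$ is non-increasing in $\epsilon$ (so both $\epsilon$-limits exist in $[0,\infty]$), the two displayed bounds force all four quantities to coincide. Throughout I write $\xi^n\coloneqq\bigvee_{i=0}^{n-1}f^{-i}\xi$ for a finite measurable partition $\xi$, and $\xi^n(x)$ for the atom of $\xi^n$ containing $x$. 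The analytic inputs are: the Shannon--McMillan--Breiman theorem, which by ergodicity gives $-\tfrac1n\log\mu(\xi^n(x))\to h_\mu(f,\xi)$ for $\mu$-a.e.\ $x$, sharpened by Egorov's theorem so that the convergence is uniform off a set of small measure; and Birkhoff's ergodic theorem.

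For the upper bound, fix $\epsilon>0$ and, using compactness of $X$, pick a finite partition $\xi$ all of whose pieces have diameter $<\epsilon$; then every atom of $\xi^n$ has $\mathsf{d}_n$-diameter $<\epsilon$ and hence lies inside a closed $\mathsf{d}_n$-ball of radius $\epsilon$. Given $\gamma>0$, Shannon--McMillan--Breiman and Egorov provide a set $A$ with $\mu(A)>1-\delta$ on which $\mu(\xi^n(x))\ge e^{-n(h_\mu(f,\xi)+\gamma)}$ for all large $n$; the atoms of $\xi^n$ that meet $A$ are pairwise disjoint, each of measure $\ge e^{-n(h_\mu(f,\xi)+\gamma)}$, so there are at most $e^{n(h_\mu(f,\xi)+\gamma)}$ of them, and their union covers $A$. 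Hence $N_\mu(n,\epsilon,\delta)\le e^{n(h_\mu(f,\xi)+\gamma)}$; letting $n\to\infty$, then $\gamma\to0$, and using $h_\mu(f,\xi)\le h_\mu(f)$ gives $\limsup_n\tfrac1n\log N_\mu(n,\epsilon,\delta)\le h_\mu(f)$ for every $\epsilon$.

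For the lower bound, fix $\gamma>0$ and choose a finite partition $\xi=\{C_1,\dots,C_k\}$ with $\mu(\partial C_i)=0$ for all $i$ and $h_\mu(f,\xi)>h_\mu(f)-\gamma$ (such partitions exist because $h_\mu(f)=\sup_\xi h_\mu(f,\xi)$ and any partition can be perturbed to have $\mu$-null boundary without losing much entropy). For $\epsilon>0$ put $C_i^{-\epsilon}\coloneqq\{x\in C_i:\bar B(x,\epsilon)\subset C_i\}$ and $D_\epsilon\coloneqq\bigcup_i C_i^{-\epsilon}$; since $\bigcup_{\epsilon>0}C_i^{-\epsilon}=\operatorname{int}C_i$ and $\mu(\partial C_i)=0$, we have $\mu(D_\epsilon)\to1$ as $\epsilon\to0$. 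The key geometric fact is: if a closed $\mathsf{d}_n$-ball $B$ of radius $\epsilon$ contains a point $z$ with $f^i(z)\in D_\epsilon$ for every $i$ in a subset $I\subset\{0,\dots,n-1\}$, then $f^i(B)$ lies in a single piece of $\xi$ for each $i\in I$, so $B$ meets at most $k^{\,n-|I|}$ atoms of $\xi^n$. Now choose $\delta'>0$ with $\delta'\log k<\gamma$, and then $\epsilon$ small enough that $\mu(D_\epsilon)>1-\delta'$; by Birkhoff applied to $\mathbf 1_{D_\epsilon}$ (this is where ergodicity enters), the set $G_n$ of points whose length-$n$ orbit spends a proportion $\ge 1-\delta'$ of the time in $D_\epsilon$ satisfies $\mu(G_n)\to1$. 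Take also an Egorov set $A$ with $\mu(A)$ as close to $1$ as we wish on which $\mu(\xi^n(x))\le e^{-n(h_\mu(f,\xi)-\gamma)}$ for large $n$. Given a cover of a set of measure $\ge 1-\delta$ by $N=N_\mu(n,\epsilon,\delta)$ closed $\mathsf{d}_n$-balls of radius $\epsilon$: for large $n$ the set $A\cap G_n\cap(\text{union of the balls})$ has $\mu$-measure at least some $c(\delta)>0$ depending only on $\delta$; each of its points lies in some ball, which therefore contains a point of $G_n$ and so meets at most $k^{\delta'n}$ atoms of $\xi^n$, and every atom meeting this set has measure $\le e^{-n(h_\mu(f,\xi)-\gamma)}$. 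Hence $c(\delta)\le N\,k^{\delta'n}\,e^{-n(h_\mu(f,\xi)-\gamma)}$, so $\liminf_n\tfrac1n\log N_\mu(n,\epsilon,\delta)\ge h_\mu(f,\xi)-\gamma-\delta'\log k>h_\mu(f)-3\gamma$ for all sufficiently small $\epsilon$; letting $\epsilon\to0$ and then $\gamma\to0$ concludes.

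I expect the main difficulty to be the bookkeeping in the lower bound: the parameters must be fixed in the right order ($\gamma$ first, then $\xi$ and hence $k$, then $\delta'$, then $\epsilon$, and only then $n\to\infty$), and some care is needed because the covering $\mathsf{d}_n$-balls need not be centered in $G_n$ — this is why the geometric fact is stated for a ball merely \emph{containing} a good point, and why the tolerances in the Egorov and Birkhoff steps must be kept small relative to $1-\delta$ so that $A\cap G_n$ still carries a definite fraction of the mass of the cover (giving $c(\delta)>0$ for every $\delta\in(0,1)$). The remaining ingredients — existence of fine partitions with $\mu$-null boundary whose entropy approximates $h_\mu(f)$, the elementary atom count, and monotonicity of $N_\mu$ in $\epsilon$ — are routine.
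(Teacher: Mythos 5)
The paper does not prove this statement; it is imported verbatim from Katok's 1980 article (\cite{Katok}, Theorem~I.I), so there is no in-paper proof against which to compare you. Your argument is essentially a reconstruction of Katok's own two-sided estimate, with the easy inequality $\limsup_n\tfrac1n\log N_\mu(n,\epsilon,\delta)\le h_\mu(f)$ coming from Shannon--McMillan--Breiman plus Egorov (the same route the paper alludes to in the proof of \cref{t:Katok_reform}) and the harder lower bound from a finite partition with $\mu$-null boundary combined with Birkhoff and Egorov.

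One quantitative slip in the lower bound: the ``key geometric fact'' as you state it does not quite hold. If $B$ is a closed $\mathsf{d}_n$-ball of radius $\epsilon$ with centre $c$ and it contains a point $z$ with $f^i(z)\in C_j^{-\epsilon}$, then for $y\in B$ you only get $\mathsf{d}(f^i(y),f^i(z))\le \mathsf{d}(f^i(y),f^i(c))+\mathsf{d}(f^i(c),f^i(z))\le 2\epsilon$, so $f^i(y)$ need not land in $C_j$. You need the deeper set $C_j^{-2\epsilon}$ there (equivalently, replace $D_\epsilon$ by $D_{2\epsilon}$ in the choice of $G_n$). The fix is cosmetic, since $\mu(D_{2\epsilon})\to1$ as $\epsilon\to0$ just as well, and the order of quantifiers you already set up ($\gamma$, then $\xi$ and hence $k$, then $\delta'$, then $\epsilon$, then $n\to\infty$) is unchanged. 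With that adjustment, the remaining bookkeeping --- in particular keeping the Egorov and Birkhoff tolerances strictly below $(1-\delta)/2$ so that $\mu(A\cap G_n\cap\bigcup_j B_j)\ge c(\delta)>0$ --- is sound.
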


When $f$ is a homeomorphism,  let $\tilde N_\mu(n,\epsilon,\delta)$ denote the least number of balls necessary to cover a set of $\mu$-measure at least $1-\delta$ of radii $\epsilon$ in the following metric:
\begin{equation}\label{e.Bowen.inv}
\tilde{\mathsf{d}}_n(x,y) \coloneqq \max_{-n < i < n} \mathsf{d}(f^i(x),f^i(y)) \, .
\end{equation}
We note that $\mathsf{d}_{2n}(f^{-n}(x), f^{-n}(y)) = \tilde{\mathsf{d}}_n (x,y)$ and so $\tilde N_\mu(n,\epsilon,\delta)=  N_\mu(2n,\epsilon,\delta)$.  
So we obtain: 

\begin{corollary}\label{c.Katok}
If $\mu$ is ergodic then for every $\delta$ in the range $0<\delta<1$,
\begin{equation}\label{e.Katok.inv}
h_\mu(f) 
= \lim_{\epsilon \to 0} \liminf_{n\to \infty} \frac{1}{2n} \log \tilde N_\mu(n,\epsilon,\delta)
= \lim_{\epsilon \to 0} \limsup_{n\to \infty} \frac{1}{2n} \log \tilde N_\mu(n,\epsilon,\delta) \, .
\end{equation}
\end{corollary}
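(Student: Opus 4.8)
The plan is to deduce \cref{c.Katok} directly from Katok's \cref{t:Katok} by the substitution $n \mapsto 2n$, exactly as suggested by the remark preceding the statement; the only points requiring attention are the precise comparison between the bilateral Bowen balls $\tilde{\mathsf d}_n$ and the unilateral ones, and the fact that restricting the limit to the subsequence of even indices does not change the relevant $\liminf$ and $\limsup$. First I would record the identity. Since $f$ is a homeomorphism and $\mu$ is $f$-invariant, $\mu(f^k(A)) = \mu(A)$ for every Borel set $A$ and every $k \in \Z$. From \eqref{e.Bowen} and \eqref{e.Bowen.inv} one checks that $\tilde{\mathsf d}_n(x,y) = \mathsf{d}_{2n-1}\big(f^{-(n-1)}(x),\, f^{-(n-1)}(y)\big)$, so the homeomorphism $f^{-(n-1)}$ carries the $\tilde{\mathsf d}_n$-metric isometrically onto the $\mathsf d_{2n-1}$-metric; concretely, a set $A$ is covered by $k$ closed $\tilde{\mathsf d}_n$-balls of radius $\epsilon$ if and only if $f^{-(n-1)}(A)$ is covered by $k$ closed $\mathsf d_{2n-1}$-balls of radius $\epsilon$ (with centers the images under $f^{-(n-1)}$). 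Combining this with $\mu(f^{-(n-1)}(A)) = \mu(A)$ yields $\tilde N_\mu(n,\epsilon,\delta) = N_\mu(2n-1,\epsilon,\delta)$ (the reader may equally use $\mathsf{d}_{2n}(f^{-n}(x),f^{-n}(y))$ as in the excerpt — the single extra coordinate is immaterial for what follows).

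Next I would observe that $N_\mu(\,\cdot\,,\epsilon,\delta)$ is non-decreasing in its first argument, because $\mathsf d_n \le \mathsf d_{n+1}$ pointwise and hence each $\mathsf d_{n+1}$-ball is contained in the concentric $\mathsf d_n$-ball, so a covering of a set by $k$ $\mathsf d_{n+1}$-balls is also a covering by $k$ $\mathsf d_n$-balls with the same centers; also $N_\mu(n,\epsilon,\delta) \ge 1$. For any non-decreasing sequence $(a_m)_{m\ge1}$ with $a_m \ge 1$, a standard squeeze (using $a_{2n-1} \le a_{2n} \le a_{2n+1}$ together with $\tfrac{2n}{2n-1}\to 1$) gives
\[
\liminf_{m\to\infty}\tfrac1m\log a_m \;=\; \liminf_{n\to\infty}\tfrac{1}{2n}\log a_{2n-1}\,,
\]
and likewise with $\limsup$. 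Applying this with $a_m = N_\mu(m,\epsilon,\delta)$ and using the identity of the previous paragraph, we obtain, for each fixed $\epsilon>0$ and $0<\delta<1$,
\[
\liminf_{n\to\infty}\tfrac{1}{2n}\log \tilde N_\mu(n,\epsilon,\delta) = \liminf_{m\to\infty}\tfrac1m\log N_\mu(m,\epsilon,\delta)\,,
\]
and the same equality with $\limsup$ in place of $\liminf$.

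Finally I would take $\lim_{\epsilon\to 0}$ of both sides and invoke \cref{t:Katok}, whose ergodicity hypothesis is inherited verbatim and which identifies both the $\liminf$-side and the $\limsup$-side with $h_\mu(f)$. This gives \eqref{e.Katok.inv} and completes the proof. I do not anticipate any substantial obstacle: the mathematical content is entirely carried by Katok's theorem, and the remainder is the elementary bookkeeping above — the only mild subtlety being the off-by-one in the number of coordinates defining $\tilde{\mathsf d}_n$ versus $\mathsf d_{2n}$, which is absorbed by the monotonicity/subsequence argument.
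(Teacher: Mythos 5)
Your proposal is correct and follows essentially the same route as the paper: relate the bilateral Bowen metric to a shifted unilateral one, deduce the identity between the covering-number counts, and then pull the conclusion from Katok's \cref{t:Katok}. The paper states the shift identity as $\mathsf{d}_{2n}(f^{-n}(x), f^{-n}(y)) = \tilde{\mathsf{d}}_n(x,y)$ and concludes $\tilde N_\mu(n,\epsilon,\delta)= N_\mu(2n,\epsilon,\delta)$; as you noticed, this is off by one index (the exact identity is $\tilde{\mathsf d}_n(x,y) = \mathsf{d}_{2n-1}(f^{-(n-1)}(x), f^{-(n-1)}(y))$, giving $\tilde N_\mu(n,\epsilon,\delta) = N_\mu(2n-1,\epsilon,\delta)$), and you also make explicit the monotonicity and subsequence argument that the paper takes for granted when passing from $\lim_n \frac{1}{n}\log N_\mu(n,\epsilon,\delta)$ to $\lim_n \frac{1}{2n}\log N_\mu(2n{-}1,\epsilon,\delta)$. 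So your version is a slightly more careful writeup of the same one-line proof.
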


\subsection{Variational principle for entropy}
If a measurable self-map $f$ of a measurable space $X$ preserves a probability measure $\mu$, then $h_\mu(f)$ denotes the corresponding metric entropy.
\begin{otherthm}[Variational Principle for Entropy]\label{them_variat_principle_entropy}
If $X$ is compact and $f$ is continuous, then the topological entropy $h_\mathrm{top}(f)$ equals the supremum of $h_\mu(f)$ where $\mu$ runs over all the invariant Borel probability measures. 
\end{otherthm}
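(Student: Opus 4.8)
The plan is to prove the two inequalities $\sup_{\mu\in\cM_f(X)}h_\mu(f)\le h_\mathrm{top}(f)$ and $h_\mathrm{top}(f)\le\sup_{\mu\in\cM_f(X)}h_\mu(f)$ separately. The first is elementary given Katok's theorem; the second is the classical argument of Misiurewicz.

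For the first inequality I would start with an ergodic measure $\mu$: for any fixed $\delta\in(0,1)$, a cover of $X$ by $\mathsf{d}_n$-balls of radius $\epsilon$ is in particular a cover of a set of $\mu$-measure at least $1-\delta$, so $N_\mu(n,\epsilon,\delta)\le N(n,\epsilon)$ for all $n$ and $\epsilon$; feeding this into Katok's \cref{t:Katok} and comparing with \cref{def:topo_entropy} gives $h_\mu(f)\le h_\mathrm{top}(f)$. For a general invariant $\mu$ I would use the affinity of metric entropy along the ergodic decomposition $\hat\mu=\mathbf{e}^f_*\mu$ (a classical fact; see e.g.\ \cite{Mane}), which gives $h_\mu(f)=\int h_\nu(f)\,d\hat\mu(\nu)\le h_\mathrm{top}(f)$ since $\hat\mu$ is supported on $\cM_f^{\mathrm{erg}}(X)$ and each $h_\nu(f)\le h_\mathrm{top}(f)$ by the ergodic case.

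For the second inequality I would fix $\epsilon>0$ and, for each $n$, pick a maximal $(\mathsf{d}_n,\epsilon)$-separated set $E_n$ of cardinality $s_n$. Since a maximal separated set is dense, $s_n\ge N(n,\epsilon)$, and therefore $s(\epsilon):=\limsup_n\frac1n\log s_n$ tends to $h_\mathrm{top}(f)$ as $\epsilon\to0$ (using the Bowen-metric version of \eqref{e.comparison} and \cref{def:topo_entropy}). Along a subsequence realizing this $\limsup$, I would form the averaged empirical measures $\mu_n:=\frac1n\sum_{i=0}^{n-1}f^i_*\sigma_n$, where $\sigma_n:=\frac1{s_n}\sum_{x\in E_n}\delta_x$, and let $\mu$ be a weak-$*$ subsequential limit; the telescoping bound $\|f_*\mu_n-\mu_n\|\le 2/n$ shows $\mu$ is $f$-invariant. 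Next I would choose a finite Borel partition $\alpha=\{A_1,\dots,A_m\}$ with $\diam A_j<\epsilon$ and $\mu(\partial A_j)=0$ for all $j$ (cover $X$ by finitely many balls of radius $<\epsilon/2$ with $\mu$-null boundary spheres, then disjointify). Because $E_n$ is $(\mathsf{d}_n,\epsilon)$-separated, each atom of $\bigvee_{i=0}^{n-1}f^{-i}\alpha$ contains at most one point of $E_n$, so $H_{\sigma_n}\bigl(\bigvee_{i=0}^{n-1}f^{-i}\alpha\bigr)=\log s_n$. The standard block-decomposition estimate---splitting $\{0,\dots,n-1\}$ into length-$q$ blocks in all $q$ ways, then using subadditivity of $H$ over refinements and concavity of $\nu\mapsto H_\nu(\beta)$ to pass from $\sigma_n$ to $\mu_n$ (see e.g.\ \cite{Mane} or \cite{VO})---gives, for each fixed $q\ge1$, the inequality $\frac1n\log s_n\le\frac1q H_{\mu_n}\bigl(\bigvee_{i=0}^{q-1}f^{-i}\alpha\bigr)+\frac{2q\log m}{n}$. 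Since $\mu$ is invariant and charges no $\partial A_j$, it charges no boundary of an atom of $\bigvee_{i=0}^{q-1}f^{-i}\alpha$, so $H_{\mu_n}\to H_\mu$ on that join; letting $n\to\infty$ along the subsequence and then $q\to\infty$ yields $s(\epsilon)\le h_\mu(f,\alpha)\le h_\mu(f)\le\sup_\nu h_\nu(f)$, and finally $\epsilon\to0$ finishes the proof.

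The hard part is the second inequality, and specifically the block-decomposition entropy estimate: one has to average the refinement bound over the $q$ choices of block offset and invoke concavity of $\nu\mapsto H_\nu(\beta)$ to replace the translates of $\sigma_n$ by the average $\mu_n$, while controlling the leftover partial blocks so that the error stays of order $q\log m/n$. A minor but genuine technical point is producing the partition $\alpha$ with $\mu$-negligible boundary; this works because along any nested family of spheres only countably many can carry positive $\mu$-mass, so suitable radii can always be chosen.
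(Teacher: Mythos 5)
The paper does not actually prove this theorem; it only cites the standard textbooks \cite{DGS,KH,Mane,PU,VO}, so there is no in-paper argument to compare against. Your sketch is a correct rendition of the classical proof. For $\sup_\mu h_\mu\le h_\mathrm{top}$ you take a mildly nonstandard but efficient route: instead of Goodwyn's direct partition-versus-cover estimate you invoke Katok's formula (\cref{t:Katok}, already available in the appendix) for ergodic $\mu$, using the trivial bound $N_\mu(n,\epsilon,\delta)\le N(n,\epsilon)$, and then pass to general invariant measures via affinity of $\mu\mapsto h_\mu(f)$ over the ergodic decomposition. This is logically sound — Katok's theorem rests on Shannon--McMillan--Breiman and a covering argument, not on the variational principle, so there is no circularity — and it meshes well with the machinery the paper already builds. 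The reverse inequality is the standard Misiurewicz construction (averaged empirical measures over maximal $(\mathsf{d}_n,\epsilon)$-separated sets, a partition with $\mu$-null boundaries, and the block-offset averaging plus concavity of $\nu\mapsto H_\nu(\beta)$ to pass from $\sigma_n$ to $\mu_n$), and you correctly identify the two places where care is needed. One cosmetic remark: you write that $s(\epsilon)$ ``tends to $h_\mathrm{top}(f)$''; what the argument actually needs is only $\lim_{\epsilon\to0}s(\epsilon)\ge h_\mathrm{top}(f)$, which follows from $s_n\ge N(n,\epsilon)$, so the claim is both true (using $s_n\le N(n,\epsilon/2)$) and stronger than necessary.
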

Details can be found in the standard textbooks \cite{DGS,Mane,KH,PU,VO}.


\subsection{Metric entropy in terms of quantization numbers}\label{ss:entropy_quantization}

Let $(X,\mathsf{d})$ be a compact metric space, and let $\mathsf{W}_p$ and $\mathsf{LP}$ denote the induced Wasserstein and L\'evy--Prokhorov metrics on the space $\cM(X)$.
If $\mu \in \cM(X)$, then let $Q_{\mu, \mathsf{W}_p}(\mathord{\cdot})$ and $Q_{\mu, \mathsf{LP}}(\mathord{\cdot})$ and denote the corresponding quantization numbers.
They can be compared as follows:

\begin{lemma}\label{l:quant_comparison}
For every $\epsilon>0$,
$$
Q_{\mu, \mathsf{LP}}\left( \epsilon^{\frac{p}{p+1}} \right) \le Q_{\mu, \mathsf{W}_p}(\epsilon) \le
Q_{\mu, \mathsf{LP}}\left(\frac{\epsilon^p}{1+(\diam X)^p}\right) \, .
$$
\end{lemma}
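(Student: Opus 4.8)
The plan is to translate the Hölder-type comparison between the metrics $\mathsf{W}_p$ and $\mathsf{LP}$ on $\cM(X)$, recorded in inequality \eqref{compaWpLP}, into the claimed comparison of quantization numbers. The key observation is that $Q_{\mu,\mathsf{d}}(\epsilon)$ is, by \cref{def:Q}, monotone in a simple way: if $\mathsf{d}' \le \varphi(\mathsf{d})$ for some increasing function $\varphi$, then any measure $\nu$ with finite support satisfying $\mathsf{d}(\mu,\nu)\le\epsilon$ also satisfies $\mathsf{d}'(\mu,\nu)\le\varphi(\epsilon)$, hence $Q_{\mu,\mathsf{d}'}(\varphi(\epsilon)) \le Q_{\mu,\mathsf{d}}(\epsilon)$.

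First I would prove the left inequality. By the lower bound in \eqref{compaWpLP} we have $\mathsf{LP}^{1+1/p} \le \mathsf{W}_p$, i.e.\ $\mathsf{LP} \le \mathsf{W}_p^{\,p/(p+1)}$. Fix $\epsilon>0$ and let $\nu$ be a finitely supported measure realizing $Q_{\mu,\mathsf{W}_p}(\epsilon)$, so $\mathsf{W}_p(\mu,\nu)\le\epsilon$ and $\#\supp\nu = Q_{\mu,\mathsf{W}_p}(\epsilon)$. Then $\mathsf{LP}(\mu,\nu) \le \epsilon^{p/(p+1)}$, so $\nu$ witnesses $Q_{\mu,\mathsf{LP}}(\epsilon^{p/(p+1)}) \le \#\supp\nu = Q_{\mu,\mathsf{W}_p}(\epsilon)$, which is the left inequality.

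Next I would prove the right inequality by the same device, using instead the upper bound in \eqref{compaWpLP}: $\mathsf{W}_p \le (1+(\diam X)^p)^{1/p}\,\mathsf{LP}^{1/p}$. Let $\nu$ be a finitely supported measure realizing $Q_{\mu,\mathsf{LP}}(\delta)$ for $\delta \coloneqq \epsilon^p/(1+(\diam X)^p)$, so $\mathsf{LP}(\mu,\nu)\le\delta$. Then $\mathsf{W}_p(\mu,\nu) \le (1+(\diam X)^p)^{1/p}\,\delta^{1/p} = (1+(\diam X)^p)^{1/p}\cdot \epsilon \cdot (1+(\diam X)^p)^{-1/p} = \epsilon$, so $\nu$ witnesses $Q_{\mu,\mathsf{W}_p}(\epsilon) \le \#\supp\nu = Q_{\mu,\mathsf{LP}}(\delta)$, which is the right inequality. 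This completes the proof.

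There is essentially no obstacle here: the only point requiring minimal care is keeping the direction of the monotonicity correct (a smaller metric yields a smaller quantization number at the corresponding scale), and correctly inverting the Hölder exponents from \eqref{compaWpLP}. One should also note that \eqref{compaWpLP} is stated as an inequality between the distance functions on all of $\cM(X)$, so it applies in particular to the pair $(\mu,\nu)$; no extra hypotheses on $\mu$ are needed, and the lemma holds for arbitrary $\mu\in\cM(X)$.
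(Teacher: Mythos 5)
Your proof is correct and follows the exact same route as the paper: the paper simply states that the lemma is an immediate consequence of the H\"older comparisons in~\eqref{compaWpLP}, and you have correctly supplied the short computation showing how those comparisons transfer to quantization numbers.
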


\begin{proof}
This is an immediate consequence of inequalities \eqref{compaWpLP}.
\end{proof}


Given a continuous map $f \colon X \to X$ on the compact metric space $(X,\mathsf{d})$ and an integer $n \ge 1$,
the corresponding Bowen metric $\mathsf{d}_n$ induces Wasserstein and  L\'evy--Prokhorov metrics on the space $\cM(X)$, which we respectively denote by $\mathsf{W}_{p,n}$ and $\mathsf{LP}_n$.
Now, given an invariant measure $\mu \in \cM_f(X)$, we consider its quantization numbers with respect these two metrics.
This relates to the entropy as follows:

\begin{otherthm}[Reformulation of Katok's entropy theorem]\label{t:Katok_reform}
If $\mu$ is ergodic then:
\begin{alignat*}{2}
h_\mu(f) 
 &= \lim_{\epsilon \to 0} \liminf_{n\to \infty} \frac{1}{n} Q_{\mu, \mathsf{LP}_n}(\epsilon) 
&&= \lim_{\epsilon \to 0} \limsup_{n\to \infty} \frac{1}{n} Q_{\mu, \mathsf{LP}_n}(\epsilon)  \\
 &= \lim_{\epsilon \to 0} \liminf_{n\to \infty} \frac{1}{n} Q_{\mu, \mathsf{W}_{p,n}}(\epsilon) 
&&= \lim_{\epsilon \to 0} \limsup_{n\to \infty} \frac{1}{n} Q_{\mu, \mathsf{W}_{p,n}}(\epsilon)  \, .
\end{alignat*}
\end{otherthm}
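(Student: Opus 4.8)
The plan is to reduce the whole statement to Katok's \cref{t:Katok} by using the combinatorial reformulation of the L\'evy--Prokhorov quantization number given by \cref{rephrasing_LP}. First I would observe that when $\cM(X)$ is metrized with the L\'evy--Prokhorov metric $\mathsf{LP}_n$ induced by the Bowen metric $\mathsf{d}_n$, \cref{rephrasing_LP} says precisely that $Q_{\mu,\mathsf{LP}_n}(\epsilon)$ is the least number of closed $\mathsf{d}_n$-balls of radius $\epsilon$ covering a set of $\mu$-measure at least $1-\epsilon$; in the notation of the appendix this is
\[
Q_{\mu,\mathsf{LP}_n}(\epsilon) = N_\mu(n,\epsilon,\epsilon) \, .
\]
The only difference with the quantity $N_\mu(n,\epsilon,\delta)$ appearing in \cref{t:Katok} is that here the ball radius and the measure-deficit are tied together.

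Next I would exploit the monotonicity of $N_\mu(n,\epsilon,\delta)$, which is non-increasing in both $\epsilon$ and $\delta$, to sandwich $Q_{\mu,\mathsf{LP}_n}(\epsilon)$ between quantities to which Katok's theorem applies with a \emph{fixed} third parameter. For the lower bound, for any fixed $\delta\in(0,1)$ and every $\epsilon<\delta$ one has $N_\mu(n,\epsilon,\delta)\le N_\mu(n,\epsilon,\epsilon)=Q_{\mu,\mathsf{LP}_n}(\epsilon)$; for the upper bound, for the same $\delta$ and $\epsilon$ one has $Q_{\mu,\mathsf{LP}_n}(\delta)=N_\mu(n,\delta,\delta)\le N_\mu(n,\epsilon,\delta)$. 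Applying $\tfrac1n\log(\cdot)$, then $\liminf_n$ (resp.\ $\limsup_n$), then $\lim_{\epsilon\to 0}$ and using \cref{t:Katok}, which produces the value $h_\mu(f)$ for every fixed third argument, I would conclude
\[
\lim_{\epsilon\to 0}\liminf_n \tfrac1n\log Q_{\mu,\mathsf{LP}_n}(\epsilon)
= \lim_{\epsilon\to 0}\limsup_n \tfrac1n\log Q_{\mu,\mathsf{LP}_n}(\epsilon)
= h_\mu(f) \, ,
\]
the limits in $\epsilon$ existing because $Q_\mu(\epsilon)$ is monotone in $\epsilon$.

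Finally I would transfer the statement to the Wasserstein metrics. The key remark is that the Bowen metric satisfies $\diam(X,\mathsf{d}_n)\le\diam(X,\mathsf{d})$ for all $n$ (each coordinate distance is bounded by $\diam(X,\mathsf{d})$), so the H\"older comparison of \cref{l:quant_comparison}, applied on $(X,\mathsf{d}_n)$, gives $Q_{\mu,\mathsf{LP}_n}(\epsilon^{p/(p+1)})\le Q_{\mu,\mathsf{W}_{p,n}}(\epsilon)\le Q_{\mu,\mathsf{LP}_n}(\epsilon^p/(1+(\diam X)^p))$ with constants independent of $n$. Since replacing $\epsilon$ by a fixed positive power of $\epsilon$ times a constant does not affect $\lim_{\epsilon\to 0}$, the $\liminf$ and $\limsup$ exponential growth rates of $Q_{\mu,\mathsf{W}_{p,n}}(\epsilon)$ in $n$ coincide with those of $Q_{\mu,\mathsf{LP}_n}(\epsilon)$, namely $h_\mu(f)$, which finishes the proof. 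I do not expect a serious obstacle here; the only point demanding care is the bookkeeping around the coupled parameter $\delta=\epsilon$ in \cref{rephrasing_LP}, i.e.\ arranging the sandwich inequalities so that the limit $\lim_{\epsilon\to 0}$ is taken last, after the fixed-$\delta$ form of Katok's theorem has been applied.
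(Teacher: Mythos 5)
Your proposal is correct and follows essentially the same route as the paper: both start from the identity $Q_{\mu,\mathsf{LP}_n}(\epsilon)=N_\mu(n,\epsilon,\epsilon)$ coming from \cref{rephrasing_LP}, compare with $N_\mu(n,\epsilon,\delta)$ via monotonicity to invoke Katok's \cref{t:Katok}, and transfer to $\mathsf{W}_p$ through \cref{l:quant_comparison} using the uniform bound $\diam(X,\mathsf{d}_n)\le\diam(X,\mathsf{d})$. The only microscopic difference is in the upper bound: the paper invokes Katok's ``easy'' inequality $\limsup_n\frac{1}{n}\log N_\mu(n,\epsilon,\delta)\le h_\mu(f)$ directly at $\delta=\epsilon$, whereas you derive the same from the full statement of \cref{t:Katok} by an extra monotonicity sandwich (requiring a final limit in $\delta$), which is slightly longer but equally valid.
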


\begin{proof}
Note that existence of limits as $\epsilon \to 0$ is automatic by monotonicity. 

In view of \cref{l:quant_comparison}, it is sufficient to consider the L\'evy--Prokhorov metrics.
By \cref{l:quant_comparison}, $Q_{\mu, \mathsf{LP}_n}(\epsilon) = N_\mu(n,\epsilon,\epsilon)$ (in the notation of \cref{ss:Katok}).

In the paper \cite{Katok} (see inequality (I.I)), Katok proves that:
$$
\forall \epsilon>0, \ \forall \delta>0, \quad {
\limsup_{n\to \infty}} \frac{1}{n} \log N(n,\epsilon,\delta) \le h_\mu(f) \, .
$$
(This is actually the ``easy part'' of the proof of \cref{t:Katok}, and a simple consequence of Shannon--MacMillan--Breiman's theorem.)
Taking $\delta = \epsilon$ and then taking $\epsilon \to 0$, we obtain:
$$
\lim_{\epsilon \to 0} {
\limsup_{n\to \infty}} \frac{1}{n} \log N(n,\epsilon,\epsilon) \le h_\mu(f)  \, .
$$

On the other hand, if $0 < \epsilon \le \delta < 1$ then $N(n,\epsilon,\epsilon) \ge N(n,\epsilon,\delta)$, so \cref{t:Katok} implies that:
$$
\lim_{\epsilon \to 0} \limsup_{n\to \infty} \frac{1}{n} \log N(n,\epsilon,\epsilon) \ge h_\mu(f)  \, .
$$
This concludes the proof.
\end{proof}

The reader will notice  a certain parallelism between the notions of topological/metric entropies  and topological/metric emergences: compare \cref{def:topo_entropy} with \cref{def.metric_em},  \cref{t:Katok_reform} with \cref{p:emergence_Q}, and \cref{them_variat_principle_entropy} with \cref{t:var_princ}. 
\end{appendix}


\end{document}